\DeclareMathOperator{\Br}{Br}
\DeclareMathOperator{\QCoh}{QCoh} 
\DeclareMathOperator{\Sh}{Sh}
\newcommand\blfootnote[1]{%
  \begingroup
  \renewcommand\thefootnote{}\footnote{#1}%
  \addtocounter{footnote}{-1}%
  \endgroup
}
\begin{document}
\title[A virtual $\mathrm{PGL}_r$--$\mathrm{SL}_r$ correspondence]{A virtual $\mathrm{PGL}_r$--$\mathrm{SL}_r$ correspondence for projective surfaces}
\author[Van Bree, Gholampour, Jiang, Kool]{D.~van Bree, A.~Gholampour, Y.~Jiang, and M.~Kool}
\maketitle

\vspace{-1cm}
\begin{abstract}
For a smooth projective surface $X$ satisfying $H_1(X,\mathbb{Z}) = 0$ and $w \in H^2(X,\mu_r)$, we study deformation invariants of the pair $(X,w)$. Choosing a Brauer-Severi variety $Y$ (or, equivalently, Azumaya algebra $\mathcal{A}$) over $X$ with Stiefel-Whitney class $w$, the invariants are defined as virtual intersection numbers on suitable moduli spaces of stable twisted sheaves on $Y$ constructed by Yoshioka (or, equivalently, moduli spaces of $\mathcal{A}$-modules of Hoffmann-Stuhler). 

We show that the invariants do not depend on the choice of $Y$. Using a result of de Jong, we observe that they are deformation invariants of the pair $(X,w)$. For surfaces with $h^{2,0}(X) > 0$, we show that the invariants can often be expressed as virtual intersection numbers on Gieseker-Maruyama-Simpson moduli spaces of stable sheaves on $X$. This can be seen as a $\mathrm{PGL}_r$--$\mathrm{SL}_r$ correspondence.

As an application, we express $\mathrm{SU}(r) / \mu_r$ Vafa-Witten invariants of $X$ in terms of $\mathrm{SU}(r)$ Vafa-Witten invariants of $X$.  We also show how formulae from Donaldson theory can be used to obtain upper bounds for the minimal second Chern class of Azumaya algebras on $X$ with given division algebra at the generic point.
\end{abstract}

\section{Introduction} \label{sec:introduction}

\subsection{\texorpdfstring{$\mathrm{SL}_r$}{SLr} invariants} \label{sec:introSL} 

Let $(X,H)$ be a smooth polarized surface over the complex numbers. Suppose $H_1(X,\mathbb{Z}) = 0$ and take $r \in \mathbb{Z}_{>0}$, $c_1 \in H^2(X,\mathbb{Z})$, $c_2 \in H^4(X,\mathbb{Z}) \cong \mathbb{Z}$.\blfootnote{Keywords: moduli of twisted sheaves, virtual intersection numbers. MSC classes: 14D20, 14D21, 14F22, 14J60, 14J80.} We denote by $M:=M_X^H(r,c_1,c_2)$ the Gieseker-Maruyama-Simpson moduli space of rank $r$ Gieseker $H$-stable sheaves $F$ on $X$ satisfying $c_1(F) = c_1$ and $c_2(F) = F$ \cite{HL}. Then $M$ is a quasi-projective scheme and it has a natural ``compactification'' by adding strictly semistable sheaves. Sometimes $M$ is itself projective, e.g., when $\gcd(r,c_1H) = 1$, in which case Gieseker stability and $\mu$-stability coincide, and there are no rank $r$ strictly Gieseker $H$-semistable sheaves on $X$ with Chern classes $c_1,c_2$. We view $M$ as a partial compactification of moduli spaces of holomorphic vector bundles, i.e., holomorphic principal $\mathrm{GL}_r$ bundles.\footnote{Since we assume $H_1(X,\mathbb{Z}) = 0$ fixing $c_1$ is equivalent to fixing the determinant, so for $c_1=0$ we are considering holomorphic principal $\mathrm{SL}_r$ bundles.} 

The moduli space $M$ is virtually smooth --- it has a perfect obstruction theory, studied by Mochizuki \cite{Moc}, with virtual tangent bundle 
\[
T_{M}^{\mathrm{vir}} = R \mathcal{H}{\it{om}}_{\pi_M} (\mathcal{E},\mathcal{E})_0[1]
\]
where $\pi_M : X \times M \to M$ denotes the projection, $R \mathcal{H}{\it{om}}_{\pi_M} = R\pi_{M*} \circ R \mathcal{H}{\it{om}}$, $(\cdot)_0$ denotes trace-free part, and $\mathcal{E}$ is a universal sheaf on $X \times M$. In general, a universal sheaf $\mathcal{E}$ only exists \'etale locally on $X \times M$, but the virtual tangent bundle exists globally by \cite[Thm.~2.2.4]{Caldararu2000} (see also \cite[Sect.~10.2]{HL}). 
We assume $\gcd(r,c_1H)=1$.
%In all of the paper, it suffices to assume there are no strictly semistables (for all the moduli spaces under consideration). However, it is tricky to ensure this on all fibres of a family in a clean way. In the untwisted case, it can be ensured by $\gcd(r,c_1H)=1$ (among other ways). In the twisted case, it can be ensured by looking at Brauer classes of order ``$r$ equals rank of twisted sheaves under consideration''.
Then $M$ is projective and, by work of Behrend-Fantechi \cite{BF} and Li-Tian \cite{LT}, there exists a virtual fundamental class
\[
[M]^{\mathrm{vir}} \in A_{\mathrm{vd}}(M), \quad \mathrm{vd} := \mathrm{vd}(r,c_1,c_2) = \rk(T_M^{\mathrm{vir}}) =  2rc_2 - (r-1)c_1^2 - (r^2-1)\chi(\mathcal{O}_X),
\]
where $A_*(M)$ denotes the Chow group of $M$. 

Intersection numbers obtained by capping with the virtual fundamental class play a key role in enumerative geometry, gauge theory, and physics. Examples of such intersection numbers are virtual Euler characteristics, $\chi_y$-genera, elliptic genera, cobordism classes, Donaldson invariants, Segre numbers, and Verlinde numbers. In the cases $HK_X < 0$ or $K_X \cong \mathcal{O}_X$, $M$ is smooth of expected dimension and these numbers have a long history going back (at least) to the 1990s. 
%For first case (and when using Gieseker stability): use [HL], Prop.~1.2.7 applied to $\mu$-semistability via Lem.~1.2.13 and Thm.~1.6.6.
In this paper, we focus on surfaces with a non-zero holomorphic 2-form, i.e.~$h^{2,0}(X) > 0$, in which case $M$ is typically singular and may not have expected dimension (see e.g.~\cite{MS} for examples). For a (partial!) survey on this rich subject, we refer to \cite{GK5} and references therein. One key feature of these ``virtual intersection numbers'' is that they are invariant under deformations of $X$.

Perhaps the most interesting case is the generating function of virtual Euler characteristics 
\[
\mathsf{Z}^{\mathrm{SL}_r,\mathsf{Eu}}_{(X,H),c_1}(q) =  \sum_{c_2} q^{\frac{\mathrm{vd}(r,c_1,c_2)}{2r}} \int_{[M_X^H(r,c_1,c_2)]^{\mathrm{vir}}} c(T_{M_X^H(r,c_1,c_2)}^{\mathrm{vir}}),
\]
where $c(\cdot)$ denotes the total Chern class, which plays a central role in Vafa-Witten theory \cite{TT}. In the ground-breaking work of Vafa-Witten \cite{VW} on $S$-duality and $N=4$ supersymmetric Yang-Mills theory on the 4-manifold underlying $X$, such generating functions are predicted to be Fourier expansions of meromorphic functions on the upper half plane with beautiful modular properties. Notably, under the $S$-duality transformation, a certain generating function associated to gauge group $\mathrm{SU}(r)$ and its Langlands dual $\mathrm{PSU}(r) = \mathrm{SU}(r) / \mu_r$ are related. We denote by $\mu_r$ the multiplicative cyclic group of order $r$.
%The compact real form of $\mathrm{SL}_r(\mathbb{C})$ is $\mathrm{SU}(r)$. 
%The compact real form of $\mathrm{PGL}_r(\mathbb{C}) = \mathrm{PSL}_r(\mathbb{C})$ is $\mathrm{PSU}(r) = \mathrm{SU} / \mu_r$.
%The pair $\mathrm{SL}_r(\mathbb{C})$, $\mathrm{PGL}_r(\mathbb{C})$ are Langlands dual.
%The pair $\mathrm{SU}(r)$, $\mathrm{PSU}(r)$ are Langlands dual.

In this paper, we consider essentially arbitrary intersection numbers on Gieseker-Maruyama-Simpson moduli spaces obtained as polynomial expressions in ``descendent insertions'' as defined in Section \ref{sec:genfun}. Besides virtual Euler characteristics this includes, e.g., the Segre and Verlinde numbers studied in \cite{MOP, GK4, Yua, GM}. For a choice of formal insertions $\mathsf{P}$, we denote the corresponding generating function by $\mathsf{Z}^{\mathrm{SL}_r,\mathsf{P}}_{(X,H),c_1}(q)$. There are several powerful tools for the calculation of virtual intersection numbers, notably Mochizuki's formula \cite{Moc} and the new vertex algebra wall-crossing technology developed by Joyce and collaborators \cite{GJT, Joy}.

\subsection{\texorpdfstring{$\mathrm{PGL}_r$}{PGLr} invariants} 

In this paper, we are interested in compactifications of moduli spaces of holomorphic $\mathbb{P}^{r-1}$-bundles, i.e., holomorphic principal $\mathrm{PGL}_r$ bundles with $r>1$. 
The correct approach for dealing with these objects within algebraic geometry is by using moduli spaces of \emph{twisted sheaves}. Instead of first Chern class $c_1$, we now fix a Stiefel-Whitney class\footnote{In physics parlance, $w$ is called the 't Hooft flux \cite{VW}.} $w \in H^2(X,\mu_r)$, where we view the multiplicative group $\mu_r$ of $r$th roots of unity as a constructible sheaf on $X$ in the \'etale topology. In fact, $H^2(X,\mu_r)$ is isomorphic to the singular cohomology group $H^2(X,\mathbb{Z} / r\mathbb{Z})$, where $X$ is endowed with its complex analytic topology \cite[Thm.~21.1]{MilneLec}. The inclusion $\mu_r \leq \mathbb{G}_m$ induces a map
\[
o : H^2(X,\mu_r) \to H^2(X,\mathbb{G}_m),
\]
where $H^2(X,\mathbb{G}_m)$ is isomorphic to the Brauer group $\mathrm{Br}(X)$. For $\alpha:=o(w)$, there are many models for twisted sheaves on $X$:
\begin{itemize}
\item \textbf{C\u{a}ld\u{a}raru.} \cite{Caldararu2000} Perhaps the most intuitive approach is to represent $\alpha \in H^2(X,\mathbb{G}_m)$ by a \v{C}ech 2-cocycle $\{\alpha_{ijk} \in H^0(U_{ijk}, \mathbb{G}_m)\}$, where $\{U_i \to X\}$ is an \'etale cover and we write $U_{ij} = U_i \times_X U_j$, $U_{ijk} = U_i \times_X U_j \times_X U_k$. Then an $\alpha$-twisted sheaf consists of a collection of sheaves $\{F_i \in \mathrm{Coh}(U_i) \}$ together with isomorphism $\{\phi : F_i |_{U_{ij}} \to F_j|_{U_{ij}}\}$ satisfying $\phi_{ii} = \mathrm{id}$, $\phi_{ji} = \phi_{ij}^{-1}$, and $\phi_{ij} \circ \phi_{jk} \circ \phi_{ki} = \alpha_{ijk} \cdot \mathrm{id}$ for all $i,j,k$.
\item \textbf{Yoshioka.} \cite{Yoshioka2006} Let $[\pi : Y \to X] \in H^1(X,\mathrm{PGL}_r)$ be a degree $r$ Brauer-Severi variety, i.e.~\'etale $\mathbb{P}^{r-1}$ fibre bundle, with Stiefel-Whitney class $w(Y) = w \in H^2(X,\mu_r)$. 
It is a consequence of the period-index theorem, proved by A.J.~de Jong \cite{Jong2004}, \cite[Cor.~4.2.2.4]{Lieblich2008}, that such a Brauer-Severi variety exists.
Yoshioka defines the notion of $Y$-sheaves. These are essentially pull-backs of $\alpha$-twisted sheaves from $X$ to $Y$ tensored with the $-\pi^* \alpha$-twisted line bundle $\mathcal{O}_Y(1)$, which removes the twist.
\item \textbf{Lieblich.} \cite{Lieblich_2007} Let $\mathcal{G} \to X$ be the $\mu_r$-gerbe associated to $w \in H^2(X, \mu_r)$. Then Lieblich introduces twisted sheaves on $\mathcal{G}$. Roughly speaking, coherent sheaves on $\mathcal{G}$ decompose with respect to the character group of $\mu_r$ and the twisted sheaves are the weight 1 eigensheaves. 
\item \textbf{Hoffmann-Stuhler.} \cite{HoffmannStuhler2005} One can also view elements of $H^1(X,\mathrm{PGL}_r)$ as isomorphism classes of degree $r$ Azumaya algebras on $X$. Indeed, there exists a non-trivial extension $0 \to \mathcal{O}_Y \to G \to T_{Y/X} \to 0$ (unique up to scaling), where $T_{Y/X}$ denotes the relative tangent bundle. Then $\mathcal{A} = \pi_*(\mathcal{E}{\it{nd}}(G^\vee))$ is the degree $r$ Azumaya algebra corresponding to $Y$ (Lemma \ref{lem:sheaf-G}, Proposition \ref{prp:azu-bs-equiv}). Suppose $\mathcal{A}$ has Stiefel-Whitney class $w \in H^2(X,\mu_r)$.
Then Hoffmann-Stuhler introduce moduli spaces of (left) $\mathcal{A}$-modules which are generically simple. These modules provide another model for twisted sheaves. Azumaya algebras are generalizations of central simple algebras, which in turn are generalizations of division algebras. The latter have a long history dating back to Hamilton. 
\end{itemize}
We survey these models for twisted sheaves and their equivalences in Section \ref{sec:twisted-sheaves-and-models}. We will mostly work with the moduli spaces of Yoshioka and Hoffmann-Stuhler, and occasionally the one of Lieblich (Section \ref{subsec:moduli}).

Let $Y \to X$ be a degree $r$ Brauer-Severi variety with Stiefel-Whitney class $w:=w(Y) \in H^2(X,\mu_r)$.
Then we consider the generating function of virtual Euler characteristics\footnote{As it stands, the second Chern class $c_2$ is rational. It can be made integral by ``twisting by the $B$-field'' \cite{HuybrechtsStellari2005} (Proposition \ref{prop:integrality}).}
\[
\mathsf{Z}^{\mathrm{PGL}_r,\mathsf{Eu}}_{(X,H),w}(q) =  \sum_{c_2} q^{\frac{\mathrm{vd}(r,0,c_2)}{2r}} \int_{[M_Y^H(r,0,c_2)]^{\mathrm{vir}}} c(T_{M_Y^H(r,0,c_2)}^{\mathrm{vir}}),
\]
where we assume all of the moduli spaces of $H$-stable $Y$-sheaves $M_Y^H(r,0,c_2)$ (which we recall in Section \ref{sec:moduli-of-twisted-sheaves}) are projective.
For instance, this is the case when the Brauer class $o(w) \in H^2(X,\mathbb{G}_m)$ has order $r$. Indeed, then there are no $Y$-sheaves $F$ of rank $0 < \rk(F) < r$, thus stability is automatic and there are no rank $r$ strictly semistable $Y$-sheaves (Remark \ref{rem:stabauto}). In this case, the generating function does not depend on the choice of polarization $H$ and we write 
\[
\mathsf{Z}^{\mathrm{PGL}_r,\mathsf{Eu}}_{X,w}(q) = \mathsf{Z}^{\mathrm{PGL}_r,\mathsf{Eu}}_{(X,H),w}(q).
\]
We show in Proposition \ref{prop:indep} that the generating function does not depend on the choice of degree $r$ Brauer-Severi variety $Y \to X$. 

As in the $\mathrm{SL}_r$ case, we also consider arbitrary polynomial expressions in descendent insertions on moduli of twisted sheaves. For any choice of formal insertions $\mathsf{P}$, we denote the corresponding generating function by $\mathsf{Z}^{\mathrm{PGL}_r,\mathsf{P}}_{(X,H),w}(q)$ (see Section \ref{sec:genfun} for the precise definition).

The third-named author first introduced the use of twisted sheaves to Vafa-Witten theory in \cite{Jiang}. This was used by the third- and fourth-named authors in \cite{JiangKool2021} to introduce the $\mathrm{PSU}(r)$ Vafa-Witten partition function when $r$ is prime. Denoting $\epsilon_r := \exp(2\pi \sqrt{-1} / r)$, it has the following form
\begin{align}
\begin{split} \label{eqn:PSUVWgenfun}
&\mathsf{VW}^{\mathrm{PSU}(r)}_{X,c_1}(q) = \sum_{w \in H^2(X,\mu_r)} \epsilon_r^{c_1 w} \, \mathsf{VW}_{X,w}(q), \\
&\textrm{where} \quad  \mathsf{VW}_{X,w}(q) = \mathsf{Z}^{\mathrm{PGL}_r,\mathsf{Eu}}_{X,w}(q), \quad \mathrm{if \ } 0 \neq o(w) \in \mathrm{Br}(X).
\end{split}
\end{align}

\subsection{Main results} 

The construction of the $\mathrm{PGL}_r$ generating function on $X$ depends on a choice of a degree $r$ Brauer-Severi variety $Y \to X$ with $w(Y)=w$. Since $Y$ may be obstructed when deforming $X$, deformation invariance of the generating function is not immediate. However, by a result of de Jong \cite{Jong2004}, when $Y$ is obstructed one can always apply an elementary transformation after which it becomes unobstructed. We recall this result in Theorem \ref{thm:extending-azu-algs}. This will lead to the deformation invariance, which we will now describe.

Let $f : \mathcal{X} \to B$ be a smooth projective morphism of relative dimension 2 with connected fibres over a smooth connected variety $B$. Suppose that one fibre (and hence all fibres) $\mathcal{X}_b$ satisfies $H_1(\mathcal{X}_b,\mathbb{Z}) = 0$. We fix a family of polarizations $\mathcal{H}$ on $\mathcal{X}$ and we consider $\mu_r$ as a constructible sheaf in the \'etale topology on $\mathcal{X}$. Then $R^2 f_* \mu_r$ is a constructible sheaf and 
\[
(R^2 f_* \mu_r)_b \cong H^2(\mathcal{X}_b, \mu_r)
\]
for all closed points $b \in B$ by the proper base change theorem \cite[Thm.~17.7]{MilneLec}. We fix a section 
\[
\widetilde{w} \in H^0(B,R^2 f_* \mu_r).
\]

In this paper, we only want to consider the case where there are no strictly semistable objects anywhere in our family. This can be achieved by the following two assumptions:
\begin{itemize}
\item $r$ is prime, and
\item $\gcd(r,\widetilde{w}_b \mathcal{H}_b) = 1$ for some (and hence all) closed points $b \in B$.
\end{itemize}
Since $r$ is prime and $\widetilde{w}_b \in H^2(\mathcal{X}_b,\mu_r)$ for any closed point $b \in B$, the order of $o(\widetilde{w}_b) \in H^2(\mathcal{X}_b,\mathbb{G}_m)$ is either 1 or $r$. Then for any degree $r$ Brauer-Severi variety $Y$ over $\mathcal{X}_b$ and any $c_2$, the moduli space $M_Y^{\mathcal{H}_b}(r,0,c_2)$ is projective (Lemma \ref{lem:moduliprojinfam}). Roughly speaking, in the case of trivial Brauer class, we are in the untwisted setting and the second condition rules out strictly semistable objects (Proposition \ref{prop:untwist}), whereas in the case of non-trivial Brauer class, stability is automatic for all rank $r$ torsion free twisted sheaves, so in particular there are no strictly semistables (Remark \ref{rem:stabauto}). In order to deal with strictly semistable objects, one should work with a notion of twisted Joyce-Song-Mochizuki pairs \cite{JS, Joy, Moc}. 

\begin{theorem} \label{thm:main1}
Let $\widetilde{w} \in H^0(B,R^2 f_* \mu_r)$ be a section. Suppose $r$ is prime and $\mathrm{gcd}(r,\widetilde{w}_b \mathcal{H}_b)=1$ for some (and hence all) closed points $b \in B$. Then $\mathsf{Z}_{(\mathcal{X}_{b},\mathcal{H}_{b}),\widetilde{w}_{b}}^{\mathrm{PGL}_r, \mathsf{P}}(q)$ is independent of the closed point $b \in B$.
\end{theorem}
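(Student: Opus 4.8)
The plan is to reduce Theorem \ref{thm:main1} to the standard deformation-invariance machinery for virtual fundamental classes, with the one genuinely new ingredient being that the twisted-sheaf moduli problem lives on a Brauer-Severi variety that may fail to extend over $B$. First I would reduce the global statement to a one-parameter family. Since $B$ is smooth and connected, any two closed points $b_0,b_1$ lie on a smooth connected curve $C$ admitting a map $C\to B$ through both (a general complete-intersection curve will do when $\dim B\geq 2$, and there is nothing to prove when $\dim B\leq 1$). After base change along $C\to B$ the family $\mathcal{X}_C\to C$ again has the required properties, and $R^2f_*\mu_r$ pulls back compatibly so that $\widetilde{w}$ restricts to a section over $C$; proper base change keeps the fiberwise classes $\widetilde{w}_b\in H^2(\mathcal{X}_b,\mu_r)$ meaningful. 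It therefore suffices to prove that the invariants are constant along the connected curve $C$.

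Next I would produce a global geometric model over $C$. Choosing a degree $r$ Brauer-Severi variety $Y_0$ over the central fiber with $w(Y_0)=\widetilde{w}_{b_0}$, the obstruction to deforming $Y_0$ over $C$ is precisely what Theorem \ref{thm:extending-azu-algs} (de Jong) resolves: after applying an elementary transformation to $Y_0$ one obtains a Brauer-Severi variety that deforms, yielding a degree $r$ Brauer-Severi variety $\mathcal{Y}\to\mathcal{X}_C$ over the whole family with $w(\mathcal{Y}_b)=\widetilde{w}_b$ for every $b$. Crucially, Proposition \ref{prop:indep} guarantees that the generating function on the central fiber is unchanged by the elementary transformation, so replacing $Y_0$ by its transform costs nothing and the quantity we wish to compute is unaffected.

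With a global model $\mathcal{Y}$ in hand, I would form the relative moduli space $\pi:\mathcal{M}\to C$ of fiberwise $H$-stable $\mathcal{Y}$-sheaves of rank $r$, trivial $c_1$, and fixed $c_2$ (equivalently, relative moduli of generically simple $\mathcal{A}$-modules), and then sum over $c_2$. Under the hypotheses that $r$ is prime and $\gcd(r,\widetilde{w}_b\mathcal{H}_b)=1$, Lemma \ref{lem:moduliprojinfam} makes $\pi$ projective with no strictly semistable objects in any fiber, so $\mathcal{M}_b\cong M_{\mathcal{Y}_b}^{\mathcal{H}_b}(r,0,c_2)$. The relative trace-free Ext-complex $R\mathcal{H}om_{\pi}(\mathcal{E},\mathcal{E})_0[1]$ furnishes a perfect obstruction theory for $\mathcal{M}$ relative to $C$, restricting to the absolute theories of the excerpt on the fibers and hence giving a relative virtual class $[\mathcal{M}/C]^{\mathrm{vir}}$. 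Deformation invariance then follows from Behrend-Fantechi functoriality: for the inclusion $i_b:\mathcal{M}_b\hookrightarrow\mathcal{M}$ one has $[\mathcal{M}_b]^{\mathrm{vir}}=i_b^![\mathcal{M}/C]^{\mathrm{vir}}$, and since the descendent insertions $\mathsf{P}$ are built from classes of the universal object that extend over $\mathcal{M}$, capping $[\mathcal{M}/C]^{\mathrm{vir}}$ with the global insertion class and pushing forward produces a cycle in $A_1(C)$; intersecting this cycle with any closed point of the smooth connected curve $C$ gives the same degree. Thus $\int_{[\mathcal{M}_b]^{\mathrm{vir}}}\mathsf{P}$ is independent of $b$, and summing over $c_2$ yields the equality of generating functions.

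I expect the extension step to be the main obstacle: producing a single Brauer-Severi variety over the entire family that realizes the section $\widetilde{w}$. This is exactly where $Y_0$ may be obstructed, and it is the interplay of de Jong's theorem with the choice-independence of Proposition \ref{prop:indep} that makes the relative construction legitimate at all. A secondary technical point is verifying that the relative obstruction theory genuinely restricts to the fiberwise one and that projectivity is uniform along $C$ via Lemma \ref{lem:moduliprojinfam}; both are needed to justify the Gysin specialization $[\mathcal{M}_b]^{\mathrm{vir}}=i_b^![\mathcal{M}/C]^{\mathrm{vir}}$ and the resulting conservation-of-number argument.
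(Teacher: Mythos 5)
Your overall architecture (relative moduli space, relative perfect obstruction theory, Gysin specialization $[\mathcal{M}_b]^{\mathrm{vir}}=i_b^![\mathcal{M}/C]^{\mathrm{vir}}$, conservation of number, and working with the untwisted Chern character $(r,0,-c_2)$ so that it extends as a flat section) is exactly the machinery the paper uses. But there is a genuine gap at the step you yourself flag as the main obstacle: you claim that, after an elementary transformation, de Jong's Theorem~\ref{thm:extending-azu-algs} produces a Brauer-Severi variety $\mathcal{Y}\to\mathcal{X}_C$ over the \emph{whole} curve $C$ realizing $\widetilde{w}_b$ in every fibre. The theorem does not provide this. Its proof (kill the obstruction space $H^2(\mathcal{X}_0,\mathcal{A}/\mathcal{O}_{\mathcal{X}_0})$ by an elementary transformation, then apply Grothendieck existence and Artin approximation) is intrinsically local: the output is an Azumaya algebra over an \emph{\'etale neighbourhood} $(U,0)\to(B,0)$ of the one chosen point, and nothing more. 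Restricting to a curve does not help: the \'etale neighbourhood of $b_0$ in $C$ is still small, and there is no mechanism for gluing the local models produced at different points of $C$ into a single family --- on overlaps they are only related through Brauer equivalence, can have different discrete invariants (e.g.\ different $c_2(\mathcal{A}_b)$), and an Azumaya algebra realizing $\widetilde{w}$ over all of $C$ need not exist. So your relative moduli space $\mathcal{M}\to C$ is simply not defined globally, and the conservation-of-number argument has nothing to run on.

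The repair is the paper's actual argument, and it needs one more use of Proposition~\ref{prop:indep} than you make. Cover $B$ by \'etale opens $U\to B$ on which Theorem~\ref{thm:extending-azu-algs} gives a model $\mathcal{Y}_U\to\mathcal{X}_U$; over each such $U$ run precisely your relative-moduli/Gysin argument (this part of your proposal is correct, and is Section~\ref{sec:obthy} together with \eqref{eqn:Gysinpull}) to conclude that $b\mapsto\mathsf{Z}_{(\mathcal{X}_b,\mathcal{H}_b),\widetilde{w}_b}^{\mathrm{PGL}_r,\mathsf{P}}(q)$ is constant on the (open) image of $U$. Then the crucial patching point: on the overlap of two such images, the two local models compute the same number, because Proposition~\ref{prop:indep} says the generating function depends only on $(\mathcal{X}_b,\mathcal{H}_b,\widetilde{w}_b)$ and not on the choice of Brauer-Severi variety representing $\widetilde{w}_b$. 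Hence the function is locally constant on $B$, and connectedness of $B$ finishes the proof; no global model is ever needed. (Your use of Proposition~\ref{prop:indep} --- only to absorb the elementary transformation at the central fibre --- is subsumed by this.) Incidentally, the reduction to a one-parameter family is then unnecessary, and as stated it quietly assumes $B$ is quasi-projective in order to find complete-intersection curves; but that is a side issue compared to the extension gap.
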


This leads us to the following $\mathrm{SL}_r$--$\mathrm{PGL}_r$ correspondence.
\begin{theorem} \label{thm:main2}
Let $\widetilde{w} \in H^0(B,R^2 f_* \mu_r)$ be a section. Suppose $r$ is prime and $\mathrm{gcd}(r,\widetilde{w}_b \mathcal{H}_b)=1$ for some (and hence all) closed points $b \in B$. Suppose for some closed point $0 \in B$, there exists a class $\beta \in H^{1,1}(\mathcal{X}_0)$ such that the following composition is surjective
\[
T_B|_{0} \stackrel{\mathrm{KS}_0}{\longrightarrow} H^1(\mathcal{X}_0, T_{\mathcal{X}_0}) \stackrel{\cup \beta}{\longrightarrow} H^{2}(\mathcal{X}_0,\mathcal{O}_{\mathcal{X}_0}),
\]
where the first arrow is the Kodaira-Spencer map and the second is cupping with $\beta$ followed by contraction. Then any complex analytic simply connected neighbourhood $U$ of $0$ contains a closed point $b \in U$ such that $\widetilde{w}_{b} \in H^2(\mathcal{X}_{b},\mu_r)$ has trivial Brauer class and
\[
\mathsf{Z}_{(\mathcal{X}_{0},\mathcal{H}_{0}),\widetilde{w}_{0}}^{\mathrm{PGL}_r, \mathsf{P}}(q) = \mathsf{Z}_{(\mathcal{X}_{b},\mathcal{H}_{b}),c_1}^{\mathrm{SL}_r, \mathsf{P}}(q),
\]
where $c_1 \in H^2(\mathcal{X}_{b},\mathbb{Z})$ is any (necessarily algebraic) representative of $\widetilde{w}_{b}$.
\end{theorem}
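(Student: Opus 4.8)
The plan is to combine the deformation invariance of Theorem~\ref{thm:main1} with an analysis of where the Brauer class $o(\widetilde w_b)$ degenerates to zero. First I would invoke Theorem~\ref{thm:main1}: since $B$ is connected, $\mathsf Z^{\mathrm{PGL}_r,\mathsf P}_{(\mathcal X_b,\mathcal H_b),\widetilde w_b}(q)$ is independent of $b$, so the left-hand side equals $\mathsf Z^{\mathrm{PGL}_r,\mathsf P}_{(\mathcal X_b,\mathcal H_b),\widetilde w_b}(q)$ for every $b\in U$. It therefore suffices to produce a single $b\in U$ at which $o(\widetilde w_b)=0$ and to identify the twisted invariant there with an $\mathrm{SL}_r$ invariant. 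For the latter, at a point where the Brauer class vanishes the chosen Brauer-Severi variety trivialises, the $Y$-sheaves (equivalently $\mathcal A$-modules) untwist to honest Gieseker-stable sheaves, and the moduli spaces are identified together with their perfect obstruction theories; this is Proposition~\ref{prop:untwist}. Since $H_1(\mathcal X_b,\mathbb Z)=0$, the N\'eron-Severi group equals the integral $(1,1)$ classes and, by Lefschetz $(1,1)$, every such class is a $c_1$; the vanishing of $o(\widetilde w_b)$ says exactly that $\widetilde w_b$ lifts to a class $c_1\in\mathrm{NS}(\mathcal X_b)$, yielding the desired $\mathrm{SL}_r$ partition function. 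Replacing $c_1$ by $c_1+rL$ corresponds to tensoring twisted sheaves by $L$, an isomorphism of moduli, which is why any algebraic representative may be used.

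The remaining, and genuinely geometric, task is to show that untwisting points accumulate at $0$, i.e.\ that every neighbourhood $U$ meets $S:=\{b : o(\widetilde w_b)=0\}$. I would reformulate this Hodge-theoretically. Over the simply connected $U$ the local system $R^2f_*\mathbb Z$ is trivial, so $\widetilde w$ lifts to a flat integral section $\lambda$ of $R^2 f_*\mathbb Z$ reducing to $\widetilde w_b$ modulo $r$. Using the exact sequence of $1\to\mu_r\to\mathbb G_m\to\mathbb G_m\to 1$ and $\mathrm{Pic}=\mathrm{NS}$, one has $o(\widetilde w_b)=0$ if and only if some integral lift $\lambda+r\mu$ (with $\mu\in H^2(\mathcal X_0,\mathbb Z)$ flat) is of type $(1,1)$ at $b$, equivalently $\lambda+r\mu$ pairs to zero against $H^{2,0}(\mathcal X_b)$ under the cup-product pairing, equivalently the projection $p_b(\lambda+r\mu)\in H^{0,2}(\mathcal X_b)=H^2(\mathcal X_b,\mathcal O_{\mathcal X_b})$ vanishes.

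The link to the hypothesis is the derivative of the period. For a flat class the Gauss-Manin connection is trivial, so differentiating the holomorphic section $b\mapsto p_b(\lambda)$ of the Hodge bundle and using Griffiths transversality gives, at $0$,
\[
d\big(p_\bullet(\lambda)\big)_0(v) \;=\; \mathrm{KS}_0(v)\cup \lambda^{1,1},\qquad v\in T_B|_0,
\]
the contraction $H^1(\mathcal X_0,T_{\mathcal X_0})\otimes H^1(\mathcal X_0,\Omega^1_{\mathcal X_0})\to H^2(\mathcal X_0,\mathcal O_{\mathcal X_0})$ evaluated against the $(1,1)$-part $\lambda^{1,1}$. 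Thus the hypothesis asserts precisely that, for a suitable $(1,1)$-class $\beta$, the section $b\mapsto p_b(\lambda)$ is submersive onto $H^{0,2}(\mathcal X_0)$ in the $\beta$-direction. Because surjectivity of $(\cup\beta)\circ\mathrm{KS}_0$ depends only on the line spanned by $\beta$ and defines a nonempty Zariski-open condition on $\beta\in H^{1,1}$, it holds for a real $(1,1)$-class and, at a sufficiently general nearby base point, for the $(1,1)$-part of a suitable integral lift $\lambda+r\mu$. I would then run the density-of-Noether-Lefschetz argument: at a very general $b_1\in U$ the projections $p_{b_1}(H^2(\mathcal X_0,\mathbb Z))$ are dense in $H^{0,2}(\mathcal X_{b_1})$, so within the fixed coset modulo $r$ one can make $p_{b_1}(\lambda+r\mu)$ small while keeping the $\beta$-direction admissible, and the submersion then lets the implicit function theorem solve $p_b(\lambda+r\mu)=0$ for some $b\in U$. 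This produces the required untwisting point in every $U$.

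The main obstacle is this last step, and specifically the constraint that the lift must lie in a \emph{fixed residue class modulo $r$}. Unlike the classical density argument, one cannot clear denominators and rescale a rational Hodge class, since scaling destroys the congruence $\lambda+r\mu\equiv\widetilde w_b$; the admissible representatives form a discrete coset rather than a dense set. The resolution I anticipate is to exploit that surjectivity of $(\cup\beta)\circ\mathrm{KS}_0$ is scale-invariant, so that large integral representatives retain an admissible $\beta$-direction, together with the density of $p_{b_1}(H^2(\mathcal X_0,\mathbb Z))$ at general $b_1$ to approximate the vanishing of $p_{b_1}$ within the coset; the submersion then upgrades this approximation to an exact solution at a nearby $b$. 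Checking that genericity of $b_1$, admissibility of the $\beta$-direction, and smallness of $p_{b_1}(\lambda+r\mu)$ can be arranged simultaneously is the delicate point, and is where the full strength of the surjectivity hypothesis is used.
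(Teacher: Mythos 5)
Your reduction follows the same skeleton as the paper's proof: deformation invariance (Theorem \ref{thm:main1}, i.e.\ Theorem \ref{thm:definv}) transports the invariant to any point $b$ where $o(\widetilde{w}_b)=0$, Proposition \ref{prop:untwist} untwists the moduli problem there, and the freedom in the representative $c_1$ is handled as you indicate. Your Hodge-theoretic reformulation of the remaining task, and the identification of the derivative of $b \mapsto p_b(\lambda)$ with $(\cup\,\lambda^{1,1})\circ\mathrm{KS}_0$ via Griffiths transversality, are also correct and match the setup of the paper's Proposition \ref{thm:deform-w-trivial-new} (which rests on \cite[Lem.~5.22]{VoiBookII}).

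However, the heart of the theorem is exactly the step you leave open, so there is a genuine gap. Your proposed resolution rests on two unestablished claims: (i) that for very general $b_1 \in U$ the image $p_{b_1}(H^2(\mathcal{X}_0,\mathbb{Z}))$ is dense in $H^{0,2}(\mathcal{X}_{b_1})$, and (ii) that genericity of $b_1$, admissibility of the $\beta$-direction for the actual integral lift, and smallness of $p_{b_1}(\lambda+r\mu)$ can be arranged simultaneously --- which you yourself flag as ``the delicate point.'' Claim (i) does not follow from the hypothesis as stated (the hypothesis concerns a single class $\beta$ at the single point $0$), and proving it would require an argument of Green's type, i.e.\ essentially the statement being proven. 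The paper circumvents the lattice-density issue entirely by a perturbation-plus-rescaling trick, and this is the concrete idea missing from your proposal. Since the surjectivity condition is Zariski open in $\beta$, one may take $\beta$ real; then the projection $\phi$ from the real $(1,1)$-bundle $\mathcal{H}^{1,1}_{\mathbb{R}}$ to $H^2(\mathcal{X}_0,\mathbb{R})$, via the flat trivialization $t$ over $U$, is a submersion, hence open, on a neighbourhood $W$ of $(0,\beta)$. Pick $(b',\gamma)\in W$ with $\pi(t(b',\gamma))$ rational, and choose $N\gg 0$ divisible enough that $N\pi(t(b',\gamma))$ is integral and $\pi(t(b',\gamma))+\xi/(rN)$ still lies in the (open) image of $\phi$. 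If $(b,\delta)$ is a preimage, then $\epsilon := rN\delta = rN\,\pi(t(b',\gamma)) + \xi$ is an \emph{integral} $(1,1)$-class on $\mathcal{X}_b$ congruent to $\xi$ modulo $rH^2$, so $\widetilde{w}_b$ has trivial Brauer class by the Kummer sequence \eqref{eqn:Kummer}. In other words, rather than trying to make a fixed-coset integral lift of type $(1,1)$ by approximation and an implicit function theorem, the paper scales the target: the summand $\xi/(rN)$ is inserted \emph{before} rescaling so that after multiplication by $rN$ it reappears with coefficient exactly $1$, which is what preserves the congruence. This single step replaces all of the density and simultaneity bookkeeping your sketch defers.
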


The condition of the theorem is mild --- it says that there exists \emph{at least one} $\beta \in H^{1,1}(\mathcal{X}_0)$ for which the Noether-Lefschetz locus is smooth of expected codimension \cite{VoiHL}. 
The same assumption is used by Green to show that the Hodge locus is dense \cite{VoiBookII}.
When $\beta$ is an effective algebraic class, this condition appears in the work of Thomas and the fourth-named author on reduced Gromov-Witten/stable pairs theory, and the enumerative geometry of curves in the linear system $|\beta|$ \cite{KT1, KT2}. 

We discuss examples for which the condition of the theorem is satisfied in Section \ref{sec:mainresult}. E.g.~when $X \subset \mathbb{P}^3$ is a smooth surface of degree $d \geq 4$, we consider $$\mathcal{X} \to B \subset |\mathcal{O}_{\mathbb{P}^3}(d)|$$ the smooth family of smooth degree $d$ surfaces. Then the locus of points $b \in B$ for which there exists a $\beta$ satisfying the condition of Theorem \ref{thm:main2} is dense by a result of Kim \cite{Kim_1991}. Hence, we can first use Theorem \ref{thm:main1} to deform to a point in $B$ where the condition of Theorem \ref{thm:main2} is satisfied, and then apply Theorem \ref{thm:main2} to express the $\mathrm{PGL}_r$ generating function for $X$ in terms of the $\mathrm{SL}_r$ generating function.

When $o(\widetilde{w}_0) \in \mathrm{Br}(\mathcal{X}_0)$ has order $r$ (so in particular $h^{2,0}(\mathcal{X}_0) > 0$), we note that $\mathsf{Z}_{(\mathcal{X}_{0},\mathcal{H}_{0}),\widetilde{w}_{0}}^{\mathrm{PGL}_r, \mathsf{P}}(q) = \mathsf{Z}_{\mathcal{X}_{0},\widetilde{w}_{0}}^{\mathrm{PGL}_r, \mathsf{P}}(q)$ does not depend on the polarization. In particular, it follows that $\mathsf{Z}_{(\mathcal{X}_{b},\mathcal{H}_{b}),c_1}^{\mathrm{SL}_r, \mathsf{P}}(q)$ does not depend on the polarization. It also follows that $\mathsf{Z}_{(\mathcal{X}_{b},\mathcal{H}_{b}),c_1}^{\mathrm{SL}_r, \mathsf{P}}(q)$ only depends on $c_1 \mod r H^2(\mathcal{X}_b,\mathbb{Z})$.

Theorem \ref{thm:main2} can be viewed as some kind of virtual $\mathrm{PGL}_r$--$\mathrm{SL}_r$ correspondence. For the comparison of the cohomology of the analogues of our moduli spaces on a smooth projective curve $C$, we refer to the work of Harder-Narasimhan \cite{HN}. Note however that $\mathrm{Br}(C) = 0$, so the complication is rather in dealing with torsion in $\mathrm{Pic}(C)$, which leads to questions of a different flavour. Note that we assume $H_1(X,\mathbb{Z}) = 0$ so $\mathrm{Pic}(X)$ is torsion free. It will be interesting to study analogues of Theorem \ref{thm:main2} when $H_1(X,\mathbb{Z}) \neq 0$. See \cite{Nes} for work in this direction when $X$ is the product of two curves. In the curve case, upgrade to Higgs moduli spaces is an active research area \cite{HT, GWZ, MS, HP}. In the surface case, upgrade to Higgs moduli spaces is the content of the $S$-duality conjecture. 

The intuitive idea behind the $\PGL_r$--$\SL_r$ correspondence of Theorem \ref{thm:main2} is as follows.\footnote{We thank one of the anonymous referees for this exposition.} We start with a holomorphic $\PGL_r$-bundle $Y$ on $X$ with Stiefel-Whitney class $w$. By our assumption $H^3(X,\mathbb{Z}) = 0$, it is of the form $\mathbb{P}(E)$ for a $C^\infty$ $\GL_r$-bundle $E$ on $X$ with first Chern class $\xi$ which is possibly not of Hodge type $(1,1)$. However, by the Hodge theoretic result in Proposition \ref{thm:deform-w-trivial-new} (which involves adding large $r$-multiples to $\xi$), we show that, under our assumption on the Noether-Lefschetz locus and after a small change of complex structure of $X$, one can arrange $E$ to be holomorphic and $\xi$ to be $(1,1)$. Therefore, if the original $\PGL_r$-bundle $Y$ was unobstructed, which can be arranged by de Jong's result (Theorem \ref{thm:extending-azu-algs}), then $Y = \mathbb{P}(E)$ with $E$ a holomorphic $\GL_r$-bundle in the new complex structure, and this $E$ is unique after fixing its determinant. Instead, we first completely settle the deformation invariance question in Theorem \ref{thm:main1} in the algebro-geometric category, and then use the (complex analytic) Hodge theory result in Proposition \ref{thm:deform-w-trivial-new} to find a complex structure where the $\PGL_r$ count becomes an $\SL_r$ count.

\subsection{Consequences}

\subsubsection{Application to Vafa-Witten theory} In \cite[Conj.~1.10]{GKL}, the fourth-named author and G\"ottsche-Laarakker conjecture a structure formula for the $\mathrm{SL}_r$ generating function of virtual Euler characteristics (Conjecture \ref{conj:GKL}). It appears this conjecture will be proved in a forthcoming work of Joyce as an application of his vertex algebra wall-crossing formula \cite{Joy}. Combined with Theorem \ref{thm:main2}, it leads to a structure formula for the  $\mathrm{PGL}_r$ generating function of virtual Euler characteristics as we now describe. Consider the normalized discriminant modular form
\begin{align*}
\overline{\Delta}(q) = \prod_{n=1}^{\infty} (1-q^n)^{24}.
\end{align*}
Denote by $\epsilon_r = \exp(2 \pi \sqrt{-1}/r)$ a primitive $r$th root of unity. 

Let $X$ be a smooth projective surface satisfying $H_1(X,\mathbb{Z}) = 0$ and $h^{2,0}(X)>0$. For an algebraic class $a \in H^2(X,\mathbb{Z})$, the linear system $|a|$ has a perfect obstruction theory and virtual class $|a|^{\mathrm{vir}}$ in degree $a(a-K_X) / 2$. If $|a|^{\mathrm{vir}} \neq 0$, then $a^2 = a K_X$ and $\mathrm{SW}(a) := \deg(|a|^{\mathrm{vir}})$ \cite[Prop.~6.3.1]{Moc}. This is the algebro-geometric definition of Seiberg-Witten invariants of $X$. A class $a \in H^2(X,\mathbb{Z})$ is called a Seiberg-Witten basic class when $\mathrm{SW}(a) \neq 0$.

\begin{corollary}
For any prime rank $r>1$, there exist\footnote{These universal functions only depend on $r$.} 
\[
D_0, \{D_{ij}\}_{1 \leq i \leq j \leq r-1} \in \mathbb{C}[\![q^{\frac{1}{2r}}]\!] 
\]
with the following property. Suppose $X = \mathcal{X}_0$ and $w = \widetilde{w}_0$ for a family $\mathcal{X} \to B$ satisfying the conditions of Theorem \ref{thm:main2} and $h^{2,0}(X)>0$. Fix any $\delta \in \mathbb{Z}$ such that $\delta \equiv -(r-1) w^2 - (r^2-1) \chi(\mathcal{O}_X) \mod 2r$. Then Conjecture \ref{conj:GKL} implies that the coefficient of $q^{\delta/2r}$ in $\mathsf{Z}_{(X,H),w}^{\mathrm{PGL}_r, \mathsf{Eu}}(q)$ equals the coefficient of $q^{\delta/2r}$ in
\begin{align*}
&r^{2+K_X^2 - \chi(\mathcal{O}_X)} \Bigg( \frac{1}{\overline{\Delta}(q^{\frac{1}{r}})^{\frac{1}{2}}} \Bigg)^{\chi(\mathcal{O}_X)} D_0^{K_X^2} \sum_{(a_1, \ldots, a_{r-1}) \in H^2(X,\mathbb{Z})^{r-1}}   \prod_{i} \epsilon_r^{i a_i w} \, \SW(a_i) \prod_{i \leq j} D_{ij}^{a_i a_j}.
\end{align*}
\end{corollary}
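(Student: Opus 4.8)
The plan is to combine the complex-analytic $\mathrm{PGL}_r$--$\mathrm{SL}_r$ correspondence of Theorem \ref{thm:main2} with the (conjectural) $\mathrm{SL}_r$ structure formula of Conjecture \ref{conj:GKL}, the only genuine work being the reconciliation of phases and of the $q$-grading. I would first specialise Theorem \ref{thm:main2} to the insertion $\mathsf{P} = \mathsf{Eu}$. Since $X = \mathcal{X}_0$, $w = \widetilde{w}_0$ and the family $\mathcal{X} \to B$ satisfies the hypotheses, the theorem yields a point $b$ in a simply connected neighbourhood $U \ni 0$ with $o(\widetilde{w}_b) = 0 \in \mathrm{Br}(\mathcal{X}_b)$ together with
\[
\mathsf{Z}_{(X,H),w}^{\mathrm{PGL}_r, \mathsf{Eu}}(q) = \mathsf{Z}_{(\mathcal{X}_b,\mathcal{H}_b),c_1}^{\mathrm{SL}_r, \mathsf{Eu}}(q),
\]
where $c_1 \in H^2(\mathcal{X}_b,\mathbb{Z})$ is an algebraic lift of $\widetilde{w}_b$. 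As $U$ is simply connected, the local system $R^2 f_* \mathbb{Z}$ is trivial there, so I identify $H^2(\mathcal{X}_b,\mathbb{Z}) \cong H^2(X,\mathbb{Z})$; under this identification $\widetilde{w}_b$ corresponds to $\widetilde{w}_0 = w$, and hence $c_1$ is an integral lift of $w$ with $c_1 \equiv w \pmod r$.

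Next I would apply Conjecture \ref{conj:GKL} to the right-hand side. Its hypotheses hold on $\mathcal{X}_b$: one has $h^{2,0}(\mathcal{X}_b) = h^{2,0}(X) > 0$ by deformation invariance, and $c_1 \mathcal{H}_b \equiv \widetilde{w}_b \mathcal{H}_b \pmod r$ is coprime to the prime $r$, so that $\gcd(r,c_1\mathcal{H}_b)=1$ and the Gieseker moduli spaces are projective with no strictly semistables. This presents $\mathsf{Z}_{(\mathcal{X}_b,\mathcal{H}_b),c_1}^{\mathrm{SL}_r, \mathsf{Eu}}(q)$ through the universal series $D_0, \{D_{ij}\}$, the modular factor $\overline{\Delta}(q^{1/r})^{-\chi(\mathcal{O})/2}$ and a sum over Seiberg-Witten basic classes of $\mathcal{X}_b$, with $c_1$ entering only through the phases $\epsilon_r^{i a_i c_1}$. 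I would then rewrite every ingredient in terms of $X$: the invariants $\chi(\mathcal{O})$ and $K^2$ are locally constant in the family, the Seiberg-Witten invariants $\SW(a)$ are deformation invariant because $b_2^+ = 2h^{2,0}+1 > 1$, and under the cohomology identification over $U$ the basic classes of $\mathcal{X}_b$ match those of $X$ with equal $\SW$-values. Thus the structure formula for $\mathcal{X}_b$ becomes the asserted expression in $\chi(\mathcal{O}_X)$, $K_X^2$, $\SW$ and the phases.

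It remains to match phases and gradings. Since $\epsilon_r$ has order $r$ and $c_1$ reduces to $w$ modulo $r$, the pairing $a_i c_1 \bmod r$ equals $a_i w \in \mathbb{Z}/r\mathbb{Z}$, whence $\epsilon_r^{i a_i c_1} = \epsilon_r^{i a_i w}$; this turns the conjecture's $c_1$-phases into the $w$-phases of the statement. For the grading, the equality furnished by Theorem \ref{thm:main2} is an identity of power series, so its two sides agree coefficient by coefficient: the $\mathrm{PGL}_r$ term indexed by the (rational, $B$-field twisted) class $c_2$ and the $\mathrm{SL}_r$ term indexed by the integral $c_2'$ with $2rc_2 = 2rc_2' - (r-1)c_1^2$ both lie in degree $q^{\delta/2r}$ with $\delta = \mathrm{vd}(r,0,c_2) = \mathrm{vd}(r,c_1,c_2')$, the common value being a deformation invariant. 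The exponents occurring on the $\mathrm{SL}_r$ side are exactly $\delta = 2rc_2' - (r-1)c_1^2 - (r^2-1)\chi(\mathcal{O}_X)$ for $c_2' \in \mathbb{Z}$, that is those with $\delta \equiv -(r-1)c_1^2 - (r^2-1)\chi(\mathcal{O}_X) \pmod{2r}$; moreover $(r-1)c_1^2 \bmod 2r$ does not depend on the chosen lift of $w$ (changing the lift by $rv$ alters $(r-1)c_1^2$ by a multiple of $2r$, using that $r(r-1)$ is even), so this congruence may be written as $\delta \equiv -(r-1)w^2 - (r^2-1)\chi(\mathcal{O}_X) \pmod{2r}$, matching the hypothesis on $\delta$. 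Extracting the coefficient of $q^{\delta/2r}$ from both sides then yields the claimed identity.

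I expect the main obstacle to be this grading reconciliation rather than any conceptual difficulty, as the corollary is a formal synthesis of two prior results. Concretely, one must verify that the rational $c_2$-shift built into the twisted $\mathrm{PGL}_r$ moduli via the $B$-field twist of Proposition \ref{prop:integrality} lines up exactly with the $(r-1)c_1^2/(2r)$ shift relating $\mathrm{vd}(r,0,c_2)$ to $\mathrm{vd}(r,c_1,c_2')$, so that the two power series genuinely share a common exponent set and the coefficient extraction is unambiguous. A secondary point needing care is that Theorem \ref{thm:main2} produces only a complex-analytic deformation; one must therefore invoke the deformation invariance of the algebro-geometric Seiberg-Witten invariants and of $\chi(\mathcal{O})$, $K^2$ (guaranteed by $h^{2,0}>0$ and by the triviality of the local system over the simply connected $U$) to transport the structure formula from $\mathcal{X}_b$ back to $X$.
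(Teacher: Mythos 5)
Your proposal is correct and follows essentially the same route as the paper: the corollary is deduced by specialising Theorem \ref{thm:main2} to $\mathsf{P} = \mathsf{Eu}$, applying Conjecture \ref{conj:GKL} on the deformed fibre $\mathcal{X}_b$ (where $\gcd(r,c_1\mathcal{H}_b)=1$ rules out strictly semistables), and transporting the result back via the triviality of the local system over the simply connected neighbourhood, so that $\chi(\mathcal{O})$, $K^2$, the Seiberg--Witten data, and the phases $\epsilon_r^{ia_ic_1}=\epsilon_r^{ia_iw}$ are all expressed in terms of $X$ and $w$. Your careful check that the congruence $\delta \equiv -(r-1)c_1^2-(r^2-1)\chi(\mathcal{O}_X) \bmod 2r$ is independent of the integral lift of $w$ (using that $r(r-1)$ is even) is exactly the grading reconciliation the paper leaves implicit.
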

Suppose, in this corollary, $X$ is moreover minimal of general type. Then its only Seiberg-Witten basic classes are $0, K_X$ with Seiberg-Witten invariants $1,(-1)^{\chi(\mathcal{O}_X)}$ \cite[Thm.~7.4.1]{Mor}. Therefore $\mathsf{Z}_{(X,H),w}^{\mathrm{PGL}_r, \mathsf{Eu}}(q)$ only depends on $$w K_X \mod r, \quad K_X^2, \quad \chi(\mathcal{O}_X).$$ 

The novel feature of this corollary is that it provides an interpretation of the formulae in \cite{GK1,GK3,GKL} in the case $w$ has non-trivial Brauer class, i.e., it \emph{cannot} be represented by an algebraic class. By equation \eqref{eqn:PSUVWgenfun}, this essentially determines the structure of the $\mathrm{PSU}(r)$ Vafa-Witten partition function of $X$ for prime rank $r$ and $w \in H^2(X,\mu_r)$ with non-trivial Brauer class. This reduces Vafa-Witten's enigmatic $S$-duality conjecture (mathematically formulated in \cite{JiangKool2021}) to a conjecture on the $\mathrm{SU}(r)$ side.

\subsubsection{Application to $c_2^{\mathrm{min}}$ of Azumaya algebras} Let $X$ be a smooth projective surface with $H_1(X,\mathbb{Z}) = 0$ and function field $\mathbb{C}(X)$. Let $D$ be a (central) division algebra over $\mathbb{C}(X)$ of degree $r > 1$ (equivalently, an element of $\mathrm{Br}(\mathbb{C}(X))$ of order $r$). We assume $D$ lies in the image of the inclusion $\mathrm{Br}(X) \hookrightarrow \mathrm{Br}(\mathbb{C}(X))$ (basic facts on the Brauer group are reviewed in Section \ref{sec:brauer-group}, see also \cite{Salt}). 

Artin-de Jong \cite{AdJ} introduce the $\mathbb{C}$-stack $\mathfrak{A}_{c_2}$, whose groupoid over a scheme $B$ consists of Azumaya algebras $\mathcal{A}$ on $X \times B$ such that for all closed points $b \in B$, the stalk of $\mathcal{A}|_b$ over the generic point of $X$ is isomorphic to $D$ and $c_2(\mathcal{A}|_b) = c_2 \in H^4(X,\mathbb{Z}) \cong \mathbb{Z}$. This stack is algebraic and of finite type. Its coarse moduli space $\overline{\mathfrak{A}}_{c_2}$ is an algebraic space of finite type \cite[Thm.~8.7.6]{AdJ}. If $\mathfrak{A}_{c_2}$ is non-empty (i.e.~has a $\mathbb{C}$-valued point), then $c_2 \geq 0$ \cite[Thm.~7.2.1]{AdJ}. It interesting to consider the minimal value $c_2 = c_2^{\mathrm{min}}$ for which $\mathfrak{A}_{c_2}$ is non-empty, because $\mathfrak{A}_{c_2^{\mathrm{min}}}$, $\overline{\mathfrak{A}}_{c_2^{\mathrm{min}}}$ are \emph{proper} \cite[Thm.~8.7.7]
{AdJ}. Artin-de Jong proved that \cite[Cor.~7.1.5, Thm.~7.2.1]{AdJ}
\[
\mathrm{max}\big\{r^2\chi(\mathcal{O}_X) - h^0(\omega_X^{\otimes r}) - 1,0 \big\} \leq c_2^{\mathrm{min}}.
\]
We show the following.
\begin{theorem} \label{thm:main3}
Suppose $X = \mathcal{X}_0$ for a family $\mathcal{X} \to B$ satisfying the conditions of Theorem \ref{thm:main2}, and $X$ is a minimal surface of general type satisfying $h^{2,0}(X)>0$. Let $D \in \mathrm{Br}(\mathbb{C}(X))$ be a degree $r$ division algebra in the image of $\mathrm{Br}(X) \hookrightarrow \mathrm{Br}(\mathbb{C}(X))$. Then, for $r=2$, we have
\[
c_2^{\mathrm{min}} \leq 3\chi(\mathcal{O}_X) + 1.
\]
Moreover, for $r=3$ and assuming G\"ottsche's conjecture \ref{conj:Gott} for $r=3$, we have
\[
c_2^{\mathrm{min}} \leq 8\chi(\mathcal{O}_X).
\]
\end{theorem}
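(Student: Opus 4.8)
The plan is to deduce the existence of an Azumaya algebra with small second Chern class from the non-vanishing of a single virtual Euler characteristic, which the $\PGL_r$--$\SL_r$ correspondence renders computable.

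\emph{Step 1 (from non-emptiness to an Azumaya algebra).} By the Kummer sequence the image of $o \colon H^2(X,\mu_r) \to \mathrm{Br}(X)$ is the $r$-torsion subgroup, so I may pick $w \in H^2(X,\mu_r)$ with $o(w) = [D]$ together with a degree $r$ Brauer-Severi variety $Y \to X$ with $w(Y) = w$. I claim that if $M_Y^H(r,0,c_2) \neq \emptyset$ then there is an Azumaya algebra $\mathcal{A}$ on $X$ with generic stalk $D$ and
\[
c_2(\mathcal{A}) \le 2r\,c_2 = \mathrm{vd}(r,0,c_2) + (r^2-1)\chi(\mathcal{O}_X).
\]
Indeed, a stable twisted sheaf $E \in M_Y^H(r,0,c_2)$ is torsion free, so its reflexive hull $E^{\vee\vee}$ is a locally free twisted sheaf with $c_1(E^{\vee\vee}) = 0$ and $c_2(E^{\vee\vee}) \le c_2$. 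Then $\mathcal{A} := \mathcal{E}nd(E^{\vee\vee})$ is Azumaya (the twists cancel), its Brauer class is $o(w) = [D]$, and since $[D]$ has order $r$ its generic stalk is the unique degree $r$ central simple algebra in that class, namely $D$. The identity $c_2(\mathcal{E}nd F) = 2r\,c_2(F) - (r-1)c_1(F)^2$ for a rank $r$ bundle $F$, applied to $F = E^{\vee\vee}$, yields the displayed bound. Thus it suffices to find a $c_2$ with $M_Y^H(r,0,c_2) \neq \emptyset$ and $\mathrm{vd}(r,0,c_2) + (r^2-1)\chi(\mathcal{O}_X)$ at most the claimed value.

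\emph{Step 2 (making the invariant explicit).} The coefficient of $q^{\mathrm{vd}(r,0,c_2)/2r}$ in $\mathsf{Z}^{\PGL_r,\mathsf{Eu}}_{X,w}(q)$ is exactly the virtual Euler characteristic of $M_Y^H(r,0,c_2)$; if it is non-zero then $[M_Y^H(r,0,c_2)]^{\mathrm{vir}} \neq 0$, hence $M_Y^H(r,0,c_2) \neq \emptyset$. Since $X = \mathcal{X}_0$ with $\mathcal{X} \to B$ satisfying the hypotheses of Theorem \ref{thm:main2}, the Corollary above expresses this coefficient through the $\SL_r$ structure formula --- unconditionally for $r = 2$, where the rank $2$ case of Conjecture \ref{conj:GKL} is known, and for $r = 3$ under G\"ottsche's Conjecture \ref{conj:Gott}. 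Minimality of general type forces the only Seiberg-Witten basic classes to be $0$ and $K_X$, with $\SW(0) = 1$ and $\SW(K_X) = (-1)^{\chi(\mathcal{O}_X)}$, so the sum over $(a_1,\dots,a_{r-1})$ collapses to a finite sum over $\{0,K_X\}^{r-1}$ of products of the universal series $D_0, D_{ij}$ with the factor $\overline{\Delta}(q^{1/r})^{-\chi(\mathcal{O}_X)/2}$. For $r=2$ this is
\[
r^{\,2+K_X^2-\chi(\mathcal{O}_X)}\,\overline{\Delta}(q^{1/2})^{-\chi(\mathcal{O}_X)/2}\,D_0^{K_X^2}\Big(1 + (-1)^{wK_X + \chi(\mathcal{O}_X)}\,D_{11}^{K_X^2}\Big).
\]

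\emph{Step 3 (locating a non-vanishing coefficient).} The admissible exponents form the single residue class $\mathrm{vd}(r,0,c_2) \equiv -(r-1)w^2 - (r^2-1)\chi(\mathcal{O}_X) \pmod{2r}$, which one checks contains the target values $c_2(\mathcal{A}) = 3\chi(\mathcal{O}_X)+1$ for $r=2$ and $c_2(\mathcal{A}) = 8\chi(\mathcal{O}_X)$ for $r=3$, i.e.\ $\mathrm{vd} = 1$ and $\mathrm{vd} = 0$ respectively. Expanding the explicit series in powers of $q^{1/2r}$ from the known low-order terms of $D_0, D_{ij}$ and of $\overline{\Delta}^{-1/2}$, I would verify that the coefficient at this target exponent is non-zero; by Step 1 this produces an Azumaya algebra realizing the asserted bound. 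The values are consistent with the Artin--de Jong lower bound $c_2^{\min} \ge 3\chi(\mathcal{O}_X) - K_X^2 - 1$ in the $r=2$ case.

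\emph{Main obstacle.} The difficulty is concentrated in Step 3: one must prove non-vanishing rather than merely write down a candidate coefficient. This requires (i) controlling enough low-order terms of the universal functions $D_0, D_{ij}$ to identify the relevant coefficient, and (ii) ruling out cancellation among the $2^{r-1}$ contributions indexed by $\{0,K_X\}^{r-1}$, whose relative signs are dictated by the phases $\epsilon_r^{i a_i w}$ and hence by $w K_X \bmod r$. It is precisely the possibility of such cancellation for some values of $w K_X \bmod r$ that forces passing to a slightly higher coefficient, explaining the ``$+1$'' in the $r=2$ bound and the exact constant for $r=3$; and it is at this explicit computation that G\"ottsche's Conjecture \ref{conj:Gott} enters for $r=3$, being needed to determine the $\SL_3$ series and thus the sign and size of its relevant coefficient.
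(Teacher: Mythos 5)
Your overall skeleton (non-vanishing virtual invariant $\Rightarrow$ non-empty twisted moduli space $\Rightarrow$ Azumaya algebra with generic stalk $D$ and $c_2 \le \delta + (r^2-1)\chi(\mathcal{O}_X)$ via the Hoffmann--Stuhler endomorphism construction) is exactly the paper's strategy, and your Step 1 is a correct, if more hands-on, rendering of Corollary \ref{cor:HS}. The genuine gap is in your choice of invariant in Steps 2--3. You detect non-emptiness through virtual Euler characteristics and the structure formula of Conjecture \ref{conj:GKL}, asserting that this conjecture ``is known'' in rank $2$. It is not: the paper only says it appears it will be proved in forthcoming work of Joyce, and the Corollary in the introduction is explicitly conditional on it. So your argument makes the $r=2$ bound conditional, whereas Theorem \ref{thm:main3} asserts it unconditionally. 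Worse, even granting Conjecture \ref{conj:GKL}, that conjecture is a pure universality statement --- it asserts the \emph{existence} of series $D_0, D_{ij}$ but does not determine them; their closed forms are separate conjectures in \cite{GK1,GK3,GKL}. Hence there are no ``known low-order terms of $D_0, D_{ij}$'' from which to expand, and your Step 3, which you yourself flag as the main obstacle, cannot be carried out: no coefficient of $\mathsf{Z}^{\mathrm{PGL}_r,\mathsf{Eu}}_{X,w}(q)$ is computable from the stated hypotheses, so non-vanishing cannot be verified and the possibility of cancellation among the $2^{r-1}$ Seiberg--Witten contributions cannot be ruled out. Finally, for $r=3$ your conditional hypothesis does not match the theorem's: you assume Conjecture \ref{conj:GKL}, while the theorem assumes G\"ottsche's Conjecture \ref{conj:Gott}, which concerns a different set of invariants.

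The paper sidesteps all of this by running the same strategy with \emph{Donaldson invariants} instead of virtual Euler characteristics: it uses the generating function $\mathsf{Z}^{\mathrm{PGL}_r,\mathsf{D}}_{(X,H),w}(z)$ of $\mu$-insertions, specialized at $u=0$ and $L=K_X$. For $r=2$ the relevant structure formula is Witten's conjecture, which is a \emph{theorem} of G\"ottsche--Nakajima--Yoshioka (Theorem \ref{thm:GNY}) --- fully explicit and unconditional --- and for $r=3$ it is precisely Conjecture \ref{conj:Gott}, whose constants $\beta_{ij}$ and $B$ are explicit trigonometric numbers. Combined with the $\mathrm{PGL}_r$--$\mathrm{SL}_r$ correspondence (Corollary \ref{cor:conseqLGconj}) and the collapse of the Seiberg--Witten sum to $\{0,K_X\}^{r-1}$ for minimal general type, the leading coefficients can then be evaluated in closed form (Example \ref{ex:rk2and3}): for $r=2$ one gets $2^{2-\chi(\mathcal{O}_X)+K_X^2}\cdot 2$ in degree $z^0$ or $2^{2-\chi(\mathcal{O}_X)+K_X^2}\cdot 2K_X^2$ in degree $z^1$ depending on the parity of $wK_X + \chi(\mathcal{O}_X)$ (whence the ``$+1$''), and for $r=3$ one gets $3^{2-\chi(\mathcal{O}_X)+K_X^2}\bigl(2^{1+K_X^2}\pm 2\bigr)$ or $3^{2-\chi(\mathcal{O}_X)+K_X^2}\bigl(2^{1+K_X^2}\pm 1\bigr)$ in degree $z^0$; all are visibly positive since $K_X^2 \ge 1$. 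This explicit positivity is exactly the non-vanishing your Step 3 needs but cannot supply. To repair your proof, replace the Euler-characteristic series and Conjecture \ref{conj:GKL} by the Donaldson series together with Theorem \ref{thm:GNY} (for $r=2$) and Conjecture \ref{conj:Gott} (for $r=3$).
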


For minimal surfaces $X$ of general type, we have plurigenera $h^0(\omega_X^{\otimes r}) = \frac{1}{2}r(r-1)K_X^2+\chi(\mathcal{O}_X)$ \cite{Bom}. For $r=2$ and $p_g(X) = K_X^2=1$ (which implies $H_1(X,\mathbb{Z}) = 0$ \cite{Bom}), this leaves very little room: $c_2^{\mathrm{min}} \in \{4,5,6,7\}$. 
%A minimal smooth projective surface with $p_g(X) > 0$ and $K_X^2 > 0$, is automatically general type. Then its plurigenera are  $h^0(\omega_X^{\otimes r}) = \frac{1}{2}r(r-1)K_X^2+\chi(\mathcal{O}_X)$ for $r>1$ (Bombieri). For $p_g(X) = K_X^2 = 1$, automatically $q=0$.
%In general: for a smooth projective variety $X$, $\mathrm{Pic}(X)$ is torsion free if and only if $H_1(X,\mathbb{Z}) = 0$. For $p_g(X) = K_X^2 = 1$, automatically $q=0$ and there is no torsion (Bombieri).

The $r=2$ case of Theorem \ref{thm:main3} is proved by combining Theorem \ref{thm:main2} with Witten's conjecture and a result of Hoffmann-Stuhler (Proposition \ref{prop:HS}). Witten's conjecture, proved by G\"ottsche-Nakajima-Yoshioka \cite{GNY3} in the algebro-geometric setup, expresses Donaldson invariants in terms of Seiberg-Witten invariants (Theorem \ref{thm:GNY}). The $r=3$ case follows similarly, but instead using the higher rank generalization of Witten's conjecture by Mari\~{n}o-Moore \cite{MM} (see also \cite{LM}), or rather its explicit form in the algebro-geometric setup due to G\"ottsche \cite{Gott}. 

This is an illustration of a general strategy described in detail in Section \ref{sec:minc2}. Suppose $X = \mathcal{X}_0$ for a family $\mathcal{X} \to B$ satisfying the conditions of Theorem \ref{thm:main2}: 
\begin{itemize}
\item Show that some virtual intersection number $\int_{[M]^{\mathrm{vir}}} \mathsf{P}$, for $M$ some moduli space of twisted sheaves on $X$ of virtual dimension $\delta$, is non-zero.
\item Then there exists an Azumaya algebra $\mathcal{A}$ on $X$ with $\mathcal{A}_{\eta} \cong D$ satisfying $c_2(\mathcal{A}) \leq \delta + (r^2-1)\chi(\mathcal{O}_X)$, where $\eta$ is the generic point of $X$.
\end{itemize}

On the other hand, non-emptiness of $\mathfrak{A}_{c_2}$ for \emph{large} $c_2$ was established by Lieblich (see Remark \ref{rem:exlargec2}). \\

\noindent \textbf{Notation and conventions.} In this paper, $X$ always denotes a smooth projective variety over $\mathbb{C}$. If $E$ is a locally free sheaf on any scheme, then $\mathbb{P}(E)$ is defined as $\Proj \Sym^\bullet E$, which is the moduli space of quotients $E \twoheadrightarrow L$ with $L$ an invertible sheaf. In this paper, we will use both singular cohomology, with respect to the complex analytic topology, and \'etale cohomology. By convention, $H^i(X, A)$ will denote singular cohomology when $A = \mathbb{Z}$, $\mathbb{Q}$, $\mathbb{R}$, $\mathbb{C}$ or $\mathbb{Z}/r\mathbb{Z}$. On the other hand, the groups $A = \mu_r$, $\mathbb{G}_m$, $\mathrm{SL}_r$, $\mathrm{GL}_r$, $\mathrm{PGL}_r$ can be viewed as sheaves of groups $\mu_r(\mathcal{O}_X)$, $\mathcal{O}_X^{\times}$, $\mathrm{SL}_r(\mathcal{O}_X)$, $\mathrm{GL}_r(\mathcal{O}_X)$, $\mathrm{PGL}_r(\mathcal{O}_X)$ in the \'etale topology and we denote the corresponding \'etale cohomology groups by $H^i(X,A)$. \\ 

\noindent \textbf{Acknowledgments.} The authors express their gratitude to Y.~Bae, P.~Brosnan, H.~Movasati, G.~Oberdieck, H.~Park, C.A.M.~Peters, F.~Reede, R.P.~Thomas, and Q.~Yin for useful discussions. M.K.~would like to provide special thanks to L.~G\"ottsche, since this paper grew out of an attempt to find a geometric motivation for \cite[Rem.~1.9]{GK3} (see also \cite[Sect.~4.3]{GKL}) in their collaboration. D.vB.~and M.K.~are supported by NWO grant VI.Vidi.192.012. M.K.~is also supported by ERC Consolidator Grant FourSurf 101087365. A.G.~and Y.J.~are supported by Simons Foundation Collaboration Grants for Mathematicians and  Y.J.~is also supported by a KU Research GO grant from the University of Kansas.

%Yuuji: twisted Hitchin-Kobayashi correspondences: 
%twisted vector bundle case: https://arxiv.org/abs/0907.4920 
%https://arxiv.org/abs/1910.01867 
%twisted Higgs bundle case:
%https://arxiv.org/abs/1008.2034 

\section{Models of twisted sheaves} \label{sec:twisted-sheaves-and-models}

\subsection{Brauer-Severi varieties, Azumaya algebras, and gerbes} \label{sec:models}

In this section, we recall the notions of Brauer-Severi variety, Azumaya algebra, and gerbe, and gather various known facts about them. Let $X$ be a smooth projective variety over $\mathbb{C}$.  
\begin{definition}
    A \emph{Brauer-Severi variety of degree $r$ over $X$} is a scheme $Y \to X$ for which there exists an \'etale cover $\{U_i\}$ of $X$ on which $Y \to X$ is isomorphic to the projection $\mathbb{P}^{r-1}_{U_i} := \mathbb{P}^{r-1} \times U_i \to U_i$.
    An \emph{Azumaya algebra of degree $r$ on $X$} is a coherent sheaf of $\mathcal{O}_X$-algebras $\mathcal{A}$ for which there exists an \'etale cover $\{U_i\}$ of $X$ on which $\mathcal{A}$ is isomorphic to the matrix algebra $M_r(\mathcal{O}_{U_i})$.
\end{definition}

We will recall below that the data of a Brauer-Severi variety over $X$ is equivalent to the data of an Azumaya algebra on $X$. We first recall the following version of the Skolem-Noether theorem \cite[Prop. IV.2.3]{Milne1980}.

\begin{theorem}[Skolem-Noether] \label{thm:skolem-noether}
    For any Azumaya algebra $\mathcal{A}$ on $X$ and any automorphism $\phi \in \Aut(\mathcal{A})$, there is a covering of $X$ by Zariski open subsets $\{U_i\}$ such that $\phi|_{U_i}$ is of the form $a \mapsto u a u^{-1}$ for some $u \in \Gamma(U_i,\mathcal{A})^{\times}$ for all $i$.
\end{theorem}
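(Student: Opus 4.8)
The plan is to give the standard bimodule (Morita) proof, reducing the existence of a local conjugator to the essential uniqueness of modules over a matrix algebra. Throughout I use the defining property of an Azumaya algebra $\mathcal{A}$ of degree $r$: it is a locally free $\mathcal{O}_X$-module of rank $r^2$, and the enveloping homomorphism
\[
\mathcal{A} \otimes_{\mathcal{O}_X} \mathcal{A}^{\mathrm{op}} \longrightarrow \mathcal{E}{\it{nd}}_{\mathcal{O}_X}(\mathcal{A}), \qquad a \otimes b \longmapsto (x \mapsto a x b),
\]
is an isomorphism of sheaves of $\mathcal{O}_X$-algebras. This holds globally, and lets me regard any $\mathcal{O}_X$-central $\mathcal{A}$-bimodule as a sheaf of modules over the Azumaya algebra $\mathcal{E}{\it{nd}}_{\mathcal{O}_X}(\mathcal{A})$.

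First I introduce two bimodule structures on the underlying sheaf $\mathcal{A}$. The first, $\mathcal{M} := \mathcal{A}$, uses ordinary left and right multiplication; the second, $\mathcal{M}_\phi$, uses left multiplication together with right multiplication twisted by $\phi$, i.e.\ $(a \otimes b) \cdot x := a\, x\, \phi(b)$. Because $\phi$ is an $\mathcal{O}_X$-algebra homomorphism these are genuine $\mathcal{A} \otimes_{\mathcal{O}_X} \mathcal{A}^{\mathrm{op}}$-modules, hence modules over $\mathcal{E}{\it{nd}}_{\mathcal{O}_X}(\mathcal{A})$ via the enveloping isomorphism, and as $\mathcal{O}_X$-modules both are just $\mathcal{A}$, so both are locally free of rank $r^2$.

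Next I show $\mathcal{M}$ and $\mathcal{M}_\phi$ are Zariski-locally isomorphic as $\mathcal{E}{\it{nd}}_{\mathcal{O}_X}(\mathcal{A})$-modules. On any affine open $U$ where $\mathcal{A}$ is free of rank $r^2$ we have $\mathcal{E}{\it{nd}}_{\mathcal{O}_X}(\mathcal{A})|_U \cong M_{r^2}(\mathcal{O}_U)$, and Morita theory identifies $M_{r^2}(\mathcal{O}_U)$-modules with $\mathcal{O}_U$-modules by compressing with a rank-one idempotent, under which a module of $\mathcal{O}_U$-rank $r^2$ corresponds to a line bundle. Shrinking $U$ so that these line bundles trivialise, both $\mathcal{M}|_U$ and $\mathcal{M}_\phi|_U$ become isomorphic to the standard module $\mathcal{O}_U^{\oplus r^2}$. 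Hence, after passing to a suitable Zariski cover $\{U_i\}$, there are isomorphisms $\psi_i : \mathcal{M}|_{U_i} \to \mathcal{M}_\phi|_{U_i}$ of $\mathcal{A} \otimes_{\mathcal{O}_X} \mathcal{A}^{\mathrm{op}}$-modules.

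Finally I extract the conjugator. The bimodule condition on $\psi := \psi_i$ reads $\psi(a x b) = a\, \psi(x)\, \phi(b)$ for all local sections $a,b,x$. Putting $u := \psi(1)$ and taking $x = b = 1$ gives $\psi(a) = a u$, while $a = x = 1$ gives $\psi(b) = u\, \phi(b)$; comparing the two yields $a u = u\, \phi(a)$, i.e.\ $\phi(a) = u^{-1} a u$, for all sections $a$ over $U_i$. Since $\psi$ is an isomorphism equal to right multiplication by $u$, and $\mathcal{A}$ is module-finite over $\mathcal{O}_X$ (hence Dedekind finite), the section $u$ is a unit, so $\phi|_{U_i}$ is conjugation by $u^{-1} \in \Gamma(U_i,\mathcal{A})^{\times}$, which is the assertion (after relabelling $u^{-1}$ as the stated $u$). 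The main obstacle is the middle step: the enveloping isomorphism and Morita theory must be combined to guarantee that the two twisted bimodule structures become isomorphic after only a \emph{Zariski} refinement --- this works because local freeness of $\mathcal{A}$ and triviality of line bundles are both Zariski-local, even though $\mathcal{A}$ itself is only \'etale-locally a matrix algebra.
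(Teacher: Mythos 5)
Your proof is correct: the two twisted bimodule structures on $\mathcal{A}$, the Morita compression over $M_{r^2}(\mathcal{O}_U)$ to a line bundle (trivialised after a further Zariski shrinking), and the extraction of the conjugator $u=\psi(1)$ from the identity $\psi(axb)=a\,\psi(x)\,\phi(b)$ --- with invertibility of $u$ following from Dedekind finiteness, or even more directly from $R_u\circ R_v=R_{vu}=\mathrm{id}$ --- are all sound, and the argument genuinely produces a \emph{Zariski} cover as required. The paper does not prove this statement itself but recalls it with a citation to Milne's \emph{\'Etale Cohomology}, Prop.~IV.2.3, and your argument is essentially the standard proof given there, so the two approaches coincide.
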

%See also \cite[Tag 074P]{stacks-project}.

\begin{corollary} \label{cor:aut-mrox}
    We have an isomorphism of sheaves ${\mathcal{A}\it{ut}}(M_r(\mathcal{O}_X)) \cong \PGL_r$ in the \'etale topology.
\end{corollary}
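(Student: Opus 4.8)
The plan is to establish an isomorphism of étale sheaves of groups
\[
\mathcal{A}\it{ut}(M_r(\mathcal{O}_X)) \cong \PGL_r
\]
by exhibiting a natural morphism and checking it is an isomorphism on stalks (equivalently, on sufficiently small étale neighbourhoods). First I would recall that $\PGL_r$ acts on $M_r(\mathcal{O}_X)$ by conjugation: for a section $u \in \GL_r(\mathcal{O}_U)$ over an étale open $U$, the map $a \mapsto u a u^{-1}$ is an $\mathcal{O}_U$-algebra automorphism, and it is unchanged if $u$ is scaled by a unit in $\mathcal{O}_U^\times$. This gives a well-defined morphism of sheaves $\PGL_r \to \mathcal{A}\it{ut}(M_r(\mathcal{O}_X))$, since by definition $\PGL_r = \GL_r / \mathbb{G}_m$ as an étale sheaf. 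The content of the corollary is that this map is an isomorphism.

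For injectivity, I would argue that if $u a u^{-1} = a$ for all $a \in M_r(\mathcal{O}_U)$, then $u$ is central in the matrix algebra, hence a scalar matrix; the scalars are exactly the image of $\mathbb{G}_m$, so $u$ is trivial in $\PGL_r$. This is just the statement that the centre of $M_r$ is $\mathbb{G}_m$, which can be checked fibrewise or by a direct computation with elementary matrices (requiring that the matrix entries behave well, i.e. that we work over a local or even just reduced base, though étale-locally the stalks are local rings so this is automatic). For surjectivity, I would invoke the Skolem--Noether theorem, Theorem \ref{thm:skolem-noether}, applied to the specific Azumaya algebra $\mathcal{A} = M_r(\mathcal{O}_X)$: it guarantees that, Zariski-locally (and hence étale-locally), any automorphism $\phi$ of $M_r(\mathcal{O}_X)$ is inner, i.e. of the form $a \mapsto u a u^{-1}$. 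This shows the morphism $\PGL_r \to \mathcal{A}\it{ut}(M_r(\mathcal{O}_X))$ is an epimorphism of étale sheaves.

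Combining the two directions yields that the morphism is both a monomorphism and an epimorphism of sheaves in the étale topology, hence an isomorphism. I would take care to present this as an identity of sheaves rather than merely of sections over a fixed open, since surjectivity of sheaf morphisms is a local (stalkwise) condition and Skolem--Noether only provides local inner-ness — which is exactly what is needed.

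The main obstacle, and the only non-formal input, is surjectivity: it is not true that \emph{every} automorphism of $M_r$ over a given base is globally inner, so the argument genuinely relies on the local statement supplied by Skolem--Noether. Everything else (well-definedness of conjugation, triviality of the kernel via centrality of scalars) is a routine verification. Since the paper has just stated Skolem--Noether in precisely the form ``there is a covering by Zariski opens on which $\phi$ is inner,'' the corollary is essentially an immediate reformulation, and I would expect the proof to be only a few lines.
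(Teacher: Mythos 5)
Your proposal is correct and follows essentially the same route as the paper: the paper exhibits $\GL_r \to {\mathcal{A}\it{ut}}(M_r(\mathcal{O}_X))$ as a surjection of \'etale sheaves via Skolem--Noether and identifies its kernel as $\mathbb{G}_m$, which is exactly your injectivity-plus-surjectivity argument for the induced map $\PGL_r \to {\mathcal{A}\it{ut}}(M_r(\mathcal{O}_X))$ packaged as a quotient statement. The only cosmetic difference is that the paper starts from $\GL_r$ and quotients, while you start from the quotient sheaf $\PGL_r$ and check both directions; the mathematical content (Skolem--Noether for local surjectivity, centrality of scalars for the kernel) is identical.
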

\begin{proof}
    The Skolem-Noether theorem implies that every automorphism of $M_r(\mathcal{O}_X)$ is locally given by conjugation by an invertible matrix, so that $\GL_r \to {\mathcal{A}\it{ut}}(M_r(\mathcal{O}_X))$ is a surjection of sheaves. The kernel of this surjection is $\mathbb{G}_m$ from which the result follows.
\end{proof}
    
The facts in the following lemma are all proved in \cite[Sect.~1.1]{Yoshioka2006}.
\begin{lemma} \label{lem:sheaf-G}
    If $\pi : Y \to X$ is a Brauer-Severi variety with relative tangent sheaf $T_{Y/X}$, then there is a unique (up to scaling) sheaf $G$ fitting in a non-trivial short exact sequence
    \begin{equation}
        0 \to \mathcal{O}_Y \to G \to T_{Y/X} \to 0.
    \end{equation}
Furthermore, the sheaf $G$ satisfies the properties $R \pi_*(G^\vee) = 0$ and the canonical map $\pi^* \pi_* (\mathcal{E}{\it{nd}}(G^\vee)) \to \mathcal{E}{\it{nd}}(G^\vee)$ is an isomorphism, where $\mathcal{E}{\it{nd}}(G^\vee) := G^\vee \otimes G$. 
\end{lemma}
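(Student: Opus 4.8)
The plan is to reduce every assertion to the \'etale-local model $Y = \mathbb{P}^{r-1}_U$, where the objects in play become the Euler sequence and the standard cohomology of projective space, and to handle the one genuinely global point---the uniqueness of $G$---through a relative cohomology computation via the Leray spectral sequence.

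First I would settle existence and uniqueness of $G$. Extensions $0 \to \mathcal{O}_Y \to G \to T_{Y/X} \to 0$ are classified by $\operatorname{Ext}^1_Y(T_{Y/X}, \mathcal{O}_Y)$, and since $T_{Y/X}$ is locally free this group equals $H^1(Y, \Omega_{Y/X})$. I would compute the latter from the Leray spectral sequence for $\pi$. The derived pushforwards $R^q\pi_*\Omega_{Y/X}$ are read off fibrewise by cohomology and base change: on each fibre $\mathbb{P}^{r-1}$ one has $H^0(\Omega) = 0$, $H^1(\Omega) = \mathbb{C}$, and $H^q(\Omega) = 0$ for $q \geq 2$, so $R^0\pi_*\Omega_{Y/X} = 0$, $R^{\geq 2}\pi_*\Omega_{Y/X} = 0$, and $R^1\pi_*\Omega_{Y/X}$ is a line bundle. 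The relative first Chern class of $T_{Y/X}$ (the trace of its relative Atiyah class) provides a canonical global section restricting on each fibre to $r$ times the generator of $H^1(\mathbb{P}^{r-1}, \Omega)$; as $r \neq 0$ this section is nowhere vanishing and trivialises $R^1\pi_*\Omega_{Y/X} \cong \mathcal{O}_X$. The spectral sequence then collapses to $H^1(Y,\Omega_{Y/X}) \cong H^0(X, \mathcal{O}_X) = \mathbb{C}$, a one-dimensional space (recall $X$ is connected). Hence the nontrivial extension class is unique up to scaling, and $G$ is its middle term.

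Next I would record that this class restricts to a generator of $H^1(\mathbb{P}^{r-1}, \Omega)$ on every fibre, since the formation of $R^1\pi_*\Omega_{Y/X}$ commutes with base change and the class generates it. Thus $G|_{Y_x}$ is the nontrivial extension of $T_{\mathbb{P}^{r-1}}$ by $\mathcal{O}$, which by the Euler sequence is $\mathcal{O}(1)^{\oplus r}$; in particular $G$ is locally free and $G^\vee|_{Y_x} \cong \mathcal{O}(-1)^{\oplus r}$. For the vanishing $R\pi_* G^\vee = 0$ I would then invoke cohomology and base change directly: since $H^q(\mathbb{P}^{r-1}, \mathcal{O}(-1)) = 0$ for all $q$, the fibrewise cohomology of $G^\vee$ vanishes identically, forcing $R^q\pi_* G^\vee = 0$ for every $q$. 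Equivalently, one may dualise the defining sequence to $0 \to \Omega_{Y/X} \to G^\vee \to \mathcal{O}_Y \to 0$ and apply $R\pi_*$; the connecting map $R^0\pi_*\mathcal{O}_Y = \mathcal{O}_X \to R^1\pi_*\Omega_{Y/X} = \mathcal{O}_X$ is multiplication by the nonzero extension class, hence an isomorphism, which kills all $R^q\pi_* G^\vee$.

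Finally, for the map $\pi^*\pi_*\mathcal{E}\mathit{nd}(G^\vee) \to \mathcal{E}\mathit{nd}(G^\vee)$---the counit of the $(\pi^*, \pi_*)$ adjunction---I would observe that being an isomorphism is \'etale-local on $X$ and that both sides commute with \'etale base change. \'Etale locally $Y = \mathbb{P}^{r-1}_U$ and $G^\vee = \mathcal{O}(-1)^{\oplus r}$, so $\mathcal{E}\mathit{nd}(G^\vee) \cong M_r(\mathcal{O}_Y) = \pi^* M_r(\mathcal{O}_U)$ is pulled back from the base; for such a sheaf the projection formula together with $\pi_*\mathcal{O}_Y = \mathcal{O}_U$ gives $\pi_*\pi^* M_r(\mathcal{O}_U) = M_r(\mathcal{O}_U)$ and identifies the counit with the identity. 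I expect the main obstacle to be the bookkeeping in the first step: establishing that $R^1\pi_*\Omega_{Y/X}$ is canonically trivial and that the chosen extension class restricts to a generator on every fibre---that is, the correct use of cohomology and base change for a Brauer--Severi fibration that need not carry a global relative $\mathcal{O}(1)$. Once that is pinned down, the remaining steps are routine \'etale-local verifications on $\mathbb{P}^{r-1}$.
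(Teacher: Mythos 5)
Your proof is correct. Note that the paper itself does not prove this lemma at all --- it simply cites Yoshioka (Sect.~1.1 of \emph{Moduli of twisted sheaves on a projective variety}), so the right comparison is with that source. Your route is essentially the standard one that Yoshioka follows: identify $\operatorname{Ext}^1_Y(T_{Y/X},\mathcal{O}_Y)$ with $H^1(Y,\Omega_{Y/X})$, kill the $E_2^{1,0}$-term of Leray using $\pi_*\Omega_{Y/X}=0$, reduce everything else \'etale-locally to the Euler sequence on $\mathbb{P}^{r-1}$, and get the pushforward vanishing from $H^q(\mathbb{P}^{r-1},\mathcal{O}(-1))=0$ via cohomology and base change. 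The genuinely distinctive choice you make is how the line bundle $R^1\pi_*\Omega_{Y/X}$ gets trivialized: you produce a canonical nowhere-vanishing section as the trace of the relative Atiyah class of $T_{Y/X}$ (fibrewise $r$ times the hyperplane class, nonzero since we are in characteristic $0$), whereas the argument in the literature instead glues the local Euler extension classes, using that $\mathrm{PGL}_r=\operatorname{Aut}(\mathbb{P}^{r-1})$ acts trivially on $H^1(\mathbb{P}^{r-1},\Omega^1_{\mathbb{P}^{r-1}})$, so that the local classes agree on overlaps and define a global trivializing section. Both mechanisms are fine; yours has the advantage of not requiring any gluing discussion. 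One small elision you should patch: in the final step you invoke $G^\vee|_{Y_U}\cong\mathcal{O}(-1)^{\oplus r}$ on an \'etale open $U$, but what you established earlier is the \emph{fibrewise} statement $G|_{Y_x}\cong\mathcal{O}(1)^{\oplus r}$. The upgrade is easy and uses exactly your nowhere-vanishing section: over $U$ the restricted extension class equals a unit of $\mathcal{O}_U$ times the Euler class, and pushing out the Euler sequence along a unit does not change the middle term, so $G|_{Y_U}\cong\mathcal{O}_{\mathbb{P}^{r-1}_U}(1)^{\oplus r}$ (alternatively, fibrewise triviality plus Grauert gives $G|_{Y_U}\cong\pi_U^*W(1)$ for a rank-$r$ bundle $W$ on $U$, which is all that the projection-formula argument for the counit needs). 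With that sentence added, the proof is complete.
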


We now state the following classification result on Brauer-Severi varieties and Azumaya algebras.
\begin{proposition} \label{prp:azu-bs-equiv}
    There is a canonical bijection of sets between (isomorphism classes of) Brauer-Severi varieties of degree $r$ over $X$ and Azumaya algebras of degree $r$ on $X$. Furthermore, both are equivalent to the set of $\PGL_r$-torsors, classified by $H^1(X, \PGL_r)$.
\end{proposition}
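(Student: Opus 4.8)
The plan is to establish a chain of bijections passing through $H^1(X,\mathrm{PGL}_r)$, the cohomology set classifying $\mathrm{PGL}_r$-torsors in the \'etale topology. The strategy is to show separately that (i) degree $r$ Azumaya algebras are classified by $H^1(X,\mathrm{PGL}_r)$, and (ii) degree $r$ Brauer-Severi varieties are classified by the same set; composing these gives the desired bijection, and functoriality of the constructions will make it canonical.

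First I would treat the Azumaya algebra side, which is the cleaner of the two. By definition a degree $r$ Azumaya algebra $\mathcal{A}$ is \'etale-locally isomorphic to $M_r(\mathcal{O}_{U_i})$, so it is a twisted form of the sheaf of algebras $M_r(\mathcal{O}_X)$. The general principle of descent/non-abelian cohomology says that isomorphism classes of twisted forms of an object $Z$ are in bijection with $H^1(X, \underline{\mathcal{A}\mathit{ut}}(Z))$, where the cocycle records the gluing automorphisms $\phi_{ij} = \phi_i\circ\phi_j^{-1}$ on overlaps $U_{ij}$ of two local trivializations. Here $\underline{\mathcal{A}\mathit{ut}}(M_r(\mathcal{O}_X)) \cong \mathrm{PGL}_r$ by Corollary \ref{cor:aut-mrox}, so twisted forms of $M_r(\mathcal{O}_X)$ — i.e. degree $r$ Azumaya algebras — are classified by $H^1(X,\mathrm{PGL}_r)$. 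Similarly, a degree $r$ Brauer-Severi variety is a twisted form of $\mathbb{P}^{r-1}_X = \mathbb{P}^{r-1}\times X$, and since the automorphism group scheme of $\mathbb{P}^{r-1}$ is $\mathrm{PGL}_r$ (automorphisms of projective space are the projective linear transformations), we have $\underline{\mathcal{A}\mathit{ut}}(\mathbb{P}^{r-1}_X) \cong \mathrm{PGL}_r$ as \'etale sheaves. Hence the same descent principle gives a bijection between degree $r$ Brauer-Severi varieties and $H^1(X,\mathrm{PGL}_r)$.

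To make the resulting bijection between Brauer-Severi varieties and Azumaya algebras \emph{canonical} (rather than merely an abstract coincidence of classifying sets), I would exhibit the direct functorial constructions in both directions and check they realize the same cocycle. Going from a Brauer-Severi variety $Y$ to an Azumaya algebra, I would use the sheaf $G$ from Lemma \ref{lem:sheaf-G} and set $\mathcal{A} = \pi_*(\mathcal{E}\mathit{nd}(G^\vee))$, as indicated in the introduction; locally on a trivializing $U_i$ where $Y|_{U_i}\cong \mathbb{P}^{r-1}_{U_i}$, one computes that $G$ restricts to (a twist of) the tautological extension whose $\mathcal{E}\mathit{nd}$ pushes forward to $M_r(\mathcal{O}_{U_i})$, so $\mathcal{A}$ is indeed Azumaya of degree $r$, and the local trivializations of $Y$ induce local trivializations of $\mathcal{A}$ carrying the same $\mathrm{PGL}_r$-valued transition cocycle. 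Conversely, from an Azumaya algebra $\mathcal{A}$ one recovers $Y$ as the Brauer-Severi scheme parametrizing rank $r$ left ideals (or, equivalently, the scheme of splittings), which \'etale-locally is $\mathbb{P}^{r-1}$ and whose transition functions again match. Verifying these two assignments are mutually inverse reduces, after passing to a common trivializing cover, to the single statement that both constructions produce the \emph{same} class in $H^1(X,\mathrm{PGL}_r)$.

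The main obstacle I anticipate is the cocycle-matching bookkeeping in the last step: one must check that the Skolem-Noether normalization (Theorem \ref{thm:skolem-noether}, Corollary \ref{cor:aut-mrox}) identifying $\underline{\mathcal{A}\mathit{ut}}(M_r(\mathcal{O}_X))$ with $\mathrm{PGL}_r$ is compatible, under the functor $Y\mapsto \pi_*(\mathcal{E}\mathit{nd}(G^\vee))$, with the identification $\underline{\mathcal{A}\mathit{ut}}(\mathbb{P}^{r-1}_X)\cong\mathrm{PGL}_r$, so that a single $\mathrm{PGL}_r$-cocycle governs both twisted forms simultaneously. Concretely, this amounts to confirming that the conjugation action of $\mathrm{PGL}_r$ on $M_r$ and its action on $\mathbb{P}^{r-1}$ as projectivized linear automorphisms are intertwined by the standard construction sending a vector space $V$ to the pair $(\mathrm{End}(V),\mathbb{P}(V))$. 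This is essentially formal but requires care to ensure the \emph{same} (rather than inverse or dual) cocycle appears on both sides; once it is checked on a single \'etale chart it propagates globally by the uniqueness clause of Lemma \ref{lem:sheaf-G}. Everything else is a routine application of \'etale descent for twisted forms, so I expect no deeper difficulty beyond this identification.
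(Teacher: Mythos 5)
Your proposal is correct and takes essentially the same approach as the paper's sketch: both pass through $H^1(X,\PGL_r)$ by identifying the automorphism sheaves of $M_r(\mathcal{O}_X)$ (Skolem--Noether, Corollary \ref{cor:aut-mrox}) and of $\mathbb{P}^{r-1}_X$ with $\PGL_r$, and both anchor the canonical bijection in the direct constructions $Y \mapsto \pi_*(\mathcal{E}{\it{nd}}(G^\vee))$ and the moduli of left ideals of $\mathcal{A}$. The paper's formulation via the $\mathcal{I}{\it{som}}$-sheaves being $\PGL_r$-torsors together with a gluing argument is the same descent/twisted-forms principle you invoke, so the two arguments differ only in packaging.
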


We only provide a sketch of the proof. 
\begin{proof}[Sketch of proof]
    We first describe the bijection, for the details of this part we refer to \cite{Reede2018}. If $\pi : Y \to X$ is a degree $r$ Brauer-Severi variety, then $\pi_*(\mathcal{E}{\it{nd}}(G^\vee))$ is a degree $r$ Azumaya algebra.\footnote{We observe that Reede considers the opposite algebra $\mathcal{E}{\it{nd}}(G)^{op}$, which is isomorphic to $\mathcal{E}{\it{nd}}(G^\vee)$ as sheaves of $\mathcal{O}_X$-algebras.}
    %Observation: $R^{>0} \pi_*(\mathcal{E}{\it{nd}}(G^\vee)) = 0$.
    Conversely, if $\mathcal{A}$ is a degree $r$ Azumaya algebra on $X$, we can consider the moduli space of left ideals $I \subseteq \mathcal{A}$ such that $\mathcal{A}/I$ is a locally free $\mathcal{O}_X$-module of rank $r(r - 1)$, which is a degree $r$ Brauer-Severi variety over $X$.

    For a degree $r$ Brauer-Severi variety $Y \to X$, we consider the sheaf $\mathcal{I}{\it{som}}(Y, \mathbb{P}^{r-1}_X)$ of isomorphisms of schemes over $X$. This sheaf admits an action of ${\mathcal{A}\it{ut}}(\mathbb{P}^{r-1}_X) \cong \PGL_r$,\footnote{This isomorphism follows from \cite[Sect.~0.5]{MFK}.} and locally, where $Y$ is trivial, this action is free and transitive. Therefore, this is a $\PGL_r$-torsor. Similarly, for a degree $r$ Azumaya algebra $\mathcal{A}$ on $X$, one can consider the sheaf $\mathcal{I}{\it{som}}(\mathcal{A}, M_r(\mathcal{O}_X))$, which is an ${\mathcal{A}\it{ut}}(M_r(\mathcal{O}_X)) \cong \PGL_r$-torsor by Corollary \ref{cor:aut-mrox}.

    It is not hard to see that all $\PGL_r$-torsors on $X$ are obtained in this way by a gluing argument: if a cover $\{U_i\}$ trivialises a $\PGL_r$-torsor on $X$, then the transition maps on the intersections provide exactly a gluing data to glue copies of $\mathbb{P}^{r-1}$ into a Brauer-Severi variety over $X$ (and similarly for a degree $r$ Azumaya algebra on $X$). By a result on non-abelian cohomology \cite{Giraud1971}, the set of $\PGL_r$-torsors on $X$ is classified by $H^1(X, \PGL_r)$.
\end{proof}

\begin{example} \label{exa:trivial-algebras}
    Let $E$ be a locally free sheaf of rank $r$ on $X$. Then, clearly, $\pi : \mathbb{P}(E^\vee) = \mathrm{Proj}(\mathrm{Sym}^{\bullet}(E^\vee)) \to X$ is a degree $r$ Brauer-Severi variety. It is even Zariski locally isomorphic to $\mathbb{P}^{r - 1}_X$. Then $G \cong \pi^* E(1)$ and its associated Azumaya algebra is $\mathcal{E}{\it{nd}}(E^\vee)$, the endomorphism sheaf.
    Brauer-Severi varieties of this form are called \emph{trivial} because their associated category of twisted sheaves is actually untwisted, as we discuss in Section \ref{sec:twisted-sheaves}.
\end{example}

\begin{example} 
    Azumaya algebras play a prominent role in number theory. Azumaya algebras over $\mathrm{Spec} \, k$, where $k$ is a field, are precisely the central simple algebras over $k$. Thus, the quaternion algebra $\mathbb{H}$ is an Azumaya algebra over $\Spec \mathbb{R}$. There are no non-trivial examples for $k = \mathbb{C}$ --- over algebraically closed fields every Azumaya algebra is trivial. In this paper, we focus on the the case $k=\mathbb{C}(X)$, where $\mathbb{C}(X)$ denotes the function field of a smooth projective surface.
\end{example}

\begin{example}
    Just as for Azumaya algebras, there are no non-trivial Brauer-Severi varieties over $\Spec \mathbb{C}$. However, over $\Spec \mathbb{R}$ there is a non-trivial example, the variety $Y = Z(x^2 + y^2 + z^2) \subseteq \mathbb{P}^2_\mathbb{R}$. After base change to the \'etale cover $\Spec \mathbb{C} \to \Spec \mathbb{R}$, $Y$ is isomorphic to $\mathbb{P}^1_\mathbb{C}$. This Brauer-Severi variety corresponds to the quaternion algebra.
\end{example}

\begin{definition}
    An algebraic stack $\mathcal{G} \to X$ is called a \emph{gerbe on $X$} if it satisfies the following two conditions:
    \begin{enumerate}
        \item $\mathcal{G}$ admits local sections: for any\footnote{Morphisms $T \to X$ are always taken from the fppf site on $X$.} $T \to X$, there exists an \'etale cover $\{U_i \to T\}$ such that $\mathcal{G}$ admits a section over each $U_i$. 
        \item Sections of $\mathcal{G}$ are locally isomorphic: if $x$, $y$ are sections of $\mathcal{G}$ over some $T \to X$, then there is an \'etale cover $\{U_i \to T\}$ with $x|_{U_i} \cong y|_{U_i}$ for all $i$. 
    \end{enumerate}
    Let $A$ be one of the groups in ``Notation and conventions'' --- viewed as a sheaf in the \'etale topology. An \emph{$A$-gerbe} on $X$ is a gerbe $\mathcal{G}$ on $X$ together with an isomorphism $\psi_{U,x} : A(U) \cong \Aut(x)$ for each section $x$ of $\mathcal{G}$ over an \'etale open $U \to T$ for any $T \to X$. We require that $\psi$ is natural with respect to $x$ and restriction along $U$. A morphism of $A$-gerbes on $X$ is a morphism of stacks over $X$ that commutes with $\psi$.
\end{definition}

There is also a theory of gerbes when $A$ is non-abelian, which we will not need, but see \cite{Giraud1971}, where the general theory is developed.
We require the following facts from the theory of gerbes, which can all be found in \cite[Ch.~IV]{Giraud1971}.

\begin{proposition} \label{prp:properties-gerbes} 
\hfill
    \begin{enumerate}
        \item Every morphism of $A$-gerbes on $X$ is an isomorphism\footnote{We will use the term ``isomorphism'' for what is actually an \emph{equivalence} of stacks.}. Thus, the category of $A$-gerbes on $X$ is a groupoid.
        \item The stack of $A$-torsors, $[*/A] \times X$, forms an $A$-gerbe on $X$. An $A$-gerbe on $X$ is equivalent to this one if and only if it admits a section over $X$.
        \item The set of equivalence classes of $A$-gerbes on $X$ is canonically isomorphic to the sheaf cohomology group $H^2(X,A)$. The stack of $A$-torsors corresponds to the unit.
    \end{enumerate}
\end{proposition}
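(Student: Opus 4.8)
The plan is to isolate a single structural lemma and then derive all three parts from it. Namely, I would first show that for any two objects $x,y$ of an $A$-gerbe $\mathcal{G}$ over some $T \to X$, the sheaf $\mathcal{I}{\it{som}}_{\mathcal{G}}(x,y)$ on the small \'etale site of $T$ is naturally an $A$-torsor. Local nonemptiness is exactly the second gerbe axiom, and the simply transitive action comes from composition with $\Aut(x)$, which the trivialisation $\psi$ identifies with $A$; since $A$ is abelian and $\psi$ is natural, the left and right actions agree and the torsor structure is canonical. This lemma is the engine for everything that follows, and it is also the local geometry underlying the \v{C}ech description of Proposition \ref{prp:azu-bs-equiv}.

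For part (1), given a morphism of $A$-gerbes $F : \mathcal{G} \to \mathcal{H}$, I would check that it is fully faithful and locally essentially surjective, which for a morphism of stacks suffices to conclude it is an equivalence. Full faithfulness follows because $F$ induces a map $\mathcal{I}{\it{som}}_{\mathcal{G}}(x,y) \to \mathcal{I}{\it{som}}_{\mathcal{H}}(Fx,Fy)$ that is $A$-equivariant (as $F$ commutes with the $\psi$-data), and any $A$-equivariant map of $A$-torsors is an isomorphism. Essential surjectivity is local: given an object of $\mathcal{H}$ over $T$, apply axiom (1) for $\mathcal{G}$ to produce a local section $x$, then axiom (2) for $\mathcal{H}$ to identify $Fx$ with the given object \'etale-locally. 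For part (2), I would verify the two gerbe axioms for $[*/A]\times X$ directly: the trivial torsor provides local sections, any two $A$-torsors are \'etale-locally isomorphic, and the automorphisms of the trivial torsor recover $A$, furnishing $\psi$. If $\mathcal{G}$ carries a global section $s$, then $x \mapsto \mathcal{I}{\it{som}}_{\mathcal{G}}(s|_T, x)$ is, by the lemma, a morphism of $A$-gerbes $\mathcal{G} \to [*/A]\times X$, hence an equivalence by part (1); conversely $[*/A]\times X$ has a tautological section (the trivial torsor), so any equivalent gerbe has one.

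For part (3) I would classify gerbes by explicit cocycle data built from the lemma. Choosing by axiom (1) an \'etale cover $\{U_i\}$ with sections $x_i$, and by axiom (2) (after refining) isomorphisms $\phi_{ij} : x_j|_{U_{ij}} \to x_i|_{U_{ij}}$, the automorphism $\phi_{ij}\phi_{jk}\phi_{ki}$ of $x_i|_{U_{ijk}}$ yields via $\psi$ an element $a_{ijk} \in A(U_{ijk})$; one checks the $2$-cocycle condition and that altering the $x_i$ or $\phi_{ij}$ changes $\{a_{ijk}\}$ only by a coboundary. Conversely, a cocycle lets one glue the neutral gerbes $[*/A]\times U_i$ with a twist by $a_{ijk}$ on triple overlaps, and by part (2) the neutral gerbe corresponds to the trivial class.

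The main obstacle is the careful identification of this cocycle set with the \emph{derived-functor} cohomology $H^2(X,A)$, rather than with a naive \v{C}ech group, together with the well-definedness and naturality of the correspondence independent of the choices of cover, sections, and gluing isomorphisms. To settle this I would either invoke that \v{C}ech and derived cohomology agree through degree two on the \'etale site (via hypercovers and the associated spectral sequence), or follow Giraud's intrinsic assignment of a cohomology class to a gerbe, which bypasses \v{C}ech altogether. As the paper does, I would lean on \cite[Ch.~IV]{Giraud1971} for this final identification, since reproducing it in full is lengthy and orthogonal to the applications here.
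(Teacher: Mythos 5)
Your proposal is correct in substance, but it takes a genuinely different route from the paper: the paper gives no proof at all, deferring all three statements wholesale to \cite[Ch.~IV]{Giraud1971}, whereas you reconstruct the arguments from a single structural lemma (that $\mathcal{I}{\it{som}}_{\mathcal{G}}(x,y)$ is canonically an $A$-torsor, using that $A$ is abelian and $\psi$ is natural so the left and right actions coincide) and defer only the final identification of the cocycle set with derived-functor $H^2(X,A)$ to Giraud. Parts (1) and (2) of your argument are complete and standard: $A$-equivariance of the induced maps on $\mathcal{I}{\it{som}}$-torsors gives full faithfulness, local essential surjectivity plus descent in the stack $\mathcal{G}$ gives essential surjectivity, and the functor $x \mapsto \mathcal{I}{\it{som}}_{\mathcal{G}}(s|_T,x)$ trivializes a gerbe with a global section. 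What your approach buys is transparency about where the group structure on $H^2$ and the role of abelianness enter; what the paper's citation buys is that Giraud's classification is intrinsic and avoids the one genuinely delicate point in your sketch, which deserves flagging: in part (3), ``after refining'' is not quite enough in a general topos --- the isomorphisms $\phi_{ij}$ live only locally on each $U_{ij}$, so a naive cover refinement may not produce them simultaneously, and one is led to hypercovers rather than covers. This is precisely why gerbes compute derived $H^2$ rather than cover-\v{C}ech $H^2$. You acknowledge this correctly (hypercovers, or Artin's comparison theorem, which applies here since $X$ is quasi-projective so \'etale \v{C}ech and derived cohomology agree), so the gap is one of emphasis rather than a flaw; but any written version should make clear that the refinement step is where the \v{C}ech-versus-derived issue actually lives, rather than treating it as bookkeeping.
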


As a consequence of the proposition, and the fact that gerbes admit local sections, every $A$-gerbe on $X$ is \'etale locally isomorphic to the stack of $A$-torsors $[*/A] \times X$. The converse is almost true: $[*/A]$ is a group stack, and a stack over $X$ that is \'etale locally isomorphic to $[*/A] \times X$ is an $A$-gerbe on $X$ if the gluing maps are $[*/A]$-equivariant.

From now on, we will only be concerned with $\mathbb{G}_m$- and $\mu_r$-gerbes.

\begin{example}
    Consider the stack over $X$, which over $T \to X$ is determined by the groupoid of line bundles on $L$ on $T$. This is a $\mathbb{G}_m$-gerbe on $X$ and it is, in fact, the trivial $\mathbb{G}_m$-gerbe.  Indeed, it admits a global section, the trivial line bundle, and two line bundles are locally isomorphic. Moreover, there is a canonical map $\mathbb{G}_m \to \Aut(L)$ that provides the $\mathbb{G}_m$-gerbe structure.
\end{example}

\begin{example}
    A typical example of a gerbe on $X$ (which we will not need) comes from the \emph{root stack} construction: if $L$ is a line bundle on $X$, consider the moduli stack whose groupoid over $T \to X$ consists of a line bundle $M$ on $T$ together with an isomorphism $\phi : M^r \cong L|_T$. This is a $\mu_r$-gerbe on $X$.
\end{example}

\begin{example} \label{exa:gerbe-bs}
    Let $Y \to X$ be a degree $r$ Brauer-Severi variety. We consider the stack of locally free sheaves trivialising $Y$: for $T \to X$ we consider the groupoid of locally free sheaves $E$ on $T$ together with an isomorphism $\phi : Y|_T \cong \mathbb{P}(E^\vee)$ of $T$-schemes. We claim this is a $\mathbb{G}_m$-gerbe on $X$. Indeed, this stack admits \'etale local sections, since $Y$ is \'etale locally trivial. 
    Secondly, if $E$ and $F$ both trivialise $Y$, then we consider the local situation where they are both free, and now we need to produce an isomorphism. The composition $\mathbb{P}(E^\vee) \cong Y \cong \mathbb{P}(F^\vee)$ is given by an element of $\PGL_r$ \cite[Sect.~0.5]{MFK}, so it is locally an equivalence class of matrices. Any matrix in this class defines an isomorphism of $E$ and $F$ commuting with $\phi$. 
    
    If $M$ is an automorphism of $(E,\phi)$, then it has to be a scalar: this can be checked locally, where $E$ is free, and the matrix associated to $M$ acts as its representative in $\PGL_r$ on $\mathbb{P}(E^\vee)$; thus, this action is trivial if and only $M$ is a non-zero scalar. This gives us a natural isomorphism $\mathbb{G}_m \to \Aut(E,\phi)$.

    There is a variant on this example, where we also fix an isomorphism $\nu : \det E \cong \mathcal{O}_T$. This restricts the number of automorphisms of the triple $(E,\phi, \nu)$, and the result is a $\mu_r$-gerbe on $X$. Thus, associated to any degree $r$ Brauer-Severi variety over $X$ is a $\mathbb{G}_m$-gerbe and a $\mu_r$-gerbe on $X$.
\end{example}

\begin{example} \label{exa:gerbe-az}
    The same example works for degree $r$ Azumaya algebras on $X$: one can consider the stack of locally free sheaves $E$ such that $\mathcal{E}{\it{nd}}(E^\vee)$ trivialises the Azumaya algebra $\mathcal{A}$, both with and without trivialising the determinant. In this case the Skolem-Noether theorem ensures that we can use the group $\PGL_r$ in the same way as above.
    In fact, by the result of Proposition \ref{prp:azu-bs-equiv} and Example \ref{exa:trivial-algebras}, a locally free sheaf $E$ trivialises $\mathcal{A}$ if and only if it trivialises the associated Brauer-Severi variety, so the moduli stacks of these examples are isomorphic.
\end{example}

By the previous examples, every Brauer-Severi-variety $Y$ on $X$ of degree $r$ defines classes $\alpha(Y) \in H^2(X, \mathbb{G}_m)$ and $w(Y) \in H^2(X, \mu_r)$. We refer to $\alpha(Y)$ as the \emph{Brauer class} of $Y$ and to $w(Y)$ as the \emph{Stiefel-Whitney} class of $Y$. Similarly, every Azumaya algebra $\mathcal{A}$ on $X$ of degree $r$ determines classes $\alpha(\mathcal{A})  \in H^2(X, \mathbb{G}_m)$ and $w(\mathcal{A}) \in H^2(X, \mu_r)$. 
The assignments $Y \mapsto w(Y)$ and $Y \mapsto \alpha(Y)$ fit in the framework of non-abelian cohomology \cite{Giraud1971}. This theory defines groups $H^1(X, G)$ and $H^2(X, G)$ for non-abelian groups $G$ in the \'etale topology, and defines connecting homomorphisms between these groups. The definition of the connecting homomorphisms induced by the sequences
\begin{equation} \label{keyses}
    1 \to \mu_r \to \SL_r \to \PGL_r \to 1 \qquad \text{ and } \qquad 1 \to \mathbb{G}_m \to \GL_r \to \PGL_r \to 1
\end{equation}
are precisely $w$ and $\alpha$. Furthermore, these maps are compatible in the sense that the following diagram commutes:
\begin{equation} \label{diag:factor}
\xymatrix
{
H^1(X,\PGL_r) \ar^{w}[r] \ar_{\alpha}[dr] & H^2(X,\mu_r) \ar^{o}[d] \\ 
& H^2(X,\mathbb{G}_m),
}
\end{equation}
where $o$ is induced by the inclusion $\mu_r \leq \mathbb{G}_m$. This can be seen by either using that there is a morphism between the short exact sequences from \eqref{keyses}, or, alternatively, by explicitly comparing the gerbes. Note that the vertical arrow $o$ in the diagram is part of the long exact sequences induced by the Kummer sequence

\begin{equation} \label{eqn:Kummer}
\cdots \to H^1(X,\mathbb{G}_m) \to H^2(X,\mu_r) \to H^2(X,\mathbb{G}_m) \stackrel{(\cdot)^r}{\to} H^2(X,\mathbb{G}_m) \to \cdots
\end{equation}
Since $H^1(X,\mathbb{G}_m) \cong \mathrm{Pic}(X)$, we refer to the first map as $c_1 : H^1(X,\mathbb{G}_m) \to H^2(X,\mu_r)$. In the trivial case we get \cite[Lem.~1.2]{Yoshioka2006}, \cite{HuybrechtsSchroeer2003}
\begin{equation} \label{eq:w-trivial-algebra}
w(\mathbb{P}(E^\vee)) = [c_1(E) \mod r] \in H^2(X, \mu_r).
\end{equation}

By the comparison theorem \cite{Milne1980}, the \'etale cohomology group $H^2(X,\mu_r)$ equals the corresponding singular cohomology group with respect to the complex analytic topology on $X$, which we sometimes denote by $H^2(X,\mathbb{Z} / r\mathbb{Z})$. The above factorization \eqref{diag:factor} also reveals that the Brauer class of a degree $r$ Azumaya algebra on $X$ is an $r$-torsion element of $H^2(X,\mathbb{G}_m)$.

In the next section, we assume $H_1(X,\mathbb{Z})_{\mathrm{tor}} = 0$ (in the complex analytic topology). Then, by Lemma \ref{lmm:rep-w-xi}, we have 
\[
H^2(X,\mu_r) \cong H^2(X,\mathbb{Z}) / rH^2(X,\mathbb{Z}).
\]
Using the identification $H^1(X,\mathbb{G}_m) \cong \mathrm{Pic}(X)$, the map $H^1(X,\mathbb{G}_m) \to H^2(X,\mu_r)$ factors via the usual first Chern class map
\begin{displaymath} \label{diag:fac}
\xymatrix
{
\mathrm{Pic}(X) \ar[r] \ar_{c_1}[d] & H^2(X,\mathbb{Z}) / rH^2(X,\mathbb{Z}) \\
H^2(X,\mathbb{Z}) \cap H^{1,1}(X), \ar[ur] &
}
\end{displaymath}
%Commutativity can be seen in the complex analytic topology using sequences like the one in Huybrechts-Schroer p.~4 (middle).
where the diagonal arrow is the quotient map and the vertical arrow is surjective by the Lefschetz theorem on  $(1,1)$-classes. By the Kummer sequence \eqref{eqn:Kummer}, the $\mu_r$-gerbes which are trivial as $\mathbb{G}_m$-gerbes are precisely the elements of 
\[
(H^2(X,\mathbb{Z}) \cap H^{1,1}(X)) / r H^2(X,\mathbb{Z}).
\]

The cohomology class $\alpha(Y) \in H^2(X, \mathbb{G}_m)$ determines whether $Y$ is trivial (in the sense of Example \ref{exa:trivial-algebras}), because $\alpha(Y) = 0$ if and only if the gerbe from Example \ref{exa:gerbe-bs} has a global section (Proposition \ref{prp:properties-gerbes}), but this precisely happens when a trivialising locally free sheaf exists on $X$. This statement can also be seen from the previous paragraph combined with \cite[Lem.~1.4]{Yoshioka2006}.\footnote{We note, in passing, that $w(Y) = 0$ if and only if $Y \cong \mathbb{P}(E^\vee)$ for a locally free sheaf $E$ on $X$ with $\det(E) \cong \mathcal{O}_X$. However, triviality of $w(Y)$ does not play a role in this paper.}

It is not possible to recover a Brauer-Severi variety from its class in $H^2(X, \mathbb{G}_m)$; indeed, all Brauer-Severi varieties of the form $\mathbb{P}(E^\vee)$ for some locally free sheaf $E$ have trivial class. Thus, taking the associated class is not ``injective''. Instead, we have the following ``surjectivity'' results.

\begin{theorem}[de Jong] \label{thm:gabber}
    Any torsion class in $H^2(X, \mathbb{G}_m)$ is equal to $\alpha(Y)$ for some Brauer-Severi variety $Y \to X$.
\end{theorem}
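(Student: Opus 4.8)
The plan is to reduce the statement to the existence of a single locally free twisted sheaf of positive rank, and then invoke Gabber's theorem. By Proposition~\ref{prp:azu-bs-equiv} and the factorization~\eqref{diag:factor}, it is equivalent to produce, for some $n \geq 1$, a degree $n$ Azumaya algebra $\mathcal{A}$ on $X$ with $\alpha(\mathcal{A}) = \alpha$. Passing to the $\mathbb{G}_m$-gerbe $\mathcal{G} \to X$ representing $\alpha$ (Proposition~\ref{prp:properties-gerbes}), I would reformulate this, exactly as in Example~\ref{exa:gerbe-az}, as the problem of finding a locally free $\alpha$-twisted sheaf $E$ of positive constant rank $n$. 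Indeed, given such an $E$, its dual $E^\vee$ is $(-\alpha)$-twisted, so $\mathcal{E}{\it{nd}}(E) = E^\vee \otimes E$ is an honest sheaf of $\mathcal{O}_X$-algebras which is \'etale-locally isomorphic to $M_n(\mathcal{O}_X)$, i.e.\ a degree $n$ Azumaya algebra; the same cocycle computation as in Example~\ref{exa:trivial-algebras}, carried out \'etale-locally where the twist is trivialised, shows that its Brauer class is $\alpha$ (the scalar ambiguity of the transition functions of $E$ dies in $\PGL_n$ but reappears as the class $\alpha$ under $H^1(X,\PGL_n) \to H^2(X,\mathbb{G}_m)$). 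So everything reduces to producing one locally free twisted sheaf of positive rank.

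Before doing so I would reduce to $\alpha$ of prime power order. As $\alpha$ is torsion we may write $\alpha = \sum_p \alpha_p$ with $\alpha_p$ of $p$-power order and only finitely many nonzero. Tensoring Azumaya algebras adds Brauer classes, so if each $\alpha_p$ is realised by an Azumaya algebra $\mathcal{A}_p$, then $\bigotimes_p \mathcal{A}_p$ realises $\sum_p \alpha_p = \alpha$. Hence it suffices to treat $\alpha$ of order $\ell^k$ for a prime $\ell$.

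The easy half of the existence problem is to construct a \emph{coherent} $\alpha$-twisted sheaf of positive generic rank. Restricting $\alpha$ to the generic point $\eta = \mathrm{Spec}\,\mathbb{C}(X)$ gives a class in $\mathrm{Br}(\mathbb{C}(X))$, which, being a Brauer class of a field, is represented by a central simple $\mathbb{C}(X)$-algebra $D$; a simple $D$-module is then a finite-dimensional $\alpha_\eta$-twisted $\mathbb{C}(X)$-vector space, that is, a locally free $\alpha$-twisted sheaf over $\eta$. Spreading out, this extends to a coherent $\alpha$-twisted sheaf over some dense open $U \subseteq X$, and since $X$ (hence the gerbe $\mathcal{G}$) is Noetherian, the usual Grothendieck--Serre coherent-extension argument produces a coherent $\alpha$-twisted sheaf $\mathcal{F}$ on all of $X$ of the same positive generic rank.

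The main obstacle is the final step: promoting $\mathcal{F}$ to a locally free twisted sheaf without altering $\alpha$. When $X$ is a smooth surface --- the only case needed in this paper --- this is immediate: the reflexive hull $\mathcal{F}^{\vee\vee}$ is a reflexive $\alpha$-twisted sheaf, and both reflexivity and local freeness can be checked \'etale-locally after trivialising the twist, where a reflexive sheaf on a regular two-dimensional scheme is automatically locally free (by Auslander--Buchsbaum, a reflexive module over a regular local ring of dimension $\leq 2$ is free). Thus $\mathcal{F}^{\vee\vee}$ is a locally free $\alpha$-twisted sheaf of positive (constant, as $X$ is connected) rank, and $\mathcal{E}{\it{nd}}(\mathcal{F}^{\vee\vee})$ is the desired Azumaya algebra. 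For general smooth projective $X$ the reflexive hull only confines the non-locally-free locus to codimension $\geq 3$, and resolving it there is the genuinely hard input of Gabber's theorem in the form proved by de Jong: a delicate modification argument, using the ample line bundle on $X$ to manufacture enough twisted sections, that removes the remaining singularities and yields a locally free twisted sheaf. This last step is the crux, and for it I would appeal to de Jong's proof.
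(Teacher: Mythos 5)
Your proposal is correct, but note what it is being compared against: the paper does not prove Theorem \ref{thm:gabber} at all. It is quoted as a black box, with the remark that it holds whenever $X$ admits an ample line bundle and a pointer to de Jong's write-up of Gabber's theorem \cite{deJong_result_of_Gabber}. So the comparison is between your structured sketch and a bare citation, and what you add is genuine. Your reduction chain is sound: realizing $\alpha$ by a Brauer--Severi variety is equivalent, via Proposition \ref{prp:azu-bs-equiv} and \eqref{diag:factor}, to realizing it by an Azumaya algebra, and hence to producing one locally free $\alpha$-twisted sheaf $E$ of positive rank, since $\mathcal{E}{\it{nd}}(E)$ has weight zero and descends to $X$ (Proposition \ref{prp:eigensheaf-decomp}); the construction of a coherent twisted sheaf of positive rank from a central simple algebra at the generic point, spreading out, and coherent extension over the Noetherian gerbe is standard; and on a surface the reflexive hull is locally free by Auslander--Buchsbaum, so for $\dim X = 2$ --- the setting of everything from Section \ref{sec:moduli-of-twisted-sheaves} onward --- your argument is complete and elementary, making transparent why the surface case requires no hard input. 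The honest caveat, which you state yourself, is that the theorem as written sits in Section \ref{sec:models} where $X$ has arbitrary dimension, and it is invoked in that generality in the proposition of Section \ref{sec:brauer-group}; there the reflexive hull only confines the non-locally-free locus to codimension $\geq 3$, and you defer the remaining modification argument to de Jong --- which is exactly what the paper does in every dimension, so this is not a gap relative to the paper. Two small tidying remarks: the prime-power decomposition of $\alpha$ is correct but never used afterwards and can be dropped; and the sign ambiguity in the Brauer class of $\mathcal{E}{\it{nd}}(E)$ should be dispatched explicitly (replace $\mathcal{A}$ by $\mathcal{A}^{\mathrm{op}}$, whose class is the inverse, or dualize $E$) rather than left implicit.
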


\begin{theorem}[de Jong, Lieblich] \label{thm:period-index}
    Suppose $\dim(X) = 2$. Any class in $H^2(X, \mu_r)$ is equal to $w(Y)$ for some degree $r$ Brauer-Severi variety $Y \to X$.
\end{theorem}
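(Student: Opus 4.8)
The plan is to reduce the statement to the period--index theorem for surfaces. Write $\alpha := o(w) \in H^2(X,\mathbb{G}_m) = \Br(X)$. By exactness of the Kummer sequence \eqref{eqn:Kummer} the image of $o$ is the $r$-torsion subgroup $H^2(X,\mathbb{G}_m)[r]$, so $\per(\alpha)\mid r$. Since $X$ is a smooth projective surface, $\mathbb{C}(X)$ has transcendence degree $2$ over $\mathbb{C}$, and the period--index theorem of de Jong \cite{Jong2004} (see also \cite[Cor.~4.2.2.4]{Lieblich2008}) gives $\ind(\alpha) = \per(\alpha)$, hence $\ind(\alpha)\mid r$. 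In Lieblich's twisted-sheaf formulation this produces a locally free $w$-twisted sheaf on the $\mu_r$-gerbe $\mathcal{G}_w$ associated to $w$ of rank $\ind(\alpha)$; taking a direct sum of $r/\ind(\alpha)$ copies yields a locally free $w$-twisted sheaf $\mathcal{E}$ of rank exactly $r$ (direct sums and the locally free property are preserved, and the weight stays $1$).

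Next I would build a Brauer--Severi variety from $\mathcal{E}$ and identify its Stiefel--Whitney class. Set $\mathcal{A} := \mathcal{E}{\it{nd}}(\mathcal{E}) = \mathcal{E}^\vee\otimes\mathcal{E}$. This is a weight-$0$ sheaf, so it descends to a sheaf of $\mathcal{O}_X$-algebras on $X$ which is \'etale locally $M_r(\mathcal{O}_X)$; hence it is a degree-$r$ Azumaya algebra with associated degree-$r$ Brauer--Severi variety $Y$ by Proposition \ref{prp:azu-bs-equiv}. To compute $w(Y)$, trivialise $\mathcal{E}$ over an \'etale cover with $w$-twisted transition functions $g_{ij}\in\GL_r$ satisfying $g_{ij}g_{jk}g_{ki} = \zeta_{ijk}\,\mathrm{id}$, where $(\zeta_{ijk})$ is a $\mu_r$-valued $2$-cocycle representing $w$. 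The induced $\PGL_r$-cocycle $\overline{g}_{ij}$ defines $Y$, and lifting $\overline{g}_{ij}$ to $\SL_r$ amounts to extracting $r$-th roots of the determinant $1$-cochain $d_{ij} := \det g_{ij}$. Because $d_{ij}d_{jk}d_{ki} = \zeta_{ijk}^{\,r} = 1$, this $d_{ij}$ is an honest $\mathbb{G}_m$-cocycle representing the descended line bundle $\det\mathcal{E}\in\mathrm{Pic}(X)$, and tracking the resulting $\mu_r$-cocycle gives
\[
w(Y) = w - c_1(\det\mathcal{E}) \in H^2(X,\mu_r),
\]
where $c_1\colon \mathrm{Pic}(X)=H^1(X,\mathbb{G}_m)\to H^2(X,\mu_r)$ is the Kummer connecting map; this is consistent with $o(w(Y))=\alpha$ via \eqref{diag:factor} and specialises to \eqref{eq:w-trivial-algebra} when $\alpha=0$.

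It then remains to cancel the discrepancy $c_1(\det\mathcal{E})\in\ker(o)=\mathrm{im}(c_1)$. For this I would replace $\mathcal{E}$ by an elementary modification: writing $\det\mathcal{E}\cong\mathcal{O}_X(D)$ and choosing a surjection $\mathcal{E}\twoheadrightarrow\iota_*L_D$ onto a twisted line bundle on a smooth divisor $D$, the kernel $\mathcal{E}'$ is again a locally free $w$-twisted sheaf of rank $r$, now with $\det\mathcal{E}'\cong\det\mathcal{E}(-D)\cong\mathcal{O}_X$. Rerunning the previous paragraph with $\mathcal{E}'$ produces a degree-$r$ Brauer--Severi variety $Y'$ with $w(Y') = w - c_1(\mathcal{O}_X) = w$, as required. (Alternatively, one may quote the refined form of the period--index theorem that directly yields a rank-$r$ twisted bundle of trivial determinant, absorbing this bookkeeping into the cited result.)

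The genuinely hard input is the period--index theorem, which I treat as a black box: without $\per=\ind$ there is no reason for a twisted bundle of rank dividing $r$ to exist, and the index could a priori exceed $r$. Granting it, the only real content is the cocycle computation of the second paragraph --- pinning down that the Stiefel--Whitney class of $\mathcal{E}{\it{nd}}(\mathcal{E})$ differs from $w$ \emph{exactly} by $c_1(\det\mathcal{E})$ --- together with the standard fact that elementary modifications change $\det\mathcal{E}$ by an arbitrary divisor class while preserving the rank and the $w$-twisting. Fixing the sign and normalisation in that determinant computation, so that the correction provably lands in $\mathrm{im}(c_1)=\ker(o)$ and can be removed, is the step that demands the most care.
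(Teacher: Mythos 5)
The paper itself gives no proof of this theorem: it is imported as a known result, with the deduction of this exact form delegated to de Jong's period--index theorem \cite{Jong2004} and \cite[Cor.~4.2.2.4]{Lieblich2008}. Your proposal keeps the same black box (period equals index over $\mathbb{C}(X)$) but actually carries out the deduction that the paper outsources, essentially reconstructing Lieblich's argument: Kummer gives $\per(\alpha)\mid r$, de Jong gives $\ind(\alpha)\mid r$, a direct sum produces a rank-$r$ locally free $w$-twisted sheaf $\mathcal{E}$, its endomorphism algebra descends to a degree-$r$ Azumaya algebra with associated Brauer--Severi variety $Y$, and the discrepancy $w(Y)=w-c_1(\det\mathcal{E})$ lies in $\mathrm{im}(c_1)=\ker(o)$ and is cancelled by an elementary modification. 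This skeleton is correct, and your cocycle computation of the discrepancy is right; it is even self-consistent with \eqref{eq:w-trivial-algebra}, since for $w=0$ the Brauer--Severi variety of $\mathcal{E}{\it{nd}}(\mathcal{E})$ is $\mathbb{P}(\mathcal{E})$, and both formulas give $-c_1(\mathcal{E})\bmod r$.

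Two steps need patching. First, passing from de Jong's field-theoretic statement $\ind(\alpha)=\per(\alpha)$ over $\mathbb{C}(X)$ to a locally free $w$-twisted sheaf of rank $\ind(\alpha)$ on all of $X$ is not a formality: one must extend a twisted vector space at the generic point to a coherent twisted sheaf on $X$ and take its reflexive hull, which is locally free precisely because $X$ is a smooth surface. This globalization is the real content of the corollary of Lieblich that the paper cites, so you should either spell it out or cite it rather than fold it silently into the black box. Second, and this is the step that would actually fail as written: you assume $\det\mathcal{E}\cong\mathcal{O}_X(D)$ for a smooth effective divisor $D$, but $\det\mathcal{E}$ need not be effective, let alone have smooth members. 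The repair is cheap once you notice you only need $c_1(\det\mathcal{E}')=0$ in $H^2(X,\mu_r)$, i.e.\ $\det\mathcal{E}'\in r\cdot\mathrm{Pic}(X)=\ker(c_1)$, not $\det\mathcal{E}'\cong\mathcal{O}_X$: choose $M$ ample so that $\det\mathcal{E}\otimes M^{\otimes r}$ is very ample, take a smooth member $D$ of its linear system by Bertini, note that an invertible $w|_D$-twisted sheaf $L_D$ exists because $\mathrm{Br}(D)=0$ for a smooth projective curve (Tsen), and that a surjection $\mathcal{E}|_D\twoheadrightarrow L_D$ exists after replacing $L_D$ by a sufficiently ample twist; the kernel $\mathcal{E}'$ of $\mathcal{E}\to\iota_*L_D$ is then locally free of rank $r$ with $\det\mathcal{E}'\cong M^{\otimes -r}$, and the resulting $Y'$ satisfies $w(Y')=w$ as desired.
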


Theorem \ref{thm:gabber} holds far more generally. We only need $X$ to admit an ample line bundle, for example, if $X$ is quasi-projective over an affine scheme (see \cite{deJong_result_of_Gabber}). On the contrary, Theorem \ref{thm:period-index} is special in the sense that it does not hold for smooth varieties of arbitrary dimension \cite{AntieauWilliams2013}. It is a consequence of the period-index theorem proved by de Jong \cite{Jong2004}. See \cite[Cor.~4.2.2.4]{Lieblich2008} on how to deduce this specific form of the theorem.

\subsection{Twisted sheaves} \label{sec:twisted-sheaves}

We review the theory of twisted sheaves from different viewpoints. Recall the sheaf $G$ from Lemma \ref{lem:sheaf-G}.

\begin{definition} \label{def:Ysheaf}
    Let $\pi : Y \to X$ be a Brauer-Severi variety. A \emph{$Y$-sheaf} is a coherent sheaf $F$ on $Y$ such that the counit $\pi^*\pi_*(F \otimes G^\vee) \to F \otimes G^\vee$ is an isomorphism. The category of $Y$-sheaves is denoted $\Coh(X,Y)$.
    Let $\mathcal{A}$ be an Azumaya algebra on $X$. An $\mathcal{A}$-module is a left module over $\mathcal{A}$ such that the underlying $\mathcal{O}_X$-module is coherent. The category of $\mathcal{A}$-modules is denoted $\Coh(X,\mathcal{A})$.
\end{definition}

\begin{proposition}[Reede] \label{prop:equivReede}
    Let $\pi : Y \to X$ be a Brauer-Severi variety over $X$ with corresponding Azumaya algebra $\mathcal{A}$ (Proposition \ref{prp:azu-bs-equiv}). Then there is a canonical equivalence between $\Coh(X,\mathcal{A})$ and $\Coh(X, Y)$.
\end{proposition}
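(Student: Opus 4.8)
The plan is to exhibit an explicit adjoint pair of functors between $\Coh(X,Y)$ and $\Coh(X,\mathcal{A})$ and then check that the unit and counit are isomorphisms by reducing to the split case over an \'etale cover trivialising $Y$. Recall from Lemma \ref{lem:sheaf-G} that $\mathcal{A} = \pi_*(\mathcal{E}{\it{nd}}(G^\vee))$ and that the canonical map $\pi^*\mathcal{A} \to \mathcal{E}{\it{nd}}(G^\vee)$ is an isomorphism of sheaves of $\mathcal{O}_Y$-algebras. I would first define
\[
\Phi \colon \Coh(X,Y) \to \Coh(X,\mathcal{A}), \qquad \Phi(F) = \pi_*(F \otimes_{\mathcal{O}_Y} G^\vee),
\]
where the left $\mathcal{A}$-module structure comes from the tautological left action of $\mathcal{E}{\it{nd}}(G^\vee)$ on the factor $G^\vee$, pushed forward along the proper map $\pi$; this lands in coherent left $\mathcal{A}$-modules. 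In the opposite direction I would set
\[
\Psi \colon \Coh(X,\mathcal{A}) \to \Coh(X,Y), \qquad \Psi(M) = \pi^*M \otimes_{\pi^*\mathcal{A}} G,
\]
using the identification $\pi^*\mathcal{A} \cong \mathcal{E}{\it{nd}}(G^\vee) \cong \mathcal{E}{\it{nd}}(G)^{\mathrm{op}}$ to regard $G$ as a right $\pi^*\mathcal{A}$-module and $\pi^*M$ as a left $\pi^*\mathcal{A}$-module.

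The tensor-hom adjunction over $\pi^*\mathcal{A}$ together with the $(\pi^*,\pi_*)$-adjunction yields, for $F \in \Coh(X,Y)$ and $M \in \Coh(X,\mathcal{A})$, natural isomorphisms
\[
\mathrm{Hom}_{Y}(\Psi(M),F) \cong \mathrm{Hom}_{\pi^*\mathcal{A}}(\pi^*M, F \otimes G^\vee) \cong \mathrm{Hom}_{\mathcal{A}}(M, \Phi(F)),
\]
where I have used $\mathcal{H}{\it{om}}_{\mathcal{O}_Y}(G,F) \cong F \otimes G^\vee$ (as $G$ is locally free) and the fact that these adjunctions are compatible with the relevant module structures. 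Hence $\Psi$ is left adjoint to $\Phi$, with unit $\eta \colon \mathrm{id} \to \Phi\Psi$ and counit $\epsilon \colon \Psi\Phi \to \mathrm{id}$. It then remains to verify (i) that $\Psi$ actually lands in $Y$-sheaves, i.e.\ that $\Psi(M)$ satisfies the counit condition of Definition \ref{def:Ysheaf}, and (ii) that $\eta$ and $\epsilon$ are isomorphisms.

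Both (i) and (ii) are \'etale-local statements on $X$: being an isomorphism of coherent sheaves may be checked after a faithfully flat base change, and the formation of $\Phi$, $\Psi$, $\eta$, $\epsilon$ commutes with \'etale base change $u \colon U \to X$ by flat base change for $\pi_*$ and the fact that $\pi^*$, $\otimes$ and $\mathcal{H}{\it{om}}$ of coherent sheaves commute with flat pullback; here one uses that $G$ and the canonical isomorphism $\pi^*\mathcal{A} \cong \mathcal{E}{\it{nd}}(G^\vee)$ are themselves compatible with base change, by the uniqueness in Lemma \ref{lem:sheaf-G}. Choosing $U$ so that $Y_U \cong \mathbb{P}^{r-1}_U$, the Euler sequence identifies $G_U$ with $\mathcal{O}(1)^{\oplus r}$, whence $\mathcal{E}{\it{nd}}(G_U^\vee) = M_r(\mathcal{O}_{Y_U})$ and $\mathcal{A}_U = M_r(\mathcal{O}_U)$ (cf.\ Example \ref{exa:trivial-algebras}). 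In this split situation $\Phi$ and $\Psi$ factor as the relative-$\mathbb{P}^{r-1}$ twist-and-pushforward composed with the Morita equivalence $M_r(\mathcal{O}_U)\text{-mod} \simeq \Coh(U)$, and the vanishing $R\pi_*\mathcal{O}(-1)=0$ together with $\pi_*\mathcal{O}_{\mathbb{P}^{r-1}_U} = \mathcal{O}_U$ (equivalently $R\pi_*(G_U^\vee)=0$ from Lemma \ref{lem:sheaf-G}) shows directly that $\eta$ and $\epsilon$ are isomorphisms and that $\Psi(M)$ is a $Y$-sheaf. The higher pushforwards $R^{>0}\pi_*(F \otimes G^\vee)$ vanish because $F \otimes G^\vee$ is a pullback along $\pi$ for any $Y$-sheaf $F$, so using the underived $\pi_*$ in the definition of $\Phi$ is harmless and we genuinely obtain an equivalence of abelian categories.

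I expect the main obstacle to be the bookkeeping in this reduction: making sure the module structures defining $\Phi$ and $\Psi$, and the natural transformations $\eta$ and $\epsilon$, are constructed canonically enough to be manifestly compatible with \'etale descent, and verifying point (i), that $\Psi(M)$ meets the counit condition of Definition \ref{def:Ysheaf}. Once the problem is localised, the remaining input is the classical interplay between the cohomology of $\mathbb{P}^{r-1}$ and Morita theory, and is routine.
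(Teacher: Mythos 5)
Your pair of functors is exactly the one in the paper's proof: $\Phi(F)=\pi_*(F\otimes G^\vee)=\pi_*(\mathcal{H}{\it{om}}(G,F))$ and $\Psi(M)=\pi^*M\otimes_{\mathcal{E}{\it{nd}}(G)}G$, so the equivalence itself is the same; where you genuinely diverge is in the verification. The paper stops after describing the functors and cites Reede \cite[Lem.~1.10]{Reede2018} for the fact that they are mutually inverse, whereas you give a self-contained argument: an adjunction $\Psi\dashv\Phi$ (tensor-hom over $\pi^*\mathcal{A}\cong\mathcal{E}{\it{nd}}(G^\vee)$ combined with the $(\pi^*,\pi_*)$-adjunction for module sheaves), then an \'etale-local reduction, via flat base change for $\pi_*$, to the split case $Y_U\cong\mathbb{P}^{r-1}_U$, where $G_U\cong\mathcal{O}(1)^{\oplus r}$, $\mathcal{A}_U\cong M_r(\mathcal{O}_U)$, and both the $Y$-sheaf condition for $\Psi(M)$ and the invertibility of unit and counit collapse to Morita theory together with $R\pi_*\mathcal{O}(-1)=0$ and $\pi_*\mathcal{O}_{\mathbb{P}^{r-1}_U}=\mathcal{O}_U$. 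This is sound, and it buys a proof independent of Reede's paper at the cost of the descent bookkeeping you flag. Two points to tighten: (i) with your stated conventions ($\pi^*M$ a left and $G$ a right $\pi^*\mathcal{A}$-module) the tensor product should be written $G\otimes_{\pi^*\mathcal{A}}\pi^*M$, or else keep $\pi^*M\otimes_{\mathcal{E}{\it{nd}}(G)}G$ as the paper does — purely notational, but worth fixing; (ii) identifying $G|_{Y_U}$ with $G_U$ requires more than the uniqueness clause of Lemma \ref{lem:sheaf-G}: you must check that the restricted extension $0\to\mathcal{O}_{Y_U}\to G|_{Y_U}\to T_{Y_U/U}\to 0$ remains non-trivial, which holds because the extension class is a section of $R^1\pi_*\Omega_{Y/X}\cong\mathcal{O}_X$, hence (as $X$ is connected and projective) a non-zero constant, which restricts to a nowhere-vanishing section over $U$.
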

\begin{proof}
    Let us describe the equivalence. If $F$ is a $Y$-sheaf, then $\mathcal{H}{\textit{om}}(G, F)$ is a $\mathcal{E}{\emph{nd}}(G^\vee)$-module. Hence $\pi_*(\mathcal{H}{\textit{om}}(G, F))$ is a $\pi_* \mathcal{E}{\emph{nd}}(G^\vee)$ module, but the latter is identified with $\mathcal{A}$. On the other hand, if $F$ is a $\pi_* \mathcal{E}{\emph{nd}}(G^\vee)$-module, then $\pi^*F$ is a $\pi^*\pi_* \mathcal{E}{\emph{nd}}(G^\vee) \cong \mathcal{E}{\emph{nd}}(G^\vee)$-module (Lemma \ref{lem:sheaf-G}). Hence it is a right $\mathcal{E}{\emph{nd}}(G)$-module. Then $\pi^*F \otimes_{\mathcal{E}{\emph{nd}}(G)} G$ is the corresponding $Y$-sheaf. Reede checks that this is an equivalence \cite[Lem.~1.10]{Reede2018}.
\end{proof}

The category of $Y$-sheaves only depends on $\alpha(Y) \in H^2(X, \mathbb{G}_m)$. We review the theory of coherent sheaves on gerbes and explain how to recover $\Coh(X,\mathcal{A})$, $\Coh(X,Y)$. The following result can be found in work of Lieblich \cite[Prop. 2.1.1.13, Prop. 2.2.1.6]{Lieblich_2007}. 
\begin{proposition}[Lieblich]
    \label{prp:eigensheaf-decomp}
    Let $\pi : \mathcal{G} \to X$ be a $\mathbb{G}_m$-gerbe. Every quasi-coherent sheaf $F$ on $\mathcal{G}$ admits a canonical $\mathbb{G}_m$-action. Hence, it admits a functorial weight decomposition
    \[
        F = \bigoplus_{n \in \mathbb{Z}} F_n.
    \]
    In fact, the entire category of quasi-coherent sheaves admits a weight decomposition
    \[
        \QCoh(\mathcal{G}) = \bigoplus_{n \in \mathbb{Z}} \QCoh(\mathcal{G})_n.
    \]
    Furthermore, the pullback $\pi^* : \QCoh(X) \to \QCoh(\mathcal{G})$ induces an equivalence between $\QCoh(X)$ and $\QCoh(\mathcal{G})_0$, these are the sheaves on which the action is trivial.
    For a $\mu_r$-gerbe the same result holds, except that the grading is now over $\mathbb{Z}/r\mathbb{Z}$. 
\end{proposition}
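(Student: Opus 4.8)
The plan is to reduce everything to the representation theory of the band $\mathbb{G}_m$ together with fppf descent along the gerbe. The starting observation is that a $\mathbb{G}_m$-gerbe is precisely a stack whose relative inertia is identified with $\mathbb{G}_m$: for every test pair $(T \to X, x \in \mathcal{G}(T))$ the banding isomorphism $\psi_{U,x}$ supplies a central identification $\mathbb{G}_m(T) \cong \Aut(x)$, and these automorphisms act functorially on the restriction $x^*F$ of any $F \in \QCoh(\mathcal{G})$. This is the canonical $\mathbb{G}_m$-action. Phrased Hopf-algebraically it is a coaction $x^*F \to x^*F \otimes_{\mathcal{O}_T} \mathcal{O}_T[t,t^{-1}]$, natural in $(T,x)$, i.e.\ a $\mathbb{G}_m$-action on the object $F$ of $\QCoh(\mathcal{G})$.

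Next I would invoke complete reducibility of $\mathbb{G}_m$-representations. Since $\mathcal{O}[\mathbb{G}_m] = \bigoplus_{n \in \mathbb{Z}} \mathcal{O}\cdot t^n$ as a coalgebra, with $t^n$ spanning the character $\chi_n$, every comodule splits canonically into isotypic pieces $F = \bigoplus_{n} F_n$, where $F_n$ is the subsheaf on which the action is by $\chi_n$. The projectors onto the $F_n$ are natural in $F$, so the splitting is functorial and promotes to a direct-sum decomposition of the category $\QCoh(\mathcal{G}) = \bigoplus_n \QCoh(\mathcal{G})_n$. For a $\mu_r$-gerbe one repeats this verbatim: since we work over $\mathbb{C}$, the diagonalizable group $\mu_r$ is linearly reductive with character group $\mathbb{Z}/r\mathbb{Z}$, whence the grading now runs over $\mathbb{Z}/r\mathbb{Z}$.

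It remains to identify $\QCoh(\mathcal{G})_0$ with $\QCoh(X)$ via $\pi^*$. That $\pi^*$ lands in weight $0$ is immediate: for $E \in \QCoh(X)$ and any section $x$ over $T$, the restriction $x^*\pi^*E = (\pi \circ x)^*E$ only sees the composite $T \to X$, on which the inertia automorphisms of $x$ (covering $\id_T$) act trivially. To see that $\pi^*$ is an equivalence onto weight $0$, I would use that a gerbe admits local sections, so there is an fppf cover $U \to X$ with $\mathcal{G} \times_X U \cong [U/\mathbb{G}_m] = B\mathbb{G}_m \times U$ (trivial action); over this trivialization the assertion becomes the elementary fact that a $\mathbb{G}_m$-equivariant sheaf on $U$ with trivial action is the same as a $\mathbb{Z}$-graded sheaf, whose weight-$0$ part recovers $\QCoh(U)$. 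The point I expect to be the main obstacle is gluing these local equivalences. The gerbe's transition data are morphisms of trivial $\mathbb{G}_m$-gerbes, which twist a weight-$n$ sheaf by the $n$-th tensor power of the \v{C}ech cocycle $\{\alpha_{ijk}\}$ representing $\alpha(\mathcal{G})$; for $n \neq 0$ this is exactly what turns $\QCoh(\mathcal{G})_n$ into a category of powers of C\u{a}ld\u{a}raru's twisted sheaves and obstructs descent, whereas for $n = 0$ the cocycle acts by its $0$-th power, hence trivially, so the local equivalences glue and $\pi^*$ descends to a global equivalence $\QCoh(X) \simeq \QCoh(\mathcal{G})_0$. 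Equivalently, and perhaps more cleanly, one verifies full faithfulness by the projection formula once $R\pi_*\mathcal{O}_{\mathcal{G}} = \mathcal{O}_X$ in weight $0$, and essential surjectivity by descending a weight-$0$ sheaf along the fppf cover $U \to X$, using that its two pullbacks to $\mathcal{G} \times_X U$ agree canonically in weight $0$.
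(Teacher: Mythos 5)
Your proposal is correct; note that the paper itself gives no proof of this proposition but simply cites Lieblich \cite[Prop.~2.1.1.13, Prop.~2.2.1.6]{Lieblich_2007}, and your argument is essentially the standard one found there: the banding identifies the relative inertia with $\mathbb{G}_m$ (resp.\ $\mu_r$), acting canonically on fibres of any quasi-coherent sheaf; diagonalizability of the band yields the functorial comodule (weight) decomposition; and $\pi^*$ is identified with the weight-$0$ part by local triviality of the gerbe plus descent (or, as you note, by adjunction and the projection formula, since $\pi_*\mathcal{O}_{\mathcal{G}} = \mathcal{O}_X$ and the counit is an isomorphism locally). Your observation that the transition data twist weight-$n$ sheaves by $n$-th powers of the cocycle---so that descent is unobstructed exactly in weight $0$, while weight $n \neq 0$ produces $\alpha^n$-twisted sheaves---is precisely the mechanism underlying the whole theory, so no gap remains.
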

%In general: for a left adjoint $F$ of $G$ where $F$ is an equivalence, we have $F \circ G \cong \mathrm{id}$ and $G \circ F \cong \mathrm{id}$. (E.g.~adjoint equivalence + uniqueness right adjoints.)
%Hence above $\pi^*$ forms an equivalence with $\pi_*$.

Tensor product and $\mathcal{H}{\it{om}}$ respects this decomposition, in the sense that if $F \in \QCoh(\mathcal{G})_n$ and $F' \in \QCoh(\mathcal{G})_m$, then $F \otimes F' \in \QCoh(\mathcal{G})_{n+m}$ and $\mathcal{H}{\it{om}}(F, F') \in \QCoh(\mathcal{G})_{-n+m}$. In particular, if $F$ is of pure weight, then $\mathcal{H}{\it{om}}(F, F)$ is of weight zero and thus descends to $X$.

\begin{definition}
    If $\pi : \mathcal{G} \to X$ is a $\mathbb{G}_m$-gerbe, we define the \emph{$\mathcal{G}$-twisted sheaves} as the coherent objects of $\QCoh(\mathcal{G})_1$. We denote the category of $\mathcal{G}$-twisted sheaves by $\Coh(\mathcal{G})_1$. We make analogous definitions for a $\mu_r$-gerbe on $X$, except now $1 \in \mathbb{Z}/r\mathbb{Z}$. 
\end{definition}

Let $\mathcal{A}$ be an Azumaya algebra on $X$ mapping to a $\mathbb{G}_m$-gerbe $\pi : \mathcal{G}  \to X$. Then there exists a rank $r$ locally free sheaf $E$ on $\mathcal{G}$ such that $\mathcal{E}{\it{nd}}(E^\vee) \cong \mathcal{A}|_{\mathcal{G}}$. One can verify that $E$ is in fact a twisted sheaf \cite[Cor.~2.2.2.2]{Lieblich_2007}. If $F$ is any $\mathcal{G}$-twisted sheaf, then $\mathcal{H}{\it{om}}(E, F)$ is again untwisted, i.e., it is (uniquely) in the image of $\pi^*$ by Proposition \ref{prp:eigensheaf-decomp}. Note that  $\mathcal{H}{\it{om}}(E, F)$  is a right $\mathcal{E}{\it{nd}}(E)$-module and therefore a left $\mathcal{E}{\it{nd}}(E)^{\mathrm{op}} \cong \mathcal{E}{\it{nd}}(E^\vee)$-module. The following is a consequence of Morita equivalence \cite{Lieblich_2007, Caldararu2000}:
\begin{proposition} \label{prp:gerbe-equivalence}
    Let $\mathcal{A}$ be an Azumaya algebra on $X$ mapping to a $\mathbb{G}_m$-gerbe $\mathcal{G}  \to X$. The assignment $F \mapsto \pi_* \mathcal{H}{\it{om}}(E, F)$ gives an equivalence between $\Coh(\mathcal{G})_1$ and $\Coh(X,\mathcal{A})$.\
\end{proposition}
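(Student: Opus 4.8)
The plan is to recognise this as a gerby, weight-graded version of classical Morita equivalence for the matrix algebra, with all the care spent on tracking the $\mathbb{G}_m$-grading of Proposition \ref{prp:eigensheaf-decomp} and the left/right module conventions.

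First I would confirm that $\Phi(F) := \pi_* \mathcal{H}{\it{om}}(E,F)$ genuinely lands in $\Coh(X,\mathcal{A})$. Since $E$ is locally free of pure weight $1$, the sheaf $\mathcal{H}{\it{om}}(E,F) = E^\vee \otimes F$ has weight $-1+1 = 0$, so by Proposition \ref{prp:eigensheaf-decomp} it is (uniquely) pulled back from $X$, and $\pi_*$ returns the corresponding coherent sheaf. The precomposition action of $\mathcal{E}{\it{nd}}(E)$ makes it a right $\mathcal{E}{\it{nd}}(E)$-module, hence a left $\mathcal{E}{\it{nd}}(E)^{\mathrm{op}} \cong \mathcal{E}{\it{nd}}(E^\vee)$-module; as $\mathcal{E}{\it{nd}}(E^\vee) \cong \mathcal{A}|_{\mathcal{G}} = \pi^*\mathcal{A}$ and the module has weight $0$, this descends to a left $\mathcal{A}$-module structure on $\Phi(F)$.

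Next I would write down the candidate quasi-inverse $\Psi(N) := E \otimes_{\pi^*\mathcal{A}} \pi^* N$, where $E$ is regarded as a right $\pi^*\mathcal{A} \cong \mathcal{E}{\it{nd}}(E^\vee)$-module (via $\mathcal{E}{\it{nd}}(E^\vee) \cong \mathcal{E}{\it{nd}}(E)^{\mathrm{op}}$ acting on the right of $E$) and $\pi^*N$ is the weight-$0$ left $\pi^*\mathcal{A}$-module pulled back from the $\mathcal{A}$-module $N$. This is coherent because $E$ is locally free of finite rank, and it has weight $1$ because $E$ does, so $\Psi(N) \in \Coh(\mathcal{G})_1$. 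The weight bookkeeping is exactly what couples the two sides: $\mathcal{H}{\it{om}}(E,-)$ lowers weight by $1$ and $E \otimes_{\pi^*\mathcal{A}} (-)$ raises it by $1$, so weight-$1$ twisted sheaves match weight-$0$ $\pi^*\mathcal{A}$-modules, which by Proposition \ref{prp:eigensheaf-decomp} are precisely those descending from $\Coh(X,\mathcal{A})$.

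It then remains to check that the unit $\mathrm{id} \Rightarrow \Phi \Psi$ and counit $\Psi \Phi \Rightarrow \mathrm{id}$ are isomorphisms; this is the only substantive point, and I expect it to be the main obstacle only insofar as the conventions must line up. It is, however, a local question on $\mathcal{G}$: being an isomorphism of coherent sheaves may be tested after an fppf cover trivialising both the gerbe and $E$. Over such a cover $E$ becomes free of rank $r$, $\mathcal{E}{\it{nd}}(E^\vee)$ becomes $M_r(\mathcal{O})$, and the two natural transformations reduce to the classical Morita isomorphisms $\mathcal{O}^{\oplus r} \otimes_{M_r(\mathcal{O})} \mathcal{H}{\it{om}}(\mathcal{O}^{\oplus r},F) \cong F$ and $\mathcal{H}{\it{om}}(\mathcal{O}^{\oplus r}, \mathcal{O}^{\oplus r} \otimes_{M_r(\mathcal{O})} N) \cong N$ for the progenerator $\mathcal{O}^{\oplus r}$. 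Hence $\Phi$ and $\Psi$ are mutually quasi-inverse equivalences. The entire argument hinges on identifying $E$ as a twisted locally free progenerator with $\mathcal{E}{\it{nd}}(E^\vee) \cong \mathcal{A}|_{\mathcal{G}}$ (\cite[Cor.~2.2.2.2]{Lieblich_2007}), after which Morita theory forces the equivalence; the real care is only in matching the opposite-algebra and left/right conventions so that the $\mathcal{A}$-module structures on the two constructions coincide.
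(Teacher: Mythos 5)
Your proposal is correct and follows essentially the same route as the paper: the paper sets up exactly your weight bookkeeping (weight of $\mathcal{H}{\it{om}}(E,F)$ is $0$, hence descends by Proposition \ref{prp:eigensheaf-decomp}) and the left/right conventions via $\mathcal{E}{\it{nd}}(E)^{\mathrm{op}} \cong \mathcal{E}{\it{nd}}(E^\vee) \cong \mathcal{A}|_{\mathcal{G}}$, and then simply declares the equivalence to be a consequence of Morita theory, citing Lieblich and C\u{a}ld\u{a}raru. Your write-up supplies the details the paper leaves to those references --- the explicit quasi-inverse $E \otimes_{\pi^*\mathcal{A}} \pi^*(-)$ and the local reduction of the unit and counit to the classical Morita isomorphisms for $M_r(\mathcal{O})$ --- and these are sound.
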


This discussion also works for the associated $\mu_r$-gerbe, instead of the $\mathbb{G}_m$-gerbe. 

In summary: for a Brauer-Severi variety $Y \to X$ with corresponding Azumaya algebra $\mathcal{A}$ on $X$ with corresponding $\mathbb{G}_m$-gerbe $\mathcal{G}$, we have the three equivalent categories
\[
\Coh(X,Y), \quad \Coh(X,\mathcal{A}), \quad \Coh(\mathcal{G})_1,
\]
which in particular shows that (up to equivalence) the category only depends on the gerbe $\mathcal{G}$. In the next section, we view the $\mathbb{G}_m$-gerbe $\mathcal{G}$ as a Brauer class and establish in a different way that $\Coh(X,\mathcal{A})$ only depends on this Brauer class (Example \ref{exa:morita-azumaya}). In view of this discussion, it seems reasonable to call any of these equivalent  categories the \emph{category of twisted sheaves}.

\subsection{Brauer group} \label{sec:brauer-group}

The material of the previous sections is closely related to the Brauer group. We review its most important properties.

\begin{definition}
    Define two Azumaya algebras $\mathcal{A}$ and $\mathcal{A}'$ on $X$ to be \emph{Brauer equivalent} if there exist locally free sheaves $E$ and $E'$ of finite rank on $X$ such that $\mathcal{A} \otimes \mathcal{E}{\it{nd}}(E) \cong \mathcal{A}' \otimes \mathcal{E}{\it{nd}}(E')$. The resulting equivalence classes form a group, where $[\mathcal{A}] + [\mathcal{A}'] = [\mathcal{A} \otimes \mathcal{A}']$ and the inverse of $[\mathcal{A}] $ is $[\mathcal{A}^{\text{op}}]$. This group is called the \emph{Brauer group} $\Br(X)$ of $X$. For any $r>1$, we have a natural homomorphism $H^1(X,\mathrm{PGL}_r) \to \mathrm{Br}(X)$.
\end{definition}
The fact that the inverse is given by the opposite algebra follows from an alternative definition of Azumaya algebras as coherent $\mathcal{O}_X$-algebras $\mathcal{A}$, which are locally free, and for which
\begin{equation}
    \mathcal{A} \otimes \mathcal{A}^{\text{op}} \to \mathcal{E}{\it{nd}}(\mathcal{A}), \quad a \otimes a' \mapsto (x \mapsto axa')
 \end{equation}
 is an isomorphism.

\begin{proposition}
    The map $H^1(X,\mathrm{PGL}_r) \to H^2(X,\mathbb{G}_m)$ descends to an injective homomorphism $\Br(X) \to H^2(X, \mathbb{G}_m)$. Its image is exactly the collection of torsion classes in $H^2(X, \mathbb{G}_m)$.
\end{proposition}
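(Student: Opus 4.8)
The plan is to handle the three assertions---that $\alpha$ descends to $\Br(X)$, that the descended map is injective, and that its image is the torsion subgroup---in turn, the crux being the additivity of the connecting homomorphism $\alpha$ under tensor product. First I would prove
\[
\alpha(\mathcal{A} \otimes \mathcal{A}') = \alpha(\mathcal{A}) + \alpha(\mathcal{A}') \in H^2(X,\mathbb{G}_m)
\]
for Azumaya algebras of degrees $r$ and $s$. I would obtain this from functoriality of connecting maps applied to the morphism of central extensions of \'etale sheaves of groups induced by the Kronecker product $M_r(\mathcal{O}_X)\otimes M_s(\mathcal{O}_X)\cong M_{rs}(\mathcal{O}_X)$: the maps $\GL_r\times\GL_s\to\GL_{rs}$ and $\PGL_r\times\PGL_s\to\PGL_{rs}$ cover the multiplication $\mathbb{G}_m\times\mathbb{G}_m\to\mathbb{G}_m$ on centres, and the torsor of $\mathcal{A}\otimes\mathcal{A}'$ is the image of the pair under the middle-quotient map. (In the \v{C}ech model this is just the statement that the product of the representing $2$-cocycles represents $\mathcal{A}\otimes\mathcal{A}'$.) Since $\mathcal{E}{\it{nd}}(E)$ is split, so that $\alpha(\mathcal{E}{\it{nd}}(E))=0$ by Example~\ref{exa:trivial-algebras} and the triviality criterion recorded after Theorem~\ref{thm:period-index}, additivity then forces $\alpha(\mathcal{A})=\alpha(\mathcal{A}')$ whenever $\mathcal{A}\otimes\mathcal{E}{\it{nd}}(E)\cong\mathcal{A}'\otimes\mathcal{E}{\it{nd}}(E')$. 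Hence $\alpha$ is constant on Brauer classes and descends to a homomorphism $\Br(X)\to H^2(X,\mathbb{G}_m)$, the homomorphism property being exactly additivity together with the fact that the law on $\Br(X)$ is induced by tensor product.

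For injectivity I would compute the kernel directly: if $\alpha(\mathcal{A})=0$, the triviality criterion provides a global section of the $\mathbb{G}_m$-gerbe of Example~\ref{exa:gerbe-az}, i.e.\ a locally free sheaf $E$ with $\mathcal{E}{\it{nd}}(E^\vee)\cong\mathcal{A}$; then $\mathcal{A}$ is split and $[\mathcal{A}]=0$ in $\Br(X)$, so the kernel is trivial. For the image I would prove two inclusions. That it lies in the torsion subgroup follows from the triangle~\eqref{diag:factor}: writing $\alpha(\mathcal{A})=o(w(\mathcal{A}))$ with $w(\mathcal{A})\in H^2(X,\mu_r)$ and using that $H^2(X,\mu_r)$ is annihilated by $r$ gives $r\,\alpha(\mathcal{A})=0$. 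Conversely, every torsion class is realised: this is precisely Theorem~\ref{thm:gabber}, which yields a Brauer-Severi variety, equivalently (Proposition~\ref{prp:azu-bs-equiv}) an Azumaya algebra, with prescribed torsion class. Combining the two inclusions identifies the image with the full torsion subgroup.

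The only step demanding genuine care is the additivity of $\alpha$; once it is in hand, injectivity and the description of the image are immediate consequences of the triviality criterion and of Theorem~\ref{thm:gabber}, respectively. The hard part is therefore the clean setup of the morphism of central extensions---or, equivalently, the cocycle bookkeeping---that produces $\alpha(\mathcal{A}\otimes\mathcal{A}')=\alpha(\mathcal{A})+\alpha(\mathcal{A}')$.
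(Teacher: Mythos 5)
Your proof is correct, and it reaches the same three conclusions as the paper, but the route through the well-definedness step is genuinely different. The paper handles descent purely gerbe-theoretically: it shows the $\mathbb{G}_m$-gerbes of $\mathcal{A}$ and $\mathcal{A}\otimes\mathcal{E}{\it{nd}}(E)$ are equivalent by writing down the explicit morphism of gerbes sending a trivialising sheaf $E'$ to $E'\otimes E$, and then invoking the fact that every morphism of $\mathbb{G}_m$-gerbes is an isomorphism (Proposition \ref{prp:properties-gerbes}); no cocycle computation and no additivity statement is needed for this. You instead prove the additivity $\alpha(\mathcal{A}\otimes\mathcal{A}')=\alpha(\mathcal{A})+\alpha(\mathcal{A}')$ first, via the morphism of central extensions induced by the Kronecker product (equivalently, the \v{C}ech computation that the $2$-cocycle of a tensor product is the product of the $2$-cocycles), and then deduce descent from additivity together with $\alpha(\mathcal{E}{\it{nd}}(E))=0$. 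Your route is heavier in setup --- it relies on functoriality of the connecting map in degree-$2$ nonabelian cohomology --- but it buys something the paper's proof leaves implicit: the claim that the descended map is a group \emph{homomorphism} is exactly your additivity statement, which the paper never verifies (its proof establishes only well-definedness, injectivity, and the image). For injectivity the two arguments are essentially equivalent restatements of the same fact: the paper quotes exactness of \eqref{keyses}, so that $\ker\alpha$ is the image of $H^1(X,\GL_r)$, while you use the gerbe-section criterion of Example \ref{exa:gerbe-az}; and for the image both proofs are identical, combining the factorization \eqref{diag:factor} (torsion) with Theorem \ref{thm:gabber} (surjectivity onto torsion). One cosmetic slip: the triviality criterion you cite is recorded in the paper just \emph{before} Theorems \ref{thm:gabber} and \ref{thm:period-index}, not after; this has no mathematical consequence.
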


\begin{proof}
    It suffices to show that for any Azumaya algebra $\mathcal{A}$ on $X$ and locally free sheaf $E$ of finite rank on $X$, the $\mathbb{G}_m$-gerbes associated to $\mathcal{A}$ and $\mathcal{A} \otimes \mathcal{E}{\it{nd}}(E)$ are equivalent. This is done as follows: if a locally free sheaf $E'$ on a scheme $T \to X$ trivialises $\mathcal{A}$, then $E' \otimes E$ trivialises $\mathcal{A} \otimes \mathcal{E}{\it{nd}}(E)$. This defines a $\mathbb{G}_m$-equivariant morphism between gerbes, which is therefore an isomorphism (Proposition \ref{prp:properties-gerbes}).
        
    By \eqref{keyses}, the kernel of the map $\alpha : H^1(X,\PGL_r) \to H^2(X,\mathbb{G}_m)$ is the image of $H^1(X,\mathrm{GL}_r) \to H^1(X,\PGL_r)$, which precisely consists of trivial Azumaya algebras $\mathcal{A} \cong \mathcal{E}{\it{nd}}(E)$ on $X$, from which the injectivity statement follows.

   By \eqref{diag:factor}, any element in the image of $\alpha$ is torsion. Moreover, all torsion elements of $H^2(X,\mathbb{G}_m)$ are in the image by Theorem \ref{thm:gabber}.
\end{proof}

\begin{remark}
    For a Noetherian scheme $Z$, the torsion part of $H^2(Z, \mathbb{G}_m)$ is also known as the \emph{cohomological Brauer group} of $Z$. For a regular Noetherian scheme $Z$, $H^2(Z, \mathbb{G}_m)$ is a torsion group \cite[Cor.~IV.2.6]{Milne1980}, \cite[Cor.~3.1.3.4]{Lieblich2008}, so in our setting, $\Br(X) \cong H^2(X, \mathbb{G}_m)$. 
\end{remark}

From Proposition \ref{prp:gerbe-equivalence} one can see that the category of left $\mathcal{A}$-modules only depends on the Brauer class of $\mathcal{A}$. One can also see this explicitly as follows.

\begin{example} \label{exa:morita-azumaya}
    For an Azumaya algebra $\mathcal{A}$ on $X$ and locally free sheaf $E$ of finite rank on $X$, we want to show that $\mathcal{A}$ and $\mathcal{A} \otimes \mathcal{E}nd(E)$ have equivalent categories of left modules. This follows from Morita theory \cite[Sec.~18D]{Lam1999}. Indeed, $\mathcal{A} \otimes \mathcal{E}nd(E)$ is a $(\mathcal{A}, \mathcal{A}\otimes \mathcal{E}{\it{nd}}(E))$ bimodule, which is locally free both as $\mathcal{A}$- and $\mathcal{A} \otimes \mathcal{E}{\it{nd}}(E)$-module. Explicitly, the functor 
\[
\mathrm{Coh}(X,\mathcal{A}) \to \mathrm{Coh}(X,\mathcal{A} \otimes \mathcal{E}nd(E)), \quad F \mapsto F \otimes E,
\] 
is an equivalence.
\end{example}

One might wonder if the converse is true: if $\mathcal{A}$ and $\mathcal{A}'$ are such that their categories of modules are equivalent, are they also Brauer equivalent? C\u{a}ld\u{a}raru's conjecture, now a theorem \cite{Antieau2016}, says that this is almost the case.

\begin{theorem}[Antieau] \label{thm:antieau-reconstruction}
    Let $X$ and $Y$ be quasi-compact and quasi-separated schemes over a 
    commutative ring $R$. Suppose that $\mathcal{A}_X$ and $\mathcal{A}_Y$ are 
    Azumaya algebras on $X$ respectively $Y$ such that there is an $R$-linear equivalence between 
    $\QCoh(\mathcal{A}_X)$ and $\QCoh(\mathcal{A}_Y)$. Then there is an 
    isomorphism $\varphi : X \to Y$ of $R$-schemes such that $\varphi^*\mathcal{A}_Y$ is 
    Brauer equivalent to $\mathcal{A}_X$.
\end{theorem}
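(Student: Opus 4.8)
The plan is to prove this as a reconstruction theorem in the spirit of Gabriel and Rosenberg: first recover the underlying scheme from the bare abelian category $\QCoh(\mathcal{A}_X)$, and then recognise the Brauer class as a Morita-theoretic invariant of the category relative to that reconstructed scheme. The starting observation is that $\QCoh(\mathcal{A}_X)$ is a Grothendieck abelian category, and that \'etale-locally on $X$ the Azumaya algebra $\mathcal{A}_X$ becomes a matrix algebra $M_r(\mathcal{O}_U)$, so by Morita equivalence (Example \ref{exa:morita-azumaya}) one has $\QCoh(\mathcal{A}_X)|_U \simeq \QCoh(U)$. Thus $\QCoh(\mathcal{A}_X)$ is, locally on $X$, indistinguishable from the category of quasi-coherent sheaves on $X$ itself.

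First I would apply Rosenberg's reconstruction theorem, which produces from a suitably nice Grothendieck category $\mathcal{C}$ a ringed space $\mathrm{Spec}(\mathcal{C})$, functorial in equivalences, with the property that $\mathrm{Spec}(\QCoh(X)) \cong X$ for $X$ quasi-compact and quasi-separated. The essential point is that this spectrum is built from the lattice of subobjects and from endomorphism rings of objects, while the relevant structure sheaf is extracted as the \emph{center} of the category. Since the center of the Azumaya algebra $\mathcal{A}_X$ is $\mathcal{O}_X$ rather than the non-commutative $\mathcal{A}_X$, the reconstruction is insensitive to the Azumaya twist: one recovers the commutative datum $(X,\mathcal{O}_X)$. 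Feeding the given $R$-linear equivalence $\QCoh(\mathcal{A}_X)\simeq\QCoh(\mathcal{A}_Y)$ into this functorial construction then yields an isomorphism of $R$-schemes $\varphi : X \xrightarrow{\sim} Y$.

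Next I would transport along $\varphi$ to reduce to the case $X = Y$, where one now has an $\mathcal{O}_X$-linear equivalence $\QCoh(\mathcal{A}_X) \simeq \QCoh(\mathcal{A}_Y)$ of module categories over the reconstructed structure sheaf. It remains to show that two Azumaya algebras on the same scheme with $\mathcal{O}_X$-linearly equivalent module categories are Brauer equivalent. Here I would invoke Morita theory relative to $\mathcal{O}_X$: such an equivalence is implemented by tensoring with an invertible $(\mathcal{A}_X,\mathcal{A}_Y)$-bimodule that is locally free over $\mathcal{O}_X$, and chasing this bimodule through the gerbe description of the Brauer class (Proposition \ref{prp:gerbe-equivalence} and Proposition \ref{prp:properties-gerbes}) shows that the associated $\mathbb{G}_m$-gerbes, hence the Brauer classes $\alpha(\mathcal{A}_X)$ and $\varphi^*\alpha(\mathcal{A}_Y)$, coincide.

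The hard part will be making the reconstruction genuinely intrinsic and functorial for a merely $R$-linear equivalence. One must verify that Rosenberg's spectrum, together with its structure sheaf recovered as a categorical center, really does reconstruct $X$ from $\QCoh(\mathcal{A}_X)$ despite the twisting, and that the recovered datum is natural in abstract abelian equivalences so that $\varphi$ genuinely exists. Upgrading the abstract equivalence to an honestly $\mathcal{O}_X$-linear one over $\varphi$ — which is what licenses the Morita-theoretic final step — is the delicate point, and it is precisely here that the quasi-compactness and quasi-separatedness hypotheses, and the full strength of the Gabriel--Rosenberg machinery, are needed.
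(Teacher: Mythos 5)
First, a point of order: the paper does not prove this statement at all. Theorem \ref{thm:antieau-reconstruction} is quoted as a black box from the literature --- it is Antieau's resolution of C\u{a}ld\u{a}raru's conjecture \cite{Antieau2016} --- so there is no in-paper proof to compare against; what you are really attempting is a proof of Antieau's theorem itself.

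Your strategy is, in outline, the right one, and in fact it mirrors the actual proof in the literature (a Gabriel--Rosenberg-type spectrum to recover the scheme, structure sheaf extracted from centers of localized categories, then Morita theory over the reconstructed $\mathcal{O}_X$ to compare Brauer classes; compare also Perego's earlier Gabriel-type theorem for coherent twisted sheaves on noetherian schemes). But as a proof it has a genuine gap, and you have located it yourself in your final paragraph: Rosenberg's reconstruction theorem, as it stands, is a statement about $\QCoh(X)$, not about module categories $\QCoh(\mathcal{A}_X)$ over a sheaf of noncommutative algebras. The assertion that ``the reconstruction is insensitive to the Azumaya twist because the center of $\mathcal{A}_X$ is $\mathcal{O}_X$'' is not an argument: the spectrum is built from the global abstract category, and the fact that $\QCoh(\mathcal{A}_X)$ is merely \'etale-locally equivalent to $\QCoh(U)$ does not by itself show that the global spectrum, with its structure sheaf, comes out to be $(X,\mathcal{O}_X)$, nor that the construction is functorial under bare $R$-linear equivalences, which a priori respect no $\QCoh(X)$-module structure whatsoever. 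Relatedly, knowing that the center of the category is $\Gamma(X,\mathcal{O}_X)$ only gives $\Gamma(X,\mathcal{O}_X)$-linearity of the equivalence, whereas your Morita step needs sheaf-level $\mathcal{O}_X$-linearity, i.e.\ compatibility with localization over the reconstructed scheme. Establishing exactly these points --- that the spectrum of a twisted category recovers $X$, and that this is natural enough both to produce $\varphi$ and to upgrade the given equivalence to an $\mathcal{O}_X$-linear one --- is the entire technical content of \cite{Antieau2016}. So what you have written is a correct plan of attack with the theorem's substance deferred to the acknowledged ``hard part,'' not a self-contained proof.
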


\section{Moduli of twisted sheaves} \label{sec:moduli-of-twisted-sheaves}

\subsection{Chern characters}

From now on we will in addition assume that $\dim(X) = 2$, i.e., $X$ is a surface, and that $H_1(X, \mathbb{Z})$ is torsion free (in the complex analytic topology). By Poincar\'e duality and the universal coefficient theorem, this implies that all groups $H_i(X,\mathbb{Z})$, $H^i(X,\mathbb{Z})$ are torsion free.

In the next lemma, we use the assumption that $H_1(X, \mathbb{Z})$ is torsion free to construct lifts of Stiefel-Whitney classes. 
\begin{lemma} \label{lmm:rep-w-xi}
    There is an isomorphism $H^2(X, \mathbb{Z})/rH^2(X, \mathbb{Z}) \cong H^2(X, \mu_r)$ induced by the long exact sequence associated to the short exact sequence 
    \[
    0 \to \mathbb{Z} \stackrel{r \cdot}{\to} \mathbb{Z} \to \mathbb{Z}/r\mathbb{Z} \to 0,
    \]
    where $\mathbb{Z} / r\mathbb{Z} \cong \mu_r$. In other words, every $w \in H^2(X, \mu_r)$ is represented by a $\xi \in H^2(X, \mathbb{Z})$, which is unique up to translation by multiples of $r$.
\end{lemma}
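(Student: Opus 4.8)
The plan is to extract the desired isomorphism directly from the long exact sequence in singular cohomology associated to the short exact sequence of sheaves (constant sheaves, or equivalently étale sheaves via the comparison theorem)
\[
0 \to \mathbb{Z} \stackrel{r \cdot}{\to} \mathbb{Z} \to \mathbb{Z}/r\mathbb{Z} \to 0.
\]
First I would write down the relevant segment of the long exact sequence,
\[
H^2(X,\mathbb{Z}) \stackrel{r \cdot}{\to} H^2(X,\mathbb{Z}) \to H^2(X,\mathbb{Z}/r\mathbb{Z}) \stackrel{\delta}{\to} H^3(X,\mathbb{Z}) \stackrel{r \cdot}{\to} H^3(X,\mathbb{Z}).
\]
Exactness at $H^2(X,\mathbb{Z}/r\mathbb{Z})$ gives a short exact sequence
\[
0 \to H^2(X,\mathbb{Z})/rH^2(X,\mathbb{Z}) \to H^2(X,\mathbb{Z}/r\mathbb{Z}) \to H^3(X,\mathbb{Z})[r] \to 0,
\]
where $H^3(X,\mathbb{Z})[r]$ denotes the $r$-torsion subgroup, i.e.\ the kernel of multiplication by $r$ on $H^3(X,\mathbb{Z})$.

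The heart of the argument, and the only place the hypothesis is used, is to show that the rightmost term vanishes. This follows immediately from the standing assumption: since $H_1(X,\mathbb{Z})$ is torsion free, the discussion opening Section \ref{sec:moduli-of-twisted-sheaves} (Poincaré duality together with the universal coefficient theorem) shows that \emph{all} the integral (co)homology groups $H_i(X,\mathbb{Z})$, $H^i(X,\mathbb{Z})$ are torsion free. In particular $H^3(X,\mathbb{Z})$ is torsion free, so $H^3(X,\mathbb{Z})[r] = 0$, and the connecting map $\delta$ is zero. Hence the natural map $H^2(X,\mathbb{Z})/rH^2(X,\mathbb{Z}) \to H^2(X,\mathbb{Z}/r\mathbb{Z})$ is an isomorphism.

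To finish, I would identify $H^2(X,\mathbb{Z}/r\mathbb{Z})$ with $H^2(X,\mu_r)$ via the chosen isomorphism $\mathbb{Z}/r\mathbb{Z} \cong \mu_r$ (and, if one prefers the étale formulation, invoke the comparison theorem \cite{Milne1980} already cited in the text relating étale and singular cohomology of $X$ with these finite coefficients). The final clause of the statement is then just a reinterpretation: surjectivity of $H^2(X,\mathbb{Z}) \to H^2(X,\mu_r)$ says every $w$ lifts to some $\xi \in H^2(X,\mathbb{Z})$, and the identification of the kernel with $rH^2(X,\mathbb{Z})$ says the lift is unique up to adding an element of $rH^2(X,\mathbb{Z})$. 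The main (and essentially only) obstacle is the vanishing of the $r$-torsion in $H^3$; everything else is the formal machinery of the long exact sequence, so I would keep that part brief.
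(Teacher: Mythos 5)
Your proposal is correct and follows essentially the same route as the paper: both arguments reduce the lemma, via the long exact sequence for $0 \to \mathbb{Z} \stackrel{r\cdot}{\to} \mathbb{Z} \to \mathbb{Z}/r\mathbb{Z} \to 0$, to the vanishing of the connecting map into $H^3(X,\mathbb{Z})$, which holds because $H^3(X,\mathbb{Z})$ is torsion free by Poincar\'e duality and the assumption that $H_1(X,\mathbb{Z})$ is torsion free. The only cosmetic difference is that you note the image of $\delta$ lands in the $r$-torsion $H^3(X,\mathbb{Z})[r]$, while the paper observes that $H^2(X,\mu_r)$ is torsion; both reduce to the same torsion-freeness statement.
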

\begin{proof}
    By using the long exact sequence, it suffices to show that $H^2(X, \mu_r) \to H^3(X, \mathbb{Z})$ is zero. Since the first group is torsion, it suffices to prove that $H^3(X, \mathbb{Z})$ is torsion free. This follows from Poincar\'e duality and the assumption that $H_1(X, \mathbb{Z})$ is torsion free.
\end{proof}

Another important consequence of the assumption $H_1(X,\mathbb{Z})_{\mathrm{tor}} = 0$ is that for any Brauer-Severi variety $\pi : Y \to X$, the map
$$
\pi^* : H^*(X,\mathbb{Z}) \hookrightarrow H^*(Y,\mathbb{Z})
$$
is an inclusion \cite[Lem.~1.6]{Yoshioka2006}.
%The results so far do not need $X$ to be a surface.
%This result also holds over $\mathbb{Q}$, $\mathbb{R}$, $\mathbb{C}$.

The lifts of Lemma \ref{lmm:rep-w-xi} are used to define the Chern character of twisted sheaves. This definition is due to Yoshioka \cite{Yoshioka2006}. On surfaces this gives the correct answer (due to Proposition \ref{prop:integrality}, and the discussion at the end of this section), but for higher dimensional varieties another definition is required.

\begin{definition} \label{def:chern-character}
    If $\mathcal{A}$ is an Azumaya algebra on a surface $X$, choose a representing element $\xi \in H^2(X,\mathbb{Z})$ for 
    $w(\mathcal{A}) \in H^2(X,\mu_r)$ (Lemma \ref{lmm:rep-w-xi}). If $F$ is a left $\mathcal{A}$-module, we define
    \[
    \ch_{\mathcal{A}}(F) := \frac{\ch(F)}{\sqrt{\ch(\mathcal{A})}} \quad \text{and} \quad \ch^\xi_{\mathcal{A}}(F) := e^{\xi/r}\ch_{\mathcal{A}}(F).
    \]
    Here on the right we take the Chern character of coherent 
    $\mathcal{O}_X$-modules. Equivalently (via Proposition \ref{prop:equivReede}), if $\pi : Y \to X$ is the Brauer-Severi variety corresponding to $\mathcal{A} = \pi_*(\mathcal{E}{\it{nd}(G^\vee)})$ and $F$ is a $Y$-sheaf, we define
    \[
    \ch_G(F) := \frac{\ch(\pi_*(F \otimes G^\vee))}{\sqrt{\ch(\pi_*(G \otimes G^\vee))}}, \quad  \ch^\xi_{G}(F) := e^{\xi/r} \ch_G(F),
    \]
where $G$ is the sheaf defined in Lemma \ref{lem:sheaf-G}. We view these Chern characters as elements of $H^*(X,\mathbb{Q})$. The component of $\ch_G(F)$ in $H^0(X,\mathbb{Z})$ equals $\rk(F)$.
\end{definition}
 
We refer to the factor $e^{\xi/r}$ as the \emph{Huybrechts-Stellari twist} introduced in \cite{HuybrechtsStellari2005}. The class $\xi/r \in H^2(X,\mathbb{Q})$ is called a \emph{rational $B$-field}. The Huybrechts-Stellari twist has the desirable feature that $\ch^\xi_{\mathcal{A}}(F)$ has the following integrality property shown in \cite{Yoshioka2006} for K3 surfaces (but it holds for arbitrary surfaces with $H_1(X,\mathbb{Z})$ torsion free \cite{JiangKool2021}).

\begin{proposition} \label{prop:integrality}
Let $\mathcal{A}$ be an Azumaya algebra on a surface $X$. For any left $\mathcal{A}$-module $F$, choose a representing element $\xi$ for $w(\mathcal{A}) \in H^2(X,\mu_r)$ and write 
\[
\ch_{\mathcal{A}}^\xi(F) = (s, c_1, \tfrac{1}{2} c_1^2 - c_2) \in H^*(X,\mathbb{Q}).
\]
Then $c_1 \in H^2(X,\mathbb{Z})$ and $c_2 \in H^4(X,\mathbb{Z}) \cong \mathbb{Z}$.
\end{proposition}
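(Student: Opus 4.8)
The plan is to show that, through degree $4$, the class $\ch^\xi_{\mathcal{A}}(F)$ lies in the image of the Chern character map, which is exactly the assertion that $c_1 \in H^2(X,\mathbb{Z})$ and $c_2 \in H^4(X,\mathbb{Z})$ (recall a class $(s,c_1,\ch_2)$ is a Chern character of an integral class precisely when $c_1$ and $c_2 = \tfrac12 c_1^2 - \ch_2$ are integral). I would first record that the statement is independent of the chosen lift: replacing $\xi$ by $\xi + r\eta$ with $\eta \in H^2(X,\mathbb{Z})$ multiplies $\ch^\xi_{\mathcal{A}}(F)$ by the integral class $e^{\eta}$, and tensoring $F$ by a line bundle on $X$ changes it by a similar integral factor. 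So it suffices to treat a convenient model and then globalize.

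The model case is $\mathcal{A} = \mathcal{E}{\it{nd}}(E^\vee)$ for a locally free sheaf $E$ of rank $r$ on $X$, with the lift $\xi = c_1(E)$ (legitimate by \eqref{eq:w-trivial-algebra}). Under the Morita equivalence $\Coh(X) \simeq \Coh(X,\mathcal{A})$, $V \mapsto E^\vee \otimes V$, a module $F$ corresponds to an honest coherent sheaf $V$ on $X$ with $\ch(F) = \ch(E^\vee)\ch(V)$. The computational heart is the pair of elementary low-degree identities
\[
\sqrt{\ch(\mathcal{A})} = e^{-c_1(E)/r}\ch(E), \qquad \frac{\ch(E^\vee)}{\ch(E)} = e^{-2c_1(E)/r} \quad \text{in degrees } \le 4.
\]
Feeding these into the definition collapses everything to $\ch^\xi_{\mathcal{A}}(F) = \ch(V)$ in degrees $\le 4$, which is visibly the Chern character of an integral class. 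Both identities hold only after truncation to degree $\le 4$, but that is all we need.

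For general $\mathcal{A}$, let $\pi : Y \to X$ be the associated Brauer-Severi variety, so $\mathcal{A} = \pi_*(\mathcal{E}{\it{nd}}(G^\vee))$, and by Lemma \ref{lem:sheaf-G} the pullback $\pi^*\mathcal{A} \cong \mathcal{E}{\it{nd}}(G^\vee)$ is a \emph{trivial} Azumaya algebra on $Y$ with trivialising bundle $G$. If $\mathcal{F}$ is the $Y$-sheaf corresponding to $F$ (Proposition \ref{prop:equivReede}), the $Y$-sheaf condition (Definition \ref{def:Ysheaf}) gives $\pi^*F \cong \mathcal{F} \otimes G^\vee$, which is precisely the $\pi^*\mathcal{A}$-module attached to the honest sheaf $\mathcal{F}$ in the model of the previous paragraph (with $E = G$). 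Hence $\ch^{\,c_1(G)}_{\pi^*\mathcal{A}}(\pi^*F) = \ch(\mathcal{F})$ in degrees $\le 4$, which is integral because $\mathcal{F}$ is a genuine coherent sheaf on $Y$. Comparing the two $B$-field twists and using $\ch(\pi^*\mathcal{A}) = \pi^*\ch(\mathcal{A})$ (so $\sqrt{\cdot}$ commutes with $\pi^*$) yields
\[
\pi^*\ch^\xi_{\mathcal{A}}(F) = e^{\eta}\,\ch^{\,c_1(G)}_{\pi^*\mathcal{A}}(\pi^*F), \qquad \eta := \tfrac{1}{r}\big(\pi^*\xi - c_1(G)\big).
\]
The crux is that $\eta$ is \emph{integral}: both $\pi^*\xi$ and $c_1(G)$ are integral lifts of the single class $w(\pi^*\mathcal{A}) = \pi^*w(\mathcal{A}) \in H^2(Y,\mu_r)$ (the latter by \eqref{eq:w-trivial-algebra}), so their difference lies in $rH^2(Y,\mathbb{Z})$ by the analogue of Lemma \ref{lmm:rep-w-xi} on $Y$. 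Thus $\pi^*\ch^\xi_{\mathcal{A}}(F)$ agrees through degree $4$ with the Chern character of an integral class on $Y$.

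It remains to descend integrality along $\pi$, and I expect this to be the main obstacle. I would use that $\pi^* : H^*(X,\mathbb{Z}) \hookrightarrow H^*(Y,\mathbb{Z})$ is not merely injective \cite[Lem.~1.6]{Yoshioka2006} but realises $\pi^*H^*(X,\mathbb{Z})$ as a \emph{direct summand}: the Leray spectral sequence of the $\mathbb{CP}^{r-1}$-fibration degenerates \emph{integrally} because the only possible transgression obstruction lands in $H^3(X,\mathbb{Z})_{\mathrm{tor}}$, which vanishes under our standing hypothesis that $H_1(X,\mathbb{Z})$ is torsion free. Consequently a rational class on $X$ whose pullback is integral is itself integral, and applying this to the degree-$2$ and degree-$4$ components gives $c_1 \in H^2(X,\mathbb{Z})$ and $c_2 \in H^4(X,\mathbb{Z})$. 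The delicate point is exactly this integral descent: rationally the splitting is automatic, but the integral statement genuinely needs the vanishing of the Brauer-type transgression, which is where torsion-freeness enters; a secondary point of care is that, since $\dim Y = r+1 > 2$, the Chern-character identities hold only after truncation to degree $\le 4$, so one must verify throughout that only the degree $\le 4$ parts of each factor are ever used.
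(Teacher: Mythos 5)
Your argument is correct, but it takes a genuinely different route from the paper's. The paper does not prove Proposition \ref{prop:integrality} in the text at all: it quotes the statement from \cite{Yoshioka2006} (K3 case) and \cite{JiangKool2021} (general surfaces with torsion-free $H_1$), and the argument it sketches after Definition \ref{def:chern-character} is topological — torsion-freeness of $H_1(X,\mathbb{Z})$ kills the topological Brauer group $H^3(X,\mathbb{Z})_{\mathrm{tor}}$, so $\mathcal{A}$ is trivialized by a \emph{topological} vector bundle on $X$ itself, and the Morita computation (exactly your model case, which reproduces the paper's ``motivation'' computation built on \eqref{eq:surface-chern-equation} and \eqref{eq:w-trivial-algebra}) then gives integrality directly on $X$. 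You instead trivialize $\mathcal{A}$ \emph{holomorphically} by pulling back to the Brauer--Severi variety: $\pi^*\mathcal{A}\cong\mathcal{E}{\it{nd}}(G^\vee)$ by Lemma \ref{lem:sheaf-G}, $\pi^*F\cong\mathcal{F}\otimes G^\vee$ by Definition \ref{def:Ysheaf} and Proposition \ref{prop:equivReede}, run the model computation on $Y$ after truncation, and then descend integrality along $\pi$ via the integral Leray--Hirsch splitting of $H^*(Y,\mathbb{Z})$. Note that the two routes consume the same topological input: the vanishing of the transgression that gives your integral splitting \emph{is} the vanishing of the topological Brauer class in $H^3(X,\mathbb{Z})_{\mathrm{tor}}$ that the paper uses to trivialize $\mathcal{A}$. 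What your version buys is that all sheaf-theoretic steps stay holomorphic (no topological Azumaya algebras or topological Morita theory are needed), at the cost of the truncation bookkeeping on $Y$ — which you correctly flag, and which is harmless because killing $H^{>4}$ is a ring quotient, so identities like $\ch(G^\vee)=\ch(G)e^{-2c_1(G)/r}$ may be verified there — and of the extra descent step; the paper's route is shorter but leans on topological machinery.

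Three small points to tighten, none a genuine gap. First, in the final step the descent principle must be applied to $c_1$ and to the rational class $c_2=\tfrac12c_1^2-\ch_2$, not to the degree-$4$ component $\ch_2$ itself: writing $e^{\eta}\ch(\mathcal{F})=(s,C_1,\tfrac12C_1^2-C_2)$ with $C_1,C_2$ integral, one has $\pi^*\ch_2=\tfrac12C_1^2-C_2$, which need not be integral, whereas $\pi^*c_2=C_2$ is; your opening paragraph shows this is what you intend, but the sentence as written would not go through literally. Second, you assert $w(\pi^*\mathcal{A})=\pi^*w(\mathcal{A})$ without justification; this functoriality is standard (the $\mu_r$-gerbe of trivializations pulls back) and is exactly what the paper extracts from Lemmas 1.3 and 1.8 of \cite{Yoshioka2006} in the proof of Proposition \ref{prop:indep}, so it deserves the same citation. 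Third, for the integrality of $\eta$ you only need \emph{injectivity} of $H^2(Y,\mathbb{Z})/rH^2(Y,\mathbb{Z})\to H^2(Y,\mathbb{Z}/r\mathbb{Z})$, which holds for any space by the Bockstein exact sequence; torsion-freeness of $H^3(Y,\mathbb{Z})$ is only needed for surjectivity, so this particular step is even more robust than your appeal to the analogue of Lemma \ref{lmm:rep-w-xi} suggests.
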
 

We wish to provide more motivation for Definition \ref{def:chern-character}. Suppose that $\mathcal{A} = \mathcal{E}{\it{nd}}(E^\vee)$ is a trivial Azumaya algebra. Then every left $\mathcal{A}$-module $M$ is of the form $F \otimes E^\vee$ for $F$ a coherent sheaf on $X$ by Example \ref{exa:morita-azumaya}. One might expect that $\ch_{\mathcal{A}}^{\xi}(M) = \ch(F)$.
And indeed, this can be verified: on a surface $X$, we have 
\begin{equation} \label{eq:surface-chern-equation}
    \ch(E) \cdot e^{-2c_1(E)/r} = \ch(E^\vee).
\end{equation}
%This equation fails already on a threefold.
Keeping in mind that $\xi = c_1(E)$ represents $w$, see \eqref{eq:w-trivial-algebra}, we 
can rewrite this as
\[
    \ch(F)= \frac{\ch(F \otimes E^\vee)}{\sqrt{\ch(E^\vee)^2}} = \frac{\ch(F \otimes E^\vee)}{\sqrt{\ch(\mathcal{E}{\it{nd}}(E^\vee)) e^{-2\xi/r}}} = \ch_{\mathcal{A}}^\xi(M).
\]

There is an alternative perspective. We can view our surface as a complex manifold and forget the complex structure; what remains is a topological space. There is a notion of \emph{topological Azumaya algebras} on this space. Their classes take values in the \emph{topological Brauer group}, which is the torsion part of $H^3(X, \mathbb{Z})$, which is zero in our case because we assume $H_1(X,\mathbb{Z})_{\mathrm{tor}} = 0$. Hence any topological Azumaya algebra is trivialized by a topological vector bundle, and this vector bundle is unique up to tensoring by a topological line bundle (that is, a class in $H^2(X, \mathbb{Z})$). One can use this to define Chern classes, and a similar calculation shows that they are the same as the previous definition. For details on topological Azumaya algebras, we 
refer the reader to \cite{AntieauWilliams2014a}. Similar ideas to what we have 
described also appear in \cite{Yoshioka2006, Heinloth2005}.

\subsection{Moduli spaces} \label{subsec:moduli}

Let $(X,H)$ be a smooth polarized surface and suppose $H_1(X,\mathbb{Z})_{\mathrm{tor}} = 0$. Let $\pi : Y \to X$ be a Brauer-Severi variety of degree $r$ with corresponding Azumaya algebra $\mathcal{A}$. On the one hand, one can consider moduli spaces of stable $Y$-sheaves on $X$ constructed by Yoshioka \cite{Yoshioka2006}. On the other hand, one can consider moduli spaces of generically simple torsion free left $\mathcal{A}$-modules constructed by Hoffmann-Stuhler  \cite{HoffmannStuhler2005}. We recall the main results on these moduli spaces. Since $\mathrm{Coh}(X,Y)$ and $\mathrm{Coh}(X,\mathcal{A})$ are equivalent, one expects these moduli spaces to be isomorphic as was shown by Reede \cite{Reede2018}. 

We first recall Yoshioka's result \cite{Yoshioka2006}. Let $G$ be the unique (up to scaling) non-trivial extension of $T_{Y/X}$ by $\mathcal{O}_Y$ of Lemma \ref{lem:sheaf-G}. For a $Y$-sheaf $F$, its twisted Hilbert polynomial is defined by \cite{Yoshioka2006}
$$
\chi(G,F \otimes \pi^* \mathcal{O}_X(mH)) = \chi(X,\pi_*(F \otimes G^\vee)(mH)).
$$
Suppose $F$ is torsion free. Then the above polynomial has degree 2 and we denote its leading term by $\frac{1}{2}a_2^G(F) m^2$. One defines $F$ as semistable (with respect to $H$) if
$$
\frac{\chi(\pi_*(F' \otimes G^\vee)(mH))}{a_2^G(F')}  \leq  \frac{\chi(\pi_*(F \otimes G^\vee)(mH))}{a_2^G(F)}
$$  
for all $Y$-subsheaves $0 \neq F' \subsetneq F$. 
%Some non-trivial facts: if $E$ is a coherent $Y$-sheaf with support $Y$, then $\pi_*(E \otimes G^\vee)$ and $\pi_*E$ have support $X$. Moreover, $\rk \pi_*(E \otimes G^\vee) = \rk \pi_*E = r \rk(E)$.
Stability is defined analogously with $\leq$ replaced by $<$. 

\begin{definition}
For any choice of Chern character $\mathrm{ch} = (s, c_1, \frac{1}{2} c_1^2 - c_2) \in H^*(X,\mathbb{Q})$, Yoshioka proves there exists a coarse moduli space $M_Y^{H,ss}(\ch)$ parametrizing $S$-equivalence classes of semistable $Y$-sheaves $F$ with $\mathrm{ch}_G(F) = \mathrm{ch}$. Moreover, $M_Y^{H,ss}(\ch)$ is a projective scheme. Furthermore, $M_Y^{H,ss}(\ch)$ contains an open subscheme $M_Y^H(\ch)$ parametrizing isomorphism classes of stable $Y$-sheaves $F$ on $X$ with $\mathrm{ch}_G(F) = \mathrm{ch}$. 
\end{definition}
%As for Yoshioka, our definition of coarse moduli space only requires corepresentability by the natural moduli functor; not that it is a bijection on $\mathbb{C}$-points.

We sometimes prefer fixing Chern classes instead of Chern characters, in which case we write $M_Y^{H,ss}(s,c_1,c_2)$ and $M_Y^{H}(s,c_1,c_2)$.
Representing the Stiefel-Whitney class $w(Y)$ by $\xi \in H^2(X,\mu_r)$, we sometimes prefer to fix $\mathrm{ch}_G^\xi(F) = \mathrm{ch} = (s, c_1, \frac{1}{2} c_1^2 - c_2)$, in which case we denote the corresponding moduli spaces by
\[
M_{Y,\xi/r}^{H,ss}(\ch), \quad M_{Y,\xi/r}^{H}(\ch), \quad M_{Y,\xi/r}^{H,ss}(s,c_1,c_2), \quad M_{Y,\xi/r}^{H}(s,c_1,c_2).
\]
Then we have  $s \in \mathbb{Z}$, $c_1 \in H^2(X,\mathbb{Z})$, and $c_2 \in \mathbb{Z}$ (Proposition \ref{prop:integrality}). 

\begin{remark} \label{rem:stabauto}
Suppose we take $s$ equal to the order of $\alpha(Y) \in \mathrm{Br}(X)$. Then $s$ is the minimal rank among all (coherent) $Y$-sheaves of positive rank \cite[Lem.~3.2, Rem.~3.1]{Yoshioka2006}. This implies \emph{all} rank $s$ torsion free $Y$-sheaves are automatically stable and semistable! This is a crucial feature of the theory of twisted sheaves. 
\end{remark}

We now recall Hoffmann-Stuhler's result. They consider left $\mathcal{A}$-modules $F$ which are torsion free (as $\mathcal{O}_X$-module) and generically simple, i.e., over the generic point $\eta \in X$ the $\mathcal{A}_\eta$-module $F_\eta$ is simple. By Wedderburn's theorem, over the generic point $\mathcal{A}_\eta \cong M_n(D)$ for some division algebra $D$ over $\mathbb{C}(X)$ and some $n \in \mathbb{Z}_{>0}$, and simplicity means $F_\eta \cong D^{\oplus n}$ (with $\mathcal{A}_\eta$-module structure induced by matrix multiplication).
%Simplicity of this module is clear. 
One crucial observation is that the algebra of $\mathcal{A}$-endomorphisms $\End_{\mathcal{A}}(F)$ is a finite-dimensional $\mathbb{C}$-algebra, which is moreover a division ring because we have an embedding \cite{HoffmannStuhler2005}
$$
\End_{\mathcal{A}}(F) \hookrightarrow \End_{\mathcal{A}_\eta}(F_\eta) \cong D^{\mathrm{op}}, 
$$
hence $\End_{\mathcal{A}}(F) = \mathbb{C}$. 
Therefore, we do not have non-trivial automorphisms. In fact, no notion of stability is required.

\begin{definition}
For any choice of Chern character $\mathrm{ch} = (s, c_1, \frac{1}{2} c_1^2 - c_2) \in H^*(X,\mathbb{Q})$, Hoffmann-Stuhler prove there exists a coarse moduli space $M_{\mathcal{A}}(\ch)$ parametrizing \emph{isomorphism classes} of generically simple torsion free left $\mathcal{A}$-modules $F$ with $\mathrm{ch}_{\mathcal{A}}(F) = \mathrm{ch}$. Moreover, $M_{\mathcal{A}}(\ch)$ is a projective scheme. 
\end{definition}
 
The property of generic simplicity implies $s = \mathrm{deg}(D) := \sqrt{\mathrm{dim}_{\mathbb{C}(X)}(D)}$ (and in particular $F$ has rank $ns^2$ as $\mathcal{O}_X$-module). Therefore, we may suppress rank $s$ from the notation. Also recall that $\mathcal{A}$ has degree $r$, so $r,n,s$ are related by $r = ns$. We sometimes prefer fixing Chern classes instead of Chern characters, in which case we write $M_{\mathcal{A}}(c_1,c_2)$.
Moreover, sometimes we fix the twisted Chern character $\mathrm{ch}^\xi_{\mathcal{A}}(F) = \mathrm{ch} = (s, c_1, \frac{1}{2} c_1^2 - c_2)$, in which case we denote the corresponding moduli space by 
\[
M_{\mathcal{A},\xi/r}(\ch), \quad M_{\mathcal{A},\xi/r}(c_1,c_2).
\] 

In the setting of Azumaya algebras, one often \emph{only} considers the case $n=1$, i.e., $\mathcal{A}_\eta \cong D$. The reason is that one can replace $\mathcal{A}$ by a Brauer equivalent Azumaya algebra with this property (\cite[Rem.~2.1]{Reede2018}) and replacing $\mathcal{A}$ by a Brauer equivalent Azumaya algebra does not change the category of left modules (Example \ref{exa:morita-azumaya}). 

Then the degree $r$ of the Azumaya algebra $\mathcal{A}$ is equal to its \emph{index} which is defined as the degree of $D$. By the period-index theorem \cite{Jong2004}, this is also the \emph{period} of $\mathcal{A}$ which is defined as the order of the Brauer class $\alpha(\mathcal{A})$.\footnote{In particular, this means $w(Y)$ is an optimal $\mu_r$-gerbe \cite[Def.~2.2.5.2]{Lieblich_2007}.}
For $n=1$, we have the following result:
\begin{theorem}[Reede]
The equivalence of categories of Proposition \ref{prop:equivReede} induces an isomorphism of moduli spaces 
$$
M_{Y}^{H}(r,c_1,c_2) \cong M_{\mathcal{A}}(c_1,c_2).
$$
In particular, all rank $r$ torsion free $Y$-sheaves are automatically stable and the space $M_{Y}^{H}(r,c_1,c_2)$ is projective and independent of $H$.
\end{theorem}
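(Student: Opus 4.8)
The plan is to promote the abelian-category equivalence of Proposition \ref{prop:equivReede} to an isomorphism of \emph{moduli functors}, and then invoke the universal property of coarse moduli spaces. Write $\Phi : \Coh(X,Y) \to \Coh(X,\mathcal{A})$ for the functor $F \mapsto \pi_* \mathcal{H}{\it{om}}(G,F)$ and $\Psi$ for its quasi-inverse from Proposition \ref{prop:equivReede}. First I would check that $\Phi$ and $\Psi$ interchange exactly the objects parametrized by the two moduli problems. Since $G$ is locally free we have $\mathcal{H}{\it{om}}(G,F) = F \otimes G^\vee$ and $\mathcal{A} = \pi_*\mathcal{E}{\it{nd}}(G^\vee) = \pi_*(G\otimes G^\vee)$, so comparing Definition \ref{def:chern-character} term by term gives $\ch_G(F) = \ch_{\mathcal{A}}(\Phi(F))$ on the nose; in particular the Chern classes $(c_1,c_2)$ are matched and no Huybrechts-Stellari twist needs to be tracked. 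Because we are in the case $n=1$, the index equals the period, so by the period-index theorem \cite{Jong2004} the Brauer class $\alpha(Y) = \alpha(\mathcal{A})$ has order exactly $r$; by Remark \ref{rem:stabauto} this makes $r$ the minimal positive rank of a $Y$-sheaf. I would then verify that, under $\Phi$, this minimal-rank torsion-free condition corresponds fibrewise over the generic point $\eta$ to $F_\eta \cong D$, i.e.\ to generic simplicity of $\Phi(F)$ as a left $\mathcal{A}$-module, matching the two object classes.

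Next I would establish the \emph{relative} version of the equivalence over an arbitrary base $T$, which is precisely what upgrades the statement from a bijection on isomorphism classes to an isomorphism of schemes. The key inputs are the vanishing $R\pi_*(G^\vee) = 0$ and the isomorphism $\pi^*\pi_*\mathcal{E}{\it{nd}}(G^\vee)\cong \mathcal{E}{\it{nd}}(G^\vee)$ of Lemma \ref{lem:sheaf-G}: these ensure that $\Phi$ is exact, that its formation commutes with arbitrary base change $T'\to T$, and that it carries $T$-flat families to $T$-flat families (and symmetrically for $\Psi$). Granting this --- which is the content of Reede's relative analysis \cite{Reede2018} --- the functors $\Phi,\Psi$ define mutually inverse natural transformations between the moduli functor of flat families of rank $r$ torsion-free $Y$-sheaves with Chern classes $(c_1,c_2)$ and that of flat families of generically simple torsion-free left $\mathcal{A}$-modules with the same Chern classes. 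Since the objects on both sides have only scalar automorphisms (stable sheaves are simple, and $\End_{\mathcal{A}}(F)=\mathbb{C}$ for generically simple $F$), these functors are $\mathbb{G}_m$-gerbes over their coarse spaces and the natural transformations respect this structure.

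Finally I would conclude. The Remark \ref{rem:stabauto} input shows that every rank $r$ torsion-free $Y$-sheaf is automatically $H$-stable, so the $Y$-sheaf functor above is exactly the one corepresented by $M_Y^H(r,c_1,c_2)$, while the $\mathcal{A}$-module functor is corepresented by $M_{\mathcal{A}}(c_1,c_2)$; a natural isomorphism of moduli functors descends to an isomorphism of their coarse spaces, yielding $M_Y^H(r,c_1,c_2)\cong M_{\mathcal{A}}(c_1,c_2)$. The three ``in particular'' claims are then immediate: automatic stability is the Remark \ref{rem:stabauto} fact just used; projectivity is inherited from Hoffmann-Stuhler's theorem that $M_{\mathcal{A}}(c_1,c_2)$ is projective; and independence of $H$ holds because $M_{\mathcal{A}}(c_1,c_2)$ carries no polarization data whatsoever. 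The main obstacle is the base-change step of the second paragraph: one must check that $\Phi$ and $\Psi$ preserve flatness and commute with base change in families, and this is exactly where the cohomological vanishing of Lemma \ref{lem:sheaf-G} does the real work --- by contrast the object-level bijection and the Chern-class bookkeeping of the first paragraph are comparatively routine.
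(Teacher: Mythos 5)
Your proposal is correct and follows essentially the same route as the paper, which gives no internal proof but cites Reede \cite{Reede2018} and notes precisely your key point: that the equivalence is upgraded to an identification of the moduli \emph{functors} (via the relative, flatness- and base-change-compatible version of the equivalence), after which the coarse-space isomorphism, automatic stability via Remark \ref{rem:stabauto} and the period-index theorem, and projectivity/$H$-independence from the Hoffmann-Stuhler side all follow as you describe. Your deferral of the family-level flatness and base-change verification to Reede's analysis mirrors exactly what the paper itself does.
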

In fact, Reede identifies the moduli functors, not just the coarse spaces.

Still working under the assumption $\mathcal{A}_\eta \cong D$, Hoffmann-Stuhler consider the locus 
\[
M_{\mathcal{A}}^{\mathrm{lf}}(c_1,c_2) \subset M_{\mathcal{A}}(c_1,c_2)
\]
of locally free $\mathcal{A}$-modules of rank 1 (as $\mathcal{A}$-modules). Note that $\mathrm{Pic}(X)$ acts on the union of these moduli spaces over all $c_1, c_2$ by $F \mapsto F \otimes L$.
\begin{proposition}[Hoffmann-Stuhler] \label{prop:HS}
The map $F \to \mathcal{E}{\it{nd}}_{\mathcal{A}}(F)^{\mathrm{op}}$ gives a bijection between the closed points of 
\[
\bigsqcup_{c_1,c_2} M_{\mathcal{A}}^{\mathrm{lf}}(c_1,c_2) \Big/ \mathrm{Pic}(X)
\]
and the set of isomorphism classes of Azumaya algebras $\mathcal{B}$ on $X$ satisfying $\mathcal{B}_\eta \cong D$. Under this bijection $c_2(\mathcal{B}) =2rc_2 - (r-1) c_1^2$.
%Fix a Brauer class $\alpha \in \mathrm{Br}(X)$ on a smooth projective surface $X$ with period $r \geq 1$. Let $D$ be the division algebra corresponding to $\alpha$ (unique up to isomorphism). For $\mathcal{A}$ an Azumaya algebra on $X$ we have the following: $\alpha(\mathcal{A}) = \alpha$ and $\deg(\mathcal{A}) = r$ if and only if $\mathcal{A}_\eta \cong D$.
\end{proposition}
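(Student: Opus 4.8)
The plan is to realize the assignment $F \mapsto \mathcal{E}{\it{nd}}_{\mathcal{A}}(F)^{\mathrm{op}}$ as an instance of Morita theory for Azumaya algebras, working throughout under the standing reduction to $n=1$, so that $\mathcal{A}_\eta \cong D$ and a rank $1$ locally free left $\mathcal{A}$-module $F$ has rank $r^2$ over $\mathcal{O}_X$ and is \'etale-locally isomorphic to $\mathcal{A}$ itself. First I would record that $\mathcal{B} := \mathcal{E}{\it{nd}}_{\mathcal{A}}(F)^{\mathrm{op}}$ is again a degree $r$ Azumaya algebra: \'etale locally $F \cong \mathcal{A} \cong M_r(\mathcal{O}_X)$ and $\mathrm{End}_{M_r}(M_r) \cong M_r^{\mathrm{op}} \cong M_r$, so $\mathcal{B}$ is locally $M_r(\mathcal{O}_X)$. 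Over the generic point $\mathrm{End}_D(D)^{\mathrm{op}} \cong (D^{\mathrm{op}})^{\mathrm{op}} = D$, so $\mathcal{B}_\eta \cong D$; in particular $[\mathcal{B}] = [\mathcal{A}]$ in $\Br(X)$. Since $\mathcal{E}{\it{nd}}_{\mathcal{A}}(F \otimes_{\mathcal{O}_X} L) \cong \mathcal{E}{\it{nd}}_{\mathcal{A}}(F) \otimes_{\mathcal{O}_X} \mathcal{E}{\it{nd}}_{\mathcal{O}_X}(L) \cong \mathcal{E}{\it{nd}}_{\mathcal{A}}(F)$ for any $L \in \mathrm{Pic}(X)$, the map descends to the quotient by $\mathrm{Pic}(X)$.

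For surjectivity, let $\mathcal{B}$ be any Azumaya algebra with $\mathcal{B}_\eta \cong D$; it is automatically of degree $r$. Then $\mathcal{A} \otimes_{\mathcal{O}_X} \mathcal{B}^{\mathrm{op}}$ has Brauer class $[\mathcal{A}] - [\mathcal{B}] = 0$, so by the description of the kernel of $\alpha$ (trivial Azumaya algebras are endomorphism algebras) it is isomorphic to $\mathcal{E}{\it{nd}}_{\mathcal{O}_X}(P)$ for a locally free sheaf $P$ of rank $r^2$. The module $P$ is then an $(\mathcal{A},\mathcal{B})$-bimodule; set $F := P$ viewed as a left $\mathcal{A}$-module. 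A local computation (writing $M_{r^2} = M_r \otimes M_r$ and noting that the simple module restricts to the free rank $1$ module over the left factor) shows $F$ is locally free of rank $1$ over $\mathcal{A}$ and generically simple. By the double centralizer theorem, $\mathcal{E}{\it{nd}}_{\mathcal{A}}(F)$ is the centralizer of $\mathcal{A} = \mathcal{A} \otimes 1$ inside $\mathcal{E}{\it{nd}}_{\mathcal{O}_X}(P) = \mathcal{A} \otimes_{\mathcal{O}_X} \mathcal{B}^{\mathrm{op}}$, namely $\mathcal{B}^{\mathrm{op}}$, whence $\mathcal{E}{\it{nd}}_{\mathcal{A}}(F)^{\mathrm{op}} \cong \mathcal{B}$.

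For injectivity, I would use that a rank $1$ locally free left $\mathcal{A}$-module $F$ is an invertible $(\mathcal{A},\mathcal{B})$-bimodule, with Morita inverse $F^\vee := \mathcal{H}{\it{om}}_{\mathcal{A}}(F,\mathcal{A})$ satisfying $F \otimes_{\mathcal{B}} F^\vee \cong \mathcal{A}$ and $F^\vee \otimes_{\mathcal{A}} F \cong \mathcal{B}$. Given a second module $F'$ and an algebra isomorphism $\theta \colon \mathcal{B}' := \mathcal{E}{\it{nd}}_{\mathcal{A}}(F')^{\mathrm{op}} \xrightarrow{\sim} \mathcal{B}$, restricting the right $\mathcal{B}'$-action along $\theta$ makes $F'$ an $(\mathcal{A},\mathcal{B})$-bimodule, and $N := F^\vee \otimes_{\mathcal{A}} F'$ is an $\mathcal{O}_X$-central invertible $(\mathcal{B},\mathcal{B})$-bimodule. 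The crux is the Azumaya Picard lemma: every such bimodule is \'etale-locally isomorphic to $\mathcal{B}$ (by Skolem-Noether), and the sheaf of bimodule automorphisms of $\mathcal{B}$ is its sheaf of central units $\mathbb{G}_m$; hence $N \cong \mathcal{B} \otimes_{\mathcal{O}_X} L$ for a unique $L \in H^1_{\mathrm{et}}(X,\mathbb{G}_m) = \mathrm{Pic}(X)$. Then $F' \cong F \otimes_{\mathcal{B}} N \cong F \otimes_{\mathcal{O}_X} L$, as desired. This Picard computation --- together with the bookkeeping of the left/right/opposite conventions feeding into it --- is the main obstacle; everything else is formal Morita theory or an \'etale-local matrix calculation.

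Finally, for the Chern class formula I would compare $\mathcal{E}{\it{nd}}_{\mathcal{O}_X}(F) = F^\vee \otimes F \cong \mathcal{A} \otimes_{\mathcal{O}_X} \mathcal{B}^{\mathrm{op}}$. Taking Chern characters and using $\ch_{\mathcal{A}}(F) = \ch(F)/\sqrt{\ch(\mathcal{A})}$ together with $\ch(\mathcal{A})^\vee = \ch(\mathcal{A})$ (as $\mathcal{A} \cong \mathcal{A}^\vee$ via the reduced trace pairing, so $c_1(\mathcal{A}) = 0$ and $\sqrt{\ch(\mathcal{A})}$ has vanishing degree-$2$ part), the contributions of $\mathcal{A}$ cancel and one finds $\ch(\mathcal{B}) = \ch_{\mathcal{A}}(F) \cdot \ch_{\mathcal{A}}(F)^\vee = v \cdot v^\vee$, where $v = \ch_{\mathcal{A}}^\xi(F) = (r, c_1, \tfrac{1}{2} c_1^2 - c_2)$ (the Huybrechts-Stellari factors $e^{\pm \xi/r}$ cancel between the two factors). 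On a surface the degree-$4$ part of $v \cdot v^\vee$ equals $(r-1) c_1^2 - 2r c_2$, and since $c_1(\mathcal{B}) = 0$ this is $\ch_2(\mathcal{B}) = -c_2(\mathcal{B})$; therefore $c_2(\mathcal{B}) = 2r c_2 - (r-1) c_1^2$.
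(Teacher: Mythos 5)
Your proposal is correct, but it proves substantially more than the paper does: for the bijection itself, the paper simply quotes Hoffmann--Stuhler \cite[Prop.~4.1]{HoffmannStuhler2005} and supplies a proof only of the Chern class formula, whereas you give a self-contained Morita-theoretic argument (well-definedness by an \'etale-local matrix computation, surjectivity via Brauer-triviality of $\mathcal{A} \otimes \mathcal{B}^{\mathrm{op}} \cong \mathcal{E}{\it{nd}}(P)$ plus the double centralizer theorem, injectivity via invertible central bimodules and the Rosenberg--Zelinsky-type identification of the relative Picard group with $\mathrm{Pic}(X)$). That machinery is essentially what underlies Hoffmann--Stuhler's original proof, so your route makes the quoted input transparent at the cost of length; the paper's route buys brevity by outsourcing it. Two small points: first, your surjectivity step concludes $[\mathcal{A}] = [\mathcal{B}]$ in $\Br(X)$ from $\mathcal{A}_\eta \cong \mathcal{B}_\eta \cong D$, which silently invokes the injectivity of $\Br(X) \hookrightarrow \Br(\mathbb{C}(X))$ for smooth $X$ --- this is standard (and is exactly the inclusion the paper assumes when requiring $D$ to lie in its image), but it deserves explicit mention since it is the only non-formal input there. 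Second, your Chern class computation coincides with the paper's, namely $\ch(\mathcal{B}) = \ch_{\mathcal{A}}(F)\cdot \ch_{\mathcal{A}}(F)^\vee$ extracted from $\mathcal{A} \otimes_{\mathcal{O}_X} \mathcal{E}{\it{nd}}_{\mathcal{A}}(F) \cong \mathcal{E}{\it{nd}}(F)$; you additionally justify $c_1(\mathcal{A}) = 0$ via the trace pairing $\mathcal{A} \cong \mathcal{A}^\vee$ and torsion-freeness of $H^2(X,\mathbb{Z})$, a step the paper asserts without comment, and you correctly observe that the Huybrechts--Stellari factors $e^{\pm\xi/r}$ cancel, so the formula is insensitive to whether one fixes $\ch_{\mathcal{A}}$ or $\ch_{\mathcal{A}}^{\xi}$, consistent with the remark following Corollary \ref{cor:HS}.
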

\begin{proof}
This is \cite[Prop.~4.1]{HoffmannStuhler2005}, except for the calculation of $c_2(\mathcal{B})$ with $\mathcal{B} = \mathcal{E}{\it{nd}}_{\mathcal{A}}(F)^{\mathrm{op}}$ where $F$ is a locally free $\mathcal{A}$-modules of rank 1 (as $\mathcal{A}$-module). The natural map
\[
\mathcal{A} \otimes_{\mathcal{O}_X}  \mathcal{E}{\it{nd}}_{\mathcal{A}}(F) \to \mathcal{E}{\it{nd}}(F)
\]
is an isomorphism. Since $c_1(\mathcal{A}) = 0$, we obtain
\begin{equation}
\ch(\mathcal{B}) =  \ch(\mathcal{E}{\it{nd}}_{\mathcal{A}}(F)) = \frac{\ch(\mathcal{E}{\it{nd}}(F))}{\ch(\mathcal{A})} = \frac{\ch(F)}{\sqrt{\ch(\mathcal{A})}} \Bigg( \frac{\ch(F)}{\sqrt{\ch(\mathcal{A}})} \Bigg)^\vee = r^2 + (r-1)c_1^2 -2rc_2,
\end{equation}
from which the result follows.
\end{proof}

\begin{corollary} \label{cor:HS}
If $M_{\mathcal{A}}(c_1,c_2) \neq \varnothing$, then there exists an Azumaya algebra $\mathcal{B}$ on $X$ satisfying $\mathcal{B}_\eta \cong D$ and 
\[
c_2(\mathcal{B}) \leq 2rc_2 - (r-1) c_1^2.
\]
\end{corollary}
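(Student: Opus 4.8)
The plan is to reduce to the locally free situation handled by Proposition~\ref{prop:HS} by replacing a given module with its reflexive hull, which only improves $c_2$. Assume $M_{\mathcal{A}}(c_1,c_2) \neq \varnothing$ and choose a generically simple torsion free left $\mathcal{A}$-module $F$ representing a point, so that $\ch_{\mathcal{A}}(F)$ has the invariants $(c_1,c_2)$ (recall $n=1$, hence $F_\eta \cong D$, i.e.\ $F$ has rank $1$ as an $\mathcal{A}$-module). Such an $F$ need not be locally free, but its double dual will be, and passing to it only decreases $c_2$; Proposition~\ref{prop:HS} then produces the desired $\mathcal{B}$.

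First I would endow $F^\vee := \mathcal{H}{\it{om}}_{\mathcal{O}_X}(F,\mathcal{O}_X)$ with its natural right $\mathcal{A}$-module structure and $F^{\vee\vee}$ with the induced left $\mathcal{A}$-module structure, so that the canonical map $F \to F^{\vee\vee}$ is $\mathcal{A}$-linear. Since $X$ is a smooth surface, the reflexive sheaf $F^{\vee\vee}$ is locally free as an $\mathcal{O}_X$-module; and because $\mathcal{A}$ is \'etale locally $M_r(\mathcal{O}_X)$, Morita equivalence (as in Example~\ref{exa:morita-azumaya}) identifies reflexive rank one $\mathcal{A}$-modules with reflexive rank one $\mathcal{O}_X$-modules, i.e.\ line bundles. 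Hence $F^{\vee\vee}$ is in fact locally free of rank $1$ as an $\mathcal{A}$-module, and as $F \to F^{\vee\vee}$ is an isomorphism at the generic point $\eta$, it remains generically simple with $(F^{\vee\vee})_\eta \cong D$. Therefore $F^{\vee\vee}$ defines a point of $M_{\mathcal{A}}^{\mathrm{lf}}(c_1,c_2')$ for some $c_2'$.

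Next I would compare Chern characters through the inclusion $F \hookrightarrow F^{\vee\vee}$, which is an isomorphism away from a finite set of points, with cokernel $Q$ of finite length $\ell \geq 0$, so $\ch(F^{\vee\vee}) = \ch(F) + (0,0,\ell)$. Dividing by the fixed class $\sqrt{\ch(\mathcal{A})}$ (of leading rank $r$) changes $\ch_{\mathcal{A}}$ only in top degree, increasing $\ch_2$ by $\ell/r$, while leaving the rank and $c_1$ unchanged. Writing $\ch_{\mathcal{A}}(F^{\vee\vee})$ with invariants $(c_1,c_2')$, this yields
\[
c_2' = c_2 - \ell/r \leq c_2,
\]
with $c_2' \in \mathbb{Z}$ by Proposition~\ref{prop:integrality}. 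The same conclusion holds verbatim with the Huybrechts-Stellari twist $e^{\xi/r}$ inserted, since it is a fixed degree-preserving factor.

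Finally, applying Proposition~\ref{prop:HS} to the locally free rank one $\mathcal{A}$-module $F^{\vee\vee}$ produces an Azumaya algebra $\mathcal{B} = \mathcal{E}{\it{nd}}_{\mathcal{A}}(F^{\vee\vee})^{\mathrm{op}}$ with $\mathcal{B}_\eta \cong D$ and
\[
c_2(\mathcal{B}) = 2 r c_2' - (r-1) c_1^2 \leq 2 r c_2 - (r-1) c_1^2,
\]
using $c_2' \leq c_2$ and $r>0$. I expect the only step requiring genuine care to be the claim that $F^{\vee\vee}$ lands in the locally free locus $M_{\mathcal{A}}^{\mathrm{lf}}$, that is, that it is locally free of rank one \emph{as an $\mathcal{A}$-module} rather than merely as an $\mathcal{O}_X$-module, while staying generically simple; this is exactly where the Azumaya structure and Morita equivalence are indispensable.
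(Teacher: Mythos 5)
Your proposal follows the paper's own proof essentially step for step: pass to the double dual $F^{\vee\vee}$ with its induced left $\mathcal{A}$-module structure, observe that it is locally free as an $\mathcal{O}_X$-module (reflexive on a smooth surface) and locally free of rank one as an $\mathcal{A}$-module, note that the zero-dimensional cokernel only decreases $c_2$, and feed the result into Proposition \ref{prop:HS}.

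One detail is mis-stated, although it does not derail the argument. Morita equivalence does \emph{not} identify rank-one $\mathcal{A}$-modules with line bundles: \'etale locally, where $\mathcal{A} \cong M_r(\mathcal{O}_X)$, the Morita functor is $N \mapsto N \otimes \mathcal{O}^{\oplus r}$ (column vectors), so a locally free rank-one $\mathcal{A}$-module --- which has rank $r^2$ as an $\mathcal{O}_X$-module --- corresponds to a locally free sheaf of rank $r$, not a line bundle. The corrected version of your step reads: \'etale locally, $F^{\vee\vee}$ corresponds under Morita to a sheaf $N$ of rank $r$ (for instance $N = e_{11}F^{\vee\vee}$, a direct summand of the locally free sheaf $F^{\vee\vee}$, hence itself locally free); then $F^{\vee\vee} \cong N \otimes \mathcal{O}^{\oplus r}$ is \'etale locally isomorphic to $\mathcal{A}$ as a left module, i.e.\ locally free of rank one over $\mathcal{A}$. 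This is precisely the content the paper extracts from Proposition \ref{prp:gerbe-equivalence} at the same point of its proof. Separately, your citation of Proposition \ref{prop:integrality} for $c_2' \in \mathbb{Z}$ is not quite apt (that proposition concerns the $B$-field--twisted Chern character); integrality actually follows from the same Morita bookkeeping, which shows the length $\ell$ of the cokernel is divisible by $r$ --- but none of this is needed, since the corollary only requires the inequality $c_2' \leq c_2$, which you establish correctly.
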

\begin{proof}
Taking an element $F$ in $M_{\mathcal{A}}(c_1,c_2)$, its double dual (as $\mathcal{O}_X$-module) $F^{**} = \mathcal{H}{\it{om}}(\mathcal{H}{\it{om}}(F,\mathcal{O}_X),\mathcal{O}_X)$ has a natural left $\mathcal{A}$-module structure and is locally free (as $\mathcal{O}_X$-module). Consider the short exact sequence of $\mathcal{O}_X$-modules
\[
0 \to F \to F^{**} \to Q \to 0
\]
induced by the natural inclusion $F \hookrightarrow F^{**}$. Then $Q$ is 0-dimensional, so $\rk(F^{**}) = \rk(F) = r^2$ (implying $F^{**}$ is locally free of rank 1 as $\mathcal{A}$-module by Proposition \ref{prp:gerbe-equivalence}), 
%This statement in parenthesis is non-trivial; it would be nice to have a proof of it not relying on Proposition \ref{prp:gerbe-equivalence}).
$c_1(F^{**}) = c_1(F)$, and $c_2(F^{**}) = c_2(F) + c_2(Q) \leq c_2(F)$. Taking $\mathcal{B} = \mathcal{E}{\it{nd}}_{\mathcal{A}}(F^{**})^{\mathrm{op}}$, the result follows from Proposition \ref{prop:HS}. 
\end{proof}

\begin{remark}
In the previous proposition and corollary, we can also replace $M_{\mathcal{A}}^{\mathrm{lf}}(c_1,c_2)$ by $M_{\mathcal{A},\xi/r}^{\mathrm{lf}}(c_1,c_2)$, i.e., we fix Chern character twisted by the $B$-field, and obtain the same conclusions.
\end{remark}

In Section \ref{sec:minc2}, we present a method to detect the condition ``$M_{\mathcal{A}}(c_1,c_2) \neq \varnothing$'' by showing that certain intersection numbers on the moduli space are non-zero. 

\begin{remark} \label{rem:exlargec2}
Suppose $H_1(X,\mathbb{Z}) = 0$. Lieblich has shown that there exists a (sufficiently large) second Chern class $c_2 \in \mathbb{Q}$ such that $M_{\mathcal{A}}^{\mathrm{lf}}(0,c_2+k) \neq \varnothing$ for all $k \in \mathbb{Z}_{\geq 0}$ \cite[Thm.~6.2.4]{Lieblich_2009}. More precisely, let $\mathcal{G} := w(\mathcal{A}) \in H^2(X,\mu_r)$ be the optimal $\mu_r$-gerbe corresponding to $\mathcal{A}$ and let $\pi : \mathcal{G} \to X$ be the map to the coarse moduli space. 
For this remark, fix a Chern character $\ch_{\mathcal{A}} := (r,c_1,\tfrac{1}{2} c_1^2 - c_2)$ in the Chow group $A^*(X)_{\mathbb{Q}}$. Moreover, let  $\pi^* \ch_{\mathcal{A}}  =: \ch'  = (r,c_1',\tfrac{1}{2} c_1^{\prime 2} - c_2')$ in $A^*(\mathcal{G})_{\mathbb{Q}}$. Denote by $\mathbf{Tw}_{\mathcal{G}}(r,c_1',c_2')$ Lieblich's moduli stack of torsion free twisted sheaves $F$ on $\mathcal{G}$ with $\ch(F) = \ch'$. 
 Recall that $ \mathcal{A}|_{\mathcal{G}} \cong \mathcal{E}{\it{nd}}(E^\vee)$ for \emph{some} locally free twisted sheaf $E$ of rank $r$ on $\mathcal{G}$, which we can choose to satisfy $c_1(E) = 0$ (Example \ref{exa:gerbe-az}).
It is not hard to see that the equivalence of Proposition \ref{prp:gerbe-equivalence} induces a bijection between (isomorphism classes of) the $\mathbb{C}$-valued points of $\mathbf{Tw}_{\mathcal{G}}(r,c_1',c_2')$ and $M_{\mathcal{A}}(c_1,c_2)$.\footnote{Undoubtedly this bijection can be enhanced to an isomorphism between the coarse moduli spaces, but we do not need this.} The bijection also preserves locally free objects. 

Now take $c_1 = c_1'=0$. By the existence of $E$, at least one of the stacks $\mathbf{Tw}_{\mathcal{G}}(r,0,c_2')$ has a locally free object. Moreover, Lieblich shows that if $\Delta':=2rc_2' - (r-1)c_1^{\prime 2} = r^{-1}(2rc_2 - (r-1)c_1^2) = 2rc_2' = 2c_2$ is sufficiently large, then $\mathbf{Tw}_{\mathcal{G}}(r,0,c_2' + k/r)$ has a locally free object for all $k \in \mathbb{Z}_{\geq 0}$ \cite[Thm.~6.2.4]{Lieblich_2009}.\footnote{Beware of the fact that Lieblich works with a normalized degree map which is $r$ times the ordinary degree map $A_*(\mathcal{G})_{\mathbb{Q}} \to \mathbb{Q}$ \cite[Sect.~6.1.5]{Lieblich_2009}. } Hence there exists a Chern class $c_2$ such that $M_{\mathcal{A}}^{\mathrm{lf}}(0,c_2+k) \neq \varnothing$ for all $k \in \mathbb{Z}_{\geq 0}$. However, there is no a priori control over the value of $c_2$. Instead, our goal is to find values of $c_2$, with explicit lower and upper bound, for which $M_{\mathcal{A}}(0,c_2) \neq \varnothing$. 
\end{remark}

\section{Invariants of moduli of twisted sheaves}

\subsection{Obstruction theory} \label{sec:obthy}

Let $(X,H)$ be a smooth polarized surface satisfying $H_1(X,\mathbb{Z}) = 0$. Let $Y \to X$ be a Brauer-Severi variety of degree $r$ and consider the moduli spaces $M:=M_{Y}^{H}(\ch)$ of the previous section.\footnote{In this subsection, and the next one, we have ``switched off'' the $B$-field, i.e., we have not included the Huybrechts-Stellari twist for the Chern character.}
Denote by $\pi_M : Y \times M \rightarrow M$ the projection. Although a universal sheaf $\mathcal{E}$ may not exist globally on $Y \times M$, the complex 
\[
R \mathcal{H}{\it{om}}_{\pi_M}(\mathcal{E},\mathcal{E}) := R\pi_{M*} R\mathcal{H}{\it{om}}(\mathcal{E},\mathcal{E}) 
\]
exists globally on $Y \times M$ \cite{Caldararu2000, HL}. Put differently, $\mathcal{E}$ exists as a twisted sheaf on $Y \times M$. Denote the truncated cotangent complex of $M$ by $\mathbb{L}_M = \tau^{\geq -1} L_M$. The following result is well-known in the untwisted case \cite{HuybrechtsThomas2009} and shown in the twisted case in \cite{JiangKool2021}. 
\begin{proposition} \label{potM}
Fix $\ch = (s,c_1,\frac{1}{2} c_1^2 - c_2) \in H^*(X,\mathbb{Q})$. Then the moduli space $M:=M_{Y}^{H}(\ch)$ has a perfect  obstruction theory 
$$\mathbb{E} := (T_M^{\mathrm{vir}})^\vee := (R \mathcal{H}{\it{om}}_{\pi_M}(\mathcal{E},\mathcal{E})_0[1])^\vee \rightarrow \mathbb{L}_M$$
of virtual dimension
\begin{equation} \label{eqn:defvd}
\mathrm{vd}(\ch) := \mathrm{vd}(s,c_1,c_2) := 2sc_2 - (s-1)c_1^2 - (s^2-1)\chi(\mathcal{O}_X).
\end{equation}
\end{proposition}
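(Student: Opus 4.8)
The plan is to follow the Atiyah-class construction of a perfect obstruction theory on moduli of sheaves due to Huybrechts-Thomas \cite{HuybrechtsThomas2009}, transported to the twisted setting as in \cite{JiangKool2021}. The one structural input that makes the twisted case behave like the untwisted one is already recorded above: although the universal object $\mathcal{E}$ exists only as a twisted sheaf on $Y \times M$ (equivalently on $\mathcal{G} \times M$), the complex $R\mathcal{H}{\it{om}}_{\pi_M}(\mathcal{E},\mathcal{E})$ and its trace-free summand are of weight $0$, hence genuinely untwisted, and so define honest objects of the derived category of $M$ by \cite[Thm.~2.2.4]{Caldararu2000}. The hard part will be making this descent precise and checking that the twisted Atiyah class, the trace map, and the truncation $\mathbb{L}_M = \tau^{\geq -1} L_M$ are all compatible with it; once the weight-$0$ (hence untwisted) nature of the trace-free Hom-complex is in hand, everything downstream is formally identical to the untwisted theory.

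First I would produce the morphism $\phi : \mathbb{E} \to \mathbb{L}_M$. The idea is to form the (twisted) Atiyah class $\mathrm{At}(\mathcal{E})$ of the universal object, project it onto the base directions $L_{Y \times M} \to \pi_M^* L_M$ and compose with the truncation to obtain a relative Kodaira-Spencer class $\kappa \in \Ext^1(\mathcal{E}, \mathcal{E} \otimes \pi_M^* \mathbb{L}_M)$, and then pass to the trace-free part and use adjunction together with the projection formula to rewrite $\kappa$ as the desired morphism $\phi$ out of $\mathbb{E} = (R\mathcal{H}{\it{om}}_{\pi_M}(\mathcal{E},\mathcal{E})_0[1])^\vee$. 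The trace is precisely what removes the scalar automorphisms, i.e.\ passes to the trace-free summand. Because the trace-free Hom-complex is untwisted, this is word-for-word the construction of \cite{HuybrechtsThomas2009}.

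Next I would verify that $\phi$ is a perfect obstruction theory in the sense of Behrend-Fantechi \cite{BF}. Perfectness of amplitude $[-1,0]$ follows from two observations. Stability gives $\End_{\mathcal{A}}(F) = \mathbb{C}$ at every closed point (Section~\ref{subsec:moduli}), so the trace-free $\mathcal{E}{\it{xt}}^0$ vanishes and $T_M^{\mathrm{vir}} = R\mathcal{H}{\it{om}}_{\pi_M}(\mathcal{E},\mathcal{E})_0[1]$ has no cohomology in degree $-1$; and since $X$ is a surface the $\mathcal{E}{\it{xt}}^{\geq 3}$ vanish, so $T_M^{\mathrm{vir}}$ is concentrated in degrees $[0,1]$ and $\mathbb{E}$ in degrees $[-1,0]$. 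That $h^0(\phi)$ is an isomorphism and $h^{-1}(\phi)$ a surjection is the standard identification of $\Ext^1(F,F)_0$ with the tangent space and of $\Ext^2(F,F)_0$ as an obstruction space; for a weight-$1$ twisted sheaf these deformation-obstruction groups are again the untwisted $\Ext^i(F,F)_0$, so the verification is identical to the untwisted case.

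Finally, the virtual dimension is the only genuinely new computation. By the above, $\mathrm{vd} = \rk(T_M^{\mathrm{vir}}) = \dim \Ext^1(F,F)_0 - \dim \Ext^2(F,F)_0 = -\chi(F,F)_0$, and the trace splitting gives $\chi(F,F)_0 = \chi(F,F) - \chi(\mathcal{O}_X)$. Since $R\mathcal{H}{\it{om}}(F,F)$ is untwisted with Chern character $\ch_G(F)^\vee \cdot \ch_G(F)$, ordinary Hirzebruch-Riemann-Roch on $X$ gives $\chi(F,F) = \int_X \ch_G(F)^\vee\, \ch_G(F)\, \mathrm{td}(X)$. Substituting $\ch_G(F) = (s, c_1, \tfrac{1}{2} c_1^2 - c_2)$ and $\mathrm{td}(X) = (1, -\tfrac{1}{2} K_X, \chi(\mathcal{O}_X))$ and reading off the degree-$4$ term yields $\chi(F,F) = s^2 \chi(\mathcal{O}_X) + (s-1) c_1^2 - 2s c_2$, whence $\mathrm{vd} = 2s c_2 - (s-1) c_1^2 - (s^2-1)\chi(\mathcal{O}_X)$, as claimed.
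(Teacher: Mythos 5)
Your proposal is correct and follows essentially the same route as the paper: the paper does not reprove this proposition but quotes it from \cite{JiangKool2021} (untwisted case: \cite{HuybrechtsThomas2009}), and that cited proof is precisely the argument you outline --- descent of the weight-zero, hence untwisted, trace-free Hom-complex via \cite{Caldararu2000}, the twisted Atiyah-class construction of the map to $\mathbb{L}_M$, perfectness in amplitude $[-1,0]$ from simplicity of stable objects together with $\dim X = 2$, and the virtual-dimension count by Hirzebruch--Riemann--Roch applied to the twisted Chern character. Your degree-four computation reproducing \eqref{eqn:defvd} is also correct.
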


By the work of Behrend-Fantechi \cite{BF}, we therefore obtain a virtual fundamental class of degree $\mathrm{vd}(s,c_1,c_2)$ in the Chow group of $M$
\[
[M]^{\mathrm{vir}} \in A_{\mathrm{vd}(s,c_1,c_2)}(M).
\]

We provide the following two additional perspectives on this obstruction theory.
\begin{remark}
Let $\mathcal{G} \to X$ be the $\mu_r$-gerbe corresponding to $w(Y) \in H^2(X,\mu_r)$. Then the analogue of this proposition was shown for the moduli stack $\mathbf{Tw}_{\mathcal{G}}^{s}(r,0,c_2)$ of twisted sheaves on $\mathcal{G}$ by Lieblich \cite[Prop.~6.5.1.1]{Lieblich_2009}. Another way to view this proposition is as follows. Let $\boldsymbol{M}$ be the derived stack of simple sheaves with fixed determinant on $Y$. Then its classical truncation $M^{\mathrm{cl}}$ has an obstruction theory
\[
\mathbb{E} := L_{\boldsymbol{M}}|_{M^{\mathrm{cl}}} = (R \mathcal{H}{\it{om}}_{\pi_{M^{\mathrm{cl}}}}(\mathcal{E},\mathcal{E})_0[1])^\vee \to L_{M^{\mathrm{cl}}}.
\]
Yoshioka proved that the locus of $Y$-sheaves is \emph{open} in $M^{\mathrm{cl}}$ \cite[Lem.~1.6.5]{Yoshioka2006}, so we can restrict $\mathbb{E} \to L_{M^{\mathrm{cl}}}$ to this open locus (though we are now on a stack, which is a $\mathbb{G}_m$-gerbe over the moduli scheme in Proposition \ref{potM}). 
\end{remark}

We are interested in the relative situation. Suppose $f : \mathcal{X} \to B$ is a smooth projective morphism of relative dimension 2 with connected fibres over a smooth connected variety $B$. 
We assume that one fibre (and hence all fibres) $\mathcal{X}_b$ satisfies $H_1(\mathcal{X}_b,\mathbb{Z}) = 0$.
Suppose $\pi : \mathcal{Y} \to \mathcal{X}$ is a smooth projective morphism of relative dimension $r-1$ such that the fibres over closed points $b \in B$ are Brauer-Severi varieties $\mathcal{Y}_b \to \mathcal{X}_b$. Moreover, we denote by $G_b$ the unique (up to scaling) non-trivial extension of $T_{\mathcal{Y}_b/\mathcal{X}_b}$ by $\mathcal{O}_{\mathcal{Y}_b}$ (Lemma \ref{lem:sheaf-G}).

Consider the Hodge bundles (with respect to the Zariski topology)
\[
\mathcal{H}^{2p}_{\mathrm{dR}} := \mathcal{H}_{\mathrm{dR}}^{2p}(\mathcal{X} / B) := R^{2p} f_* \Omega_{\mathcal{X} / B}^{\bullet},
\]
where $\Omega_{\mathcal{X} / B}^{\bullet}$ is the algebraic de Rham complex. Since the family $\mathcal{X} \to B$ is fixed, we suppress it from the notation. Recall that Hodge bundles behave well with respect to base change and the fibre of $\mathcal{H}^{2p}_{\mathrm{dR}}$ over a closed point $b \in B$ is 
\[
H^{2p}(\mathcal{X}_b,\Omega_{\mathcal{X}_b}^\bullet) \cong H^{2p}(\mathcal{X}_b,\mathbb{C}),
\]
where on the right hand side, we consider $\mathcal{X}_b$ with the complex analytic topology. We fix a flat section with respect to the Gau\ss-Manin connection
\[
\widetilde{v} = (\widetilde{v}_0, \widetilde{v}_1, \widetilde{v}_2) \in \bigoplus_{p=0}^{2} \Gamma(B, \mathcal{H}^{2p}_{\mathrm{dR}}).
\]
Fix a family of polarizations $\mathcal{H}$ on $\mathcal{X}$. Denote by
\[
M_{\mathcal{Y} / B} := M_{\mathcal{Y} / B}^{\mathcal{H}} (\widetilde{v})
\]
the moduli space parametrizing $\mathcal{H}_b$-stable $\mathcal{Y}_b$-sheaves $F$ with Chern character
$\ch_{G_b}(F) = \widetilde{v}_b$, for some closed point $b \in B$. This is the relative version over $B$ of the moduli spaces constructed by Yoshioka \cite{Yoshioka2006}. Note that for any closed point $b \in B$, we have a Cartesian diagram
\begin{displaymath} \label{diag:fib} 
\xymatrix 
{
M_{b} \ar@{^(->}[r] \ar[d] &  M_{\mathcal{Y}/B} \ar[d] \\
\{b\} \ar@{^(->}[r] & B
}
\end{displaymath}
where $M_b := M_{\mathcal{Y}_b}^{\mathcal{H}_b} (\widetilde{v}_b)$.

\begin{remark}
In order for these moduli spaces to be non-empty, we must have
\begin{equation} \label{eqn:filt}
\widetilde{v}_b \in \bigoplus_{p=0}^{2} H^{p,p}(\mathcal{X}_b)
\end{equation}
for \emph{some} closed point $b \in B$.
Suppose $B$ is quasi-projective. Then Deligne's invariant cycle theorem implies that \eqref{eqn:filt} holds for \emph{all} closed points $b \in B$ \cite[Prop.~11.3.5]{HodgeBook}.
\end{remark}

\begin{remark} \label{rem:notfixBfield}
Note that we do \emph{not} fix a family of $B$-fields. Such a family would be a flat section $\widetilde{\xi} \in \Gamma(B, \mathcal{H}^{2}_{\mathrm{dR}})$ such that $\widetilde{\xi}_b$ is integral for all $b \in B$. We are typically interested in families where the class $[\widetilde{\xi}_b] \in H^2(\mathcal{X}_b,\mu_r)$ has non-trivial Brauer class for some $b \in B$ and trivial Brauer class for another $b \in B$. Recall that $[\widetilde{\xi}_b] \in H^2(\mathcal{X}_b,\mu_r)$ has trivial Brauer class if and only if $\widetilde{\xi}_b$ has Hodge type $(1,1)$ modulo $r H^2(\mathcal{X}_b,\mathbb{Z})$ (Section \ref{sec:models}). If in fact $\widetilde{\xi}_b$ itself has type  $(1,1)$ for some $b \in B$ (and $B$ is quasi-projective), then it has type $(1,1)$ for all $b \in B$ by Deligne's invariant cycle theorem, which is too restrictive for our purposes. 
\end{remark}

By \cite[Thm.~4.1]{HuybrechtsThomas2009}, we have a relative obstruction theory 
\[
\phi : \mathbb{E}_{\mathcal{Y}/B} \to \mathbb{L}_{M_{\mathcal{Y}/B}/B}.
\]
For each closed point $b \in B$, consider the inclusion $j_b : \mathcal{X}_b \hookrightarrow \mathcal{X}$, then we obtain an induced map
\[
\phi_b : \mathbb{E}_b := Lj_b^* \mathbb{E}_{\mathcal{Y}/B} \to Lj_b^* \mathbb{L}_{M_{\mathcal{Y}/B}/B} \to \mathbb{L}_{M_b},
\]
which is the perfect obstruction theory of Proposition \ref{potM}. Since the base $B$ is connected, the topological number
\[
\mathrm{vd} = \mathrm{vd}(\widetilde{v}_b) \in \mathbb{Z}_{\geq 0}
\]
does not depend on the closed point $b \in B$.
We obtain a cycle class \cite[Cor.~4.3]{HuybrechtsThomas2009}
\[
[M_{\mathcal{Y}/B}]^{\mathrm{vir}} \in A_{\mathrm{vd}+ \dim(B)}(M_{\mathcal{Y}/B})
\]
such that for all closed points $b \in B$ we have
\begin{equation} \label{eqn:Gysinpull}
i_b^! [M_{\mathcal{Y}/B}]^{\mathrm{vir}} = [M_b]^{\mathrm{vir}} \in A_{\mathrm{vd}}(M_b),
\end{equation}
where $i_b^! : A_*(M_{\mathcal{Y}/B}) \to A_*(M_{b})$ is the refined Gysin pull-back \cite{Fulton} for the Cartesian diagram \eqref{diag:fib}.

\subsection{$\mathrm{SL}_r$ and $\mathrm{PGL}_r$ generating function} \label{sec:genfun}

Let $R$ be a commutative $\mathbb{Q}$-algebra e.g.~$\mathbb{Q}$ or a polynomial algebra over $\mathbb{Q}$. Fix $v \in \mathbb{Z}_{\geq 0}$, and $\boldsymbol{a} = (a_1, \ldots, a_N) \in \{1,2\}^N$ for some $N \in \mathbb{Z}_{\geq 0}$. Let $(\underline{\alpha_0},\underline{\alpha_1},\ldots,\underline{\alpha_N})$ be a list of variables and let $\underline{r}$ be a further variable. Let  $\mathsf{P}_{v,\boldsymbol{a}}$ be a formal power series, with coefficients in $R(\underline{r})$, in the following \emph{formal symbols}
\begin{align*}
&\pi_{M*}\Big( \pi_X^* \underline{\alpha_i} \cap \ch_{k}( \underline{\mathcal{E}} \otimes \det(\underline{\mathcal{E}})^{-\frac{1}{\underline{r}}})\Big), \quad \pi_{M*}\Big( \pi_X^* \underline{\alpha_i c_1(X)} \cap \ch_{k}( \underline{\mathcal{E}} \otimes \det(\underline{\mathcal{E}})^{-\frac{1}{\underline{r}}})\Big), \\ 
&\pi_{M*}\Big( \pi_X^* \underline{\alpha_i c_2(X)} \cap \ch_{k}( \underline{\mathcal{E}} \otimes \det(\underline{\mathcal{E}})^{- \frac{1}{\underline{r}}})\Big), \quad \pi_{M*}\Big( \pi_X^* \underline{\alpha_i c_1(X)^2} \cap \ch_{k}( \underline{\mathcal{E}} \otimes \det(\underline{\mathcal{E}})^{-\frac{1}{\underline{r}}})\Big), \\ 
&c_j(R\mathcal{H}{\it{om}}_{\pi_M}(\underline{\mathcal{E}}, \underline{\mathcal{E}})[1]),
\end{align*}
where $i=0,\ldots, N$ and $k,j \in \mathbb{Z}_{\geq 0}$, and $\underline{\alpha_i}, \underline{\alpha_i c_1(X)}, \ldots, \underline{\mathcal{E}}$ are viewed as formal variables. Define degrees $\deg \, c_j(\cdot) = j$ and
\begin{align*}
\deg \,\pi_{M*}\Big( \pi_X^* \underline{\alpha_i} \cap \ch_{k}( \underline{\mathcal{E}} \otimes \det(\underline{\mathcal{E}})^{- \frac{1}{\underline{r}}})\Big) &= a_i + k-2, \\
\deg \,\pi_{M*}\Big( \pi_X^* \underline{\alpha_i c_1(X)} \cap \ch_{k}( \underline{\mathcal{E}} \otimes \det(\underline{\mathcal{E}})^{- \frac{1}{\underline{r}}})\Big) &= a_i + k - 1,  \\
\deg \,\pi_{M*}\Big( \pi_X^* \underline{\alpha_i c_2(X)} \cap \ch_{k}( \underline{\mathcal{E}} \otimes \det(\underline{\mathcal{E}})^{- \frac{1}{\underline{r}}})\Big) &= a_i + k, \\
\deg \,\pi_{M*}\Big( \pi_X^* \underline{\alpha_i c_1(X)^2} \cap \ch_{k}( \underline{\mathcal{E}} \otimes \det(\underline{\mathcal{E}})^{-\frac{1}{\underline{r}}})\Big) &= a_i + k, 
\end{align*}
where $i=0,\ldots, N$ and we take $a_0:=0$. We assume that the formal power series  $\mathsf{P}_{v,\boldsymbol{a}}$ has only finitely many terms in each degree. We refer to $\mathsf{P}_{v,\boldsymbol{a}}$ as a \emph{formal insertion}. The reader who is only interested in virtual Euler characteristics can take $\mathsf{P}_v = c_v(R\mathcal{H}{\it{om}}_{\pi_M}(\underline{\mathcal{E}}, \underline{\mathcal{E}})[1])$.

Keeping $\boldsymbol{a}$, $\underline{r}$ fixed as above, we now consider a \emph{sequence of formal insertions} $\mathsf{P} := \{\mathsf{P}_{v,\boldsymbol{a}}\}_{v \geq 0}$.\footnote{In \cite[App.~A]{GKL}, we define a further multiplicative property on sequences of formal insertions, but multiplicativity does not play a role in this paper.} Let $r >1$, let $(X,H)$ be a smooth polarized surface satisfying $H_1(X,\mathbb{Z}) = 0$, and $\boldsymbol{\alpha} = (\alpha_1, \ldots, \alpha_N) \in H^{*}(X,\mathbb{Q})^N$ algebraic classes with $\deg(\alpha_i) = a_i$ for all $i=1, \ldots, N$. \\

\noindent \textbf{$\mathrm{SL}_r$-invariants.} As in the introduction, denote by $M:=M_X^H(r,c_1,c_2)$ the Gieseker-Maruyama-Simpson moduli space of rank $r$ Gieseker $H$-stable sheaves on $X$ with Chern classes $c_1,c_2$ \cite{HL}. Suppose, for the moment, that there exists a universal sheaf $\mathcal{E}$ on $X \times M$ such that the line bundle $\det(\mathcal{E})$ has an $r$th root. We drop these two assumptions in Remark \ref{rem:dropass}. Take
$$
v := \rk(R\mathcal{H}{\it{om}}_{\pi_M}(\mathcal{E},\mathcal{E})[1]) + \chi(\mathcal{O}_X).
$$
Then the evaluation $\mathsf{P}_{v,\boldsymbol{a}}(r,X,\boldsymbol{\alpha};M,\mathcal{E})$ is defined as the cohomology class on $M$ obtained from $\mathsf{P}_{v,\boldsymbol{a}}$ by substituting\footnote{There is some redundancy here. Obviously $\alpha_i c_2(X) = \alpha_i c_1(X)^2 = 0$ for all $i=1, \ldots, N$ for degree reasons. We nonetheless allow these classes for notational convenience.} 
\begin{align*}
&\underline{\alpha_0} = 1, \quad \underline{\alpha_0 c_1(X)} = c_1(X), \quad \underline{\alpha_0 c_2(X)} = c_2(X), \quad \underline{\alpha_0 c_1(X)^2} = c_1(X)^2,  \quad \underline{\mathcal{E}} = \mathcal{E}, \\ 
&\underline{r} = r, \quad \underline{\alpha_i} = \alpha_i, \quad \underline{\alpha_i c_1(X)} = \alpha_i c_1(X), \quad \underline{\alpha_i c_2(X)} = \alpha_i c_2(X), \quad \underline{\alpha_i c_1(X)^2} = \alpha_i c_1(X)^2
\end{align*}
for all $i=1, \ldots, N$, where $\pi_X : X \times M \to M$, $\pi_M : X \times M \to M$ denote the projections.

\begin{definition} \label{def:GLgenfun}
Fix a first Chern class $c_1 \in H^2(X,\mathbb{Z})$. 
Suppose there are no rank $r$ strictly Gieseker $H$-semistable sheaves on $X$ with first Chern class $c_1$ (this is the case, for example, when $\gcd(r,c_1H)=1$).
We define the $\mathrm{SL}_r$ generating function of $(X,H), c_1$ associated to formal insertions $\mathsf{P} := \{\mathsf{P}_{v,\boldsymbol{a}}\}_{v \geq 0}$ by
\[
\mathsf{Z}_{(X,H),c_1}^{\mathrm{SL}_r, \mathsf{P}}(q) := \sum_{c_2 \in \mathbb{Z}} q^{\frac{\mathrm{vd}(r,c_1,c_2)}{2r}} \int_{[M_X^H(r,c_1,c_2)]^{\mathrm{vir}}} \mathsf{P}_{\mathrm{vd}(r,c_1,c_2),\boldsymbol{a}}(r,X,\boldsymbol{\alpha};M_X^H(r,c_1,c_2),\mathcal{E}),
\] 
where $\mathrm{vd}(r,c_1,c_2)$ was defined in \eqref{eqn:defvd}.
\end{definition}

\noindent \textbf{$\mathrm{PGL}_r$-invariants.} Fix a class $w \in H^2(X,\mu_r)$. By Theorem \ref{thm:period-index}, there exists a degree $r$ Brauer-Severi variety $Y \to X$ such that $w(Y) = w$. Choose a representative $\xi \in H^2(X,\mathbb{Z})$ of $w$, i.e., $w = [\xi]$ (Lemma \ref{lmm:rep-w-xi}). As in Section \ref{subsec:moduli}, denote by $M:=M_{Y,\xi/r}^H(r,\xi,c_2)$ Yoshioka's moduli space of $H$-stable $Y$-sheaves $F$ satisfying
\[
\ch_G^{\xi}(F) = (r,\xi,\tfrac{1}{2} \xi^2 - c_2).
\]
Note that we have ``switched on'' the $B$-field so that $c_2 \in \mathbb{Z}$ (Proposition \ref{prop:integrality}). Also note that we choose the rank of $F$ equal to the degree of $Y$ and ``twisted'' first Chern class equal to $\xi$.\footnote{In particular, $M_{Y,\xi / r}^H(r,\xi,c_2) = M_Y^H(r,0,c_2+\frac{1-r}{2r} \xi^2)$.} Suppose, for the moment, that there exists a universal sheaf $\mathcal{E}$ on $Y \times M$ such that the line bundle $\det(\mathcal{E})$ has an $r$th root. We drop these two assumptions in Remark \ref{rem:dropass}. Take
$$
v := \rk(R\mathcal{H}{\it{om}}_{\pi_M}(\mathcal{E},\mathcal{E})[1]) + \chi(\mathcal{O}_X).
$$
The evaluation $\mathsf{P}_{v,\boldsymbol{a}}(r,X,\boldsymbol{\alpha};M,\mathcal{E})$ is defined as the cohomology class on $M$ obtained from $\mathsf{P}_{v,\boldsymbol{a}}$ by substituting
\[
\underline{\mathcal{E}} \otimes \det(\underline{\mathcal{E}})^{-\frac{1}{\underline{r}}} = \pi_*(\mathcal{E} \otimes \det(\mathcal{E})^{-\frac{1}{r}})
\]
where $\pi$ is the base change of $\pi : Y \to X$ to $Y \times M$, and furthermore by substituting
\begin{align*}
&\underline{\alpha_0} = 1, \quad \underline{\alpha_0 c_1(X)} = c_1(X), \quad \underline{\alpha_0 c_2(X)} = c_2(X), \quad \underline{\alpha_0 c_1(X)^2} = c_1(X)^2, \\ 
&\underline{r} = r, \quad \underline{\alpha_i} = \alpha_i, \quad \underline{\alpha_i c_1(X)} = \alpha_i c_1(X), \quad \underline{\alpha_i c_2(X)} = \alpha_i c_2(X), \quad \underline{\alpha_i c_1(X)^2} = \alpha_i c_1(X)^2
\end{align*}
for all $i=1, \ldots, N$, where $\pi_X : X \times M \to M$, $\pi_M : X \times M \to M$ denote the projections.

\begin{definition} \label{def:PGLgenfun}
Fix $w \in H^2(X,\mu_r)$. Choose a degree $r$ Brauer-Severi variety $Y \to X$ with $w(Y) = w$ and choose a representative $\xi \in H^2(X,\mathbb{Z})$ of $w$. 
Suppose there are no rank $r$ strictly $H$-semistable $Y$-sheaves $F$ with $\mathrm{ch}_{G}^{\xi}(F) = (r,\xi,\frac{1}{2} \xi^2 - c_2)$ for any $c_2$ (this is the case, for example, when the Brauer class $o(w) \in H^2(X,\mathbb{G}_m)$ has order $r$ by Remark \ref{rem:stabauto}). 
We define the $\mathrm{PGL}_r$ generating function of $(X,H), w$ associated to formal insertions $\mathsf{P} := \{\mathsf{P}_{v,\boldsymbol{a}}\}_{v \geq 0}$ by
\[
\mathsf{Z}_{(X,H),w}^{\mathrm{PGL}_r, \mathsf{P}}(q) := \sum_{c_2 \in \mathbb{Z}} q^{\frac{\mathrm{vd}(r,\xi,c_2)}{2r}} \int_{[M_{Y,\xi/r}^H(r,\xi,c_2)]^{\mathrm{vir}}} \mathsf{P}_{\mathrm{vd}(r,\xi,c_2),\boldsymbol{a}}(r,X,\boldsymbol{\alpha};M_{Y,\xi/r}^H(r,\xi,c_2),\mathcal{E}),
\]
where $\mathrm{vd}(r,\xi,c_2)$ was defined in \eqref{eqn:defvd}. As we will see below (Proposition \ref{prop:indep}), this generating function does \emph{not} depend on the choice of $Y, \xi$ on the right hand side --- justifying the notation. 
\end{definition}

\begin{remark} \label{rem:dropass}
In each of the above two settings, in general a universal sheaf $\mathcal{E}$ on $X \times M$ (respectively $Y \times M$) may only exist \'etale locally. However, note that the following complex and sheaf always exist globally by \cite[Thm.~2.2.4]{Caldararu2000} (see also \cite[Sect.~10.2]{HL})
\[
R\mathcal{H}{\it{om}}_{\pi_M}(\mathcal{E},\mathcal{E}), \quad \mathcal{E}^{\otimes r} \otimes \det(\mathcal{E})^{-1}. 
\]
Note furthermore that for any class $V \in 1+K^0(X \times M)$, we can define an $r$th root operation
\[
\sqrt[r]{V} \in K^0(X \times M)_{\mathbb{Q}}
\]
precisely as in \cite[Lem.~5.1]{OT} (which handles the $r=2$ case, but the arguments generalize). Then we set
\[
\ch(\mathcal{E} \otimes (\det(\mathcal{E}))^{-\frac{1}{r}}) := \mathrm{ch}\Big( r\sqrt[r]{\frac{1}{r^r} \cdot \mathcal{E}^{\otimes r} \otimes \det(\mathcal{E})^{-1} } \Big),
\]
and similarly for $\ch(\pi_*(\mathcal{E} \otimes (\det(\mathcal{E}))^{-\frac{1}{r}}))$ for a degree $r$ Brauer-Severi variety $\pi : Y \to X$.
\end{remark}

We will now prove that the generating function $\mathsf{Z}^{\mathrm{PGL}_r, \mathsf{P}}_{(X,H), w}(q)$ does not depend on the choice of Brauer-Severi variety $Y$ with Stiefel-Whitney class $w$, or representative $\xi$ of $w$. Here we will make crucial use of \cite[Lem.~3.5]{Yoshioka2006}, which establishes the relevant isomorphism of moduli spaces. We extend this result to also include an isomorphism of virtual tangent bundles. Furthermore, we prove \cite[Lem.~3.5]{Yoshioka2006} in a slightly different way, and we will provide more details and include some intermediate steps that are useful on their own. We first recall Yoshioka's setup.

Suppose $p_1 : Y_1 \to X$ and $p_2 : Y_2 \to X$ are two Brauer-Severi varieties with the same Brauer class $o(w(Y_1)) = o(w(Y_2))$ (we will later assume $Y_1$, $Y_2$ have the same degree and $w(Y_1) = w(Y_2)$). Denote the projections $Y_1 \times_X Y_2 \to Y_1$ and $Y_1 \times_X Y_2 \to Y_2$ by $\pi_1$ and $\pi_2$, respectively. Yoshioka shows that there exists a line bundle $L$ on $Y_1 \times_X Y_2$ such that the Fourier-Mukai transform 
\[
\Xi : \Coh(X, Y_1) \to \Coh(X, Y_2), \quad \Xi(F) = \pi_{2*}(\pi_1^*F \otimes L)
\]
defines an equivalence of categories \cite[Lem.~1.7]{Yoshioka2006}. Note that we can always replace $L$ by $L \otimes \pi_2^*p_2^*P$ with $P$ a line bundle on $X$, something we will do later. We first record the following identities.

\begin{lemma} \label{lem:y-sheaf-identity}
    Let $F$ be an $Y_1$-sheaf. The canonical map 
    $$
    \pi_2^*\Xi(F) = \pi_2^*\pi_{2*}(\pi_1^*F \otimes L) \to \pi_1^*F \otimes L
    $$
    is an isomorphism, i.e., $\pi_1^*F \otimes L$ is globally generated relative to $\pi_2$. If $F'$ is another $Y_1$-sheaf that is locally free, we have that 
    $$
    \pi_2^*(\Xi(F')^\vee \otimes \Xi(F)) \cong \pi_1^*((F')^\vee \otimes F).
    $$
\end{lemma}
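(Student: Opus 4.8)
The plan is to reduce everything to the first isomorphism and to analyze that one fibrewise along $\pi_2$. The structural observation driving the proof is that, since $o(w(Y_1)) = o(w(Y_2))$, the morphism $\pi_2 : Y_1 \times_X Y_2 \to Y_2$ is the base change of the Brauer--Severi variety $Y_1 \to X$ along $p_2$, so its Brauer class is $p_2^* o(w(Y_1)) = p_2^* o(w(Y_2))$, which is trivial because a Brauer--Severi variety splits its own Brauer class. Hence $\pi_2$ is an étale-locally (in fact Zariski-locally) trivial $\mathbb{P}^{r-1}$-bundle. Over any étale chart $U \to X$ trivializing both $Y_1$ and $Y_2$, the product becomes $\mathbb{P}^{r-1}_U \times_U \mathbb{P}^{r-1}_U$ and $L$ restricts to $\mathcal{O}(-1,1)$ (this is exactly the normalization that makes $\Xi$ an equivalence in \cite[Lem.~1.7]{Yoshioka2006}). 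In particular $L$ restricts to $\mathcal{O}_{\mathbb{P}^{r-1}}(-1)$ on every fibre of $\pi_2$, and this restriction is insensitive to the permitted modification $L \rightsquigarrow L \otimes \pi_2^* p_2^* P$, since such a twist is trivial on the fibres of $\pi_2$.

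For the first statement I would check the counit is an isomorphism on the fibres of $\pi_2$ and then invoke cohomology and base change. A fibre of $\pi_2$ over a point of $Y_2$ lying over $x \in X$ is carried isomorphically by $\pi_1$ onto $(Y_1)_x \cong \mathbb{P}^{r-1}$, which is split because $\mathrm{Br}(\mathbb{C}) = 0$. The $Y_1$-sheaf condition (Definition \ref{def:Ysheaf}) restricts to this fibre and forces $F|_{(Y_1)_x} \cong V \otimes \mathcal{O}_{\mathbb{P}^{r-1}}(1)$ for a finite-dimensional vector space $V$; tensoring with $L|_{\mathrm{fibre}} \cong \mathcal{O}_{\mathbb{P}^{r-1}}(-1)$ shows that $(\pi_1^* F \otimes L)|_{\mathrm{fibre}} \cong V \otimes \mathcal{O}_{\mathbb{P}^{r-1}}$ is trivial. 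For the trivial bundle on $\mathbb{P}^{r-1}$ the higher cohomology vanishes and the evaluation map is an isomorphism, so by cohomology and base change $R^{>0}\pi_{2*}(\pi_1^* F \otimes L) = 0$, the sheaf $\Xi(F) = \pi_{2*}(\pi_1^* F \otimes L)$ is locally free with formation commuting with base change, and the counit $\pi_2^* \Xi(F) \to \pi_1^* F \otimes L$ is an isomorphism because it is one on each fibre.

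Given the first statement, the second follows formally. Applying it to both $F$ and the locally free sheaf $F'$ gives $\pi_2^* \Xi(F) \cong \pi_1^* F \otimes L$ and $\pi_2^* \Xi(F') \cong \pi_1^* F' \otimes L$, and the fibrewise triviality above shows $\Xi(F')$ is locally free, so $\pi_2^*$ commutes with forming its dual. Therefore
\[
\pi_2^*\big(\Xi(F')^\vee \otimes \Xi(F)\big) \cong (\pi_1^* F' \otimes L)^\vee \otimes (\pi_1^* F \otimes L) \cong \pi_1^*\big((F')^\vee \otimes F\big) \otimes (L^\vee \otimes L),
\]
and the factor $L^\vee \otimes L \cong \mathcal{O}$ cancels, yielding the claim.

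The main obstacle is the bookkeeping in the first statement: pinning down the restriction of $L$ to the fibres of $\pi_2$ (equivalently its étale-local form $\mathcal{O}(-1,1)$), verifying that this is unaffected by the permitted modification of $L$ by $\pi_2^* p_2^* P$, and then correctly matching the twisted $Y_1$-sheaf structure of $F$ against the splitting $(Y_1)_x \cong \mathbb{P}^{r-1}$ so that $\pi_1^* F \otimes L$ becomes genuinely trivial on each fibre. Once fibrewise triviality is established, the cohomology-and-base-change input and the formal cancellation in the second statement are routine.
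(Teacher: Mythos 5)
Your reduction of the second identity to the first is exactly the paper's, and it is sound; the problem is in your proof of the first identity. Both of the tools you invoke there --- cohomology and base change, and the principle ``a map of coherent sheaves that is an isomorphism on every fibre is an isomorphism'' --- require $\pi_1^*F \otimes L$ to be \emph{flat} over $Y_2$, equivalently $F$ to be flat over $X$, and this is not a hypothesis of the lemma. A $Y_1$-sheaf is an arbitrary coherent sheaf satisfying the counit condition of Definition \ref{def:Ysheaf}; \'etale-locally these are precisely the sheaves $p_1^*M \otimes \mathcal{O}_{Y_1}(1)$ with $M$ an arbitrary coherent sheaf on $X$, and such a sheaf is flat over $X$ only when $M$ is locally free. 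Concretely, take $F = G_1 \otimes p_1^*\mathcal{O}_x$ for a closed point $x \in X$ (this is a $Y_1$-sheaf, using Lemma \ref{lem:sheaf-G}): then $\pi_1^*F \otimes L$ is supported on $\pi_2^{-1}((Y_2)_x)$, its fibrewise $h^0$ jumps from $0$ to $r$, and $\Xi(F)$ is a torsion sheaf supported on $(Y_2)_x$. So your intermediate assertions --- that $R^{>0}\pi_{2*}(\pi_1^*F\otimes L)=0$ with formation commuting with base change, and that $\Xi(F)$ is locally free --- are simply false for such $F$, and the base-change theorem you appeal to does not apply (its flatness hypothesis fails). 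The counit is still an isomorphism for such $F$, but your argument does not prove it. This is not a negligible corner case: Proposition \ref{prop:indep} applies the lemma to arbitrary positive-rank $Y_1$-sheaves and to universal families of torsion-free twisted sheaves, which are typically not locally free, hence not flat over $X$.

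The paper's proof is arranged precisely to avoid any flatness of $F$. It checks the statement \'etale-locally on $X$ (legitimate, since formation of $\pi_{2*}$ and of the counit commutes with flat base change), where $\mathcal{O}_{Y_1}(1)$ and $\mathcal{O}_{Y_2}(1)$ exist and $L$ has Yoshioka's explicit form $\mathcal{O}_{Y_1}(-1) \boxtimes (\mathcal{O}_{Y_2}(1) \otimes p_2^*P)$. There, Yoshioka's result gives that the counit $p_1^*p_{1*}(F(-1)) \to F(-1)$ is an isomorphism for \emph{every} coherent $Y_1$-sheaf $F$; since $\pi_2$ is the base change of $p_1$ along the flat morphism $p_2$, flat base change transports this isomorphism to the counit of $\pi_1^*(F(-1))$ relative to $\pi_2$, and twisting by the line bundle $\pi_2^*(\mathcal{O}_{Y_2}(1)\otimes p_2^*P)$ (projection formula) yields exactly the statement for $\pi_1^*F \otimes L$. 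If you want to keep a fibrewise picture, note also that your claim $F|_{(Y_1)_x} \cong V \otimes \mathcal{O}_{\mathbb{P}^{r-1}}(1)$ is correct but not automatic: it needs that $G_1$ restricts on each fibre to the nontrivial (Euler) extension, so $G_1|_{(Y_1)_x} \cong \mathcal{O}(1)^{\oplus r}$, plus a Krull--Schmidt argument to extract $F|_{(Y_1)_x}(-1) \cong V \otimes \mathcal{O}$ from $(F|_{(Y_1)_x}(-1))^{\oplus r} \cong V' \otimes \mathcal{O}$. Finally, your treatment of the second identity is fine once the first is established in general: there $F'$ is locally free, hence flat over $X$, so $\Xi(F')$ is indeed locally free and $\pi_2^*$ commutes with its dual.
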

\begin{proof}
    We can check the first statement \'etale locally, where the Brauer-Severi varieties are trivial. Then, the sheaves $\mathcal{O}_{Y_1}(1)$ and $\mathcal{O}_{Y_2}(1)$ exist. Yoshioka has already shown that $F(-1)$ is globally generated relative to $p_1$. Then, by base change, $\pi_1^*(F(-1))$ is globally generated relative to $\pi_2$, and also, $\pi_1^*(F(-1)) \otimes \pi_2^*Q$ is globally generated relative to $\pi_2$ for any line bundle $Q$. The claim now follows from the construction of $L$ by Yoshioka: on this \'etale cover it is given by $\mathcal{O}_{Y_1}(-1) \boxtimes (\mathcal{O}_{Y_2}(1) \otimes p_2^*P)$ for $P$ any line bundle on $X$.

    The second statement directly follows from the first.
\end{proof}

\begin{proposition} \label{prop:indep}
 The generating function $\mathsf{Z}_{(X,H),w}^{\mathrm{PGL}_r, \mathsf{P}}(q)$ does not depend on the choice of degree $r$ Brauer-Severi variety $Y \to X$ with $w(Y) = w$ or representative $\xi \in H^2(X,\mathbb{Z})$ of $w$.
\end{proposition}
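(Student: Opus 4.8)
The plan is to treat the two sources of dependence --- the representative $\xi$ and the Brauer-Severi variety $Y$ --- separately; the former is bookkeeping, while the latter rests on Yoshioka's Fourier-Mukai equivalence together with Lemma \ref{lem:y-sheaf-identity}. For $\xi$, fix $Y$ and let $\xi' = \xi + r\eta$ with $\eta \in H^2(X,\mathbb{Z})$ be a second representative of $w$ (Lemma \ref{lmm:rep-w-xi}). By the identity $M_{Y,\xi/r}^H(r,\xi,c_2) = M_Y^H(r,0,c_2 + \tfrac{1-r}{2r}\xi^2)$, the schemes $M_{Y,\xi/r}^H(r,\xi,c_2)$ and $M_{Y,\xi'/r}^H(r,\xi',c_2')$ coincide whenever $c_2' = c_2 + \tfrac{1-r}{2r}(\xi^2-\xi'^2)$, and expanding $\xi'=\xi+r\eta$ shows $\tfrac{1-r}{2r}(\xi^2-\xi'^2) = (r-1)\,\xi\eta + \tfrac{r(r-1)}{2}\,\eta^2 \in \mathbb{Z}$, so this is a bijective relabeling of the index $c_2$. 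The universal sheaf, and hence the obstruction theory and virtual class, do not involve $\xi$; a direct computation gives $\mathrm{vd}(r,\xi,c_2) = \mathrm{vd}(r,\xi',c_2')$, so the $q$-exponents match; and the insertion $\mathsf{P}$ is assembled only from $\pi_*(\mathcal{E}\otimes\det(\mathcal{E})^{-1/r})$ and $R\mathcal{H}{\it{om}}_{\pi_M}(\mathcal{E},\mathcal{E})$, both independent of $\xi$. Hence the two generating functions agree term by term.

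For independence of $Y$, let $p_1 : Y_1 \to X$ and $p_2 : Y_2 \to X$ satisfy $w(Y_1)=w(Y_2)=w$ and choose a common representative $\xi$ (allowed by the previous step). Write $Z = Y_1 \times_X Y_2$ with projections $\pi_1, \pi_2$ and structure map $\rho : Z \to X$, and let $\Xi(-) = \pi_{2*}(\pi_1^*(-)\otimes L)$ be Yoshioka's equivalence \cite[Lem.~1.7]{Yoshioka2006}. By \cite[Lem.~3.5]{Yoshioka2006}, $\Xi$ preserves rank, torsion-freeness and $H$-stability, and therefore induces an isomorphism of moduli schemes $M_1 := M_{Y_1,\xi/r}^H(r,\xi,c_2) \xrightarrow{\sim} M_2 := M_{Y_2,\xi/r}^H(r,\xi,c_2')$ together with a bijection between the moduli spaces appearing in the two sums. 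Performing $\Xi$ relatively over $M \cong M_1 \cong M_2$ with universal sheaves $\mathcal{E}_1$ and $\mathcal{E}_2 = \Xi(\mathcal{E}_1)$, the first part of Lemma \ref{lem:y-sheaf-identity} specializes to the identity $\pi_2^*\mathcal{E}_2 \cong \pi_1^*\mathcal{E}_1 \otimes L$ on $Z \times M$ (suppressing $\times\,\mathrm{id}_M$).

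Everything now follows by cancelling $L$. As $\mathcal{E}_i$ has rank $r$, one gets $\pi_2^*(\mathcal{E}_2^{\otimes r}\otimes\det(\mathcal{E}_2)^{-1}) \cong \pi_1^*(\mathcal{E}_1^{\otimes r}\otimes\det(\mathcal{E}_1)^{-1})$ and, applying $R\mathcal{H}{\it{om}}(-,-)$, also $\pi_2^* R\mathcal{H}{\it{om}}(\mathcal{E}_2,\mathcal{E}_2) \cong \pi_1^* R\mathcal{H}{\it{om}}(\mathcal{E}_1,\mathcal{E}_1)$; working with these untwisted, globally defined complexes sidesteps the fact that the $\mathcal{E}_i$ exist only as twisted sheaves (Remark \ref{rem:dropass}). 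Since $\pi_i : Z \to Y_i$ is \'etale-locally a $\mathbb{P}^{r-1}$-bundle, $R\pi_{i*}\mathcal{O}_Z = \mathcal{O}_{Y_i}$, so $R\pi_{i*}\pi_i^* = \mathrm{id}$, and pushing forward along $\rho$ (resp.\ the projection $Z\times M \to M$) yields
\[
R(p_2)_*\bigl(\mathcal{E}_2\otimes\det(\mathcal{E}_2)^{-\frac1r}\bigr) = R(p_1)_*\bigl(\mathcal{E}_1\otimes\det(\mathcal{E}_1)^{-\frac1r}\bigr), \qquad R\mathcal{H}{\it{om}}_{\pi_M}(\mathcal{E}_2,\mathcal{E}_2) = R\mathcal{H}{\it{om}}_{\pi_M}(\mathcal{E}_1,\mathcal{E}_1)
\]
as classes on $X\times M$ and on $M$ (the $K$-theoretic $r$-th root of Remark \ref{rem:dropass} being compatible with pullback and pushforward). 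Taking trace-free parts and shifting shows the virtual tangent bundles, hence $[M_1]^{\mathrm{vir}}=[M_2]^{\mathrm{vir}}$ and $\mathrm{vd}$, agree; thus the $q$-exponents match and every formal insertion evaluates to the same class, so integrating and summing over the matched index gives equal generating functions.

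The main obstacle is the family-level bookkeeping hidden in these identities: one must carry $\Xi$ out relatively over the moduli space and confirm that the isomorphism of moduli functors of \cite[Lem.~3.5]{Yoshioka2006} sends $\mathcal{E}_1$ to $\Xi(\mathcal{E}_1)$, all while only the untwisted combinations $\mathcal{E}^{\otimes r}\otimes\det(\mathcal{E})^{-1}$ and $R\mathcal{H}{\it{om}}(\mathcal{E},\mathcal{E})$ --- not $\mathcal{E}$ itself --- are globally defined. Checking that the trace-free decomposition and the $r$-th root operation commute with $R\pi_{i*}\pi_i^* = \mathrm{id}$, and that $\Xi$ genuinely preserves $H$-stability (so that the open stable loci, not merely the ambient spaces, correspond), are the delicate points; the identification of virtual tangent bundles is the new ingredient beyond \cite{Yoshioka2006}.
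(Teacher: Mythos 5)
Your overall architecture mirrors the paper's: Yoshioka's Fourier--Mukai equivalence, Lemma \ref{lem:y-sheaf-identity} for the universal-sheaf identities, cancellation of $L$ in the normalized combinations $\mathcal{E}^{\otimes r}\otimes\det(\mathcal{E})^{-1}$ and $R\mathcal{H}{\it{om}}(\mathcal{E},\mathcal{E})$, and a separate re-indexing argument for $\xi$ (your $\xi$-paragraph is correct and is essentially the paper's own computation). The genuine gap is at the central step of the $Y$-independence: the claim that $\Xi$ carries $M_{Y_1,\xi/r}^H(r,\xi,c_2)$ onto a space $M_{Y_2,\xi/r}^H(r,\xi,c_2')$ with the \emph{same} twisted first Chern class $\xi$. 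You derive this from ``$\Xi$ preserves rank, torsion-freeness and $H$-stability,'' which is a non sequitur: those properties say nothing about where the twisted $c_1$ goes. The kernel $L$ is only canonical up to tensoring by $\pi_2^*p_2^*\Pic(X)$, and for a given choice $\Xi$ shifts the twisted first Chern class by a fixed class $\tau$; the two generating functions, which both fix the lift $\xi$, match only if $L$ can be normalized so that $\tau=0$. Showing this is possible is the bulk of the paper's proof: one must prove $c_1(G_2)-c_1(\Xi(G_1))\in r\cdot p_2^*(H^2(X,\mathbb{Z})\cap H^{1,1}(X))$, i.e.\ \eqref{eq:chern-in-coh}, which uses the hypothesis $w(Y_1)=w(Y_2)$ (not merely equal Brauer classes) via \cite[Lem.~1.3, 1.8]{Yoshioka2006}, and then replace $L$ by $L\otimes\pi_2^*p_2^*P$. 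Tellingly, your argument never uses $w(Y_1)=w(Y_2)$ beyond fixing a common symbol $\xi$ --- yet the statement is false if one only assumes $o(w(Y_1))=o(w(Y_2))$: in the untwisted case the $\PGL_r$ series reduces (Proposition \ref{prop:untwist}) to an $\SL_r$ series for a representative $c_1$, and those genuinely depend on $c_1$ mod $r$. Whether Yoshioka's Lemma 3.5 can be quoted wholesale for this Chern-character matching depends on its precise statement, but your stated justification for it is wrong, and any correct proof must make the role of equal Stiefel-Whitney classes visible; in the paper, stability preservation is in fact \emph{deduced} from this Chern-character normalization, not assumed alongside it.

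A secondary gap: ``taking trace-free parts and shifting shows the virtual tangent bundles, hence $[M_1]^{\mathrm{vir}}=[M_2]^{\mathrm{vir}}$ \ldots agree.'' An isomorphism $\phi^*T^{\mathrm{vir}}_{M_2}\cong T^{\mathrm{vir}}_{M_1}$ does not formally yield equality of virtual classes, since the class depends a priori on the obstruction-theory morphism $\mathbb{E}\to\mathbb{L}_{M}$, not just on its source; the paper invokes Siebert's theorem \cite[Thm.~4.6]{Sie} precisely so that one need not check compatibility of the two obstruction theories with the maps to the cotangent complexes. Aside from these two points, your identities --- $\pi_2^*\mathcal{E}_2\cong\pi_1^*\mathcal{E}_1\otimes L$, cancellation of $L$, pushforward via $R\pi_{i*}\pi_i^*=\mathrm{id}$, and the matching of insertions and $q$-exponents --- do track the paper's proof correctly.
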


\begin{proof}
    We start with the independence of $Y$. Using the above notation, let $Y_1, Y_2$ be degree $r$ Brauer-Severi varieties on $X$ with $w(Y_1) = w(Y_2) = w$. We will first show that $\Xi$ induces an isomorphism between moduli spaces, and then that it preserves the obstruction theory and the generating function.
    
    We first want to prove that $\Xi$ preserves the Chern character, in order to have a chance at showing it preserves the moduli spaces. 
    Denote by $G_i$ the unique (up to scaling) non-trivial extension of $T_{Y_i/X}$ by $\mathcal{O}_{Y_i}$ (Lemma \ref{lem:sheaf-G}).
    We consider the equality $\ch_{G_1}(F) = \ch_{G_2}(\Xi(F))$ for any $Y_1$-sheaf $F$ of positive rank. It suffices to show this equality on $Y_1 \times_X Y_2$ modulo $H^{\geq 6}$, since $Y_1 \times_X Y_2 \to X$ induces an injective map on cohomology. Using the fact that $F, G_1$ are $Y_1$-sheaves and $\Xi(F), G_2$ are $Y_2$-sheaves (Definition \ref{def:Ysheaf}), it suffices to prove the following equality modulo $H^{\geq 6}$
    \[
        \ch \left((\pi_1^* F \otimes \pi_1^*G_1^\vee)^{\otimes 2} \otimes \pi_2^*G_2 \otimes \pi_2^*G_2^\vee\right) = \ch \left( (\pi_2^*(\Xi F) \otimes \pi_2^*G_2^\vee)^{\otimes 2} \otimes \pi_1^*G_1 \otimes \pi_1^*G_1^\vee \right).
    \]
    By Lemma~\ref{lem:y-sheaf-identity}, this is equivalent to the following equation modulo $H^{\geq 6}$
    \begin{align*}
        \ch &\left( (\pi_1^* F)^{\otimes 2} \otimes \pi_1^*G_1^\vee \otimes \pi_2^*(\Xi G_1)^\vee \otimes \pi_2^*G_2 \otimes \pi_2^*G_2^\vee \right)\\
        &= \ch \left( (\pi_1^* F)^{\otimes 2} \otimes (\pi_2^*G_2^\vee)^{\otimes 2} \otimes \pi_2^*(\Xi G_1) \otimes \pi_1^*G_1^\vee \right),
    \end{align*}
    which can then be rearranged to give
    \[
        \ch( (\pi_1^*F)^{\otimes 2} \otimes \pi_1^*G_1^\vee \otimes \pi_2^*G_2^\vee) \cdot \pi_2^*\ch( \Xi(G_1)^\vee \otimes G_2 - G_2^\vee \otimes \Xi(G_1)) = 0.
    \]
    Thus, to obtain the Chern character equality, it suffices to show that $\ch(\Xi(G_1)^\vee \otimes G_2) = \ch(G_2^\vee \otimes \Xi(G_1))$ modulo $H^{\geq 6}(Y_2,\mathbb{Q})$. Since, $\mathrm{ch}_0$ and $\mathrm{ch}_2$ already agree, we only have to worry about the $c_1$. We show that we can replace $L$ by $L \otimes \pi_2^*p_2^*P$ for some $P \in \Pic(X)$ such that $c_1(G_2) = c_1(\Xi(G_1))$, which proves what we want. Changing $L$ by $L \otimes \pi_2^* p_2^* P$ replaces $\Xi$ by $F \mapsto p_2^*P \otimes \Xi(F)$, so, in particular, it replaces $c_1(\Xi(G_1))$ by $c_1(\Xi(G_1)) + rp_2^*c_1(P)$. Thus, in order to find the desired $P$, we have to show that
    \begin{equation}\label{eq:chern-in-coh}
    c_1(G_2) - c_1(\Xi(G_1)) \in r \cdot p_2^*(H^2(X, \mathbb{Z}) \cap H^{1,1}(X)).
    \end{equation}
    The equality $p_2^*p_{2*}(\Xi(G_1) \otimes G_2^\vee) \cong \Xi(G_1) \otimes G_2^\vee$, which holds since $\Xi(G_1)$ is a $Y_2$-sheaf, already implies that $r(c_1(G_2)-c_1(\Xi(G_1)))$ lies in $r \cdot p_2^*(H^2(X, \mathbb{Z}) \cap H^{1,1}(X))$, which implies that
    \[
        c_1(G_2) - c_1(\Xi(G_1)) \in p_2^*(H^2(X, \mathbb{Z}) \cap H^{1,1}(X)).
    \]
    To get the factor $r$, we see that the image of $c_1(G_2) - c_1(\Xi(G_1))$ in $H^2(Y_2, \mu_r)$ is $p_2^*w - p_2^*w = 0$, by the assumption $w(Y_1) = w(Y_2) = w$ and \cite[Lem.~1.3, 1.8]{Yoshioka2006}. Using the explicit form of $H^2(Y_2, \mathbb{Z}) = p_2^*H^0(X,\mathbb{Z}) \oplus p_2^*H^2(X, \mathbb{Z})$, we see that \eqref{eq:chern-in-coh} is satisfied. This shows that we can pick $\Xi$ such that it preserves the Chern character. For the rest of the proof, we will use this $\Xi$.

    It is now not hard to show that 
    $$
    \frac{\chi(X,p_{1*}(F \otimes G_1)(mH))}{\rk(F)r} = \frac{\chi(X,p_{2*}(\Xi(F) \otimes G_2)(mH))}{\rk(F) r} + c,
    $$
    where $c$ is a constant that does not depend on $F$. Therefore $\Xi$ preserves stability.
    Since $\Xi$ preserves the Chern character and the stability condition, we obtain an isomorphism
    \[
    \phi : M_{Y_1,\xi/r}^H(r,\xi,c_2) \cong M_{Y_2,\xi/r}^H(r,\xi,c_2)
    \]
    for any lift $\xi$ of $w$ and any $c_2 \in \mathbb{Z}$. 

    Suppose there exists a universal sheaf $\mathcal{E}_1$ on $Y_1 \times M_1$, where $M_i:=M_{Y_i,\xi/r}^H(r,\xi,c_2)$. Then
    \[
    \mathcal{E}_2 := \phi_* \Xi(\mathcal{E}_1)
    \]
    is a universal sheaf on $Y_2 \times M_2$, where we denote the base-changed versions of $\phi, \Xi$ by the same symbol. By the second identity of Lemma~\ref{lem:y-sheaf-identity}, we have
    \[
    Rp_{2*} R{\mathcal{H}\it{om}}(\Xi(\mathcal{E}_1),\Xi(\mathcal{E}_1)) \cong Rp_{1*} R{\mathcal{H}\it{om}}(\mathcal{E}_1, \mathcal{E}_1). 
    \]
    This readily implies $\phi^* T_{M_2}^{\mathrm{vir}} \cong T_{M_1}^{\mathrm{vir}}$. Note that this reasoning does not require the universal sheaf $\mathcal{E}_1$ to exist globally on $Y_1 \times M_1$.
    This equality, and Siebert's formula \cite[Thm.~4.6]{Sie}, imply that the virtual classes defined by both moduli problems agree\footnote{We note that the comparison of virtual tangent bundles and fundamental classes \eqref{eqn:virclequal} was used but not explicitly carried out in \cite{JiangKool2021}.}
    \begin{equation} \label{eqn:virclequal}
    \phi^* [M_{Y_2,\xi/r}^H(r,\xi,c_2)]^{\mathrm{vir}} = [M_{Y_1,\xi/r}^H(r,\xi,c_2)]^{\mathrm{vir}}.
    \end{equation}
    Note that it is not necessary to establish that 
    \begin{displaymath}
    \xymatrix
    {
    (T_{M_1}^{\mathrm{vir}})^\vee \ar[d] \ar^{\cong}[r] & (\phi^* T_{M_2}^{\mathrm{vir}})^\vee \ar[d] \\
    \mathbb{L}_{M_1} \ar^{\cong}[r] & \phi^* \mathbb{L}_{M_2}.
    }
    \end{displaymath}
    commutes, and we will therefore not do this.

    To complete this part of the proof, we should check that the invariants themselves agree. Lemma~\ref{lem:y-sheaf-identity} implies that
    \begin{align*}
        \pi_2^*\phi^*(\mathcal{E}_2^{\otimes r} \otimes \det(\mathcal{E}_2)^{-1}) & \cong (\pi_2^* \phi^*\mathcal{E}_2)^{\otimes r} \otimes \det(\pi_2^* \phi^*\mathcal{E}_2)^{-1})\\
        & \cong (\pi_2^*\Xi(\mathcal{E}_1))^{\otimes r} \otimes \det(\pi_2^*\Xi(\mathcal{E}_1))^{-1}\\
        & \cong (\pi_1^*\mathcal{E}_1 \otimes L)^{\otimes r} \otimes \det(\pi_1^*\mathcal{E}_1 \otimes L)^{-1}\\
        & \cong \pi_1^*( \mathcal{E}_1^{\otimes r} \otimes \det(\mathcal{E}_1)^{-1}).
    \end{align*}
    Note that it is crucial that we work with the ``normalized'' universal sheaves. This gives us that, for any $\alpha \in H^*(X, \mathbb{Q})$ and $k \in \mathbb{Z}$ we have
    \[
    \pi_{M_1*} \Big( \pi_X^* \alpha \cap \mathrm{ch}_k(p_{1*} (\mathcal{E}_1 \otimes \det(\mathcal{E}_1)^{-\frac{1}{r}} )) \Big) = 
    \phi^* \pi_{M_2*} \Big( \pi_X^* \alpha \cap \mathrm{ch}_k(p_{2*} (\mathcal{E}_2 \otimes \det(\mathcal{E}_2)^{-\frac{1}{r}} )) \Big),
    \]
    where $\pi_X : M_i \times X \to X$ is the projection. We conclude that, for any $\boldsymbol{a}$, $\boldsymbol{\alpha}$ as in Definition \ref{def:PGLgenfun}, we have
    \begin{align*}
    \int_{[M_{Y_1,\xi/r}^H(r,\xi,c_2)]^{\mathrm{vir}}} \mathsf{P}_{\mathrm{vd}(r,\xi,c_2),\boldsymbol{a}}(r,X,\boldsymbol{\alpha};M_{Y_1,\xi/r}^H(r,\xi,c_2),\mathcal{E}_1) = \\
    \int_{[M_{Y_2,\xi/r}^H(r,\xi,c_2)]^{\mathrm{vir}}} \mathsf{P}_{\mathrm{vd}(r,\xi,c_2),\boldsymbol{a}}(r,X,\boldsymbol{\alpha};M_{Y_2,\xi/r}^H(r,\xi,c_2),\mathcal{E}_2).
    \end{align*}
    
    The second part of the proposition follows \cite[Prop.~3.1]{JiangKool2021}. Let $\pi : Y \rightarrow X$ be a degree $r$ Brauer-Severi variety with $w(Y)=w$ and let $\xi, \xi' \in H^2(X,\mathbb{Z})$ be two representatives of $w$ (Lemma \ref{lmm:rep-w-xi}). Then $\xi' = \xi+r\gamma$ for some $\gamma \in H^2(X,\mathbb{Z})$. For any $Y$-sheaf $F$ we have
    $$
    e^{\frac{\xi'}{r}} \ch_G(F) = (r,\xi',\tfrac{1}{2} \xi^{\prime 2} - c_2')
    $$
    if and only if
    $$
    e^{\frac{\xi}{r}} \ch_G(F) = (r,\xi,\tfrac{1}{2} \xi^{2} - c_2), \quad c_2' =c_2 + (r-1) \gamma \xi +\tfrac{1}{2} r (r-1) \gamma^2.
    $$
    In particular, $M_{Y,\xi'/r}^H(r,\xi',c_2') = M_{Y,\xi/r}^H(r,\xi,c_2)$ and $\mathrm{vd}(r,\xi',c_2') = \mathrm{vd}(r,\xi,c_2)$. Summing over all $c_2$, $c_2'$, the result follows.
\end{proof}

 The independence of choice of $Y$ is perhaps not surprising, as this is just a particular model for the theory of twisted sheaves that we are using (though it is non-trivial that the Chern classes behave well under the equivalence discussed in Proposition \ref{prop:indep}). The independence of the choice of $\xi$ is also interesting: here we really need that we are dealing with generating functions and sum over all $c_2$. A different choice leads to a permutation of the terms of the generating function.
  
We make some observations on the generating functions in Definitions \ref{def:GLgenfun}, \ref{def:PGLgenfun}:
\begin{itemize}
\item The $\mathrm{SL}_r$ generating function is invariant under replacing $\mathcal{E}$ by $\mathcal{E} \otimes \mathcal{L}$ for any line bundle $\mathcal{L}$ on $X \times M$.  Let $L$ be a line bundle on $X$ and suppose $- \otimes L$ preserves stability, i.e., it induces an isomorphism
 \[
 \bigsqcup_{c_2} M_{X}^H(r,c_1,c_2) \to  \bigsqcup_{c_2} M_{X}^H(r,c_1+rc_1(L),c_2). 
 \]
 This happens, for example, when (1) $L$ is a multiple of $H$, or (2) when $\gcd(r,c_1H) = 1$ so that Gieseker stability and $\mu$-stability coincide \cite[Lem.~1.2.13, 1.2.14]{HL}. Then
 \[
 \mathsf{Z}_{(X,H),c_1}^{\mathrm{SL}_r, \mathsf{P}}(q) = \mathsf{Z}_{(X,H),c_1+rc_1(L)}^{\mathrm{SL}_r, \mathsf{P}}(q).
 \]
\item Suppose $\alpha(Y) \in \mathrm{Br}(X)$ has order $r>1$. Then stability is automatic by Remark \ref{rem:stabauto} and we write
\[
 \mathsf{Z}_{X,w}^{\mathrm{PGL}_r, \mathsf{P}}(q) = \mathsf{Z}_{(X,H),w}^{\mathrm{PGL}_r, \mathsf{P}}(q).
 \]
 \item Many interesting virtual invariants can be obtained from appropriate choices of formal insertions $\mathsf{P}$ by the virtual Hirzebruch-Riemann-Roch theorem \cite{CFK, FG}:
 \begin{itemize}
 \item Virtual Euler characteristic $e^{\mathrm{vir}}$, Hirzebruch genus $\chi_{-y}^{\mathrm{vir}}$, elliptic genus $Ell^{\mathrm{vir}}$. These were defined by Fantechi-G\"ottsche \cite{FG} and studied in the $\mathrm{SL}_r$ case in, e.g., \cite{GK1, GK3, GKL}. 
 \item Donaldson invariants and $K$-theoretic Donaldson invariants studied in algebraic geometry by G\"ottsche-Nakajima-Yoshioka in the $\mathrm{SL}_r$ case in \cite{GNY1, GNY2, GNY3}.
 \item Virtual cobordism class defined by Shen \cite{She} and studied in the $\mathrm{SL}_r$ case in \cite{GK2}.
 \item Virtual Segre and Verlinde numbers \cite{GKW, GK4}.
 \item Virasoro operators \cite{vBr, BLM}.
 \end{itemize}
\end{itemize}

We now make a comparison between moduli of sheaves on a \emph{trivial} Brauer-Severi variety $\mathbb{P}(E^\vee) \to X$ and moduli of sheaves on $X$.
\begin{proposition} \label{prop:untwist}
Let $(X,H)$ be a smooth polarized surface satisfying $H_1(X,\mathbb{Z}) = 0$ and consider a projective bundle $\pi : Y = \mathbb{P}(E^\vee) \to X$ for a rank $r$ vector bundle $E$ on $X$. Let $w := w(Y) \in H^2(X,\mu_r)$ and let $c_1 \in H^2(X,\mathbb{Z})$ be a (necessarily algebraic) class representing $w$. Suppose $\gcd(r,c_1 H) = 1$. Then there exists an isomorphism between Yoshioka and Gieseker-Maruyama-Simpson moduli spaces
\[
\phi : M_{Y,c_1/r}^H(r,c_1,c_2) \cong M_{X}^H(r,c_1,c_2).
\]
Moreover, under this isomorphism we have the following comparison of virtual fundamental classes and (normalized) universal sheaves\footnote{Recall that the universal sheaves $\mathcal{E}_1$, $\mathcal{E}_2$ may only exist \'etale locally, but their normalized versions exists globally by Remark \ref{rem:dropass}.}
\begin{align*}
\phi^* T_{M_{X}^H(r,c_1,c_2)}^{\mathrm{vir}} \cong T_{M_{Y,c_1/r}^H(r,c_1,c_2)}^{\mathrm{vir}}, \quad \phi^* [M_{X}^H(r,c_1,c_2)]^{\mathrm{vir}} &= [M_{Y,c_1/r}^H(r,c_1,c_2)]^{\mathrm{vir}}, \\
\pi^* \phi^* \Big(  (\mathcal{E}_2)^{\otimes r} \otimes \det(\mathcal{E}_2)^{-1} \Big) \cong \mathcal{E}_1^{\otimes r} \otimes \det(\mathcal{E}_1)^{-1},
\end{align*}
where $\pi$ and $\phi$ denote the appropriately base-changed morphisms. 
\end{proposition}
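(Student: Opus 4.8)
The plan is to make the (a priori abstract) Morita equivalence for the trivial Brauer--Severi variety completely explicit, and then to check that it respects Chern data, stability, the virtual tangent bundle, and the normalized universal sheaf. First I would invoke Example \ref{exa:trivial-algebras}: for $Y = \mathbb{P}(E^\vee)$ the sheaf of Lemma \ref{lem:sheaf-G} is $G \cong \pi^*E(1)$, the associated Azumaya algebra is $\mathcal{E}{\it{nd}}(E^\vee)$, and the tautological $\mathcal{O}_Y(1)$ exists globally. Since $c_1 \equiv c_1(E) \bmod r$ with both classes algebraic, their difference is $r$ times an integral $(1,1)$-class, hence itself algebraic; replacing $E$ by $E \otimes L$ for a suitable line bundle $L$ (which does not change $Y$) I may assume $\xi = c_1 = c_1(E)$. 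The counit condition of Definition \ref{def:Ysheaf} then shows that every $Y$-sheaf is $F = \pi^*V \otimes \mathcal{O}_Y(1)$ for a unique coherent sheaf $V$ on $X$ (indeed $F \otimes G^\vee = \pi^*(V \otimes E^\vee)$), and $\Phi(V) := \pi^*V \otimes \mathcal{O}_Y(1)$ is an exact equivalence $\Coh(X) \to \Coh(X,Y)$ carrying subobjects to $Y$-subsheaves. The Chern character computation performed just after Proposition \ref{prop:integrality}, using \eqref{eq:surface-chern-equation} and \eqref{eq:w-trivial-algebra}, gives $\ch_G^\xi(F) = \ch(V)$; thus $\ch_G^\xi(F) = (r,c_1,\tfrac12 c_1^2 - c_2)$ precisely when $V$ has rank $r$ and Chern classes $c_1, c_2$.

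The main point is the comparison of stability, and this is exactly where $\gcd(r,c_1 H) = 1$ is essential. By definition Yoshioka's (semi)stability of $F$ compares the reduced Hilbert polynomials of $\pi_*(F' \otimes G^\vee) = V' \otimes E^\vee$ over all $Y$-subsheaves $F' = \Phi(V') \subsetneq F$, i.e.\ over all $V' \subsetneq V$. Tensoring by the fixed bundle $E^\vee$ shifts every slope by the same constant $\mu(E^\vee)$, so at the level of slopes the comparison of $V' \otimes E^\vee$ with $V \otimes E^\vee$ reduces to that of $V'$ with $V$. Under $\gcd(r,c_1 H) = 1$ there is no proper subsheaf $V' \subsetneq V$ with $\mu(V') = \mu(V)$ (equality would force $r \mid \rk(V')$ with $0 < \rk(V') < r$), so on both sides Gieseker stability coincides with $\mu$-stability and there are no strictly semistable objects; hence $F$ is a stable $Y$-sheaf if and only if $V$ is Gieseker stable. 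Running $\Phi$ in families, with $\Phi(\mathcal{V}) = (\pi \times \mathrm{id})^*\mathcal{V} \otimes \mathrm{pr}_Y^*\mathcal{O}_Y(1)$ for a flat family $\mathcal{V}$ on $X \times S$, upgrades this to an isomorphism of the two moduli functors, producing the isomorphism $\phi$ of coarse moduli spaces.

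It remains to match the obstruction-theoretic data, and the key observation is that the twist by $\mathcal{O}_Y(1)$ cancels. Writing $\mathcal{E}_1 \cong \Pi^*(\mathrm{id}_X \times \phi)^*\mathcal{E}_2 \otimes \mathrm{pr}_Y^*\mathcal{O}_Y(1)$ with $\Pi = \pi \times \mathrm{id} \colon Y \times M \to X \times M$, the line-bundle factors cancel in $R\mathcal{H}{\it{om}}(\mathcal{E}_1,\mathcal{E}_1) \cong \Pi^* R\mathcal{H}{\it{om}}(\phi^*\mathcal{E}_2,\phi^*\mathcal{E}_2)$; pushing forward and using $R\Pi_*\mathcal{O} = \mathcal{O}$ together with the projection formula (valid since the fibres of $\pi$ are $\mathbb{P}^{r-1}$) yields $R\mathcal{H}{\it{om}}_{\pi_M}(\mathcal{E}_1,\mathcal{E}_1) \cong \phi^* R\mathcal{H}{\it{om}}_{\pi_M}(\mathcal{E}_2,\mathcal{E}_2)$ compatibly with the trace, hence $\phi^* T^{\mathrm{vir}}_{M_X} \cong T^{\mathrm{vir}}_{M_{Y,c_1/r}}$. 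Exactly as in the proof of Proposition \ref{prop:indep}, Siebert's formula \cite{Sie} then gives $\phi^*[M_X^H(r,c_1,c_2)]^{\mathrm{vir}} = [M_{Y,c_1/r}^H(r,c_1,c_2)]^{\mathrm{vir}}$. Finally, since $\Pi^*\phi^*\mathcal{E}_2$ has rank $r$ we get $\det(\mathcal{E}_1) \cong \Pi^*\det(\phi^*\mathcal{E}_2) \otimes \mathrm{pr}_Y^*\mathcal{O}_Y(r)$, so in $\mathcal{E}_1^{\otimes r} \otimes \det(\mathcal{E}_1)^{-1}$ the factors $\mathcal{O}_Y(r)$ and $\mathcal{O}_Y(-r)$ cancel and one obtains $\mathcal{E}_1^{\otimes r} \otimes \det(\mathcal{E}_1)^{-1} \cong \Pi^*\phi^*(\mathcal{E}_2^{\otimes r} \otimes \det(\mathcal{E}_2)^{-1})$, which is the asserted identity. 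Both this computation and the virtual tangent bundle comparison only involve the globally defined normalized classes of Remark \ref{rem:dropass}, so they are insensitive to the fact that $\mathcal{E}_1, \mathcal{E}_2$ may exist only \'etale-locally. I expect the stability dictionary to be the real obstacle: the transform $\Phi$ introduces the auxiliary twist by $E^\vee$, and the hypothesis $\gcd(r,c_1 H) = 1$ is precisely what lets one absorb it by passing to slope stability.
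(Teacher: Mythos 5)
Your overall strategy coincides with the paper's: your $\Phi$ is the inverse of Yoshioka's equivalence $\Xi(F)=\pi_*(F(-1))$ used in the paper, you match twisted and untwisted Chern characters after normalizing $c_1(E)=c_1$, you convert Yoshioka stability into a stability condition on $X$ and use $\gcd(r,c_1H)=1$ to identify it with Gieseker/$\mu$-stability, and you then run the identical virtual-tangent-bundle/Siebert argument and normalized-universal-sheaf cancellation as in the proof of Proposition \ref{prop:indep}. Those last steps are correct, and the explicit normalization $c_1(E)=c_1$ (legitimate because $c_1-c_1(E)=r\gamma$ with $\gamma$ integral and of type $(1,1)$, hence algebraic by Lefschetz) is a point the paper leaves implicit.

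The one place where your argument has a genuine hole --- and it is exactly the step you call ``the real obstacle'' --- is the stability dictionary. First, Yoshioka's reduced Hilbert polynomial of $\pi_*(F'\otimes G^\vee)=V'\otimes E^\vee$ is \emph{not} a uniform shift of that of $V'$: by Riemann--Roch it differs from $\chi(V'(mH))/a_2(V')$ by terms independent of $V'$ \emph{plus} the term $c_1(E^\vee)c_1(V')/(r\,\rk(V')H^2)$, which depends on the subsheaf. Only the slope (the coefficient of $m$) shifts uniformly, as you say; the constant term does not, so ``reducing to slopes'' requires that slopes alone decide every comparison. Second, your parenthetical claim that no proper subsheaf $V'\subsetneq V$ has $\mu(V')=\mu(V)$ is false as stated: any full-rank subsheaf with zero-dimensional quotient has equal slope, and such subsheaves do enter the Gieseker and Yoshioka conditions (only $\mu$-stability ignores them). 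For these full-rank $V'$ the slope comparison does not decide, and one must compare constant terms; there the argument survives because $c_1(V')=c_1(V)$ (the quotient is punctual, $H$ being ample), so the $E^\vee$-twist terms coincide and $\chi(V'(mH))<\chi(V(mH))$ settles the inequality. The paper closes this same gap differently: it replaces $E^\vee$ by $E^\vee\otimes\mathcal{L}^N$ so that $c_1(E^\vee)$ is ample and invokes Nakai--Moishezon to control $c_1(E^\vee)(c_1(V)-c_1(V'))$ for full-rank subsheaves. With this one-paragraph repair your proof is complete and is essentially the paper's proof.
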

\begin{proof}
There exists an equivalence of categories \cite[Lem.~1.7]{Yoshioka2006}
\[
\Xi : \mathrm{Coh}(X,Y) \to \mathrm{Coh}(X), \quad F \mapsto \pi_*(F(-1)).
\]
We fix the $B$-field $\xi:=c_1$. Using $G = \pi^*E(1)$ and the projection formula, a direct calculation shows that for any $Y$-sheaf $F$, we have
\[
\mathrm{ch}_G^\xi(F) = \mathrm{ch}(\Xi(F)).
\]
Hence $\rk(F) = \rk(\Xi(F))$. Suppose this rank is positive. Then, using Hirzebruch-Riemann-Roch, we find that the reduced Hilbert polynomials are related by
\begin{align*}
\frac{\chi(\pi_*(F \otimes G^\vee)(mH))}{a_2^G(F)} = &\frac{\chi(\Xi(F)(mH))}{a_2(\Xi(F))}  + \frac{c_1(E^\vee)H}{r H^2} m \\ 
&+ \frac{c_1(E^\vee)c_1(X)}{2rH^2} 
+ \frac{\mathrm{ch_2}(E^\vee)}{r H^2} + \frac{c_1(E^\vee)c_1(\Xi(F))}{r \rk(\Xi(F)) H^2}.
\end{align*}

We would like to prove the following claim: for any torsion free sheaf $F$ on $X$, we have 
\[
\frac{\chi(F'(mH))}{a_2(F')} + \frac{1}{r H^2} \frac{c_1(E^\vee)c_1(F')}{\rk(F')} < \frac{\chi(F(mH))}{a_2(F)} + \frac{1}{r H^2} \frac{c_1(E^\vee)c_1(F)}{\rk(F)},
\]
for any subsheaf $0 \neq F' \subsetneq F$, if and only if $F$ is $\mu$-stable \cite[Def.~1.2.12]{HL}. We may replace $\mathbb{P}(E^\vee)$ by $\mathbb{P}(E^\vee \otimes \mathcal{L}^N)$, where $\mathcal{L}$ is an ample line bundle on $X$ and $N \in \mathbb{Z}_{>0}$. This replaces $c_1(E^\vee)$ by $N r c_1(\mathcal{L}) + c_1(E^\vee)$ which is ample for $N \gg 0$. 
In other words, we may assume $E$ is chosen such that $c_1(E^\vee)$ is ample. In particular, for any torsion free sheaf $F$ on $X$ and subsheaf $F'$ with $\rk(F') = \rk(F)$, we have
\[
c_1(E^\vee)(c_1(F) - c_1(F')) \geq 0
\]
by the Nakai-Moishezon criterion. Assuming $\gcd(\rk(F),c_1(F) H) = 1$, the claim then easily follows.
We deduce that $\Xi$ induces an isomorphism of moduli spaces\footnote{This isomorphism is essentially established in \cite[Lem.~2.2]{Yoshioka2006}. Additionally, we show that in our context $E$-twisted stability on $X$ \cite[Def.~2.2]{Yoshioka2006} coincides with $\mu$-stability on $X$, and we fix Chern character twisted by $B$-field, which leads to a nice comparison of Chern classes.}
\[
\phi : M_{Y,c_1/r}^H(r,c_1,c_2) \stackrel{\cong}{\to} M_{X}^H(r,c_1,c_2).
\]

The second part of the proposition is established exactly as in the proof of Proposition \ref{prop:indep}.
\end{proof}

\subsection{Deformation invariance}

Let $(X,H)$ be a smooth polarized surface satisfying $H_1(X,\mathbb{Z}) = 0$. We pick a class $w \in H^2(X,\mu_r)$, a sequence of formal insertions $\mathsf{P}$, and we consider the generating function $\mathsf{Z}_{(X,H),w}^{\mathrm{PGL}_r, \mathsf{P}}(q)$. We will now show that this generating function is invariant under deformations of $(X,H)$.

Let $f : \mathcal{X} \to B$ be a smooth projective morphism of relative dimension 2 with connected fibres over a smooth connected variety $B$. Suppose that one fibre (and hence all fibres) $\mathcal{X}_b$ satisfies $H_1(\mathcal{X}_b,\mathbb{Z}) = 0$. We fix a family of polarizations $\mathcal{H}$ on $\mathcal{X}$. We consider $\mu_r \subset \mathcal{O}_\mathcal{X}^{\times }$ as a constructible sheaf in the \'etale topology on $\mathcal{X}$. Then $R^2 f_* \mu_r$ is a constructible sheaf too and 
\[
(R^2 f_* \mu_r)_b \cong H^2(\mathcal{X}_b, \mu_r)
\]
for all closed points $b \in \mathcal{X}$ by the proper base change theorem \cite[Thm.~17.7]{MilneLec}. We fix a section 
\[
\widetilde{w} \in H^0(B,R^2 f_* \mu_r).
\]
Given a class $w \in H^2(\mathcal{X}_b, \mu_r)$ for some closed point $b \in B$, it is always possible to find such $\widetilde{w}$ in an \'etale neighbourhood of $b$ that restricts to $w$ over $b$. This fact uses the proper base change theorem.
We make the following two assumptions:
\begin{itemize}
\item $r$ is prime, and;
\item $\gcd(r, \widetilde{w}_b \mathcal{H}_b) = 1$ for some (and hence all) closed points $b \in B$.
\end{itemize}
This allows us to rule out strictly semistable objects as follows.
\begin{lemma} \label{lem:moduliprojinfam}
Suppose $r$ is prime and $\gcd(r, \widetilde{w}_b \mathcal{H}_b) = 1$ for some (and hence all) closed points $b \in B$. Then, for any closed point $b \in B$, any degree $r$ Brauer-Severi variety $Y \to \mathcal{X}_b$ with $w(Y) = \widetilde{w}_b$, and any $c_2$, the moduli space $M_{Y}^{\mathcal{H}_b}(r,0,c_2)$ is projective.
\end{lemma}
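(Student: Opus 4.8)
The plan is to use the hypothesis that $r$ is prime to reduce everything to a clean dichotomy on the Brauer class $\alpha := o(\widetilde{w}_b) \in \Br(\mathcal{X}_b)$. Since $\widetilde{w}_b \in H^2(\mathcal{X}_b, \mu_r)$, the class $\alpha$ is annihilated by $r$, so its order is either $1$ or $r$. As a preliminary bookkeeping step, I would record that the moduli space in the statement, $M_Y^{\mathcal{H}_b}(r,0,c_2)$, which fixes $\ch_G(F) = (r,0,-c_2)$, equals a $B$-field twisted moduli space: choosing a lift $\xi \in H^2(\mathcal{X}_b,\mathbb{Z})$ of $\widetilde{w}_b$ (Lemma \ref{lmm:rep-w-xi}), the footnote identity to Definition \ref{def:PGLgenfun} gives $M_{Y,\xi/r}^{\mathcal{H}_b}(r,\xi,c_2') = M_Y^{\mathcal{H}_b}(r,0,c_2)$ after a relabelling of the second Chern class. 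Thus it suffices to prove projectivity of the twisted space.

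\textbf{Case of nontrivial Brauer class ($\mathrm{ord}(\alpha) = r$).} Here the rank $s := \mathrm{ord}(\alpha(Y)) = r$ that we have fixed equals the minimal positive rank of a $Y$-sheaf. By Remark \ref{rem:stabauto}, every rank $r$ torsion-free $Y$-sheaf is automatically stable; in particular there are no strictly semistable rank $r$ $Y$-sheaves. Since a semistable $Y$-sheaf of positive rank is pure of dimension $2$, hence torsion-free, the semistable and stable loci coincide, so $M_Y^{\mathcal{H}_b,ss}(\ch) = M_Y^{\mathcal{H}_b}(\ch)$ is projective directly by Yoshioka's construction. Note that in this case the coprimality hypothesis is not needed at all.

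\textbf{Case of trivial Brauer class ($\alpha = 0$).} Then the gerbe of Example \ref{exa:gerbe-bs} admits a global section, so $Y \cong \mathbb{P}(E^\vee)$ for some rank $r$ locally free sheaf $E$ on $\mathcal{X}_b$, and by \eqref{eq:w-trivial-algebra} the algebraic class $c_1 := c_1(E)$ is an integral lift of $w(Y) = \widetilde{w}_b$. The crucial point is to transfer the numerical hypothesis to $c_1$: since $c_1 \equiv \widetilde{w}_b \pmod r$ we have $c_1 \cdot \mathcal{H}_b \equiv \widetilde{w}_b \mathcal{H}_b \pmod r$, and as $r$ is prime the assumption $\gcd(r,\widetilde{w}_b \mathcal{H}_b)=1$ forces $\widetilde{w}_b \mathcal{H}_b \not\equiv 0$, hence $\gcd(r, c_1 \mathcal{H}_b) = 1$. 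This is exactly the hypothesis of Proposition \ref{prop:untwist}, which (taking $\xi = c_1$) yields an isomorphism $M_{Y,c_1/r}^{\mathcal{H}_b}(r,c_1,c_2') \cong M_{\mathcal{X}_b}^{\mathcal{H}_b}(r,c_1,c_2')$ with a Gieseker-Maruyama-Simpson moduli space. Since $\gcd(r,c_1 \mathcal{H}_b)=1$, Gieseker and slope stability coincide and there are no strictly semistable sheaves, so the target is projective; therefore so is $M_Y^{\mathcal{H}_b}(r,0,c_2)$.

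The only genuinely substantive step is the reduction modulo $r$ in the trivial case, which converts the coprimality hypothesis on $\widetilde{w}_b$ into one on an honest integral class $c_1 = c_1(E)$ and thereby unlocks Proposition \ref{prop:untwist}. The remaining work is routine: matching the $B$-field twist $\xi = c_1(E)$ with the lift used to normalise the Chern character (tracking the resulting shift of $c_2$), and, in the nontrivial case, invoking purity to legitimately reduce ``semistable equals stable'' to Remark \ref{rem:stabauto}. I do not expect either to present a real obstacle.
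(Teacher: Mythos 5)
Your proposal is correct and follows essentially the same route as the paper's own (very terse) proof: the prime-$r$ dichotomy on the order of $o(\widetilde{w}_b)$, with Remark \ref{rem:stabauto} handling the order-$r$ case and Proposition \ref{prop:untwist} handling the trivial case, plus the same $B$-field relabelling $M_{Y}^{\mathcal{H}_b}(r,0,c_2) \cong M_{Y,c_1/r}^{\mathcal{H}_b}(r,c_1,c_2 + (r-1)c_1^2/2r)$ that the paper relegates to a footnote. Your explicit verification that $\gcd(r,c_1\mathcal{H}_b)=1$ follows from $c_1 \equiv \widetilde{w}_b \pmod r$ is exactly the step the paper leaves implicit, and it is carried out correctly.
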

\begin{proof}
Since $r$ is prime, the order of the Brauer class $o(\widetilde{w}_{b}) \in \mathrm{Br}(\mathcal{X}_b)$ is either $r$ or $1$. In the former case, there are no strictly semistable objects because stability is automatic (Remark \ref{rem:stabauto}). In the latter case, we apply Proposition \ref{prop:untwist}.\footnote{Note that in Proposition \ref{prop:untwist}, we fix Chern character twisted by $B$-field, whereas in this remark we do not use the $B$-field. Concretely, for a (necessarily algebraic) lift $c_1$ of $\widetilde{w}_b$, we have $M_{Y}^{\mathcal{H}_b}(r,0,c_2) \cong M_{Y,c_1/r}^{\mathcal{H}_b}(r,c_1,c_2 + (r-1)c_1^2/2r)$.}
\end{proof}
We require the following result of de Jong used in his proof of the period-index theorem \cite{Jong2004} (and which does not require $r$ prime).\footnote{Strictly speaking, this result is not explicitly stated in loc.~cit.~but follows immediately from \cite[Prop.~3.2]{Jong2004} and the argument in \cite[Sect.~6]{Jong2004}.}
\begin{theorem}[de Jong] \label{thm:extending-azu-algs}
For any closed point $0 \in B$, there exists an \'etale neighbourhood $(U,0) \to (B,0)$ and a degree $r$ Azumaya algebra $\mathcal{A}$ on $\mathcal{X}_{U} := \mathcal{X} \times_B U$ such that
\[
w(\mathcal{A}_b) = \widetilde{w}_b \in H^2(\mathcal{X}_{U}|_b, \mu_r)
\]  
for all closed points $b \in U$.
\end{theorem}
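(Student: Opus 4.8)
The plan is to realize $\widetilde{w}_0$ by a genuine algebra on the central fibre, to spread it out over the base by deformation theory, and finally to check that the fibrewise Stiefel--Whitney classes of the resulting family track the section $\widetilde{w}$. First I would apply the period--index theorem (Theorem~\ref{thm:period-index}) to the surface $\mathcal{X}_0$ to produce a degree $r$ Azumaya algebra $\mathcal{A}_0$ on $\mathcal{X}_0$ with $w(\mathcal{A}_0)=\widetilde{w}_0$. What remains is to extend $\mathcal{A}_0$ to an \'etale neighbourhood of $0$ in $B$, compatibly with the family $f:\mathcal{X}\to B$.

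Next I would analyse the relative deformation problem of $\mathcal{A}_0$ along the Artinian thickenings of $0$ inside $B$. Viewing $\mathcal{A}_0$ as a $\mathrm{PGL}_r$-torsor, its deformations and obstructions relative to the \emph{given} deformation of the base are governed by $H^1(\mathcal{X}_0,\mathrm{ad}\,\mathcal{A}_0)$ and $H^2(\mathcal{X}_0,\mathrm{ad}\,\mathcal{A}_0)$, where $\mathrm{ad}\,\mathcal{A}_0$ is the trace-free part of $\mathcal{A}_0$ (the associated $\mathfrak{sl}_r$-bundle). Concretely, the obstruction to lifting across a small extension $A'\twoheadrightarrow A$ with kernel $I$ lies in $H^2(\mathcal{X}_0,\mathrm{ad}\,\mathcal{A}_0)\otimes I$, and by Serre duality this is dual to $H^0(\mathcal{X}_0,\mathrm{ad}\,\mathcal{A}_0\otimes\omega_{\mathcal{X}_0})$, which is in general nonzero --- this is exactly the phenomenon that $\mathcal{A}_0$ (equivalently the Brauer--Severi variety $Y$) may be \emph{obstructed}. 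Removing this obstruction is the main obstacle, and here I would quote de Jong's key input \cite[Prop.~3.2]{Jong2004}: after a suitable \emph{elementary transformation} of $\mathcal{A}_0$ at points of $\mathcal{X}_0$ --- which raises $c_2(\mathcal{A}_0)$ but, being a codimension-two modification, leaves the class $w(\mathcal{A}_0)=\widetilde{w}_0\in H^2(\mathcal{X}_0,\mu_r)$ unchanged (it is determined by the restriction to $\mathcal{X}_0$ minus a point, where nothing has changed) --- one reaches an algebra whose relative deformation functor over $\hat{\mathcal{O}}_{B,0}$ is unobstructed, hence formally smooth. I would not reprove this; it is precisely the content of de Jong's analysis.

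Granting unobstructedness, one lifts $\mathcal{A}_0$ order by order to a compatible system of Azumaya algebras over the infinitesimal neighbourhoods of $0$, i.e.\ a formal Azumaya algebra over $\mathcal{X}\times_B\mathrm{Spf}\,\hat{\mathcal{O}}_{B,0}$. Since $f$ is projective, Grothendieck's existence theorem effectivises this to a coherent sheaf of algebras over the formal completion, and --- using that the locus where a coherent $\mathcal{O}$-algebra is Azumaya of degree $r$ is open, together with Artin approximation for the (locally finitely presented) functor of such extensions --- I would descend it to an honest degree $r$ Azumaya algebra $\mathcal{A}$ on $\mathcal{X}_U=\mathcal{X}\times_B U$ for some connected \'etale neighbourhood $(U,0)\to(B,0)$, with $\mathcal{A}|_{\mathcal{X}_0}\cong\mathcal{A}_0$. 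This realises, in our relative situation, the construction of \cite[Sect.~6]{Jong2004}.

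It remains to upgrade $w(\mathcal{A}_0)=\widetilde{w}_0$ to the equality $w(\mathcal{A}_b)=\widetilde{w}_b$ for \emph{all} closed points $b\in U$. The family $\mathcal{A}$ carries a Stiefel--Whitney class $w(\mathcal{A})\in H^2(\mathcal{X}_U,\mu_r)$ whose fibrewise restrictions define a section $b\mapsto w(\mathcal{A}_b)$ of $R^2f_*\mu_r$ over $U$. Because $f$ is smooth and proper and $r$ is invertible, smooth and proper base change shows that $R^2f_*\mu_r$ is lisse (locally constant constructible). The locus on which the two sections $b\mapsto w(\mathcal{A}_b)$ and $\widetilde{w}$ of a locally constant sheaf coincide is therefore both open and closed in $U$; as it contains $0$ and $U$ is connected, the two sections agree on all of $U$. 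This gives $w(\mathcal{A}_b)=\widetilde{w}_b$ for every closed point $b\in U$ and completes the proof.
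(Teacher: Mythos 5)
Your proposal follows the same route as the paper's own (sketched) proof: realize $\widetilde{w}_0$ by a degree $r$ Azumaya algebra on $\mathcal{X}_0$ via the period--index theorem, invoke de Jong's result \cite[Prop.~3.2]{Jong2004} that an elementary transformation makes the obstruction space $H^2(\mathcal{X}_0,\mathcal{A}_0/\mathcal{O}_{\mathcal{X}_0})$ vanish (your $\mathrm{ad}\,\mathcal{A}_0$ is this same sheaf, since the reduced trace splits off $\mathcal{O}_{\mathcal{X}_0}$ in characteristic zero), algebraize the formal deformation by Grothendieck existence plus Artin approximation as in \cite[Sect.~6]{Jong2004}, and finally propagate $w(\mathcal{A}_b)=\widetilde{w}_b$ from $b=0$ to all of a connected \'etale neighbourhood using local constancy of $R^2f_*\mu_r$; your open-and-closed argument for this last step is a clean version of the paper's ``pass to a further \'etale neighbourhood by proper base change''.

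The one step that is wrong as written is your parenthetical justification that the elementary transformation is a modification \emph{at points} and therefore automatically preserves $w(\mathcal{A}_0)$. There are no nontrivial point modifications of an Azumaya algebra on a smooth surface: the kernel of a surjection from a locally free sheaf onto a nonzero sheaf supported in codimension two has projective dimension one at those points, hence is not locally free and so cannot be Azumaya; equivalently, since Azumaya algebras are reflexive, an Azumaya algebra is determined by its restriction to the complement of finitely many points, so a modification there either does nothing or leaves the category of Azumaya algebras. The elementary transformations in de Jong's argument are necessarily along curves $C\subset\mathcal{X}_0$, and such transformations do \emph{not} automatically preserve the Stiefel--Whitney class: they preserve the generic fibre, hence the Brauer class $o(w)$, but in the split model $\mathcal{A}_0=\mathcal{E}nd(E)$ a transformation of $E$ along $C$ with rank-$k$ quotient shifts $c_1(E)$, hence $w$, by $-k[C]\bmod r$, and $\ker(o)\cong\mathrm{Pic}(\mathcal{X}_0)/r$ is nonzero, so ``same Brauer class'' is strictly weaker than ``same $w$''. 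This point genuinely matters: if the central fibre carried the wrong class, your own local-constancy argument would propagate the discrepancy to every $b\in U$. (The paper's sketch is equally silent here, so you are not worse off, but your proposed justification would not survive scrutiny.) The standard repair is to note that de Jong's construction only requires the curve to be a sufficiently positive smooth member of a linear system, so one may perform the transformation along a smooth $C\in|rH'|$ with $H'$ very ample; then $[C]\equiv 0\bmod r$ and $w$ is untouched. With that correction your proof is complete and coincides with the paper's.
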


The idea of the proof of this result is as follows. Pick a closed point $0 \in B$. By Theorem \ref{thm:gabber} (which is actually a consequence of the period-index theorem \cite{Lieblich2008}), there exists an Azumaya algebra $\mathcal{A}$ on $X$ such that $w(\mathcal{A}) = \widetilde{w}_0 \in H^2(\mathcal{X}_0,\mu_r)$. The obstruction space governing deformations of $\mathcal{A}$ is given by $H^2(\mathcal{X}_0,\mathcal{A} / \mathcal{O}_{\mathcal{X}_0})$. de Jong shows that (after applying an elementary transformation to $\mathcal{A}$), we may choose $\mathcal{A}$ such that $H^2(\mathcal{X}_0,\mathcal{A} / \mathcal{O}_{\mathcal{X}_0}) = 0$ \cite[Prop.~3.2]{Jong2004}. Using Grothendieck's existence theorem and Artin's approximation theorem, it then follows that there exists an \'etale neighbourhood $U \to B$ and a degree $r$ Azumaya algebra $\mathcal{A}$ on $\mathcal{X}_{U} := \mathcal{X} \times_B U$ such that $w(\mathcal{A}_0) = \widetilde{w}_0$. Using the proper base change theorem, we can go to a further \'etale neighbourhood where $w(\mathcal{A}_b) = \widetilde{w}_b$ for all closed points $b \in U$.

For the following theorem, we fix $\boldsymbol{a} = (a_1, \ldots, a_N) \in \{1,2\}^N$ and a family of formal insertions $\mathsf{P} = \{\mathsf{P}_{v,\boldsymbol{a}}\}_{v \geq 0}$ (Section \ref{sec:genfun}). Furthermore, for each $i = 1, \ldots, N$, we fix a flat section
\[
\widetilde{\alpha}_i \in H^0(B,\mathcal{H}^{2a_i}_{\mathrm{dR}}).
\]
We suppose $r$ is prime and $\gcd(r, \widetilde{w}_b \mathcal{H}_b) = 1$ for some (and hence all) closed points $b \in B$. By Lemma \ref{lem:moduliprojinfam}, the $\mathrm{PGL}_r$ generating function $\mathsf{Z}_{(\mathcal{X}_{b},\mathcal{H}_{b}),\widetilde{w}_{b}}^{\mathrm{PGL}_r, \mathsf{P}}(q)$ is defined on all the fibres of $\mathcal{X} \to B$. 
\begin{theorem} \label{thm:definv}
Let $\widetilde{w} \in H^0(B,R^2 f_* \mu_r)$ be a section. Suppose $r$ is prime and $\mathrm{gcd}(r,\widetilde{w}_b \mathcal{H}_b)=1$ for some (and hence all) closed points $b \in B$. Then $\mathsf{Z}_{(\mathcal{X}_{b},\mathcal{H}_{b}),\widetilde{w}_{b}}^{\mathrm{PGL}_r, \mathsf{P}}(q)$ is independent of the closed point $b \in B$.
\end{theorem}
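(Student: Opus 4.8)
The plan is to show that $b \mapsto \mathsf{Z}_{(\mathcal{X}_b,\mathcal{H}_b),\widetilde{w}_b}^{\mathrm{PGL}_r,\mathsf{P}}(q)$ is \emph{étale-locally} constant and then to globalize using that $B$ is irreducible. The one genuine obstruction to a naive family argument is that a Brauer--Severi variety (equivalently an Azumaya algebra) realizing the section $\widetilde{w}$ need not exist over all of $B$, since it may be obstructed under deformation; this is precisely what de Jong's Theorem \ref{thm:extending-azu-algs} circumvents. So first I would fix a closed point $0 \in B$ and invoke Theorem \ref{thm:extending-azu-algs} to obtain a connected étale neighbourhood $\rho\colon (U,0)\to(B,0)$ and a degree $r$ Azumaya algebra $\mathcal{A}$ on $\mathcal{X}_U := \mathcal{X}\times_B U$ with $w(\mathcal{A}_b) = \widetilde{w}_b$ for all closed points $b\in U$. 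As $U$ is connected and étale over the smooth, hence normal, irreducible base $B$, it is itself irreducible.

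Next I would produce the geometry over $U$. Applying the relative form of Proposition \ref{prp:azu-bs-equiv} to $\mathcal{A}$ yields a family of Brauer--Severi varieties $\pi\colon\mathcal{Y}\to\mathcal{X}_U$ with $w(\mathcal{Y}_b)=\widetilde{w}_b$. For each integer $d$, the triple $\widetilde{v}^{(d)} = (r,0,-d)$ is a flat section of $\bigoplus_{p=0}^2\mathcal{H}^{2p}_{\mathrm{dR}}(\mathcal{X}_U/U)$, its components being a constant, the zero section, and a multiple of the point class, so I can form the relative moduli space $M^{(d)} := M_{\mathcal{Y}/U}(\widetilde{v}^{(d)})$ of Section \ref{sec:obthy}. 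Its fibres are the spaces $M_{\mathcal{Y}_b}^{\mathcal{H}_b}(r,0,d)$ appearing in the generating function, and these are projective by Lemma \ref{lem:moduliprojinfam}, so $p\colon M^{(d)}\to U$ is proper. The relative obstruction theory of \cite{HuybrechtsThomas2009} then gives a cycle $[M^{(d)}]^{\mathrm{vir}}\in A_{\mathrm{vd}+\dim U}(M^{(d)})$ whose refined Gysin pull-back to each fibre is the absolute virtual class, exactly as in \eqref{eqn:Gysinpull}.

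The heart of the argument is to package the insertion into a single class on $M^{(d)}$. The flat sections $\widetilde{\alpha}_i\in H^0(B,\mathcal{H}^{2a_i}_{\mathrm{dR}})$ restrict on each fibre to the classes $\alpha_i=(\widetilde{\alpha}_i)_b$ entering $\mathsf{Z}_b$, while by Remark \ref{rem:dropass} the normalized data $\mathcal{E}^{\otimes r}\otimes\det(\mathcal{E})^{-1}$, $\pi_*(\mathcal{E}\otimes\det(\mathcal{E})^{-1/r})$ and the complex $R\mathcal{H}{\it{om}}_{\pi_M}(\mathcal{E},\mathcal{E})$ all exist globally on $M^{(d)}$ (respectively on $\mathcal{Y}\times_U M^{(d)}$). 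Substituting these into $\mathsf{P}_{\mathrm{vd},\boldsymbol{a}}$ produces a class $\mathsf{P}^{\mathrm{tot}}$ of codimension $\mathrm{vd}$ on $M^{(d)}$; using base change for $R\mathcal{H}{\it{om}}_\pi$ and flatness of the $\widetilde{\alpha}_i$, it restricts along each fibre inclusion to the absolute insertion $\mathsf{P}_{\mathrm{vd},\boldsymbol{a}}(r,\mathcal{X}_b,\boldsymbol{\alpha};M_b,\mathcal{E})$. Setting $\zeta := [M^{(d)}]^{\mathrm{vir}}\cap\mathsf{P}^{\mathrm{tot}}\in A_{\dim U}(M^{(d)})$, compatibility of the refined Gysin pull-back with proper push-forward \cite{Fulton} gives $\iota_b^! p_*\zeta = p_{b*} i_b^!\zeta$, so that for every closed point $b\in U$,
\[
\int_{[M_b]^{\mathrm{vir}}}\mathsf{P}_{\mathrm{vd},\boldsymbol{a}}(r,\mathcal{X}_b,\boldsymbol{\alpha};M_b,\mathcal{E}) = \deg\big(i_b^!\zeta\big) = \deg\big(\iota_b^! p_*\zeta\big),
\]
where $\iota_b\colon\{b\}\hookrightarrow U$ and $p_b\colon M_b\to\{b\}$. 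Since $U$ is irreducible of dimension $\dim U$, we have $A_{\dim U}(U)=\mathbb{Z}\cdot[U]$, so $p_*\zeta = c\,[U]$ for a single integer $c$, forcing the number above to equal $c$ independently of $b$. Because $\chi(\mathcal{O}_{\mathcal{X}_b})$, and hence $\mathrm{vd}(r,0,d)$, is constant over the connected family, the exponents $q^{\mathrm{vd}/2r}$ match across fibres, and summing over $d$ shows $\mathsf{Z}_{(\mathcal{X}_b,\mathcal{H}_b),\widetilde{w}_b}^{\mathrm{PGL}_r,\mathsf{P}}(q)$ is constant on $U$; here Proposition \ref{prop:indep} guarantees that this agrees with the intrinsic invariant, independent of the chosen $\mathcal{Y}$ and representative $\xi$.

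Finally I would globalize. The image $\rho(U)\subseteq B$ is a nonempty Zariski-open subset on which $\mathsf{Z}$ is constant, since $\mathsf{Z}_b$ depends only on $\rho(b)$. Letting $0$ range over $B$ produces an open cover by such sets, and irreducibility of $B$ forces any two of them to intersect, so the local constants coincide and $\mathsf{Z}$ is globally constant. The step I expect to be the main obstacle is the construction of $\mathsf{P}^{\mathrm{tot}}$ together with the verification that it restricts fibrewise to the absolute insertions: this is where the global existence of the normalized universal sheaf (Remark \ref{rem:dropass}) and base change for the relative $R\mathcal{H}{\it{om}}$ complex are indispensable, and it is also the point at which one must check that being forced—by de Jong—to work only étale-locally does not disturb the identification of the fibrewise numbers.
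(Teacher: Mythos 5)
Your proposal is correct and follows essentially the same route as the paper's proof: de Jong's Theorem \ref{thm:extending-azu-algs} to produce \'etale-local families of Azumaya algebras/Brauer--Severi varieties, the relative moduli space with its relative virtual class and the Gysin compatibility \eqref{eqn:Gysinpull} to obtain local constancy of each coefficient (your degree argument via $A_{\dim U}(U)=\mathbb{Z}\cdot[U]$ and proper push-forward, and the globalization over irreducible $B$, merely spell out steps the paper leaves implicit). One cosmetic correction: after switching off the $B$-field (which the paper does for exactly the reason you identify, cf.\ Remark \ref{rem:notfixBfield}), the untwisted second Chern class is in general only rational, so your index $d$ should run over $\mathbb{Q}$ rather than $\mathbb{Z}$ --- as in the sum over $c_2\in\mathbb{Q}$ in the paper's proof --- but the flat-section and Gysin arguments go through verbatim.
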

\begin{proof}
First, fix a closed point $b \in \mathcal{B}$ and let $X := \mathcal{X}_b$, $H:=\mathcal{H}_b$, $w := \widetilde{w}_b$, and $\alpha_i := (\widetilde{\alpha}_i)_b$ for all $i$.  As discussed in Remark \ref{rem:notfixBfield}, it is not desirable to extend the $B$-field in families, so we first observe
\[
\mathsf{Z}_{(X,H),w}^{\mathrm{PGL}_r, \mathsf{P}}(q) = \sum_{c_2 \in \mathbb{Q}} q^{\frac{\mathrm{vd}(r,0,c_2)}{2r}} \int_{[M_{Y}^H(r,0,c_2)]^{\mathrm{vir}}} \mathsf{P}_{\mathrm{vd}(r,0,c_2),\boldsymbol{a}}(r,X,\boldsymbol{\alpha};M_{Y}^H(r,0,c_2),\mathcal{E}).
\]
We focus on deformation invariance of the term indexed by $c_2 \in \mathbb{Q}$ in the generating function. Note that $\ch = (r,0,-c_2)$ trivially extends to a flat section $\widetilde{v} \in  \bigoplus_{p} \Gamma(B, \mathcal{H}^{2p}_{\mathrm{dR}})$. 

By Theorem \ref{thm:extending-azu-algs}, we can cover the base $B$ by (connected) \'etale opens $U \to B$, such that on each $\mathcal{X}_U \to U$, we have an Azumaya algebras $\mathcal{A}$ of degree $r$ with $w(\mathcal{A}_b) = \widetilde{w}_b$ for all $b \in U$. Fix such an open. By the correspondence of  Proposition \ref{prp:azu-bs-equiv}, we therefore have a family of degree $r$ Brauer-Severi varieties 
\[
\mathcal{Y}_U \to \mathcal{X}_U
\]
such that $w(\mathcal{Y}_U|_b) = \widetilde{w}_b$ for all $b \in U$. Hence, we have a relative moduli space $M_{\mathcal{Y}_U/\mathcal{X}_U}$ and virtual fundamental class $[M_{\mathcal{Y}_U/\mathcal{X}_U}]^{\mathrm{vir}}$ as in Section \ref{sec:obthy}. Since $\mathsf{P}$ lifts to an insertion on $M_{\mathcal{Y}_U/\mathcal{X}_U}$, we deduce from \eqref{eqn:Gysinpull} that the term corresponding to $c_2$ of $\mathsf{Z}_{(\mathcal{X}_{b},\mathcal{H}_{b}),\widetilde{w}_{b}}^{\mathrm{PGL}_r, \mathsf{P}}(q)$ is independent of $b$ in the image of $U \to B$.
\end{proof}

\section{$\mathrm{PGL}_r$--$\mathrm{SL}_r$ correspondence} \label{sec:correspondence}

\subsection{Main result} \label{sec:mainresult}

Let $f : \mathcal{X} \to B$ be a smooth projective morphism of relative dimension $n$ with connected fibres over a smooth connected variety $B$. We first let the relative dimension $n$ be arbitrary. Suppose $H_1(\mathcal{X}_b,\mathbb{Z})_{\mathrm{tor}} = 0$ for some (and hence all) closed points $b \in B$. We recall that the stalk of the constructible sheaf $R^2 f_* \mu_r$ at $b$ is
\[
(R^2 f_* \mu_r)_b \cong H^2(\mathcal{X}_b, \mu_r)
\]
by the proper base change theorem. We fix a section 
\[
\widetilde{w} \in H^0(B,R^2 f_* \mu_r).
\]

We will now work in the complex analytic topology and make the following assumption: there exists a closed point $0 \in B$ and a class $\beta \in H^{1,1}(\mathcal{X}_0)$ such that the composition
\begin{equation}\label{eq:local-criterion}
    T_{B,0} \to H^1(\mathcal{X}_0, T_{\mathcal{X}_0}) \to H^2(\mathcal{X}_0, T_{\mathcal{X}_0} \otimes \Omega^1_{\mathcal{X}_0}) \to H^2(\mathcal{X}_0, \mathcal{O}_{\mathcal{X}_0})
\end{equation}
is surjective. Here the first map is the Kodaira-Spencer map, the second map is $\cup \beta$ and the final map is contraction. 

By \cite{VoiHL, VoiBookII}, this map can also be expressed in terms of the Gau\ss-Manin connection 
\[
\nabla : \mathcal{H}^2_{\mathrm{dR}} \to \mathcal{H}^2_{\mathrm{dR}} \otimes \Omega_B^1
\]
 as $\overline{\nabla}_0(\beta)$, where $\beta$ is viewed as an element of $(F^1 \mathcal{H}^2_{\mathrm{dR}} / F^2 \mathcal{H}^2_{\mathrm{dR}})|_0$. The surjectivity of \eqref{eq:local-criterion} implies that the Noether-Lefschetz locus of $\beta$, i.e.~locally analytically the locus in $B$ where $\beta$ remains of Hodge type $(1,1)$, is smooth of ``expected'' codimension $h^{2,0}(\mathcal{X}_0)$ at $0$ \cite{VoiHL}. Precisely the same assumption is made by Green to show that the Hodge locus is dense \cite{Gre}. When $n=2$ and $\beta$ is Poincar\'e dual to an algebraic class on $\mathcal{X}_0$, the same assumption is also used in the theory of reduced virtual fundamental classes in \cite{KT1, KT2}. Note that $\beta$ does not have to be related to $\widetilde{w}$ --- all we require is the existence of \emph{some} class with Noether-Lefschetz locus of expected codimension. 	

\begin{example}
    The family $\mathcal{X} \to B \subset |\mathcal{O}_{\mathbb{P}^3}(d)|$ of smooth degree $d$ surfaces in $\mathbb{P}^3$ of degree $d \geq 4$ has the property that the locus of points in $B$ admitting $\beta$, such that \eqref{eq:local-criterion} is surjective, is dense by \cite{Kim_1991} (see also \cite{CHM}). 
\end{example}
 
\begin{example}
    Let $U$ be the moduli space of relatively minimal elliptic surfaces $X \to \mathbb{P}^1$ with a section, irregularity $q = 0$, and geometric genus $p_g \geq 2$. Suppose that $X$ is an elliptic surface which has no reducible fibres and $j$-invariant not identically zero. Then \cite[Prop.~1.18]{Cox_1990} calculates the rank of \eqref{eq:local-criterion}. Using this description, one can find classes $\beta$ satisfying \eqref{eq:local-criterion}.
\end{example}

For further examples, we also refer to \cite{BGL}.

\begin{proposition} \label{thm:deform-w-trivial-new}
   Suppose \eqref{eq:local-criterion} is surjective. For any complex analytic simply connected neighbourhood $U$ of $0 \in B$, there exists a point $b \in U$ such that the Brauer class of $\widetilde{w}_{b}$ is trivial.
\end{proposition}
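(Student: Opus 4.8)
The plan is to translate the triviality of the Brauer class into a Hodge-theoretic statement and then use the surjectivity of \eqref{eq:local-criterion} to produce the desired point $b \in U$. Recall from Section \ref{sec:models} that, under our standing assumption $H_1(\mathcal{X}_b,\mathbb{Z})_{\mathrm{tor}} = 0$, the Brauer class $o(\widetilde{w}_b) \in H^2(\mathcal{X}_b,\mathbb{G}_m)$ is trivial if and only if $\widetilde{w}_b \in H^2(\mathcal{X}_b,\mu_r) \cong H^2(\mathcal{X}_b,\mathbb{Z})/rH^2(\mathcal{X}_b,\mathbb{Z})$ lies in the image of $H^2(\mathcal{X}_b,\mathbb{Z}) \cap H^{1,1}(\mathcal{X}_b)$, i.e.\ iff $\widetilde{w}_b$ admits an integral lift of Hodge type $(1,1)$. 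So the goal becomes: find $b \in U$ and an integral class lifting $\widetilde{w}_b$ that is of type $(1,1)$ at $b$.

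First I would set up the variation of Hodge structure on $U$. Since $U$ is simply connected, the local system $R^2 f_* \mathbb{Z}$ is trivial, so I can choose a flat integral frame and in particular lift the section $\widetilde{w}|_U$ to a \emph{flat} integral class $\widetilde{\xi} \in H^0(U, R^2 f_* \mathbb{Z})$ reducing to $\widetilde{w}_b \bmod r$ for every $b \in U$ (using Lemma \ref{lmm:rep-w-xi} at the base point and flatness to propagate). The obstruction to $\widetilde{\xi}_b$ being $(1,1)$ is measured by its image in $\mathcal{H}^2_{\mathrm{dR}}/F^1 \cong \overline{H^{2,0}}$, equivalently by the projection to $H^2(\mathcal{X}_b,\mathcal{O}_{\mathcal{X}_b})$. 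The key flexibility is that I am allowed to modify the lift: replacing $\widetilde{\xi}$ by $\widetilde{\xi} + r\gamma$ for any flat integral $\gamma$ does not change $\widetilde{w}$, so I have the freedom to add arbitrary $r$-multiples of integral classes. The plan is to exploit this freedom together with the surjectivity hypothesis to kill the $(0,2)$-obstruction at some nearby point.

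The heart of the argument is an infinitesimal-to-local Hodge-theoretic deformation, essentially Green's density argument for Hodge loci. Consider the holomorphic map $U \to H^2(\mathcal{X}_0,\mathcal{O}_{\mathcal{X}_0})$ sending $b$ to the Dolbeault $(0,2)$-component of the flat class $\widetilde{\xi}_b$ (trivialized via the flat frame). Its derivative at $0$, in the direction of $T_{B,0}$, is computed by Griffiths transversality and the Gau\ss-Manin connection to be precisely the composition \eqref{eq:local-criterion} applied to the $(1,1)$-part $\beta$ of $\widetilde{\xi}_0$ --- this is the content of the reformulation as $\overline{\nabla}_0(\beta)$ recalled after \eqref{eq:local-criterion}. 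However, the obstruction of interest is for $\widetilde{\xi}_0$ itself, whose $(1,1)$-part need not be the given $\beta$. The main obstacle, and the step requiring care, is precisely this: the surjectivity hypothesis is stated for \emph{a fixed chosen class} $\beta$, not for our lift $\widetilde{\xi}_0$. I would resolve this by using the add-$r\gamma$ freedom: choosing $\gamma$ so that the $(1,1)$-part of $\widetilde{\xi}_0 + r\gamma$ is (a large multiple of) $\beta$, which is possible because $\beta \in H^{1,1}$ and scaling $\beta$ by the integer $r$ keeps the surjectivity of \eqref{eq:local-criterion} intact (the map is linear in $\beta$). This is exactly the ``adding large $r$-multiples to $\xi$'' manoeuvre foreshadowed in the discussion after Theorem \ref{thm:main2}.

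With a lift whose $(1,1)$-part is an appropriate multiple of $\beta$, the derivative of the obstruction map $U \to H^2(\mathcal{X}_0,\mathcal{O}_{\mathcal{X}_0})$ at $0$ is surjective by hypothesis. By the holomorphic implicit function theorem (or the submersion theorem for the real-analytic underlying map), the fibre over $0$ --- the locus where $\widetilde{\xi}_b$ stays of type $(1,1)$, i.e.\ the Noether-Lefschetz locus --- is a smooth complex submanifold of $U$ through $0$ of the expected codimension $h^{2,0}(\mathcal{X}_0)$, and in particular is nonempty and contains points $b$ arbitrarily close to $0$ inside $U$. For any such $b$, the integral class $\widetilde{\xi}_b$ is of Hodge type $(1,1)$, hence by the Lefschetz $(1,1)$-theorem is algebraic, and it lifts $\widetilde{w}_b$; therefore $\widetilde{w}_b$ has trivial Brauer class. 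This produces the required $b \in U$ and completes the argument. I expect the only genuinely delicate point to be bookkeeping the precise identification of the differential of the period-type map with \eqref{eq:local-criterion} and confirming that replacing $\beta$ by $r\beta$ (and absorbing the $(0,2)$ and $(2,0)$ parts of $\widetilde{\xi}_0$) does not disturb surjectivity, all of which are linear-algebra consequences of Griffiths transversality.
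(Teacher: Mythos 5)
Your reduction of the statement to finding a nearby $b$ and an integral lift of $\widetilde{w}_b$ of Hodge type $(1,1)$ is correct, and the observation that lifts can be modified by $r\gamma$ matches the spirit of the paper. But the core step fails. You apply the implicit function theorem to the obstruction section $b \mapsto (\text{$(0,2)$-part of } \widetilde{\xi}_b)$ and conclude that its zero locus (the Noether--Lefschetz locus) is ``a smooth complex submanifold of $U$ \emph{through $0$} \dots in particular nonempty.'' The submersion theorem only gives smoothness of the zero locus \emph{where it is nonempty}; it produces no point of it. Your locus passes through $0$ only if the obstruction vanishes at $0$, i.e.\ only if $\widetilde{\xi}_0 + r\gamma$ is already of type $(1,1)$, i.e.\ only if $o(\widetilde{w}_0)$ is trivial --- exactly the case in which the proposition has no content. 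When $o(\widetilde{w}_0)\neq 0$, \emph{every} integral lift has nonzero $(0,2)$-part at $0$, so the locus does not contain $0$, and nothing in your argument shows it meets $U$ at all; openness of the obstruction map near $0$ only says its image contains a neighbourhood of the nonzero value $\sigma(0)$, which need not contain $0$. There are two further problems with the same step: the intrinsic derivative of a section of $\mathcal{H}^2_{\mathrm{dR}}/F^1$ is only well defined at zeros of the section, so at $0$ your identification of the derivative with $\overline{\nabla}_0$ applied to the $(1,1)$-part is not valid (the motion of the Hodge decomposition also acts on the $(2,0)$ and $(0,2)$ parts); and you cannot choose an \emph{integral} $\gamma$ making the $(1,1)$-part of $\widetilde{\xi}_0+r\gamma$ equal to a multiple of $\beta$, since $(1,1)$-parts of integral classes form only a countable subgroup of $H^{1,1}(\mathcal{X}_0)$ --- at best one can approximate large multiples of $\beta$ using density of rational classes (this second point is repairable, the first two are not within your framework).

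What is missing is precisely the mechanism the paper uses (Green's density argument): one must work with the total space of the real $(1,1)$-bundle over $U$, not with a single flat section. After replacing $\beta$ by a real class (surjectivity of \eqref{eq:local-criterion} is Zariski open in $\beta$), the flat trivialization $t$ gives a map $\phi : \mathcal{H}^{1,1}_{\mathbb{R}} \to H^2(\mathcal{X}_0,\mathbb{R})$ from the bundle of \emph{all} real $(1,1)$-classes at \emph{all} points of $U$ to the fixed fibre; surjectivity of \eqref{eq:local-criterion} makes $\phi$ a submersion near $(0,\beta)$, hence \emph{open} there. Openness is what substitutes for the nonemptiness you lack: every real class near $\beta$ in the fixed fibre is the flat transport of a $(1,1)$-class living at \emph{some} nearby point $b$, and one gets to choose which class to hit. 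The paper then hits an arithmetically useful target: pick $(b',\gamma)$ in the open set with rational image, choose $N \gg 0$ with $N$ times that image integral, and realize the class $(\text{that rational class}) + \xi/(rN)$ as the transport of a $(1,1)$-class $\delta$ at some $b \in U$; then $rN\delta$ is integral, of type $(1,1)$, and congruent to $\xi$ modulo $r$, so it represents $\widetilde{w}_b$ and kills the Brauer class. Without this passage to the bundle, plus the density-of-rational-classes and rescaling trick, the surjectivity hypothesis is never converted into the existence of a point, which is the entire difficulty of the proposition.
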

\begin{proof}
As in \cite[Prop.~5.20]{VoiBookII}, we first observe that \eqref{eq:local-criterion} is a Zariski open condition on $\beta$, so we may take $\beta \in H^{1,1}(\mathcal{X}_0) \cap H^2(\mathcal{X}_0, \mathbb{R})$. Take any simply connected open neighbourhood $U$ of $0$. Using the notation of loc.~cit., we have a trivialization
\[
t : \mathcal{H}_{\mathbb{R}}^2 \cong U \times H^2(\mathcal{X}_0,\mathbb{R})
\]
and (on $U$) we consider the composition
\[
\phi : \mathcal{H}_{\mathbb{R}}^{1,1} \hookrightarrow \mathcal{H}_{\mathbb{R}}^2 \stackrel{t}{\longrightarrow} U \times H^2(\mathcal{X}_0,\mathbb{R})  \stackrel{\pi}{\longrightarrow} H^2(\mathcal{X}_0,\mathbb{R}),
\]
where $\pi$ is the projection. Surjectivity of \eqref{eq:local-criterion} implies that this map is a submersion at $(0,\beta)$ by \cite[Lem.~5.22]{VoiBookII}, and hence a submersion in a neighbourhood $W$ of $(0,\beta)$.
 In particular $\phi$ is open on $W$. We note that $t$ preserves rational and integral classes. Furthermore, viewing $R^2 f_* \mu_r$ as a local system in the complex analytic topology,\footnote{Note that the image of the constructible \'etale sheaf $R^2f_*\mu_r$ under the map $\Sh(B_{\text{\'et}}) \to \Sh(B_{\text{an}})$ is the analytic sheaf $R^2f_*\mu_r$ by the comparison theorem \cite{MilneLec}.} $t$ trivializes $R^2f_* \mu_r|_U$ to the constant sheaf $H^2(\mathcal{X}_0,\mu_r)$.  
%We use that the \'etale local system gives a complex analytic local system with isomorphic stalks. Likewise, a section of the former gives a section of the latter with compatible germs.

Still working over $U$, since $H^2(\mathcal{X}_0,\mathbb{Q})$ is dense in $H^2(\mathcal{X}_0,\mathbb{R})$ we can pick a $(b',\gamma) \in W$ with $\pi(t(b',\gamma))$ rational, and hence $\gamma$ rational.
Now choose a representative $\xi \in H^2(\mathcal{X}_0,\mathbb{Z})$ of $\widetilde{w}_0$ (Lemma \ref{lmm:rep-w-xi}). Consider the class
\[
\pi(t(b',\gamma)) + \frac{\xi}{r N}
\] 
for any $N \in \mathbb{Z}_{>0}$. Using the fact that $\phi$ is open in a neighbourhood of $(b',\gamma)$, we can choose $N \gg 0$ such that $\pi(t(b',\gamma)) + \xi / rN$ lies in the image of $\phi$ and such that $N \pi(t(b',\gamma))$ is integral. Then there exists a $(b,\delta) \in \mathcal{H}_{\mathbb{R}}^{1,1}$ mapping to $\pi(t(b',\gamma)) + \xi / rN$. Since $t$ preserves rational and integral classes, we deduce that $\epsilon:=rN \delta$ is an integral $(1,1)$ class on $\mathcal{X}_{b}$. Moreover, $\epsilon \mod r H^2(\mathcal{X}_{b},\mathbb{Z})$ corresponds to $\xi \mod r H^2(\mathcal{X}_{0},\mathbb{Z})$. Hence $\epsilon$ represents $\widetilde{w}_{b}$, and $\widetilde{w}_{b}$ has trivial Brauer class by the Kummer sequence \eqref{eqn:Kummer}.
\end{proof}

We now come to the main theorem of the paper. We fix the relative dimension of $f : \mathcal{X} \to B$ to be 2. We also choose a relative very ample divisor $\mathcal{H}$ with respect to $f$. Over some closed point $0 \in B$, we fix a Stiefel-Whitney class
\[
w \in H^2(\mathcal{X}_0,\mu_r).
\]
We fix $\boldsymbol{a} = (a_1, \ldots, a_N) \in \{1,2\}^N$ and a family of formal insertions $\mathsf{P} = \{\mathsf{P}_{v,\boldsymbol{a}}\}_{v \geq 0}$ (Section \ref{sec:genfun}). Furthermore, for each $i = 1, \ldots, N$, we fix an algebraic class
\[
\alpha_i \in H^{2a_i}(\mathcal{X}_0,\mathbb{Q}).
\]
By the proper base change theorem, after replacing $B$ by an \'etale neighbourhood of $0$, we may assume that $w$ extends to a section
\[
\widetilde{w} \in H^0(B,R^2 f_* \mu_r).
\]
By base change for Hodge bundles, after replacing $B$ by a Zariski neighbourhood of $0$, we may assume the classes $\alpha_i$ extend to flat sections
\[
\widetilde{\alpha}_i \in H^0(B,\mathcal{H}^{2a_i}_{\mathrm{dR}}).
\]

\begin{theorem} \label{thm:PGL-SLcorr}
Let $\widetilde{w} \in H^0(B,R^2 f_* \mu_r)$ be a section. Suppose $r$ is prime and $\mathrm{gcd}(r,\widetilde{w}_b \mathcal{H}_b)=1$ for some (and hence all) closed points $b \in B$. Suppose for some closed point $0 \in B$, there exists a class $\beta \in H^{1,1}(\mathcal{X}_0)$ such that the following composition is surjective
\[
T_B|_{0} \stackrel{\mathrm{KS}_0}{\longrightarrow} H^1(\mathcal{X}_0, T_{\mathcal{X}_0}) \stackrel{\cup \beta}{\longrightarrow} H^{2}(\mathcal{X}_0,\mathcal{O}_{\mathcal{X}_0}),
\]
where the first arrow is the Kodaira-Spencer map and the second is cupping with $\beta$ followed by contraction. Then any complex analytic simply connected neighbourhood $U$ of $0$ contains a closed point $b \in U$ such that $\widetilde{w}_{b} \in H^2(\mathcal{X}_{b},\mu_r)$ has trivial Brauer class and
\[
\mathsf{Z}_{(\mathcal{X}_{0},\mathcal{H}_{0}),\widetilde{w}_{0}}^{\mathrm{PGL}_r, \mathsf{P}}(q) = \mathsf{Z}_{(\mathcal{X}_{b},\mathcal{H}_{b}),c_1}^{\mathrm{SL}_r, \mathsf{P}}(q),
\]
where $c_1 \in H^2(\mathcal{X}_{b},\mathbb{Z})$ is any (necessarily algebraic) representative of $\widetilde{w}_{b}$.
\end{theorem}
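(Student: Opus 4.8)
The plan is to chain together three results already in place: the Hodge-theoretic deformation of Proposition \ref{thm:deform-w-trivial-new}, the algebraic deformation invariance of Theorem \ref{thm:definv}, and the untwisting comparison of Proposition \ref{prop:untwist}. The composition appearing in the hypothesis (Kodaira--Spencer, then $\cup\beta$, then contraction) is exactly the surjective map \eqref{eq:local-criterion}, so the Hodge-theoretic input applies without any preprocessing. Conceptually, Proposition \ref{thm:deform-w-trivial-new} moves us to a nearby fibre where the Brauer obstruction vanishes, Theorem \ref{thm:definv} guarantees the $\mathrm{PGL}_r$ count is unchanged along the way, and Proposition \ref{prop:untwist} converts the (now untwisted) $\mathrm{PGL}_r$ count into the corresponding $\mathrm{SL}_r$ count.

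First I would apply Proposition \ref{thm:deform-w-trivial-new} to the given complex analytic simply connected neighbourhood $U$ of $0$. This produces a closed point $b \in U$ at which $\widetilde{w}_b \in H^2(\mathcal{X}_b,\mu_r)$ has trivial Brauer class; moreover, inspecting its proof, such a $b$ comes equipped with an integral $(1,1)$-class $\epsilon$ on $\mathcal{X}_b$ representing $\widetilde{w}_b$. This settles the first assertion of the theorem. Because $b$ is in particular a closed point of the connected algebraic base $B$ (the analytic nature of $U$ plays no further role once $b$ is fixed), and since $r$ is prime with $\gcd(r,\widetilde{w}_b\mathcal{H}_b)=1$, Theorem \ref{thm:definv} applies and yields
\[
\mathsf{Z}_{(\mathcal{X}_0,\mathcal{H}_0),\widetilde{w}_0}^{\mathrm{PGL}_r,\mathsf{P}}(q) = \mathsf{Z}_{(\mathcal{X}_b,\mathcal{H}_b),\widetilde{w}_b}^{\mathrm{PGL}_r,\mathsf{P}}(q).
\]

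It remains to identify the right-hand side with the $\mathrm{SL}_r$ generating function at $b$. By the Lefschetz theorem on $(1,1)$-classes, $\epsilon = c_1(L)$ for a line bundle $L$ on $\mathcal{X}_b$; putting $E := L \oplus \mathcal{O}_{\mathcal{X}_b}^{\oplus(r-1)}$ gives a trivial degree $r$ Brauer--Severi variety $Y = \mathbb{P}(E^\vee)$ with $w(Y) = [\epsilon \bmod r] = \widetilde{w}_b$ by \eqref{eq:w-trivial-algebra}. By the independence of the choice of Brauer--Severi variety and representative (Proposition \ref{prop:indep}), I may compute $\mathsf{Z}_{(\mathcal{X}_b,\mathcal{H}_b),\widetilde{w}_b}^{\mathrm{PGL}_r,\mathsf{P}}(q)$ using this $Y$ and the algebraic lift $\xi = c_1 := \epsilon$. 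The remaining hypothesis of Proposition \ref{prop:untwist}, namely $\gcd(r,c_1\mathcal{H}_b)=1$, holds since $c_1\mathcal{H}_b \equiv \widetilde{w}_b\mathcal{H}_b \pmod r$. Proposition \ref{prop:untwist} then supplies, for each $c_2$, an isomorphism $\phi : M_{Y,c_1/r}^{\mathcal{H}_b}(r,c_1,c_2) \cong M_{\mathcal{X}_b}^{\mathcal{H}_b}(r,c_1,c_2)$ identifying virtual tangent bundles, virtual fundamental classes, and normalized universal sheaves. Since the insertions $\mathsf{P}$ are built only from $R\mathcal{H}{\it{om}}_{\pi_M}(\mathcal{E},\mathcal{E})$ and from descendents of $\mathcal{E}\otimes\det(\mathcal{E})^{-1/r}$, this matching identifies the two integrands term by term, and summing over $c_2$ gives $\mathsf{Z}_{(\mathcal{X}_b,\mathcal{H}_b),\widetilde{w}_b}^{\mathrm{PGL}_r,\mathsf{P}}(q) = \mathsf{Z}_{(\mathcal{X}_b,\mathcal{H}_b),c_1}^{\mathrm{SL}_r,\mathsf{P}}(q)$. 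That the right-hand side is independent of the chosen algebraic representative $c_1$ follows from the first observation after Proposition \ref{prop:indep}, since two such representatives differ by $r\,c_1(L')$ for a line bundle $L'$.

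Essentially all the conceptual difficulty has been absorbed into the cited results, so the remaining work is bookkeeping. The one point requiring genuine care is the interface between the \emph{analytic} deformation in Proposition \ref{thm:deform-w-trivial-new} and the \emph{algebraic} deformation invariance in Theorem \ref{thm:definv}; these are reconciled simply by observing that the analytically-produced $b$ is a closed point of $B$ and that Theorem \ref{thm:definv} holds over the whole connected base, so no compatibility between the two topologies is needed. The only non-formal computation is that the descendent insertions match under $\phi$, but this is exactly parallel to the descendent comparison carried out in the proof of Proposition \ref{prop:indep}, and Proposition \ref{prop:untwist} is precisely engineered --- through its statement about normalized universal sheaves --- to make it go through.
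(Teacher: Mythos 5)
Your proposal is correct and takes essentially the same route as the paper's proof: Proposition \ref{thm:deform-w-trivial-new} produces the point $b$ with trivial Brauer class, Theorem \ref{thm:definv} gives deformation invariance of the $\mathrm{PGL}_r$ series from $0$ to $b$, and Proposition \ref{prop:untwist} converts it into the $\mathrm{SL}_r$ series at $b$. Your additional details --- building the trivial Brauer--Severi variety explicitly as $\mathbb{P}\bigl((L \oplus \mathcal{O}^{\oplus (r-1)})^\vee\bigr)$ rather than invoking Theorem \ref{thm:period-index} plus triviality of the Brauer class, checking $\gcd(r, c_1 \mathcal{H}_b) = 1$, and verifying independence of the algebraic representative via tensoring by line bundles --- are points the paper leaves implicit, but they do not alter the approach.
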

\begin{proof}
Let $U$ be any complex analytic simply connected neighbourhood of $0$. By Proposition \ref{thm:deform-w-trivial-new}, there exists a closed point $b \in U$ such that $\widetilde{w}_{b}$ has trivial Brauer class. By Theorem \ref{thm:definv}, we have
\[
\mathsf{Z}_{(\mathcal{X}_{0},\mathcal{H}_{0}),\widetilde{w}_{0}}^{\mathrm{PGL}_r, \mathsf{P}}(q) = 
\mathsf{Z}_{(\mathcal{X}_{b},\mathcal{H}_{b}),\widetilde{w}_{b}}^{\mathrm{PGL}_r, \mathsf{P}}(q). 
\]
Pick a (necessarily algebraic) representative $c_1 \in H^2(\mathcal{X}_{b},\mathbb{Z})$ of $\widetilde{w}_{b}$ (Lemma \ref{lmm:rep-w-xi}, \eqref{eqn:Kummer}). 
Pick a degree $r$ Brauer-Severi variety over $\mathcal{X}_b$ with Stiefel-Whitney class $\widetilde{w}_{b}$ (Theorem \ref{thm:period-index}), which is therefore of the form $\mathbb{P}(E^\vee)$ for a rank $r$ vector bundle $E$ on $\mathcal{X}_b$ (Section \ref{sec:models}).
By Proposition \ref{prop:untwist}, we have
\[
\mathsf{Z}_{(\mathcal{X}_{b},\mathcal{H}_{b}),\widetilde{w}_{b}}^{\mathrm{PGL}_r, \mathsf{P}}(q) = \mathsf{Z}_{(\mathcal{X}_{b},\mathcal{H}_{b}),c_1}^{\mathrm{SL}_r, \mathsf{P}}(q) 
\]
which establishes the result.
\end{proof}

\subsection{Application to Vafa-Witten theory}

We recall the ``horizontal'' universality conjecture of the fourth-named author and G\"ottsche-Laarakker \cite[Conj.~1.10]{GKL}. We defined $\overline{\Delta}(q)$ and $\epsilon_r$ in the introduction.
\begin{conjecture}[G\"ottsche-Kool-Laarakker] \label{conj:GKL}
For any $r>1$, there exist\footnote{These universal functions only depend on $r$. Note that we use a different normalization of $D_0$ in loc.~cit.} 
\[
D_0, \{D_{ij}\}_{1 \leq i \leq j \leq r-1} \in \mathbb{C}[\![q^{\frac{1}{2r}}]\!] 
\]
with the following property. 
For any smooth polarized surface $(X,H)$ satisfying $b_1(X) = 0$, $h^{2,0}(X)>0$, $c_1 \in  H^2(X,\mathbb{Z})$, and $c_2 \in H^4(X,\mathbb{Z})$ such that there are no rank $r$ strictly Gieseker $H$-semistable sheaves on $X$ with Chern classes $c_1,c_2$, the virtual Euler characteristic $e^{\mathrm{vir}}(M_X^H(r,c_1,c_2))$ equals the coefficient of $q^{\mathrm{vd}(r,c_1,c_2)/2r}$ in
\begin{align*}
&r^{2+K_X^2 - \chi(\mathcal{O}_X)} \Bigg( \frac{1}{\overline{\Delta}(q^{\frac{1}{r}})^{\frac{1}{2}}} \Bigg)^{\chi(\mathcal{O}_X)} D_0^{K_X^2} \sum_{(a_1, \ldots, a_{r-1}) \in H^2(X,\mathbb{Z})^{r-1}}   \prod_{i} \epsilon_r^{i a_i c_1} \, \SW(a_i) \prod_{i \leq j} D_{ij}^{a_i a_j}.
\end{align*}
\end{conjecture}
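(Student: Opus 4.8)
The plan is to establish the structure formula via Joyce's vertex-algebra wall-crossing machinery \cite{Joy, GJT}, as anticipated in the discussion preceding the conjecture. I would first pass from the stable moduli spaces $M_X^H(r,c_1,c_2)$ to the ambient stack $\mathcal{M}$ of all objects of $\mathrm{Coh}(X)$ of the given numerical type, whose shifted, twisted homology carries Joyce's Lie bracket. The virtual classes of the Gieseker-stable loci are then packaged as distinguished Lie-algebra elements, and the virtual Euler characteristic is recovered by pairing with the insertion coming from $c(T^{\mathrm{vir}})$ via virtual Hirzebruch-Riemann-Roch \cite{FG, CFK}. The heart of the argument is an induction on the rank: Joyce's wall-crossing formula expresses the rank-$r$ element as a finite sum of iterated Lie brackets of elements of strictly smaller rank, so that after $r-1$ steps everything is reduced to rank-$1$ contributions.

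The rank-$1$ moduli spaces are, up to twisting by a line bundle $L$, Hilbert schemes of points $X^{[n]}$, and this is where the two non-universal inputs enter. The generating function of virtual Euler characteristics of the $X^{[n]}$ is governed by a G\"ottsche-type product formula, which is what produces the modular factor $\overline{\Delta}(q^{\frac{1}{r}})^{-\chi(\mathcal{O}_X)/2}$. Separately, when one sums over the possible first Chern classes $a_i$ of the rank-$1$ constituents, the contribution of each $a_i$ is precisely the virtual count of effective line bundles with $c_1 = a_i$, which by the algebro-geometric definition of Seiberg-Witten invariants recalled above (following \cite[Prop.~6.3.1]{Moc}) equals $\SW(a_i)$ and is supported on classes with $a_i^2 = a_i K_X$. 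The exponents $a_i a_j$ in $\prod_{i \le j} D_{ij}^{a_i a_j}$ should then arise from the off-diagonal Euler pairings $\chi(F_i,F_j)$ between distinct rank-$1$ constituents, which on a surface are quadratic in the $a_i$, while the phase weights $\epsilon_r^{i a_i c_1}$ should come from tracking the fixed-determinant (trace-free, $\mu_r$) constraint through the wall-crossing, matching the higher-rank Witten/Mari\~no-Moore structure.

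Universality of the coefficients $D_0$ and $D_{ij}$ would follow from the now-standard cobordism argument: after the reduction, all surviving integrals are tautological integrals over products of Hilbert schemes, and by the Ellingsrud-G\"ottsche-Lehn universality principle such integrals are universal polynomials in the Chern numbers $\chi(\mathcal{O}_X)$, $K_X^2$, $c_1^2$, $c_1 K_X$. Re-organizing these polynomials multiplicatively in the variables attached to each class $a_i$ yields the factorized shape $D_0^{K_X^2}\prod_{i \le j} D_{ij}^{a_i a_j}$, with $D_0, D_{ij} \in \mathbb{C}[\![q^{\frac{1}{2r}}]\!]$ depending only on $r$. Matching against the already-computed low-rank $\SL_r$ cases of \cite{GK1, GK3} then fixes the normalizations and the precise power of $r$ in the prefactor $r^{2 + K_X^2 - \chi(\mathcal{O}_X)}$.

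The main obstacle I expect is the combinatorial bookkeeping of the wall-crossing formula: one must show that the iterated Lie brackets assemble into \emph{exactly} the product $\prod_i \epsilon_r^{i a_i c_1}\, \SW(a_i) \prod_{i \le j} D_{ij}^{a_i a_j}$, with the correct root-of-unity weights and the correct quadratic exponents, and not merely into an expression of the same rough shape. This requires careful control of the signs, automorphism factors, and the vertex-algebra bracket on the relevant $K$-theory lattice, together with a clean argument that the off-diagonal contributions enter only through the intersection numbers $a_i a_j$. A secondary, more technical, difficulty is pinning down the exact modular factor $\overline{\Delta}$, which hinges on the virtual Euler-characteristic specialization of the Hilbert-scheme generating function and on verifying both the half-integral exponent $-\chi(\mathcal{O}_X)/2$ and the substitution $q \mapsto q^{\frac{1}{r}}$.
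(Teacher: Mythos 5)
The statement you are trying to prove is not proved in the paper at all: it is stated as Conjecture \ref{conj:GKL}, attributed to G\"ottsche--Kool--Laarakker, with the paper only citing computational evidence via Mochizuki's formula \cite{GK1, GK3} and remarking that a proof is \emph{expected} to appear in forthcoming work of Joyce. So your proposal cannot lean on the paper; it has to stand on its own as a proof, and it does not. What you have written is a strategy outline in which every step that is actually hard is deferred with ``should arise'', ``should come from'', or flagged as ``the main obstacle I expect''. Concretely: (i) there is no citable theorem that Joyce's wall-crossing reduces rank-$r$ Gieseker invariants to iterated brackets of rank-$1$ contributions in the form you need --- Joyce's formulae compare invariants across stability conditions on the stack of all sheaves, and extracting a rank-induction with Seiberg--Witten weights is exactly the Mochizuki-style analysis (master space, $\mathbb{C}^*$-fixed loci, cosection arguments) that makes this problem difficult; (ii) the insertion $c(T^{\mathrm{vir}})$ defining the virtual Euler characteristic is not a descendent insertion that is known to transform compatibly under the vertex-algebra wall-crossing, and this compatibility is itself a substantive open technical point, not a formality; (iii) Joyce's framework naturally lives on the stack of all coherent sheaves, whereas the conjecture concerns fixed-determinant (trace-free obstruction theory) invariants, and the passage between the two --- which is where the phases $\epsilon_r^{i a_i c_1}$ and the $\mu_r$-structure must come from --- is asserted rather than argued.

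Moreover, your universality step overreaches. The Ellingsrud--G\"ottsche--Lehn principle, applied after a hypothetical reduction to products of Hilbert schemes, yields universal polynomials in intersection numbers; upgrading this to the precise multiplicative factorization $D_0^{K_X^2}\prod_{i \leq j} D_{ij}^{a_i a_j}$ with coefficients depending \emph{only} on $r$, together with the identification of the modular factor $\overline{\Delta}(q^{1/r})^{-\chi(\mathcal{O}_X)/2}$ and the prefactor $r^{2+K_X^2-\chi(\mathcal{O}_X)}$, is precisely the content of the conjecture --- it is the statement to be proven, not a normalization to be ``matched against the low-rank cases''. In short, the proposal reconstructs the heuristic that motivated the conjecture (and that the conjecture's authors already had), but it does not close any of the gaps that keep it a conjecture.
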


Evidence for this conjecture was obtained by direct implementation of Mochizuki's formula \cite{Moc} in \cite{GK1, GK3}. We deduce the following:
\begin{corollary}
Suppose $X = \mathcal{X}_0$ and $w = \widetilde{w}_0$ for a family $\mathcal{X} \to B$ satisfying the conditions of Theorem \ref{thm:PGL-SLcorr} and $h^{2,0}(X)>0$. Fix any $\delta \in \mathbb{Z}$ such that $\delta \equiv -(r-1) w^2 - (r^2-1) \chi(\mathcal{O}_X) \mod 2r$. Then Conjecture \ref{conj:GKL} implies that the coefficient of $q^{\delta/2r}$ in  $\mathsf{Z}_{(X,H),w}^{\mathrm{PGL}_r, \mathsf{Eu}}(q)$ equals the coefficient of $q^{\delta/2r}$ in
\begin{align*}
&r^{2+K_X^2 - \chi(\mathcal{O}_X)} \Bigg( \frac{1}{\overline{\Delta}(q^{\frac{1}{r}})^{\frac{1}{2}}} \Bigg)^{\chi(\mathcal{O}_X)} D_0^{K_X^2} \sum_{(a_1, \ldots, a_{r-1}) \in H^2(X,\mathbb{Z})^{r-1}}   \prod_{i} \epsilon_r^{i a_i w} \, \SW(a_i) \prod_{i \leq j} D_{ij}^{a_i a_j}.
\end{align*}
\end{corollary}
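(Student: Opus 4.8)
The plan is to chain together Theorem \ref{thm:PGL-SLcorr} and Conjecture \ref{conj:GKL}, and then to check that the universal series produced for the nearby fibre $\mathcal{X}_b$ is literally the one written in the statement for $X = \mathcal{X}_0$. First I would recall that the virtual Euler characteristic generating function is the instance of Definitions \ref{def:GLgenfun}--\ref{def:PGLgenfun} attached to the formal insertion $\mathsf{Eu}$ with $\mathsf{P}_v = c_v(R\mathcal{H}{\it{om}}_{\pi_M}(\underline{\mathcal{E}},\underline{\mathcal{E}})[1])$ (so $N=0$ and no classes $\alpha_i$ are needed): since $[M]^{\mathrm{vir}}$ has dimension $\mathrm{vd}$, capping with the total Chern class extracts $c_{\mathrm{vd}}(T_M^{\mathrm{vir}})$, whence the coefficients are exactly the $e^{\mathrm{vir}}(M)$ of Conjecture \ref{conj:GKL}. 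In particular $\mathsf{Z}^{\mathrm{SL}_r,\mathsf{Eu}}_{(\mathcal{X}_b,\mathcal{H}_b),c_1}(q) = \sum_{c_2} q^{\mathrm{vd}(r,c_1,c_2)/2r}\, e^{\mathrm{vir}}(M_{\mathcal{X}_b}^{\mathcal{H}_b}(r,c_1,c_2))$.

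Applying Theorem \ref{thm:PGL-SLcorr} with $\mathsf{P}=\mathsf{Eu}$ yields a closed point $b\in U$ at which $\widetilde{w}_b$ has trivial Brauer class, an algebraic representative $c_1\in H^2(\mathcal{X}_b,\mathbb{Z})$ of $\widetilde{w}_b$, and the identity $\mathsf{Z}^{\mathrm{PGL}_r,\mathsf{Eu}}_{(X,H),w}(q)=\mathsf{Z}^{\mathrm{SL}_r,\mathsf{Eu}}_{(\mathcal{X}_b,\mathcal{H}_b),c_1}(q)$. I would then verify the hypotheses of Conjecture \ref{conj:GKL} for $(\mathcal{X}_b,\mathcal{H}_b)$: one has $b_1(\mathcal{X}_b)=0$ since $H_1(\mathcal{X}_b,\mathbb{Z})=0$; $h^{2,0}(\mathcal{X}_b)=h^{2,0}(X)>0$ by deformation invariance of Hodge numbers in the smooth family; and there are no rank $r$ strictly Gieseker $\mathcal{H}_b$-semistable sheaves with first Chern class $c_1$ because $c_1\equiv\widetilde{w}_b\pmod r$ gives $\gcd(r,c_1\mathcal{H}_b)=\gcd(r,\widetilde{w}_b\mathcal{H}_b)=1$. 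Granting the conjecture, each $e^{\mathrm{vir}}(M_{\mathcal{X}_b}^{\mathcal{H}_b}(r,c_1,c_2))$ equals the coefficient of $q^{\mathrm{vd}(r,c_1,c_2)/2r}$ in the universal series, which I will denote $\Phi_{\mathcal{X}_b}(c_1)$, i.e.\ the displayed formula with $c_1$ in place of $w$.

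Next I would carry out the coefficient extraction. The congruence $\delta\equiv -(r-1)w^2-(r^2-1)\chi(\mathcal{O}_X)\pmod{2r}$ is well posed because $(r-1)c_1^2\bmod 2r$ is independent of the chosen integral lift of $w$: replacing $c_1$ by $c_1+r\gamma$ changes $(r-1)c_1^2$ by $2r(r-1)c_1\gamma+(r-1)r^2\gamma^2$, and $(r-1)r^2=r\cdot r(r-1)\equiv 0\pmod{2r}$ since $r(r-1)$ is even. This congruence guarantees a unique integer $c_2^\ast$ with $\mathrm{vd}(r,c_1,c_2^\ast)=\delta$, namely $c_2^\ast=(\delta+(r-1)c_1^2+(r^2-1)\chi(\mathcal{O}_X))/(2r)$. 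As $\mathrm{vd}$ is strictly increasing in $c_2$, the coefficient of $q^{\delta/2r}$ in $\mathsf{Z}^{\mathrm{SL}_r,\mathsf{Eu}}_{(\mathcal{X}_b,\mathcal{H}_b),c_1}(q)$ is precisely $e^{\mathrm{vir}}(M_{\mathcal{X}_b}^{\mathcal{H}_b}(r,c_1,c_2^\ast))$, hence the coefficient of $q^{\delta/2r}$ in $\Phi_{\mathcal{X}_b}(c_1)$.

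The final and most delicate step, though ultimately a packaging of standard deformation invariance, is the identification $\Phi_{\mathcal{X}_b}(c_1)=\Phi_X(w)$. The only data entering $\Phi$ are $K^2$, $\chi(\mathcal{O})$, the Seiberg--Witten invariants $\SW(a_i)$ summed over $H^2(-,\mathbb{Z})$, the intersection form through $\prod_{i\le j}D_{ij}^{a_ia_j}$, and the phases $\epsilon_r^{i a_i c_1}$. Parallel transport of the local system $R^2f_\ast\mathbb{Z}$ along a path in the simply connected $U$ furnishes an isometry $H^2(\mathcal{X}_b,\mathbb{Z})\cong H^2(X,\mathbb{Z})$ under which $K^2$ and $\chi(\mathcal{O})$ (deformation invariants) and the Seiberg--Witten invariants (deformation invariant) are preserved, while the flat section sends $\widetilde{w}_b$ to $w$, so that $\epsilon_r^{i a_i c_1}=\epsilon_r^{i a_i w}$ since $a_ic_1\bmod r$ depends only on $a_i\widetilde{w}_b$. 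Re-indexing the sum over this isometry matches $\Phi_{\mathcal{X}_b}(c_1)$ with $\Phi_X(w)$ term by term, and extracting the coefficient of $q^{\delta/2r}$ gives the claim. The main obstacle is thus not a computation but the careful bookkeeping of this comparison of the two universal series across the deformation; everything else is a direct appeal to Theorem \ref{thm:PGL-SLcorr} and Conjecture \ref{conj:GKL}.
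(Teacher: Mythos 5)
Your proposal is correct and follows exactly the route the paper intends (the paper states this corollary without a written proof, as an immediate consequence of chaining Theorem \ref{thm:PGL-SLcorr} with Conjecture \ref{conj:GKL}): deform to a nearby fibre with trivial Brauer class, apply the conjecture to the resulting $\mathrm{SL}_r$ generating function, and transport the universal formula back via the flat structure. Your extra bookkeeping --- well-posedness of the congruence on $\delta$ under change of integral lift, and the parallel-transport isometry matching $K^2$, $\chi(\mathcal{O})$, Seiberg--Witten invariants, intersection numbers, and the phases $\epsilon_r^{i a_i c_1}=\epsilon_r^{i a_i w}$ --- is exactly the content left implicit in the paper and is carried out correctly.
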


Closed conjectural expressions for $D_0, D_{ij}$ can be found in \cite{GK1,GK3,GKL} for $r=2,3,5$. For $r=2,3$ they are expressed in terms of the Dedekind eta function and the theta functions of the $A_{r-1}^\vee$ lattice \cite{GK1, GK2}. For $r=5$, the expressions also involve the Rogers-Ramanujan continued fraction \cite{GKL}.

Let $X$ be a smooth projective surface satisfying $H_1(X,\mathbb{Z}) = 0$, $h^{2,0}(X)>0$, and let $c_1 \in H^2(X,\mathbb{Z})$ be an algebraic class. In \cite{TT}, Tanaka-Thomas give a mathematical definition of the $\mathrm{SU}(r)$ Vafa-Witten partition function $\mathsf{VW}^{\mathrm{SU}(r)}_{X,c_1}(q)$. As mentioned in the introduction, for $r$ prime, a definition of the $\mathrm{PSU}(r)$ Vafa-Witten partition function was given in \cite{JiangKool2021} (see also \cite{Jiang}). It is of the form
\begin{align*}
&\mathsf{VW}^{\mathrm{PSU}(r)}_{X,c_1}(q) = \sum_{w \in H^2(X,\mu_r)} \epsilon_r^{c_1 w} \, \mathsf{VW}_{X,w}(q).
\end{align*}
For $o(w) = 0$, one is reduced to untwisted Higgs pairs and $\mathsf{VW}_{X,w}(q)$ can be defined using Tanaka-Thomas's approach. For $o(w) \neq 0$, we take \cite{JiangKool2021}
\[
\mathsf{VW}_{X,w}(q) := \mathsf{Z}^{\mathrm{PGL}_r,\mathsf{Eu}}_{X,w}(q).
\]

Assuming the above-mentioned closed conjectural expressions for the universal functions for $r=2,3,5$, the following $S$-duality conjecture (due to Vafa-Witten \cite{VW} and mathematically formulated in \cite{JiangKool2021}) was checked for $r=2$ \cite{VW, DPS, GK3}, $r=3$ \cite{GK3}, and $r=5$ \cite{GKL}: 
\begin{conjecture}[Vafa-Witten] \label{intro:Sdualconj} 
Let $(X,H)$ be a smooth polarized  surface satisfying $H_1(X,\mathbb{Z}) = 0$ and $h^{2,0}(X)>0$. Let $r$ be prime and $c_1 \in H^2(X,\mathbb{Z})$ algebraic. Then $\mathsf{VW}_{X,c_1}^{\mathrm{SU}(r)}(q)$ and $\mathsf{VW}_{X,c_1}^{\mathrm{PSU}(r)}(q)$ are Fourier expansions in $q = \exp(2 \pi \sqrt{-1} \tau)$ of meromorphic functions $\mathsf{VW}_{X,c_1}^{\mathrm{SU}(r)}(\tau)$ and $\mathsf{VW}_{X,c_1}^{\mathrm{PSU}(r)}(\tau)$ on the upper half plane satisfying
\begin{equation*} 
\mathsf{VW}^{\mathrm{SU}(r)}_{X,c_1}(-1/\tau) = (-1)^{(r-1)\chi(\mathcal{O}_X)} \Big( \frac{ r \tau}{\sqrt{-1}} \Big)^{-\frac{e(X)}{2}} \mathsf{VW}^{\mathrm{PSU}(r)}_{X,c_1}(\tau).
\end{equation*}
\end{conjecture}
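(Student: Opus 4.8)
The plan is to reduce the S-duality identity to a transformation law for modular forms built out of the universal series $D_0,D_{ij}$. The key inputs are the two structure formulas already available: on the $\mathrm{SU}(r)$ side the (instanton) contribution to $\mathsf{VW}^{\mathrm{SU}(r)}_{X,c_1}(q)$ is controlled by the virtual Euler characteristic generating function $\mathsf{Z}^{\mathrm{SL}_r,\mathsf{Eu}}_{(X,H),c_1}(q)$, which by Conjecture \ref{conj:GKL} equals the coefficient-wise expansion of
\[
r^{2+K_X^2-\chi(\mathcal{O}_X)}\,\overline{\Delta}(q^{1/r})^{-\chi(\mathcal{O}_X)/2}\,D_0^{K_X^2}\sum_{(a_i)}\prod_i\epsilon_r^{i a_i c_1}\,\SW(a_i)\prod_{i\leq j}D_{ij}^{a_i a_j};
\]
on the $\mathrm{PSU}(r)$ side I would substitute the Corollary's formula for each $\mathsf{VW}_{X,w}=\mathsf{Z}^{\mathrm{PGL}_r,\mathsf{Eu}}_{X,w}(q)$ (with the untwisted $o(w)=0$ terms supplied by Theorem \ref{thm:PGL-SLcorr}) into $\mathsf{VW}^{\mathrm{PSU}(r)}_{X,c_1}=\sum_{w}\epsilon_r^{c_1 w}\,\mathsf{VW}_{X,w}$. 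Both sides then take the shape of a Seiberg-Witten-weighted lattice theta function times a fixed power of $\overline{\Delta}(q^{1/r})^{-1/2}$, and the conjecture becomes a Jacobi-type inversion.

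The crucial structural step is the evaluation of the finite character sum over $w\in H^2(X,\mu_r)\cong H^2(X,\mathbb{Z})/rH^2(X,\mathbb{Z})$. Interchanging it with the lattice sum over Seiberg-Witten basic classes $(a_1,\dots,a_{r-1})$, the inner sum $\sum_{w}\epsilon_r^{(c_1+\sum_i i a_i)\cdot w}$ collapses by orthogonality of characters. Since $X$ is simply connected the intersection form on $H^2(X,\mathbb{Z})$ is unimodular, so this sum equals $r^{b_2(X)}$ exactly when $c_1+\sum_i i a_i\in rH^2(X,\mathbb{Z})$ and vanishes otherwise. This is the precise mechanism by which the $\mathrm{PSU}(r)$ average turns the unconstrained sum over the root lattice $A_{r-1}$ appearing on the $\mathrm{SU}(r)$ side into a constrained sum over its dual $A_{r-1}^\vee$; this lattice duality is the arithmetic heart of Vafa-Witten S-duality and is exactly what the finite Fourier transform implements.

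With both partition functions rewritten as theta functions of $A_{r-1}$, respectively $A_{r-1}^\vee$, times a power of $\overline{\Delta}(q^{1/r})^{-1/2}$, I would apply $\tau\mapsto-1/\tau$ term by term. The cusp form $\overline{\Delta}$ has weight $12$, and because its argument is $q^{1/r}=\exp(2\pi\sqrt{-1}\,\tau/r)$ the relevant modular variable is $\tau/r$, which is what converts $\tau$ into $r\tau$ in the final automorphy factor. Poisson summation supplies the inversion $\Theta_{A_{r-1}}(-1/\tau)\sim(\tau/\sqrt{-1})^{(r-1)/2}\,|A_{r-1}^\vee/A_{r-1}|^{-1/2}\,\Theta_{A_{r-1}^\vee}(\tau)$ with $|A_{r-1}^\vee/A_{r-1}|=r$, interchanging the two sides. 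The remaining task is to check that the accumulated weights combine, via Noether's formula $e(X)=12\chi(\mathcal{O}_X)-K_X^2$, into the single factor $(r\tau/\sqrt{-1})^{-e(X)/2}$, and that the phases collect into the sign $(-1)^{(r-1)\chi(\mathcal{O}_X)}$, which I expect to come from $\SW(K_X)=(-1)^{\chi(\mathcal{O}_X)}$ together with the canonical phases on the basic classes $0$ and $K_X$.

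The main obstacle is that the argument is conditional in two essential ways, and the genuinely hard analytic input is not furnished by the present paper. First, everything rests on Conjecture \ref{conj:GKL}, whose universal series $D_0,D_{ij}$ are known in closed form only for $r=2,3,5$ (via Mochizuki's formula or Joyce's wall-crossing); for these $r$ the $D_{ij}$ are explicit eta-quotients and $A_{r-1}^\vee$-theta functions, and only at that level of explicitness can the Jacobi inversion be verified rather than merely asserted. Second, the modular bookkeeping---matching the weight $-e(X)/2$, the power $r^{-e(X)/2}$, and the sign---is delicate and is the step most likely to conceal subtleties. What Theorem \ref{thm:PGL-SLcorr} contributes, and what makes the reduction clean, is that the $\mathrm{PGL}_r$ side with non-trivial Brauer class becomes a genuine $\mathrm{SL}_r$ count after deformation, so the dual-lattice structure on the $\mathrm{PSU}(r)$ side is forced by the geometry rather than imposed by hand.
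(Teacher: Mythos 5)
The statement you are trying to prove is not proved in the paper at all: it is stated as an open conjecture (attributed to Vafa--Witten, mathematically formulated in \cite{JiangKool2021}), and the paper only remarks that it ``was checked'' for $r=2,3,5$ in the cited literature, \emph{assuming} closed conjectural expressions for the universal series $D_0, D_{ij}$. So there is no proof in the paper to compare against, and your proposal should be judged on its own terms. As a sketch of the consistency checks carried out in \cite{GK3, GKL} it has the right overall shape (substitute the structure formulas, evaluate the finite character sum over $w \in H^2(X,\mu_r)$ by orthogonality using unimodularity of the intersection form, then invert theta functions), and you correctly flag that it is conditional on Conjecture \ref{conj:GKL} and on explicit knowledge of $D_0, D_{ij}$, which exists only conjecturally for $r=2,3,5$. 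A conditional verification of low-rank cases is the best that can currently be done, and your outline is consistent with how those verifications proceed.

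However, there is a genuine mathematical gap beyond conditionality: you conflate the full Vafa--Witten partition functions with instanton-branch virtual Euler characteristic generating functions. The function $\mathsf{VW}^{\mathrm{SU}(r)}_{X,c_1}(q)$ is the Tanaka--Thomas invariant of the moduli of Higgs pairs, whose $\mathbb{C}^*$-localization produces the instanton branch (giving $\mathsf{Z}^{\mathrm{SL}_r,\mathsf{Eu}}$) \emph{plus} monopole branch contributions; the same is true of the terms with $o(w)=0$ in the $\mathrm{PSU}(r)$ sum, which the paper defines via untwisted Higgs pairs. Conjecture \ref{conj:GKL}, the Corollary, and Theorem \ref{thm:PGL-SLcorr} control only the sheaf-theoretic (instanton) invariants, so Theorem \ref{thm:PGL-SLcorr} cannot ``supply the untwisted $o(w)=0$ terms'' as you claim --- it equates $\mathrm{PGL}_r$ and $\mathrm{SL}_r$ generating functions of moduli of (twisted) sheaves, not Higgs-theoretic Vafa--Witten invariants. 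The S-duality identity is an identity of the full partition functions, and for surfaces within the conjecture's hypotheses (e.g.\ elliptic surfaces with $p_g \geq 1$) the monopole contributions are non-zero and are essential for the claimed modularity; even for minimal general type surfaces their vanishing is itself only conjectural. Any argument along your lines must either incorporate the monopole branch or invoke (and state) that additional vanishing conjecture, and as written your reduction to lattice theta functions simply omits part of both sides of the equation.
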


\subsection{Application to \texorpdfstring{$c_2^{\mathrm{min}}$}{c2min} of Azumaya algebras} \label{sec:minc2}

Let $X$ be a smooth projective surface with $H_1(X,\mathbb{Z}) = 0$ and function field $\mathbb{C}(X)$. Let $D$ be a (central) division algebra over $\mathbb{C}(X)$ of degree $r > 1$ (equivalently, an element of $\mathrm{Br}(\mathbb{C}(X))$ of order $r$). We assume $D$ lies in the image of $\mathrm{Br}(X) \hookrightarrow \mathrm{Br}(\mathbb{C}(X))$. As mentioned in the introduction, we are interested in the smallest $c_2$ for which there exists an Azumaya algebra $\mathcal{A}$ on $X$ whose stalk over the generic point $\eta \in X$ is isomorphic to $D$. We refer to this value as $c_2^{\mathrm{min}}$ and recall that Artin-de Jong proved \cite[Cor.~7.1.5, Thm.~7.2.1]{AdJ}
\[
c_2^{\mathrm{min}} \geq \mathrm{max}\big\{r^2 \chi(\mathcal{O}_X) - h^0(\omega_X^{\otimes r}) -1 , 0 \big\}.
\]
Our general strategy is as follows:
\begin{itemize}
\item Fix a class $w \in H^2(X,\mu_r)$ such that its Brauer class $o(w) \in \mathrm{Br}(X) \hookrightarrow \mathrm{Br}(\mathbb{C}(X))$ corresponds to $D$. This is possible since $H^2(X,\mu_r)$ surjects onto $H^2(X,\mathbb{G}_m)[r]$ by \eqref{eqn:Kummer}.
\item Fix any $\delta \in \mathbb{Z}$ such that $\delta \equiv -(r-1) w^2 - (r^2-1) \chi(\mathcal{O}_X) \mod 2r$. Suppose the coefficient of $q^{\delta/2r}$ of some $\mathrm{PGL}_r$ generating function $\mathsf{Z}_{(X,H),w}^{\mathrm{PGL}_r,\mathsf{P}}(q)$ is non-zero.
\item Denoting by $\xi \in H^2(X,\mathbb{Z})$ a lift of $w$ (Lemma \ref{lmm:rep-w-xi}), we conclude that for any degree $r$ Brauer-Severi variety $Y \to X$ with $w(Y) = w$ (which exist by Theorem \ref{thm:period-index}), the moduli space $M_{Y,\xi/r}^H(r,\xi,c_2)$ is non-empty, where $c_2 \in \mathbb{Z}$ is determined by the equation $\delta = 2rc_2 - (r-1) \xi^2 - (r^2-1) \chi(\mathcal{O}_X)$.
\item Therefore, by Corollary \ref{cor:HS}, there exists an Azumaya algebra $\mathcal{A}$ on $X$ whose stalk over the generic point is isomorphic to $D$ satisfying $c_2(\mathcal{A}) \leq \delta + (r^2-1) \chi(\mathcal{O}_X)$. In particular $c_2^{\mathrm{min}} \leq \delta + (r^2-1) \chi(\mathcal{O}_X)$.
\end{itemize}

We illustrate this strategy by focusing on the leading term of $\mathsf{Z}_{(X,H),w}^{\mathrm{PGL}_r,\mathsf{P}}(q)$. For this, we use the explicit form of the Mari\~{n}o-Moore conjecture \cite{MM, LM} due to G\"ottsche \cite{Gott}, which we now recall. For a smooth polarized surface $(X,H)$ satisfying $H_1(X,\mathbb{Z}) = 0$, we fix $r,c_1$ so that there are no rank $r$ strictly Gieseker $H$-semistable sheaves on $X$ with first Chern class $c_1$, and consider the Gieseker-Maruyama-Simpson moduli space $M_X^H(r,c_1,c_2)$. For any class $\alpha \in H^*(X,\mathbb{Q})$ and $k \in \mathbb{Z}_{\geq 0}$, one defines the $\mu$-insertion
\[
\mu(\alpha) :=  - \pi_{M*} \Big( \pi_X^* \alpha \cap \mathrm{ch}_2(\mathcal{E} \otimes \det(\mathcal{E})^{-\frac{1}{r}})\Big),
\]
where $\pi_M, \pi_X$ are the projections from $M \times X$ to $M,X$ respectively. Denote the Poincar\'e dual of the point class by $\mathrm{pt} \in H^4(X,\mathbb{Z})$.
We are interested in the following generating series of (algebro-geometric) $\mathrm{SL}_r$ Donaldson invariants
\[
\mathsf{Z}_{(X,H),c_1}^{\mathrm{SL}_r,\mathsf{D}}(z) = \sum_{c_2 \in \mathbb{Z}} z^{\mathrm{vd}(r,c_1,c_2)} \int_{[M_X^H(r,c_1,c_2)]^{\mathrm{vir}}} \exp\Big(\mu(L) + \mu(\mathrm{pt}) \cdot u \Big),
\]
where $L \in H^2(X,\mathbb{Z})$ and $u$ is a formal variable.
We first recall Witten's conjecture, proved in the algebro-geometric setup by G\"ottsche-Nakajima-Yoshioka \cite{GNY3}. 
\begin{theorem}[G\"ottsche-Nakajima-Yoshioka] \label{thm:GNY}
Let $(X,H)$ be a smooth polarized surface satisfying $H_1(X,\mathbb{Z}) = 0$ and $h^{2,0}(X)>0$. Let $c_1 \in H^2(X,\mathbb{Z})$ be such that there are no rank $2$ strictly Gieseker $H$-semistable sheaves on $X$ with first Chern class $c_1$. Then the coefficient of $z^{\mathrm{vd}(2,c_1,c_2)}$ in $\mathsf{Z}_{(X,H),c_1}^{\mathrm{SL}_2,\mathsf{D}}(z)$ equals the coefficient of $z^{\mathrm{vd}(2,c_1,c_2)}$ in
\[
2^{2-\chi(\mathcal{O}_X)+K_X^2} e^{( \frac{1}{2} L^2 + 2u) z^2} \!\! \sum_{a \in H^2(X,\mathbb{Z})} (-1)^{a c_1} \mathrm{SW}(a) e^{-(2a - K_X) L z }.
\]
\end{theorem}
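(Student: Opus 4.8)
The statement is Witten's conjecture in rank $2$, relating the $\mathrm{SL}_2$ Donaldson series to Seiberg-Witten invariants, so rather than a self-contained argument the plan is to follow the strategy of G\"ottsche-Nakajima-Yoshioka \cite{GNY3}: one combines Mochizuki's wall-crossing formula \cite{Moc} with the explicit evaluation of instanton partition functions. The hypotheses $H_1(X,\mathbb{Z})=0$ and $h^{2,0}(X)>0$ are exactly what make Mochizuki's machinery applicable, since they guarantee that the relevant obstruction theories and the Seiberg-Witten side behave as in the $p_g>0$ regime.

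First I would invoke Mochizuki's formula. Because $b_1(X)=0$ and $h^{2,0}(X)>0$, the integrals $\int_{[M_X^H(2,c_1,c_2)]^{\mathrm{vir}}} \exp\!\big(\mu(L)+\mu(\mathrm{pt})\,u\big)$ can be rewritten through the theory of Joyce-Song/Mochizuki pairs $(F,s)$ with $s\colon \mathcal{O}_X \to F$ and a stability parameter. Adding the section rigidifies the $\mathbb{G}_m$-automorphisms and produces a master space whose two stability chambers are connected by a $\mathbb{G}_m$-localization comparison; one chamber recovers the sheaf-theoretic integral, the other is controlled. The wall-crossing difference is a sum indexed by the decomposition type $c_1=a+(c_1-a)$, equivalently by the classes $a$ that contribute Seiberg-Witten basic classes, and each term is a product of integrals of tautological classes over Hilbert schemes of points $X^{[n_1]}\times X^{[n_2]}$ weighted by $\mathrm{SW}(a)$.

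Second I would evaluate the Hilbert-scheme integrals. The integrands are polynomials in Chern classes of tautological bundles on $X^{[n]}$, and a universality (cobordism) argument shows they are determined by the Chern numbers $c_1^2$, $c_1K_X$, $K_X^2$ and $\chi(\mathcal{O}_X)$. Passing to the local model $\mathbb{C}^2$ and applying torus localization identifies the generating function of these integrals with Nekrasov's instanton partition function in the two-dimensional $\Omega$-background. The closed form then follows from its explicit evaluation: here one uses the instanton-counting and blowup-equation technology of \cite{GNY1, GNY2}, which pins down the universal series and, after the change of variables matching the Donaldson normalization, produces the Gaussian factor $e^{(\frac{1}{2}L^2+2u)z^2}$, the prefactor $2^{2-\chi(\mathcal{O}_X)+K_X^2}$, and the exponential weights $e^{-(2a-K_X)Lz}$ attached to each basic class $a$, together with the sign $(-1)^{ac_1}$.

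The hard part will be this last step. The reduction to Seiberg-Witten invariants and a product of Hilbert-scheme integrals (Mochizuki's formula), the master-space set-up, and the localization to $\mathbb{C}^2$ are all structural and essentially formal once the pairs moduli problem is in place. By contrast, the explicit closed-form evaluation of the instanton partition function, and the verification that the resulting universal constants assemble into precisely the stated expression, require the full strength of the blowup-equation recursion and the modularity input that fixes the universal functions; this is the technical core of \cite{GNY3} and the step I would expect to consume the bulk of the work.
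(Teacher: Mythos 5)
The paper gives no proof of this statement: it is recalled verbatim as the known theorem of G\"ottsche-Nakajima-Yoshioka \cite{GNY3}, and your proposal likewise defers the substance to \cite{GNY3}, so the two take essentially the same route. Your sketch of that reference's internals --- Mochizuki's wall-crossing formula reducing the virtual Donaldson integrals (valid precisely because $b_1(X)=0$ and $h^{2,0}(X)>0$) to Seiberg-Witten-weighted integrals over products of Hilbert schemes, followed by the evaluation of the resulting universal series via instanton counting and the blowup-equation technology of \cite{GNY1, GNY2} --- accurately reflects how the cited result is actually established.
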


The higher rank generalization of Witten's conjecture is known as the Mari\~{n}o-Moore conjecture \cite{MM} (see also \cite{LM}). We discuss its algebro-geometric version due to G\"ottsche \cite{Gott}. Define $[r-1] := \{1, \ldots, r-1\}$. We require the following numbers: for all $1 \leq i < j \leq r-1$
\begin{align*}
\beta_{ij} &:= 
\frac{\sin((i+j)\pi / 2r)}{\sin((j-i)\pi / 2r)} \in \mathbb{R}_{>0}, \quad \beta_{ji} := \beta_{ij}, \\
B &:= \sum_{I \subset [r-1]} \prod_{i \in I \atop j \in [r-1] \setminus I} \beta_{ij}.
\end{align*}

\begin{conjecture}[G\"ottsche] \label{conj:Gott}
Let $(X,H)$ be a smooth polarized surface satisfying $H_1(X,\mathbb{Z}) = 0$ and $h^{2,0}(X)>0$. Let $r>1$ and $c_1 \in H^2(X,\mathbb{Z})$ such that there are no rank $r$ strictly Gieseker $H$-semistable sheaves on $X$ with first Chern class $c_1$. Then the coefficient of $z^{\mathrm{vd}(r,c_1,c_2)}$ in $\mathsf{Z}_{(X,H),c_1}^{\mathrm{SL}_r,\mathsf{D}}(z)$ equals the coefficient of $z^{\mathrm{vd}(r,c_1,c_2)}$ in
\[
r^{2-\chi(\mathcal{O}_X)} B^{K_X^2} e^{( \frac{1}{2} L^2 + ru) z^2} \sum_{\boldsymbol{a}} \prod_{i} \epsilon_r^{i a_i c_1} \mathrm{SW}(a_i) e^{-\sin(i \pi / r) (2a_i - K_X) L z } \prod_{i<j} \beta_{ij}^{(2a_i-K_X)(a_j-a_i)},
\]
where the sum is over all $\boldsymbol{a} = (a_1, \ldots, a_{r-1}) \in H^2(X,\mathbb{Z})^{r-1}$.
\end{conjecture}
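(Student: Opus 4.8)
The plan is to follow the strategy by which G\"ottsche-Nakajima-Yoshioka established the rank $2$ case (Theorem \ref{thm:GNY}): reduce the left-hand side to Mochizuki's formula \cite{Moc} and then evaluate the resulting universal local contributions. First I would rewrite $\mathsf{Z}_{(X,H),c_1}^{\mathrm{SL}_r,\mathsf{D}}(z)$ using Mochizuki's formula, which expresses each $\mu$-insertion integral $\int_{[M_X^H(r,c_1,c_2)]^{\mathrm{vir}}}$ as a sum over ordered tuples of Seiberg-Witten basic classes $(a_1,\dots,a_{r-1})$ of products $\prod_i \mathrm{SW}(a_i)$ times a \emph{universal local contribution}. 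This contribution is an equivariant residue built from integrals over products of Hilbert schemes of points on $\mathbb{C}^2$, depending only on $r$, on the intersection data $a_ia_j$, $a_iK_X$, $K_X^2$, $\chi(\mathcal{O}_X)$, and on the insertions $L,\mathrm{pt}$. Since $h^{2,0}(X)>0$, the invariants are deformation invariant and only Seiberg-Witten basic classes contribute, so the problem reduces to a purely combinatorial/equivariant computation independent of the geometry of $X$; the $(r-1)$-fold index set matches the rank of the Cartan of $\mathrm{SU}(r)$ and hence the shape of the conjectured sum.

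Second, I would identify this universal local contribution with a Nekrasov-type instanton partition function on $\mathbb{C}^2$ for gauge group $\mathrm{SU}(r)$ decorated by the observables $\mu(L),\mu(\mathrm{pt})$. The prefactor $e^{(\frac12 L^2 + ru)z^2}$ should emerge from the perturbative (zero-instanton) part, while the lattice sum $\sum_{\boldsymbol a}$ together with the factors $e^{-\sin(i\pi/r)(2a_i-K_X)Lz}$ and $\beta_{ij}^{(2a_i-K_X)(a_j-a_i)}$ should arise from evaluating the instanton sum at the distinguished specialization of the equivariant parameters dictated by the $A_{r-1}$ root system. Concretely, $\sin(i\pi/r)$ and $\beta_{ij}=\sin((i+j)\pi/2r)/\sin((j-i)\pi/2r)$ are exactly the characters and structure constants one obtains when the Coulomb parameters are frozen at $r$th roots of unity, and $B=\sum_{I\subset[r-1]}\prod_{i\in I,\, j\notin I}\beta_{ij}$ is the resulting partition function of the empty fixed locus, which governs the $K_X^2$-dependence.

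Third, with the local contribution in closed form, I would reassemble the sum over $(a_1,\dots,a_{r-1})$ and match it term by term with the conjectured expression, verifying that the power of $r$, the exponent of $B$, and the insertion-dependent exponentials agree. Here the Nakajima-Yoshioka blow-up formula and the multiplicativity of the local contributions in $K_X^2$ and $\chi(\mathcal{O}_X)$ would be used to pin down the universal prefactors and to cross-check against the rank $2$ answer of Theorem \ref{thm:GNY} upon setting $r=2$.

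The hard part will be the second step: the explicit closed-form evaluation of the higher-rank equivariant integrals. For $r=2$ this is precisely what G\"ottsche-Nakajima-Yoshioka carried out, by relating the integrals to Nekrasov's deformed partition function and passing to its Seiberg-Witten/modular limit, but for general $r$ no such evaluation is currently available --- which is exactly why Conjecture \ref{conj:Gott} remains a conjecture rather than a theorem. An alternative and perhaps more tractable route would bypass Mochizuki's formula in favour of Joyce's vertex-algebra wall-crossing \cite{Joy}: one would compute the change of $\mathsf{Z}^{\mathrm{SL}_r,\mathsf{D}}$ across walls in the space of stability conditions, show the wall-crossing terms are controlled by the same Seiberg-Witten data, and fix the initial condition on a surface where the invariants are directly computable. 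In either approach the obstruction is identical --- producing the universal rank-$r$ local contribution in the trigonometric closed form predicted by Mari\~{n}o-Moore.
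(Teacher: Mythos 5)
The statement you were asked to prove is Conjecture \ref{conj:Gott}, and the paper itself contains no proof of it: it is stated as a conjecture, attributed to G\"ottsche as the explicit algebro-geometric form of the Mari\~{n}o-Moore conjecture \cite{MM, LM}, with only the rank $2$ case known (Theorem \ref{thm:GNY}, due to G\"ottsche-Nakajima-Yoshioka \cite{GNY3}) and the $r=3,4,5$ cases noted in the paper only as consequences of \emph{other} conjectures \cite{GK4, GKL}. So there is no paper proof to compare your attempt against, and no blind attempt could legitimately have closed the statement.

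Your proposal correctly recognizes this, and that is its main virtue. What you have written is a research program rather than a proof: the first step (Mochizuki's formula \cite{Moc} reducing the virtual Donaldson invariants to sums over Seiberg-Witten basic classes weighted by universal local contributions) and the third step (reassembly, multiplicativity in $K_X^2$ and $\chi(\mathcal{O}_X)$, cross-checking against $r=2$) are standard and sound in outline, but the second step --- the closed-form evaluation of the universal rank-$r$ equivariant integrals at the root-of-unity specialization, which is supposed to produce the factors $\sin(i\pi/r)$, $\beta_{ij}$, and $B$ --- is exactly the open problem, and you say so explicitly. Since you identify the gap rather than close it, the proposal does not establish the statement; it is an honest account of why the statement remains a conjecture. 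Your suggested alternative via Joyce's vertex-algebra wall-crossing \cite{Joy} is also consistent with the paper, which remarks that the related Conjecture \ref{conj:GKL} is expected to be proved by precisely that technology; but for Conjecture \ref{conj:Gott} itself the same missing ingredient (the universal rank-$r$ contribution in trigonometric closed form) would still have to be produced, so the gap is identical in either route.
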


The $r=3,4,5$ cases of this conjecture also appear in \cite{GK4, GKL} as consequences of other conjectures.

Let $w \in H^2(X,\mu_r)$, $\pi : Y \to X$ a choice of degree $r$ Brauer-Severi variety with $w(Y)=w$ (Theorem \ref{thm:period-index}), and $\xi \in H^2(X,\mathbb{Z})$ a lift of $w$ (Lemma \ref{lmm:rep-w-xi}). We assume $r$ is prime and $\mathrm{gcd}(r,wH)=1$.
We consider the following generating series of  $\mathrm{PGL}_r$ Donaldson invariants
\[
\mathsf{Z}_{(X,H),w}^{\mathrm{PGL}_r,\mathsf{D}}(z) = \sum_{c_2 \in \mathbb{Z}} z^{\mathrm{vd}(r,\xi,c_2)} \int_{[M_{Y,\xi/r}^H(r,\xi,c_2)]^{\mathrm{vir}}} \exp\Big(\mu(L) + \mu(\mathrm{pt}) \cdot u \Big),
\]
where, for any $\alpha \in H^*(X,\mathbb{Q})$, we define
\[
\mu(\alpha) :=  - \pi_{M*} \Big( \pi_X^* \alpha \cap \mathrm{ch}_2(\pi_*(\mathcal{E} \otimes \det(\mathcal{E})^{-\frac{1}{r}}))\Big).
\]

\begin{corollary} \label{cor:conseqLGconj}
Suppose $X = \mathcal{X}_0$ and $w = \widetilde{w}_0$ for a family $\mathcal{X} \to B$ satisfying the conditions of Theorem \ref{thm:PGL-SLcorr} and $h^{2,0}(X)>0$. Fix any $\delta \in \mathbb{Z}$ such that $\delta \equiv -(r-1) w^2 - (r^2-1) \chi(\mathcal{O}_X) \mod 2r$. Then Conjecture \ref{conj:Gott} (which holds for $r=2$) implies that the coefficient of $z^{\delta}$ in $\mathsf{Z}_{(X,H),w}^{\mathrm{PGL}_r, \mathsf{D}}(z)$ equals the coefficient of $z^{\delta}$ in
\[
r^{2-\chi(\mathcal{O}_X)} B^{K_X^2} e^{( \frac{1}{2} L^2 + ru) z^2} \sum_{\boldsymbol{a}} \prod_{i} \epsilon_r^{i a_i w} \mathrm{SW}(a_i) e^{-\sin(i \pi / r) (2a_i - K_X) L z } \prod_{i<j} \beta_{ij}^{(2a_i-K_X)(a_j-a_i)},
\]
where the sum is over all $\boldsymbol{a} = (a_1, \ldots, a_{r-1}) \in H^2(X,\mathbb{Z})^{r-1}$.
\end{corollary}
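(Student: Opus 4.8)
The plan is to recognise $\mathsf{Z}_{(X,H),w}^{\mathrm{PGL}_r, \mathsf{D}}(z)$ as a special case of the formal-insertion generating function $\mathsf{Z}_{(X,H),w}^{\mathrm{PGL}_r, \mathsf{P}}(q)$ of Definition \ref{def:PGLgenfun}, apply the correspondence of Theorem \ref{thm:PGL-SLcorr} to pass to the $\mathrm{SL}_r$ side, invoke Conjecture \ref{conj:Gott}, and finally transport the resulting Seiberg--Witten expression back to $X = \mathcal{X}_0$ by deformation invariance. First I would set $\boldsymbol{a} = (1,2)$, $\alpha_1 = L$, $\alpha_2 = \mathrm{pt}$, work over the coefficient ring $R = \mathbb{Q}[u]$, and define $\mathsf{P} = \{\mathsf{P}_{v,\boldsymbol{a}}\}_{v \geq 0}$ by letting $\mathsf{P}_{v,\boldsymbol{a}}$ be the degree-$v$ part of $\exp\!\big(\mu(L) + \mu(\mathrm{pt})\,u\big)$ written in the formal symbols of Section \ref{sec:genfun}. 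Since $\mu(\alpha)$ is minus $\pi_{M*}\big(\pi_X^* \underline{\alpha} \cap \ch_2(\underline{\mathcal{E}} \otimes \det(\underline{\mathcal{E}})^{-1/\underline{r}})\big)$, of degree $1$ for $\alpha = L$ and degree $2$ for $\alpha = \mathrm{pt}$, each degree $v$ contains only finitely many terms, so $\mathsf{P}$ is a legitimate sequence of formal insertions. Under the substitution rules of Definitions \ref{def:PGLgenfun} and \ref{def:GLgenfun}, the symbol $\underline{\mathcal{E}} \otimes \det(\underline{\mathcal{E}})^{-1/\underline{r}}$ becomes $\pi_*(\mathcal{E} \otimes \det(\mathcal{E})^{-1/r})$ on the $\mathrm{PGL}_r$ side and $\mathcal{E} \otimes \det(\mathcal{E})^{-1/r}$ on the $\mathrm{SL}_r$ side, matching the two definitions of $\mu(\alpha)$. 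Because $\int_{[M]^{\mathrm{vir}}}$ extracts exactly the degree-$\mathrm{vd}$ component and $q^{\mathrm{vd}/2r} = z^{\mathrm{vd}}$ under $q = z^{2r}$, one obtains $\mathsf{Z}^{\mathrm{PGL}_r,\mathsf{D}}_{(X,H),w}(z) = \mathsf{Z}^{\mathrm{PGL}_r,\mathsf{P}}_{(X,H),w}(z^{2r})$, and likewise on the $\mathrm{SL}_r$ side.

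Next I would apply Theorem \ref{thm:PGL-SLcorr} to this $\mathsf{P}$: choosing $b \in U$ with $\widetilde{w}_b$ of trivial Brauer class and an algebraic lift $c_1 \in H^2(\mathcal{X}_b,\mathbb{Z})$ of $\widetilde{w}_b$, the theorem yields $\mathsf{Z}^{\mathrm{PGL}_r,\mathsf{D}}_{(X,H),w}(z) = \mathsf{Z}^{\mathrm{SL}_r,\mathsf{D}}_{(\mathcal{X}_b,\mathcal{H}_b),c_1}(z)$, where the insertion classes $L, \mathrm{pt}$ are understood via their flat transport to $\mathcal{X}_b$. The hypotheses of Conjecture \ref{conj:Gott} then hold on $(\mathcal{X}_b,\mathcal{H}_b)$: the conditions $H_1(\mathcal{X}_b,\mathbb{Z}) = 0$ and $h^{2,0}(\mathcal{X}_b) = h^{2,0}(X) > 0$ are inherited from the family, while $c_1 \mathcal{H}_b \equiv \widetilde{w}_b \mathcal{H}_b \bmod r$ together with $\gcd(r,\widetilde{w}_b\mathcal{H}_b) = 1$ rules out rank $r$ strictly Gieseker $\mathcal{H}_b$-semistable sheaves of first Chern class $c_1$. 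Hence the coefficient of $z^\delta$ in $\mathsf{Z}^{\mathrm{SL}_r,\mathsf{D}}_{(\mathcal{X}_b,\mathcal{H}_b),c_1}(z)$ equals the coefficient of $z^\delta$ in G\"ottsche's expression for the triple $(\mathcal{X}_b,\mathcal{H}_b,c_1)$. Here I would also note that $\delta$ indeed lies in the progression of achievable virtual dimensions, because $(r-1)\xi^2 \bmod 2r$ is independent of the lift $\xi$ of $w$ (if $\xi' = \xi + r\gamma$ then $(r-1)(\xi')^2 - (r-1)\xi^2 \equiv (r-1)r^2\gamma^2 \equiv 0 \bmod 2r$, since $r(r-1)$ is even), so the congruence imposed on $\delta$ guarantees $\delta = \mathrm{vd}(r,\xi,c_2)$ for some $c_2 \in \mathbb{Z}$.

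Finally I would transport G\"ottsche's formula from $\mathcal{X}_b$ back to $X = \mathcal{X}_0$. The scalars $\chi(\mathcal{O}_{\mathcal{X}_b})$ and $K_{\mathcal{X}_b}^2$, as well as every intersection number $a_i L$, $K L$, $a_i a_j$, $L^2$ occurring in the exponentials and in $\beta_{ij}^{(2a_i - K)(a_j - a_i)}$, are deformation invariant; the Seiberg--Witten invariants $\mathrm{SW}(a_i)$ are deformation invariant, so the sum over $a_i \in H^2(\mathcal{X}_b,\mathbb{Z})$ reindexes to a sum over $a_i \in H^2(X,\mathbb{Z})$ under the flat identification of cohomology along the family; and $\epsilon_r^{\,i a_i c_1} = \epsilon_r^{\,i a_i w}$ because $c_1 \equiv \widetilde{w}_b \bmod r$ and $\widetilde{w}_b$ is identified with $w = \widetilde{w}_0$ as a flat section of $R^2 f_* \mu_r$. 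Substituting these identities produces exactly the claimed formula for the coefficient of $z^\delta$.

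The steps requiring the most care — and hence the main, if modest, obstacle — are the bookkeeping that realises $\mathsf{D}$ as a genuine formal insertion with the correct $q = z^{2r}$ normalisation and finite-degree terms, and the systematic application of deformation invariance (of Seiberg--Witten invariants, of Hodge numbers, and of the intersection pairing, via the flat identifications of $H^*$ and of the local system $R^2 f_* \mu_r$) that rewrites the entire Seiberg--Witten side on $X = \mathcal{X}_0$ rather than on the nearby fibre $\mathcal{X}_b$. No genuinely new geometric input beyond Theorem \ref{thm:PGL-SLcorr} and Conjecture \ref{conj:Gott} is needed.
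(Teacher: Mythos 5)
Your proposal is correct and follows exactly the route the paper intends (the paper states this corollary without a written proof, as an immediate combination of Theorem \ref{thm:PGL-SLcorr}, Conjecture \ref{conj:Gott} --- Theorem \ref{thm:GNY} for $r=2$ --- and deformation invariance of the Seiberg--Witten data): realise the Donaldson series as a formal insertion with $q = z^{2r}$, pass to a nearby fibre with trivial Brauer class, apply G\"ottsche's formula there, and transport back using invariance of $\chi(\mathcal{O}_X)$, $K_X^2$, the intersection form, $\mathrm{SW}$, and the identification $\epsilon_r^{i a_i c_1} = \epsilon_r^{i a_i w}$. Your additional checks (finiteness of terms in each degree, absence of strictly semistables at $b$ via $\gcd(r, c_1\mathcal{H}_b)=1$, and well-definedness of the congruence on $\delta$ under change of lift $\xi$) are exactly the bookkeeping the paper leaves implicit.
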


Suppose $X$ is a minimal surface of general type satisfying $H_1(X,\mathbb{Z})=0$ and $h^{2,0}(X)>0$. Then the only Seiberg-Witten basic classes $a \in H^2(X,\mathbb{Z})$, i.e.~the only classes for which $\mathrm{SW}(a) \neq 0$, are $a=0,K_X$ and the corresponding Seiberg-Witten invariants are $1, (-1)^{\chi(\mathcal{O}_X)}$ \cite[Thm.~7.4.1]{Mor}. Then the formula of Corollary \ref{cor:conseqLGconj} simplifies dramatically. We take $u=0$ and $L=K_X$ and record the leading coefficients of the formula for $r=2,3$. The main observation is that in each case the leading coefficient is \emph{positive}.

\begin{example} \label{ex:rk2and3}
For $r=2$, we have the following cases:
\begin{itemize}
\item For $w K_X + \chi(\mathcal{O}_X) \equiv 0 \mod 2$, the leading term is $$2^{2 - \chi(\mathcal{O}_X) + K_X^2} \cdot 2 z^0.$$
Taking a representative $w = [\xi]$ with $\xi \in H^2(X,\mathbb{Z})$, in this case we have $\mathrm{vd}(2,w,c_2) \equiv 0 \mod 2$ for all $c_2 \in \mathbb{Z}$. Here we used Wu's formula $w^2 \equiv w K_X \mod 2$.
\item For $w K_X + \chi(\mathcal{O}_X) \equiv 1 \mod 2$, the leading term is $$2^{2 - \chi(\mathcal{O}_X) + K_X^2} (2K_X^2) z.$$
Taking a representative $w = [\xi]$ with $\xi \in H^2(X,\mathbb{Z})$, in this case we have $\mathrm{vd}(2,w,c_2) \equiv 1 \mod 2$ for all $c_2 \in \mathbb{Z}$.
\end{itemize}
\end{example}

\begin{example}
For $r=3$, we have the following cases: 
\begin{itemize}
\item For $w K_X \equiv 0 \mod 3$, $\chi(\mathcal{O}_X) \equiv 0 \mod 2$ the leading term is $$3^{2-\chi(\mathcal{O}_X) + K_X^2}(2^{1+K_X^2}+2) z^0.$$
\item For $w K_X \equiv 0 \mod 3$, $\chi(\mathcal{O}_X) \equiv 1 \mod 2$ the leading term is $$3^{2-\chi(\mathcal{O}_X) + K_X^2}(2^{1+K_X^2}-2) z^0.$$
\item For $w K_X \equiv 1,2 \mod 3$, $\chi(\mathcal{O}_X) \equiv 0 \mod 2$ the leading term is $$3^{2-\chi(\mathcal{O}_X) + K_X^2}(2^{1+K_X^2}-1) z^0.$$
\item For $w K_X \equiv 1,2 \mod 3$, $\chi(\mathcal{O}_X) \equiv 1 \mod 2$ the leading term is $$3^{2-\chi(\mathcal{O}_X) + K_X^2}(2^{1+K_X^2}+1) z^0.$$
\end{itemize}
Taking a representative $w = [\xi]$ with $\xi \in H^2(X,\mathbb{Z})$, in each of these cases we have $\mathrm{vd}(r,\xi,c_2) \equiv 0 \mod 2$ for any $c_2 \in \mathbb{Z}$.
\end{example}

\begin{remark} \label{rem:nonvanishconj}
It is natural to expect that the formula for $\mathsf{Z}_{(X,H),w}^{\mathrm{PGL}_r, \mathsf{D}}$ of Corollary \ref{cor:conseqLGconj} also holds when $r>1$ is not necessarily prime (at least when there are no strictly semistables). Experimentation for ranks $r>3$ leads us to the following expectations. For $X,H,w$ as above and any odd rank $r$, we have
\[
\mathsf{Z}_{(X,H),w}^{\mathrm{PGL}_r, \mathsf{D}}(0) \in \mathbb{Z}_{>0}.
\]
Moreover, for any even rank $r$, we have
\[
\left\{ 
\begin{array}{cc}
\mathsf{Z}_{(X,H),w}^{\mathrm{PGL}_r, \mathsf{D}}(0) \in \mathbb{Z}_{>0}  & \mathrm{if \ } wK_X + \chi(\mathcal{O}_X) \equiv 0 \mod 2 \\
\frac{\partial}{\partial z} \mathsf{Z}_{(X,H),w}^{\mathrm{PGL}_r, \mathsf{D}}(0) \in \mathbb{Z}_{>0} & \mathrm{if \ } wK_X + \chi(\mathcal{O}_X) \equiv 1 \mod 2.
\end{array}
\right.
\]
Taking a representative $w = [\xi]$ with $\xi \in H^2(X,\mathbb{Z})$, for $r$ odd we have $\mathrm{vd}(r,\xi,c_2) \equiv 0 \mod 2$, and for $r$ even we have $\mathrm{vd}(r,\xi,c_2) \equiv w K_X + \chi(\mathcal{O}_X) \mod 2$ for any $c_2 \in \mathbb{Z}$.
This positivity is obvious in the case $r$ is odd, $wK_X \equiv 0 \mod r$, and $\chi(\mathcal{O}_X) \equiv 0 \mod 2$, but appears non-trivial in general.\footnote{The expectation in this remark was first formulated for $r$ prime by the authors, and then generalized to any $r$ by G\"ottsche in an e-mail conversation.}
\end{remark}

Combining Example \ref{ex:rk2and3} and Corollary \ref{cor:conseqLGconj}, we deduce the following.

\begin{theorem}
Suppose $X = \mathcal{X}_0$ for a family $\mathcal{X} \to B$ satisfying the conditions of Theorem \ref{thm:PGL-SLcorr}, and $X$ is a minimal surface of general type satisfying $h^{2,0}(X)>0$. Let $D \in \mathrm{Br}(\mathbb{C}(X))$ be a degree $r$ division algebra in the image of $\mathrm{Br}(X) \hookrightarrow \mathrm{Br}(\mathbb{C}(X))$. Then, for $r=2$, we have
\[
c_2^{\mathrm{min}} \leq 3\chi(\mathcal{O}_X) + 1.
\]
Moreover, for $r=3$ and assuming G\"ottsche's conjecture \ref{conj:Gott} for $r=3$, we have
\[
c_2^{\mathrm{min}} \leq 8\chi(\mathcal{O}_X).
\]
\end{theorem}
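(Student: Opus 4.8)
The plan is to run the general strategy of Section \ref{sec:minc2} with the Donaldson insertion $\mathsf{D}$, specialised to $L = K_X$ and $u = 0$, and to extract a low-order coefficient of the $\mathrm{PGL}_r$ Donaldson series whose nonvanishing forces a moduli space of twisted sheaves to be nonempty. First I would fix a Stiefel--Whitney class $w \in H^2(X,\mu_r)$ with $o(w) = D$; this exists because $H^2(X,\mu_r) \twoheadrightarrow \mathrm{Br}(X)[r]$ by the Kummer sequence \eqref{eqn:Kummer}. Since $X = \mathcal{X}_0$ sits in a family satisfying the hypotheses of Theorem \ref{thm:PGL-SLcorr}, Corollary \ref{cor:conseqLGconj} expresses the coefficients of $\mathsf{Z}_{(X,H),w}^{\mathrm{PGL}_r,\mathsf{D}}(z)$ in closed form in terms of Seiberg--Witten invariants of $X$ --- via Witten's conjecture (Theorem \ref{thm:GNY}, proved by G\"ottsche--Nakajima--Yoshioka) for $r=2$, and via G\"ottsche's Conjecture \ref{conj:Gott} for $r=3$.

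Because $X$ is minimal of general type with $h^{2,0}(X) > 0$, its only Seiberg--Witten basic classes are $0$ and $K_X$, with invariants $1$ and $(-1)^{\chi(\mathcal{O}_X)}$ \cite[Thm.~7.4.1]{Mor}. Feeding this into Corollary \ref{cor:conseqLGconj} collapses the sum over $\boldsymbol{a}$ to two terms (for $r=2$) or finitely many terms (for $r=3$), and the leading coefficient has already been computed in Example \ref{ex:rk2and3} and the subsequent example for $r=3$: for $r=2$ it is $2^{2-\chi(\mathcal{O}_X)+K_X^2}\cdot 2$ at $z^0$ when $wK_X + \chi(\mathcal{O}_X)$ is even, and $2^{2-\chi(\mathcal{O}_X)+K_X^2}\cdot 2K_X^2$ at $z^1$ when it is odd; for $r=3$ it is $3^{2-\chi(\mathcal{O}_X)+K_X^2}$ times one of $2^{1+K_X^2}\pm 1$, $2^{1+K_X^2}\pm 2$ at $z^0$. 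The conceptual heart of the argument is that, since $K_X^2 \geq 1$ for a minimal surface of general type, every one of these leading coefficients is strictly positive, hence nonzero.

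A nonzero coefficient of $z^{\delta}$ in $\mathsf{Z}_{(X,H),w}^{\mathrm{PGL}_r,\mathsf{D}}(z)$ means $\int_{[M_{Y,\xi/r}^H(r,\xi,c_2)]^{\mathrm{vir}}}(\cdots) \neq 0$ for the $c_2$ with $\mathrm{vd}(r,\xi,c_2) = \delta$, which forces $[M_{Y,\xi/r}^H(r,\xi,c_2)]^{\mathrm{vir}} \neq 0$ and so the moduli space is nonempty. Passing through Reede's identification $M_{Y}^H(r,\xi,c_2) \cong M_{\mathcal{A}}(c_1,c_2)$ and applying Corollary \ref{cor:HS} then produces an Azumaya algebra $\mathcal{B}$ with $\mathcal{B}_\eta \cong D$ and $c_2(\mathcal{B}) \leq 2rc_2 - (r-1)\xi^2 = \delta + (r^2-1)\chi(\mathcal{O}_X)$, whence $c_2^{\mathrm{min}} \leq \delta + (r^2-1)\chi(\mathcal{O}_X)$. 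For $r=2$ this reads $c_2^{\mathrm{min}} \leq \delta + 3\chi(\mathcal{O}_X)$ and for $r=3$ it reads $c_2^{\mathrm{min}} \leq \delta + 8\chi(\mathcal{O}_X)$.

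It remains to pin down the exponent $\delta$, and this is where I expect the main obstacle to lie. The admissible exponents are precisely the values of $\mathrm{vd}(r,\xi,c_2)$ as $c_2$ ranges over $\mathbb{Z}$, i.e.\ the single residue class $-(r-1)w^2 - (r^2-1)\chi(\mathcal{O}_X) \bmod 2r$ (with $w^2$ read as the Pontryagin square for $r=2$, so that $w^2 \equiv \xi^2 \bmod 4$ independently of the lift, and $w^2 \in \mathbb{Z}/r$ for $r$ odd). The subtlety is that the closed-form series also carries terms at \emph{non}-admissible degrees, so the leading term computed above need not itself be admissible. I would resolve this using Wu's formula $\xi^2 \equiv \xi K_X \bmod 2$ together with the freedom to replace $w$ by $w + c_1(L) \bmod r$ for $L \in \mathrm{Pic}(X)$ without altering the Brauer class $D$: this lets me arrange the smallest non-negative admissible $\delta$ to equal the degree of the leading term, namely $\delta \leq 1$ for $r=2$ (with $\delta = 0$ in the even case and $\delta = 1$ in the odd case) and $\delta = 0$ for $r=3$, while the positivity above guarantees the closed-form coefficient does not accidentally vanish at that admissible degree. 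Substituting these into $c_2^{\mathrm{min}} \leq \delta + (r^2-1)\chi(\mathcal{O}_X)$ then yields $c_2^{\mathrm{min}} \leq 3\chi(\mathcal{O}_X) + 1$ for $r=2$ and $c_2^{\mathrm{min}} \leq 8\chi(\mathcal{O}_X)$ for $r=3$, the latter conditional on Conjecture \ref{conj:Gott} for $r=3$.
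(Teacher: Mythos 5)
Your proposal reproduces the paper's argument in all of its main steps: fixing $w$ with $o(w)$ equal to the class of $D$ via the Kummer sequence \eqref{eqn:Kummer}, invoking Corollary \ref{cor:conseqLGconj} (Theorem \ref{thm:GNY} for $r=2$, Conjecture \ref{conj:Gott} for $r=3$), collapsing the Seiberg--Witten sum to the basic classes $0, K_X$ with invariants $1, (-1)^{\chi(\mathcal{O}_X)}$, quoting the strictly positive leading coefficients of Example \ref{ex:rk2and3}, and converting non-emptiness of $M_{Y,\xi/r}^H(r,\xi,c_2)$ into $c_2^{\mathrm{min}} \leq \delta + (r^2-1)\chi(\mathcal{O}_X)$ via Corollary \ref{cor:HS}. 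This is exactly the strategy of Section \ref{sec:minc2}, and up to that point your write-up is faithful and correct.

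The gap is in your final step, where you ``pin down $\delta$''. Your diagnosis of the subtlety is right: the closed formula is $e^{\frac{1}{2}K_X^2 z^2}$ times $\cosh$ or $\sinh$ of $K_X^2 z$, so it has nonzero coefficients at \emph{every} degree of one parity, whereas $\mathsf{Z}^{\mathrm{PGL}_r,\mathsf{D}}_{(X,H),w}(z)$ is supported only on the single residue class $-(r-1)w^2-(r^2-1)\chi(\mathcal{O}_X) \bmod 2r$, so the leading term is only usable if its degree ($0$ or $1$) lies in that class. But your proposed resolution --- Wu's formula together with replacing $w$ by $w + c_1(L)$ --- is asserted rather than proved, and it cannot deliver what you claim. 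Wu's formula is a mod $2$ statement: it shows only that the parity of the admissible class agrees with the parity of the leading term (which is precisely what Example \ref{ex:rk2and3} records), while the actual condition is mod $4$ for $r=2$ (the Pontryagin square of $w$ against $\chi(\mathcal{O}_X)$) and has a mod $3$ component for $r=3$ that a mod $2$ identity cannot see at all. Moreover the twisting freedom is genuinely constrained: it is governed by $\mathrm{Pic}(X)/r\mathrm{Pic}(X)$, and it must preserve the standing hypothesis $\gcd(r,wH)=1$ needed for Theorem \ref{thm:PGL-SLcorr}. For instance, on a quintic surface with $\mathrm{Pic}(X)=\mathbb{Z}K_X$ (so $\chi(\mathcal{O}_X)=5$, $K_X^2=5$, $H=K_X$), take $\xi$ with $\xi K_X$ odd and $\xi^2 \equiv 3 \bmod 4$ (such classes exist and have non-trivial Brauer class): the admissible class is then $2 \bmod 4$, and the only twists $w \mapsto w + n\,c_1(K_X)$ preserving $\gcd(2,wH)=1$ have $n$ even and change $\xi^2$ by a multiple of $4$, so no allowed choice ever makes $\delta \in \{0,1\}$ admissible; the smallest usable exponent is $\delta = 2$. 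So the claim ``$\delta \leq 1$ for $r=2$ and $\delta = 0$ for $r=3$'' is not achievable by these means, and without it your argument only yields $c_2^{\mathrm{min}} \leq \delta_0 + (r^2-1)\chi(\mathcal{O}_X)$ with $\delta_0$ the smallest non-negative admissible exponent, which a priori can be $2$ or $3$. For comparison, the paper's own proof does not introduce this step at all: it reads $\delta = 0$ resp.\ $1$ (for $r=2$) and $\delta = 0$ (for $r=3$) directly off Example \ref{ex:rk2and3}, whose stated content is only the parity compatibility. So you have put your finger on a real subtlety, but disposing of it by assertion leaves your proof incomplete at exactly the point where a new argument is required.
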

\begin{proof}
Let $\alpha \in \mathrm{Br}(X)$ be the Brauer class corresponding to $D$. Let $Y$ be a degree $r$ Brauer-Severi variety with $o(w(Y)) = \alpha$ (Theorem \ref{thm:period-index}). 

For $r=2$, by Example \ref{ex:rk2and3}, there are two cases. (1) For $w K_X + \chi(\mathcal{O}_X) \equiv 0 \mod 2$ and running the strategy at the beginning of this section, we deduce $c_2^{\mathrm{min}} \leq 3\chi(\mathcal{O}_X)$. (2)  For $w K_X + \chi(\mathcal{O}_X) \equiv 1 \mod 2$, we deduce $c_2^{\mathrm{min}} \leq 3\chi(\mathcal{O}_X) + 1$.

For $r=3$, Example \ref{ex:rk2and3} implies the result.
\end{proof}

In general, if the expectation of Remark \ref{rem:nonvanishconj} holds and we take $X$ as in the previous theorem, then for any degree $r>1$ division algebra $D \in \mathrm{Br}(\mathbb{C}(X))$ in the image of $\mathrm{Br}(X) \hookrightarrow \mathrm{Br}(\mathbb{C}(X))$, we obtain 
\begin{align*}
c_2^{\mathrm{min}} &\leq (r^2-1)\chi(\mathcal{O}_X)+1, \quad \textrm{for } r \textrm{ even}, \\ 
c_2^{\mathrm{min}} &\leq (r^2-1)\chi(\mathcal{O}_X), \quad \textrm{for } r \textrm{ odd}. 
\end{align*}
%Proof-read.

\printbibliography

\noindent \begin{tabular}{ll}
{\tt{d.vanbree@uu.nl}} & {\tt{amingh@umd.edu}} \smallskip \\
Department of Mathematics & Department of Mathematics \\
Utrecht University & University of Maryland \\
PO Box 80010 & 4176 Campus Drive \\
3508 TA Utrecht & William E. Kirwan Hall \\
The Netherlands & College Park, MD 20742-4015 \\ 
& USA \\ \\
{\tt{y.jiang@ku.edu}} & {\tt{m.kool1@uu.nl}} \smallskip \\
Department of Mathematics & Department of Mathematics \\
University of Kansas & Utrecht University \\
405 Jayhawk Blvd & PO Box 80010 \\
Lawrence, KS 66045 & 3508 TA Utrecht \\
USA & The Netherlands \\

\end{tabular}
\end{document}